\newtheorem*{remark}{Remark}
\newtheorem{theorem}{Theorem}[section]
\newtheorem{corollary}[theorem]{Corollary}
\newtheorem{lemme}[theorem]{Lemma}
\newtheorem{proposition}[theorem]{Proposition}
\theoremstyle{definition}
\newtheorem{definition}{Definition}[section]
\newcommand{\R}{\mathbb{R}}
\newcommand{\N}{\mathbb{N}}
\newcommand{\E}{\mathbb{E}}
\newcommand{\p}{\mathbb{P}}
\newcommand{\1}{\mathbf{1}}
\newcommand{\T}{\mathcal{T}}
\newcommand{\D}{\mathcal{D}}
\newcommand{\s}{\mathcal{S}}
\newcommand{\Q}{\mathbb{Q}}
\newcommand{\BP}{\overline{\mathcal{BP}}}
\title{The bigeodesic Brownian plane}
\author{Mathieu Mourichoux}
\date{}
\begin{document}

\maketitle

\begin{abstract}
    We introduce and study a random non-compact space called the bigeodesic Brownian plane, and prove that it is the tangent plane in distribution of the Brownian sphere at a point of its simple geodesic from the root (for the local Gromov-Hausdorff-Prokhorov-Uniform topology). We also show that it  is the local limit of the Brownian plane rerooted further and further on its unique infinite geodesic. Furthermore, we discuss various properties of this space, such as its topology, the behavior of its geodesic rays, and its invariance in distribution under several natural transformations.
\end{abstract}

\tableofcontents

\section{Introduction}

This work is concerned with models of random geometry that arise as scaling limits of random maps. In particular, we consider the Brownian sphere $(\s,D)$, which appears as the scaling limit (for the Gromov-Hausdorff topology) of quadrangulations of the sphere with $n$ faces chosen uniformly at random (\cite{convergence,uniqueness}), and more generally of several models of random planar maps (see \cite{ConvergenceBJM,ConvergenceSimple,AbrahamConvergence} for instance). The Brownian sphere also comes with a volume measure $\mu$, a root $\rho$ and distinguished point $x_*$ (these points can be seen as distributed according to the measure $\mu$). The construction of the Brownian sphere is based on the Brownian snake excursion measure $\N_0$, which is a $\sigma-$ finite measure that  will be introduced in Section \ref{section 2}. Moreover, this measure also gives a label to every point $x\in\s$, and the distinguished point $x_*$ carries the minimal label $W_*$. In this paper, we are primarily interested in the study of the Brownian sphere around a point of one of its geodesics.\\

In the last decade, much work has been done to understand the behavior of the Brownian sphere around points that are ``typical'' (i.e. $\mu$-a.e. point). For instance, in his paper \cite{geodesic1}, Le Gall completely describes the geodesics to a distinguished point $x_*$. In particular, we can easily identify a unique geodesic $\Gamma=(\Gamma_t)_{0\leq t\leq D(\rho,x_*)}$ between the root $\rho$ of the Brownian sphere and the distinguished point $x_*$ (let us mention that $D(\rho,x_*)=-W_*$). He also shows a confluence phenomenon for these geodesics. Then, Curien and Le Gall introduced the Brownian plane $(\mathcal{BP},D_\infty)$ in \cite{brownianplane}, which is a non-compact random surface. They proved that it is the tangent plane in distribution of the Brownian sphere at a typical point (for instance around $\rho$ and $x_*$), and that there is a unique infinite geodesic $\gamma$ starting from the root. Afterwards, several properties of this space were studied, such as the volume of its hulls and its isoperimetric properties (see \cite{Hullprocess2016,Isoperimetric}). It is often more convenient to work with the Brownian plane and to use coupling methods to transfer results to the Brownian sphere.\\
Our work may be seen as a counterpart of theirs, but for some ``atypical'' points (i.e. points that belong to a set of $\mu$-measure zero). Indeed, it was proved in \cite{geodesic2} that the set of points that are in the interior of geodesics has Hausdorff dimension 1, whereas the Brownian sphere has Hausdorff dimension 4 (see \cite{dimension}). More precisely, we identify the tangent plane in distribution of the Brownian sphere at a point of $\Gamma$ at fixed distance of $\rho$. It is a random non-compact rooted space that we call the bigeodesic Brownian plane, and we use the notation $(\BP,\overline{D}_\infty,\overline{\rho}_\infty)$ (we will give a precise definition of this space in Subsection \ref{presentation}). We underline that $(\BP,\overline{D}_\infty,\overline{\rho}_\infty)$ has a totally explicit construction, based on labelled trees. The main result of this paper is the following (we use the notation $B_r(X,x)$ to denote the ball of radius $r$ around $x$ in a metric space $(X,d)$, and we omit $x$ if it is the root of $X$).

\begin{theorem}\label{convergence}
    For every $\delta>0$ and $b>0$, we can find $\varepsilon>0$ such that we can construct $(\s,D,\rho)$ (under $\N_0(\cdot\,|\,W_*<-b)$) and $(\overline{\mathcal{BP}},\overline{D}_\infty,\overline{\rho}_\infty)$ on the same probability space such that $B_\varepsilon(\s,\Gamma_b)$ and $B_\varepsilon(\overline{\mathcal{BP}})$ are isometric with probability at least $1-\delta$. Moreover, we have the convergence 
    \begin{equation}        (\s,\varepsilon^{-1}D,\Gamma_b)\xrightarrow[\varepsilon\rightarrow 0]{(d)}(\overline{\mathcal{BP}},\overline{D}_\infty,\overline{\rho}_\infty)
    \end{equation}
    in distribution for the local Gromov-Hausdorff topology. 
\end{theorem}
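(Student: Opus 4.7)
The plan is to adapt the strategy of Curien--Le Gall in \cite{brownianplane} that established the Brownian plane as the tangent cone of the Brownian sphere at a typical point, modifying it to handle the atypical base-point $\Gamma_b$, which lies on the simple geodesic from $\rho$ to $x_*$. The first step is to encode both sides of the claimed limit by their Brownian snake representations: on the sphere side, $(\s,D,\rho)$ under $\N_0(\cdot\,|\,W_*<-b)$ comes from an excursion of the Brownian snake, with $\rho$ being the root, $x_*$ the vertex carrying the minimal label $W_*$, and $\Gamma_b$ corresponding to the (a.s.\ unique) point $u_b$ of the CRT that sits on the geodesic $\Gamma$ and whose snake label equals $-b$; on the target side, $(\BP,\overline{D}_\infty,\overline{\rho}_\infty)$ is, by the explicit construction described in Subsection \ref{presentation}, built from a bi-infinite labelled tree structure encoding two half-planes glued along a doubly infinite geodesic through $\overline{\rho}_\infty$.

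Next I would re-root the snake encoding of the sphere at the vertex $u_b$, using the spinal/re-rooting decompositions of the Brownian snake. Under the conditioning $\{W_*<-b\}$, this decomposition splits the CRT at $u_b$ into two sub-excursions: one containing $\rho$ with labels that climb back from $-b$ up to $0$, and one containing $x_*$ with labels that continue down from $-b$ to $W_*$. Zooming in by $\varepsilon^{-1}$ around $u_b$ and running the standard Brownian scaling on both label and time coordinates, each side is locally governed by a Brownian snake whose labels are close to those of infinite snakes conditioned, respectively, to stay above and below $-b$. These two scaled snakes converge jointly (in the obvious sense for a pair of Brownian snake excursions) to the independent pair of infinite labelled trees that enters the definition of $(\BP,\overline{D}_\infty,\overline{\rho}_\infty)$.

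To turn these snake-level convergences into a local Gromov-Hausdorff convergence, and in particular to upgrade it to the strong coupling statement that $B_\varepsilon(\s,\Gamma_b)$ and $B_\varepsilon(\BP)$ are isometric with probability $\ge 1-\delta$, I would proceed as in the proof of \cite[Theorem~1]{brownianplane}. The key input is that the metric on a ball of radius $\varepsilon$ around $\Gamma_b$ is measurable with respect to the snake labels within a small time window around $u_b$, up to the event that ``long'' snake paths (in contour-time) with labels close to $-b+\varepsilon$ travel through the ball and create identifications. Explicit absolute continuity between the conditioned snake near $u_b$ and the infinite snakes appearing in $\BP$, combined with quantitative estimates ruling out these long excursions with probability tending to $1$, yields the desired isometric coupling, and Theorem \ref{convergence} then follows.

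The main obstacle is the last point: showing that the distances inside $B_\varepsilon(\s,\Gamma_b)$ are truly determined by the local snake and are not secretly altered by identifications coming from far away in the CRT. This requires a careful hull-exclusion estimate analogous to those in \cite{Hullprocess2016,Isoperimetric}, but now \emph{two-sided}, simultaneously for the part of the snake coming from the side of $\rho$ and the part coming from the side of $x_*$, and uniform in the conditioning event $\{W_*<-b\}$. Handling this two-sided constraint, and verifying that the conditioning on $W_*<-b$ does not destroy the required absolute continuity, is where the bulk of the work should lie.
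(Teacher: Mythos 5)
Your overall architecture --- spinal decomposition of the snake around $\Gamma_b$, local convergence of the decomposed snake to an infinite limit object, then a localization step that turns snake-level convergence into an isometric coupling of balls --- matches the paper's plan. But there is a genuine gap in the step you correctly single out as the main obstacle, namely showing that the metric on $B_\varepsilon(\s,\Gamma_b)$ is determined by the local snake data.

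You propose to handle this with a ``hull-exclusion estimate analogous to those in \cite{Hullprocess2016,Isoperimetric}''. Those estimates, and more generally the localization argument in \cite{brownianplane}, rest on the \emph{cactus bound}, which controls $D(x,y)$ from below by the labels along the \emph{tree} geodesic from $x$ to $y$. That bound is effective near $\rho$ or $x_*$ precisely because those points carry extremal labels, so the tree geodesic cannot dip far. Near $\Gamma_b$ the label $-b$ is interior to the label range, the tree geodesic between two points with label $\approx -b$ can dip far below $-b$, and the cactus bound gives essentially nothing; the paper explicitly notes that for this reason the Curien--Le~Gall argument ``cannot be applied in our setting''. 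The paper's replacement is an argument via \emph{Brownian slices}: the local pieces $A_{L\varepsilon}$, $\widehat{A}_{L\varepsilon}$ around the two CRT-preimages $p_\T(T_b)$, $p_\T(\widehat{T}_b)$ of $\Gamma_b$ are shown to embed (under a good event) homeomorphically and, away from their geodesic boundary, \emph{isometrically} into $\s$ (Propositions \ref{iso locale}, \ref{boundary}, Corollary \ref{length2}), and the fact that the two slice geodesics $\gamma^L,\gamma^R$ meet only at their endpoints is what rules out shortcuts from afar (Propositions \ref{Distance-frontière}, \ref{localisation}). This is a different mechanism from hull-exclusion and is the paper's key new input; without it your Step 3 does not close.

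Two smaller points. First, $\Gamma_b$ has two distinct CRT-preimages $p_\T(T_b)$ and $p_\T(\widehat{T}_b)$, identified only in the quotient; the paper's localization must therefore be \emph{two-sided} in the sense of covering both $A_{L\varepsilon}$ and $\widehat{A}_{L\varepsilon}$ simultaneously, and the relevant spine decomposition is the ``three-arms'' decomposition at their most recent common ancestor (Section \ref{three arms}), not a single re-rooting at ``$u_b$''. Your phrase ``two sub-excursions: one containing $\rho$... one containing $x_*$'' blurs this: the two relevant local pieces are the two sides of the geodesic $\Gamma$, not a $\rho$-side and an $x_*$-side, and the limit object is built from independent Bessel-7 spines with Poisson subtrees (half label-conditioned), not from ``infinite snakes conditioned to stay above and below $-b$''. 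Second, the snake-level coupling is established in the paper by a direct total-variation estimate for Bessel processes and Poisson point measures (Proposition \ref{f}, Theorem \ref{conditionnement}), which is cleaner than an absolute-continuity argument and, importantly, gives the uniformity-in-$b$ needed later; this is a matter of technique rather than a gap.
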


(Conditioning on the event $\{W_*<-b\}$ ensures that $D(\rho,x_*)>b$, so that $\Gamma_b$ is well-defined). Note that the limiting space does not depend on $b$. We also show that this convergence result also holds under various conditionings of the Brownian sphere (see Theorems \ref{convergence conditionnée} and \ref{conditionnement volume}). Consequently, the convergence also holds if we pick a random point of the interior of $\Gamma$ (in a way that only depends on the length of $\Gamma$). However, note that some points exhibit a different local behavior (for instance, confluence points of geodesics). The following result makes a link between the spaces $\mathcal{BP}$ and $\BP$ (recall that $\gamma=(\gamma_t)_{t\geq0}$ is the unique infinite geodesic starting from the root in the Brownian plane). 

\begin{theorem}\label{convergence2}
For every $\delta>0$ and $r>0$, we can find $b_0>0$ such that for every $b\geq b_0$ we can construct $(\overline{\mathcal{BP}},\overline{D}_\infty,\overline{\rho}_\infty)$ and $(\mathcal{BP},D_\infty,\rho_\infty)$ such that the balls $B_r(\overline{\mathcal{BP}})$ and $B_r(\mathcal{BP},\gamma_b)$ are isometric with probability at least $1-\delta$. Furthermore, we have 
\begin{equation*}    (\mathcal{BP},D_\infty,\gamma_b)\xrightarrow[b\rightarrow\infty]{(d)}(\overline{\mathcal{BP}},\overline{D}_\infty,\overline{\rho}_\infty)
\end{equation*}
in distribution for the local Gromov-Hausdorff topology.      
\end{theorem}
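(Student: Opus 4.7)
The plan is to derive Theorem~\ref{convergence2} from Theorem~\ref{convergence} by combining the latter with (a) a coupling between the Brownian sphere and the Brownian plane along the lines of \cite{brownianplane}, and (b) the scale invariance of the Brownian plane. The guiding observation is that under a coupling of $(\mathcal{S}, D, \rho)$ with $(\mathcal{BP}, D_\infty, \rho_\infty)$ in which the simple geodesic $\Gamma$ agrees with the infinite geodesic $\gamma$ on a sufficiently large initial segment, a small ball around $\Gamma_b$ in the Brownian sphere coincides with a small ball around $\gamma_b$ in the Brownian plane; rescaling this ball then transfers Theorem~\ref{convergence} from the Brownian sphere to the Brownian plane, after which scale invariance allows us to re-root at $\gamma_{b/\varepsilon}$.

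Concretely, fix $r, \delta > 0$ and an auxiliary constant $b > 0$ (for example $b = 1$). Theorem~\ref{convergence} (in the variant holding under a conditioning such as $\N_0(\cdot\,|\,W_* < -b - r)$ alluded to in the introduction) yields some $\varepsilon_0 > 0$ such that for every $\varepsilon \leq \varepsilon_0$ one can couple $(\mathcal{S}, D, \rho)$ with $(\BP, \overline{D}_\infty, \overline{\rho}_\infty)$ so that $B_r(\mathcal{S}, \varepsilon^{-1} D, \Gamma_b)$ is isometric to $B_r(\BP)$ with probability at least $1-\delta/2$. On the other hand, starting from the convergence of \cite{brownianplane}, one extracts a coupling of $(\mathcal{S}, D, \rho)$ (under the same conditioning) with $(\mathcal{BP}, D_\infty, \rho_\infty)$ such that the balls $B_{b+r}(\mathcal{S}, \rho)$ and $B_{b+r}(\mathcal{BP}, \rho_\infty)$ are isometric and $\Gamma_t = \gamma_t$ for every $t \leq b+r$, again with probability at least $1-\delta/2$. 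In particular the set $B_{\varepsilon r}(\mathcal{S}, D, \Gamma_b)$ coincides on this event with $B_{\varepsilon r}(\mathcal{BP}, D_\infty, \gamma_b)$, and combining the two couplings we obtain
\begin{equation*}
B_r(\mathcal{BP}, \varepsilon^{-1} D_\infty, \gamma_b) \ \cong\ B_r(\BP)
\end{equation*}
with probability at least $1-\delta$.

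To conclude, we invoke the scale invariance of the Brownian plane, $(\mathcal{BP}, \varepsilon^{-1} D_\infty, \gamma_b) \stackrel{(d)}{=} (\mathcal{BP}, D_\infty, \gamma_{b/\varepsilon})$, which transports the preceding coupling into one between $(\mathcal{BP}, D_\infty, \gamma_{b/\varepsilon})$ and $(\BP, \overline{D}_\infty, \overline{\rho}_\infty)$ with the same success probability. Setting $b_0 = b/\varepsilon_0$, this gives the desired coupling for every target parameter $b/\varepsilon \geq b_0$, and the convergence in distribution for the local Gromov--Hausdorff topology follows at once. The main technical obstacle is the second coupling: producing a joint construction of the Brownian sphere and the Brownian plane in which the two geodesics agree on a prescribed initial segment. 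I expect this to follow from a careful examination of the underlying labelled tree constructions, possibly combined with the scaling invariance of $\N_0$, rather than from the plain local GHP convergence statement of \cite{brownianplane}.
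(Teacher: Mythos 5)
Your overall architecture (sphere--plane coupling near the root, plus the sphere--$\BP$ coupling of Theorem \ref{convergence}, plus scale invariance to push the marked point to $\gamma_b$ with $b$ large) is the same as the paper's, and your final rescaling step is exactly how the paper produces $b_0$. But the second coupling you require contains a genuine gap, and it is not the one you flag. You ask for a coupling of $(\s,D,\rho)$ under the \emph{fixed} conditioning $\N_0(\cdot\,|\,W_*<-b-r)$ with $(\mathcal{BP},D_\infty,\rho_\infty)$ in which the \emph{macroscopic} balls $B_{b+r}(\s,\rho)$ and $B_{b+r}(\mathcal{BP},\rho_\infty)$ are isometric with probability at least $1-\delta/2$, for arbitrary $\delta$. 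No such coupling exists: for fixed radius and fixed conditioning these two balls have genuinely different laws (they are related by a non-trivial Radon--Nikodym-type bias, as in the hull computations for the Brownian plane versus the Brownian sphere), so their total variation distance is a fixed positive number and the matching probability cannot be pushed above $1-\delta/2$ once $\delta$ is small. The results of \cite{brownianplane} (and Lemma 18 of \cite{Geodesicstars}, which is what the paper actually invokes) only give isometry of balls whose radius is \emph{small} relative to the sphere, i.e.\ after zooming in; they cannot be upgraded to unit-scale balls. The conceptual tension is that you try to run the whole argument at a single scale, with the geodesic point $\Gamma_b$ at macroscopic distance from $\rho$ inside the sphere--plane coupling window.

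The paper resolves this with a two-scale argument: it first couples $\s$ and $\mathcal{BP}$ so that the \emph{small} balls $B_{2\varepsilon}(\s,\rho)$ and $B_{2\varepsilon}(\mathcal{BP},\rho_\infty)$ coincide (probability $\geq 1-\delta/2$), observes that on this event $B_\eta(\s,\Gamma_\varepsilon)$ and $B_\eta(\mathcal{BP},\gamma_\varepsilon)$ coincide for every $\eta<\varepsilon$, and then applies Proposition \ref{coupling} at the point $\Gamma_\varepsilon$ (the geodesic parameter there being $\varepsilon$, which is legitimate since the limit $\BP$ does not depend on it) with a second, much smaller radius $\eta$. Combining the two statements through total variation bounds yields a coupling of $B_\eta(\mathcal{BP},\gamma_\varepsilon)$ with $B_\eta(\BP,\overline{\rho}_\infty)$, and only then does the scale invariance of both spaces convert $\gamma_\varepsilon$ into $\gamma_{r\varepsilon/\eta}$ with $r\varepsilon/\eta\geq b_0$, exactly as in your last step. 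Two further remarks: the step where you ``combine the two couplings'' needs an explicit argument (the paper passes through total variation distances of the ball laws; a gluing of couplings would also do), and the issue you identify as the main obstacle --- that the coupling isometry sends $\Gamma$ to $\gamma$ --- is indeed needed but is the milder point: it holds because the couplings are built at the level of the underlying labelled trees, under which the simple geodesics (defined identically from the labels) automatically correspond. If you replace your unit-scale sphere--plane coupling by the small-scale one and introduce the second radius $\eta\ll\varepsilon$, your proof becomes the paper's.
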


Let us mention that these results still hold if we replace the Gromov-Hausdorff topology by the Gromov-Hausdorff-Prokhorov-Uniform topology (these topologies will be introduced in Section \ref{section 2}). We emphasize that the limiting space $(\overline{\mathcal{BP}},\overline{D}_\infty,\overline{\rho}_\infty)$ has a totally explicit construction, which can be used to derive some of its properties. For instance, the random space $\BP$ has a nice geodesic structure which is different from other Brownian surfaces, as shown in the following theorem.

\begin{theorem}\label{bigeodesic}
    Almost surely, there is a unique infinite bigeodesic $\Gamma_\infty$ in $\BP$.
\end{theorem}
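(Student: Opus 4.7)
My plan is to build $\Gamma_\infty$ as a subsequential limit of longer and longer pieces of the unique infinite geodesic $\gamma$ of the Brownian plane $\mathcal{BP}$, transferred to $\BP$ via Theorem \ref{convergence2}. Fix an increasing sequence $r_n \to \infty$ and, for each $n$, choose $b_n \geq r_n$ large enough that under the coupling of Theorem \ref{convergence2} the balls $B_{r_n}(\BP)$ and $B_{r_n}(\mathcal{BP}, \gamma_{b_n})$ are isometric on an event $E_n$ with $\p(E_n) \geq 1-2^{-n}$; by Borel-Cantelli, $E_n$ eventually holds almost surely. On $E_n$ the restriction of $\gamma$ to $[b_n - r_n, b_n + r_n]$ is a length-$2r_n$ geodesic through $\gamma_{b_n}$ contained in the ball, and it transfers to an isometric embedding $\sigma_n : [-r_n, r_n] \to \BP$ with $\sigma_n(0) = \overline{\rho}_\infty$. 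Each $\sigma_n$ is $1$-Lipschitz with image in $B_{r_n}(\BP)$, and $\BP$ is a boundedly compact length space, so Arzelà-Ascoli combined with a diagonal extraction produces a subsequential limit $\Gamma_\infty : \mathbb{R} \to \BP$ which is necessarily an isometric embedding, i.e.\ a bigeodesic through $\overline{\rho}_\infty$.

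\textbf{Uniqueness.} For uniqueness I would argue by contradiction: suppose there exists an event $A$ of positive probability on which $\BP$ admits two distinct bigeodesics $\Gamma^1, \Gamma^2$. Choosing a smaller positive-probability sub-event, one can fix deterministic $r_0, \eta > 0$ such that both $\Gamma^i$ meet $B_{r_0}(\BP)$ and, after matching their closest points to $\overline{\rho}_\infty$, disagree by at least $\eta$ at some time in $[-r_0, r_0]$. Using Theorem \ref{convergence2} at scale $r \gg r_0$, with high probability each $\Gamma^i$ pulls back through the coupling isometry to a length-$2r$ geodesic segment in $B_r(\mathcal{BP}, \gamma_b)$ passing within $r_0$ of $\gamma_b$, and these two segments remain $\eta$-separated. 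I would then invoke a rigidity statement in the Brownian plane: as $b \to \infty$, any length-$2r$ geodesic segment through a small neighborhood of $\gamma_b$ must, with probability tending to $1$, coincide up to error $o(1)$ with a subsegment of $\gamma$ itself. This would force both $\Gamma^1$ and $\Gamma^2$ to agree near $\overline{\rho}_\infty$ with the bigeodesic constructed in the existence part, contradicting their $\eta$-separation.

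\textbf{Main obstacle.} The principal obstacle is exactly this rigidity statement in $\mathcal{BP}$: showing that long geodesic segments through a deep point of $\gamma$ must essentially follow $\gamma$ in both directions. The ``forward'' part is the more tractable one, since by uniqueness of the infinite geodesic from the root one expects any infinite geodesic ray emanating from $\gamma_b$ to coalesce with $\gamma$ in finite time. The ``backward'' half, however, requires ruling out alternative near-geodesic paths from $\gamma_b$ back toward $\rho$ that deviate from $\gamma$ for a long distance before possibly re-joining it, and it is here that the core probabilistic work lies. I would attack this either via quantitative confluence estimates for geodesics in the Brownian plane (in the spirit of \cite{geodesic1}, transferred to $\mathcal{BP}$), or more directly by exploiting the explicit labelled-tree construction of $\BP$ announced in Subsection \ref{presentation}: in that encoding, bigeodesics should correspond to a rigid combinatorial object (typically a distinguished spine of the underlying tree), which would make the uniqueness statement essentially deterministic once the construction is in hand.
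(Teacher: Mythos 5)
Your existence step is more roundabout than necessary and, as written, has a small gap: the events $E_n$ live on different couplings (each $n$ pairs $\BP$ with a fresh copy of $\mathcal{BP}$), so Borel--Cantelli does not apply to the $E_n$ directly. This can be repaired by reformulating the conclusion of $E_n$ as the intrinsic event that $B_{r_n}(\BP)$ contains a geodesic segment of length $2r_n$ through $\overline{\rho}_\infty$, and applying Borel--Cantelli and Arzel\`a--Ascoli on the single probability space where $\BP$ lives. But this entire machinery is unnecessary: the paper already exhibits $\Gamma_\infty(s)=p_{\BP}(\tau_s)$ as a bigeodesic directly from the label bound \eqref{Bound} together with $\overline{D}_\infty\leq\overline{D}^\circ_\infty$, since labels along the spine are an isometric copy of $\R$.

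For uniqueness the proposal does not close, and you correctly diagnose where it fails: the ``rigidity'' statement for the Brownian plane (that a long geodesic segment through a deep point of $\gamma$ must follow $\gamma$ \emph{in both directions}) is precisely the hard content, and you offer no proof of it. The paper takes a genuinely different route which avoids proving any such rigidity in $\mathcal{BP}$: it works intrinsically in $\BP$ and exploits its invariances. The key ingredients are (i) Proposition \ref{geodesic}, a uniform confluence estimate for geodesics emanating from $\Gamma_b$ in the Brownian sphere; (ii) Theorem \ref{rays}, which upgrades this via the coupling of Theorem \ref{convergence} and the scale invariance of $\BP$ to the statement that $\Gamma^+_\infty$ and $\Gamma^-_\infty$ are the \emph{only} geodesic rays from $\overline{\rho}_\infty$; (iii) Corollary \ref{translation}, invariance of $(\BP,\Gamma_\infty)$ under translation along $\Gamma_\infty$; and (iv) Theorem \ref{coalesce}, that every geodesic ray coalesces with $\Gamma_\infty$, deduced from (ii) and (iii). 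Given a bigeodesic $\gamma$, the forward half $\gamma|_{\R_+}$ coalesces with $\Gamma_\infty$ by (iv); one then slides along $\Gamma_\infty$ to a rational point $\Gamma_\infty(q)$ in the coalescence region, notes that the concatenation of $\Gamma_\infty|_{[r,q]}$ (reversed) with the backward half of $\gamma$ is a geodesic ray from $\Gamma_\infty(q)$, and applies (ii) at $\Gamma_\infty(q)$ via (iii) to force this ray to be $\Gamma_\infty(q-\cdot)$, hence forcing $\gamma$ to be a translate of $\Gamma_\infty$. This replaces the troublesome ``backward rigidity'' in $\mathcal{BP}$ with an argument about uniqueness of geodesic \emph{rays} in $\BP$ itself, which is far more tractable because of scale and translation invariance. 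You gesture toward the right objects (the labelled-tree construction, geodesic-star estimates) in your final paragraph, but the actual mechanism, namely propagating uniqueness by translating the root along $\Gamma_\infty$, is the idea you are missing.
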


Moreover, we prove several invariance properties of $\BP$. We hope that these invariance properties (together with the coupling result of Theorem \ref{convergence}) could be used to prove new results about geodesics in the Brownian sphere.

Let us discuss some results that are related to this work. The paper \cite{dieuleveut} deals with the local behavior of the Uniform Infinite Planar Quadrangulation (which  is a discrete version of the Brownian plane) around an infinite geodesic. In particular, Dieuleveut introduced the $\overline{\text{UIPQ}}$, which is the local limit of the UIPQ along its infinite geodesic ray. Therefore, Theorem \ref{convergence2} is the continuous counterpart of this result. We expect that $\BP$ is the scaling limit of the $\overline{\text{UIPQ}}$, which will be the subject of another paper. Finally, similar results were obtained by Basu, Bhatia and Ganguly in \cite{basu2021environment} for LQG-surfaces, which is a model of random continuum geometry that contains Brownian surfaces as a special instances. Therefore, according to the link between $\gamma-$Liouville Quantum Gravity and the Brownian sphere proved in a series of papers \cite{MillerSchefield0,MillerSchefield1,MillerSchefield2,MillerSchefield3}, the convergence result in Theorem \ref{convergence2} is close to their result for the parameter $\gamma=\sqrt{\frac{8}{3}}$ (in their paper, they place themselves at a random point of the geodesic that goes to infinity). However, as mentioned previously, our construction of the limiting space allows us to prove a stronger version of convergence (i.e. the coupling result) and to derive many of its properties.

This paper is organized as follows. Section \ref{section 2} gives some preliminaries about the Brownian snake excursion measure $\N_0$, the Brownian sphere and the Gromov-Hausdorff topology. In section \ref{section 3}, we give several spine decompositions of the random tree $\T$ under $\N_0(\cdot\,|\,W_*<-b)$. Section \ref{Section4} is devoted to the proof of a key lemma, and Section \ref{section 5} contains the proof of the main convergence result. Finally, in Section \ref{section 6}, we study various properties of the space $\BP$.\\

\subsection*{Acknowledgements}

I would like to thank warmly Grégory Miermont, for his supervision throughout this work and for precious conversations. I also thank Erich Baur, for his initial input to this project, and Manan Bhatia for comments on a previous draft of this paper. Finally, I wish to thank Lou Le Bihan and Simon Renouf for their essential help in typing this article.

\section{Preliminaries}\label{section 2}

\subsection{Snake trajectories}\label{intervalle}
Here, we recall the construction and some basic properties of the Brownian snake (see \cite{serpent} for more details). A finite path is a continuous function $w:[0,\zeta]\longrightarrow \R$, where $\zeta(w)\geq 0$ is called the lifetime of $w$, and we set $\hat{w}=w(\zeta_{(w)})$. We write $\mathcal{W}$ for the set of all finite paths in $\R$, and for every $x\in\R$, $\mathcal{W}_x:=\{w\in\mathcal{W}:w(0)=x\}$. The set $\mathcal{W}$ is a Polish space when equipped with the distance
\begin{equation*}
    d(w,w')=|\zeta(w)-\zeta(w')|+\sup_{t\geq 0}|w(t\wedge\zeta(w))-w'(t\wedge\zeta(w'))|.
\end{equation*}
Finally, we identify the point $x\in\R$ with the element of $\mathcal{W}_x$ with zero lifetime. 
\begin{definition}
Fix $x\in\R$. A snake trajectory starting from $x\in\R$ is a continuous mapping $s\rightarrow\omega_s$ from $\R_+$ to $\mathcal{W}_x$ which satisfies the following conditions : 
\begin{itemize}
    \item $\omega_0=x$ and the quantity $\sigma(\omega)=\sup\{s\geq 0:\omega_s\neq x\}$ is finite, and is called the duration of the snake trajectory $\omega$. 
    \item For every $0\leq s\leq s'$, we have $\omega_s(t)=\omega_{s'}(t)$ for every $t\in[0,\min_{s\leq r\leq s'}\zeta(\omega_r)]$. 
\end{itemize}
\end{definition}

We will denote by $\s_x$ the set of snake trajectories starting from $x\in\R$, and $\s=\bigcup_{x\in\R}\s_x$ the set of all snake trajectories. We will use the notation $W_s(\omega)=\omega_s$ and $\zeta_s(\omega)=\zeta(\omega_s)$. Note that a snake trajectory $\omega$ is completely determined by its lifetime function $s\rightarrow\zeta_s(\omega)$ and its tip function $s\rightarrow\widehat{W}_s(\omega)$ (see \cite{Refserpent} for a proof). We also write $W_*(\omega)=\inf_{t\geq0}\widehat{W}_t(\omega)$.\\
Given a snake trajectory $\omega\in\s$, its lifetime function $\zeta(\omega)$ encodes a compact $\R$-tree, which will be denoted by $\T_{\omega}$. More precisely, if we introduce a pseudo-distance on $[0,\sigma(\omega)]$ by letting 
\begin{equation*}
    d_{(\omega)}(s,s')=\zeta_s(\omega)+\zeta_{s'}(\omega)-2\min_{s\wedge s'\leq r\leq s\vee s'}\zeta_r(\omega),
\end{equation*}
$\T_{\omega}$ is the quotient space $[0,\sigma(\omega)]/\{d_{(\omega)}=0\}$ equipped with the distance induced by $d_{(\omega)}$. We write $p_\T$ for canonical projection, and root the tree $\T_{\omega}$ at $\rho_\T:=p_\T(0)=p_\T(\sigma(\omega))$. The tree $\T_{\omega}$ also comes with a volume measure, which is the pushforward of the Lebesgue measure on $[0,\sigma(\omega)]$ under the projection $p_\T$. Finally, note that because of the snake property, $W_s(\omega)=W_{s'}(\omega)$ if $p_\T(s)=p_\T(s')$. Similarly, the mapping $s\rightarrow\widehat{W}_s(\omega)$ can be viewed as a function on the tree $\T_\omega$. In this article, for $s\in[0,\sigma(\omega)]$ (or $u\in\T_\omega$), we will often use the notation $\hat{W}_s(\omega)=Z_s=Z_u$ if $u=p_\T(s)$. \\
We also define intervals on the tree $\T_\omega$ as follows.  For every $s,t\in[0,\sigma]$ with $t<s$, we use the convention that $[s,t]=[s,\sigma]\cup[0,t]$. For every $x,y\in\T_\omega$, there is a smallest interval $[s,t]$ such that $p_\T(s)=x$ and $p_\T(t)=y$, and we define 
\[[x,y]:=\{p_\T(r):r\in[s,t]\}.\]

\subsection{The Brownian snake excursion measure}\label{snake}

For every $x\in\R$, we define a $\sigma$-finite measure on $\s_x$, called the Brownian snake excursion measure and denoted as $\N_x$, as follows. Under $\N_x$ :
\begin{enumerate}
    \item The lifetime function $(\zeta_s)_{s\geq 0}$ is distributed according to the Itô measure of positive excursions of linear Brownian motion, normalized so that the density of $\sigma$ under $\N_x$ is $t\rightarrow(2\sqrt{2\pi t^3})^{-1}$.
    \item Conditionally on $(\zeta_s)_{s\geq 0}$, the tip function $(\widehat{W}_s)_{s\geq0}$ is a Gaussian process with mean $x$ and covariance function : 
    \begin{equation*}
        K(s,s')=\min_{s\wedge s'\leq r\leq s\vee s'}\zeta_r.
    \end{equation*}
\end{enumerate}
The measure $\N_x$ is also an excursion measure away from $x$ for the Brownian snake, which is a Markov process in $\mathcal{W}_x$. For every $t>0$, we can define the conditional probability measure $\N_x^{(t)}=\N_x(\cdot\,|\,\sigma=t)$, which can also be constructed by replacing the Itô measure used to define $\N_x$ by a Brownian excursion with duration $t$. \\
For every $y<x$, we have
\begin{equation}\label{inf}
    \N_x(W_*<y)=\frac{3}{2(x-y)^2}.
\end{equation}
(see \cite{serpent} for a proof). Therefore, we can define the conditional probability measure $\N_x(\cdot\,|\,W_*<y)$. Moreover, one can prove that under $\N_x$ or $\N_x^{(t)}$, a.e, there exists a unique $s_*\in[0,\sigma]$ such that $\widehat{W}_{s_*}=W_*$ (see e.g. Proposition 2.5 in \cite{Conditionnedbrowniantrees}). \\
Finally, these measures satisfy a scaling property. For every $\lambda>0$ and $\omega\in\s_x$, we define $\Theta_\lambda(\omega)\in\s_{x\sqrt{\lambda}}$ by $\Theta_\lambda(\omega)=\omega'$ with
\begin{equation}\label{scaling}
    \omega'_s(t):=\sqrt{\lambda}\,\omega_{s/\lambda^2}(t/\lambda),\quad\text{for $s\geq 0$ and $0\leq t\leq\zeta_s':=\lambda\zeta_{s\lambda^2}$}. 
\end{equation}
Then, the pushforward of $\N_x$ by $\Theta_\lambda$ is $\lambda\N_{x\sqrt{\lambda}}$, and for every $t>0$, the pushforward of $\N_x^{(t)}$ by $\Theta_\lambda$ is $\N_{x\sqrt{\lambda}}^{(\lambda^2t)}$.\\
We can also classify the values of the Brownian snake excursion according to some subtrees branching of a spinal path. More precisely, on the event $\{\sigma>T\}$, consider the intervals $(\alpha_i,\beta_i)_{i\in I}$ that are the connected components of the open set $\{t\in[T,\sigma],\,\zeta_t>\min_{T\leq r\leq t}\zeta_r\}$. For every $i\in I$, we define $W^i\in C(\R_+,\mathcal{W})$ in the following way for every $s\geq0$ : 
\[W^i_s(t):=W_{(\alpha_i+s)\wedge\beta_i}(\zeta_{\alpha_i}+t),\quad0\leq t\leq\zeta_{(\alpha_i+s)\wedge\beta_i}-\zeta_{\alpha_i}.\]
The following lemma is proved in \cite[Lemma V.5]{serpent}. 
\begin{lemme}\label{excursion}
    Under $\N_0(\cdot\,|\,\sigma>T)$ and given $W_T$, the point measure 
    \[\sum_{i\in I}\delta_{(\zeta_{\alpha_i},W^i)}(dt,d\omega)\]
    is a Poisson point measure on $\R_+\times C(\R_+,\mathcal{W})$ with intensity 
    \[2\1_{[0,\sigma]}(t)dt\N_{\hat{W}_T(t)}(d\omega).\]
\end{lemme}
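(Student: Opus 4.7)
The plan is to reduce this to classical excursion theory for Brownian motion applied to the lifetime process $\zeta$, combined with the conditional Gaussian structure of the tip function. The entry point is the simple Markov property of the Brownian snake under $\N_0$: conditionally on $\sigma > T$ and on $W_T$, the shifted process $(W_{T+s})_{s \geq 0}$ is distributed as a Brownian snake started from $W_T$ and killed at the first time $\zeta$ reaches $0$. In particular, $(\zeta_{T+s})_{s \geq 0}$ is a linear Brownian motion issued from $\zeta(W_T)$ and killed at $0$, and the intervals $(\alpha_i, \beta_i)_{i \in I}$ are exactly its excursion intervals above the running minimum.

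The second step is to describe these lifetime excursions via Lévy's theorem and Itô's excursion theory. Setting $m_i = \zeta_{\alpha_i} = \zeta_{\beta_i}$ and parametrizing each excursion by $m_i$, this family forms a Poisson point process on $[0, \zeta(W_T)] \times C(\R_+, \R_+)$ with intensity $2\,\1_{[0, \zeta(W_T)]}(m)\,dm \otimes \mathbf{n}^+(de)$, where $\mathbf{n}^+$ is the Itô measure of positive Brownian excursions with the duration density $(2\sqrt{2\pi t^3})^{-1}$ fixed in Subsection \ref{snake}. The factor $2$ traces back to Lévy's theorem: the reflected process $\zeta - \underline{\zeta}$ has the law of $|\beta|$ for an auxiliary Brownian motion $\beta$, and the excursions of $|\beta|$ away from $0$ carry twice the mass of the positive excursions of $\beta$ alone. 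The final step upgrades each lifetime excursion to a snake excursion using the snake property: for $t \in [\alpha_i, \beta_i]$ the path $W_t$ coincides with $W_{\alpha_i}$ up to height $m_i$, so the shifted snake $W^i$ starts from $\hat{W}_T(m_i)$; and conditionally on $(\zeta_s)_{s \geq 0}$ and on $W_T$, the tip function is a centred Gaussian process whose covariance, read on $[\alpha_i, \beta_i]$, identifies $W^i$ in distribution with a Brownian snake trajectory under $\N_{\hat{W}_T(m_i)}$.

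The main obstacle is the bookkeeping around the factor $2$ and the height parametrization, which requires invoking Lévy's theorem carefully together with the precise normalization chosen for $\N_x$. Independence of the pieces $W^i$ across distinct excursion intervals is not a separate difficulty, since the conditional covariance formula for the tip function given $\zeta$ automatically decouples contributions from disjoint subtrees rooted at different heights. Combining the Poisson description of the lifetime excursions with the conditional snake law on each excursion then yields the announced intensity, up to the cosmetic identification of the range of the height variable $t$ with $[0, \zeta(W_T)]$ (i.e.\ the length of the spine path $W_T$) rather than $[0, \sigma]$.
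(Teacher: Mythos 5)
Your proof is correct, and it takes a more self-contained route than the paper, which does not actually prove the lemma: it cites \cite[Lemma~V.5]{serpent}, which states the analogous result for the Brownian snake driven by \emph{reflecting} Brownian motion, and remarks that the present version follows by routine excursion theory. Your argument instead works directly under $\N_0(\cdot\,|\,\sigma>T)$: the simple Markov property at time $T$ reduces matters to a Brownian snake issued from $W_T$ whose lifetime is a linear Brownian motion started from $\zeta_T$ and killed at $0$; Lévy's theorem and Itô's excursion theory give the Poisson structure of the lifetime excursions above the running minimum, with the factor $2$ coming from comparing the excursion measure of reflected Brownian motion to the positive-excursion measure $\mathbf{n}^+$, and with the normalization matching the density $(2\sqrt{2\pi t^3})^{-1}$ fixed in Subsection~\ref{snake}; and the conditional Gaussian structure of the tip then identifies each $W^i$, run on its excursion interval, as a snake trajectory under $\N_{W_T(\zeta_{\alpha_i})}$. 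The one place you are a bit glib is independence: the covariance formula $K(s,s')=\min_{[s\wedge s',s\vee s']}\zeta_r$ does \emph{not} vanish across distinct excursion intervals before conditioning on $W_T$; what is being used is that, after conditioning on the spine values $W_T$, tree-indexed Brownian motion has independent increments on disjoint subtrees (the Markov branching property). This is standard, but deserves to be named rather than attributed to the raw covariance. You also correctly diagnose the two typographical slips in the printed statement: the range of the height variable should be $[0,\zeta_T]$ rather than $[0,\sigma]$, and $\hat{W}_T(t)$ should be read as $W_T(t)$, in agreement with the way the intensities are written in Theorem~\ref{poisson}.
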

\begin{remark}
    In \cite{serpent}, this lemma concerned the Brownian snake driven by a reflected Brownian motion. However, the previous statement can be obtained from the original one by typical arguments from excursion theory.
\end{remark}
\subsection{The Brownian sphere}\label{sphere}

Fix a snake trajectory $\omega\in\s_0$ with duration $\sigma$.  We introduce, for every $x,y\in\T_\omega$
\[D^\circ_{(\omega)}(x,y)=Z_x+Z_y-2\max\bigg(\min_{r\in[x,y]}Z_r,\min_{r\in[y,x]}Z_r\bigg)\]
and  
\[D_{(\omega)}(x,y)=\inf\bigg\{\sum_{i=1}^p D^\circ_{(\omega)}(x_i,x_{i-1})\bigg\}\]
where the infimum is taken over all integers $p\geq 1$ and sequences $x_0,...,x_p\in\T_\omega$ such that $x_0=x$ and $x_p=y$. Note that $D_{(\omega)}\leq D^\circ_{(\omega)}$.\\
Observe that $D^\circ_{(\omega)}(x,y)\geq|Z_x-Z_y|$, which translates into a simple (but very useful) bound: 
\begin{equation}\label{Bound}
    D_{(\omega)}(x,y)\geq|Z_x-Z_y|.
\end{equation}
The mapping $(x,y)\mapsto D_{(\omega)}(x,y)$ defines a pseudo-distance on $\T_\omega$. This allows us to introduce a quotient space $\T_\omega/\{D_{(\omega)}=0\}$, which is equipped with the distance naturally induced by $D_{(\omega)}$. \\
We can now apply the previous construction with a random snake trajectory. 
\begin{definition}
    The standard Brownian sphere is defined under the probability measure $\N_0^{(1)}$ as the random metric space $\s=\T/\{D=0\}$ equipped with the distance $D$, and a volume measure $\mu$ which is the pushforward of the volume measure on $\T$ under the canonical projection $p_\s$. 
\end{definition}

We also introduce the free Brownian sphere, which is defined in the same way replacing $\N_0^{(1)}$ by $\N_0$; even though this is not a random variable anymore, it is often more convenient to work with this object. Note that we can also see the standard Brownian sphere (or the free Brownian sphere) as a quotient of $[0,1]$ (or $[0,\sigma]$). We will sometimes use this point of view, and we write $\mathbf{p}:[0,1]\rightarrow\s$ for the canonical projection. We also set $\rho=\mathbf{p}(0)$. \\
As mentioned earlier, almost surely, there exists a unique $s_*\in[0,1]$ such that $Z_{s_*}=\inf_{t\in[0,1]}Z_t=W_*$, and we write $x_*=\mathbf{p}(s_*)$. Note that the bound \eqref{Bound} together with the inequality $D\leq D^\circ$ implies that almost surely, for every $s\in\s$, 
\[D(s,x_*)=Z_s-Z_{s_*}.\]
In particular, we have 
\[D(\rho,x_*)=-W_*.\]
The following proposition, proved in \cite{TopologicalStructure}, completely characterizes the points of $\T$ that are identified in the Brownian sphere.
\begin{proposition}\label{Identification}
    Almost surely, for every $x,y\in\T$, we have 
    \[D(x,y)=0\Longrightarrow D^\circ(x,y)=0.\]
\end{proposition}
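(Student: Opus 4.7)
The direction $\{D^\circ=0\}\subseteq\{D=0\}$ is trivial from $D\leq D^\circ$, but this is not what the proposition asserts; the content is the converse inclusion $\{D=0\}\subseteq\{D^\circ=0\}$. My plan begins by recording the explicit characterization of the easier relation: from the formula defining $D^\circ$ and the bound $D^\circ\geq|Z_x-Z_y|$, the pair $(x,y)$ satisfies $D^\circ(x,y)=0$ if and only if $Z_x=Z_y$ and this common value coincides with $\min_{r\in[x,y]}Z_r$ or $\min_{r\in[y,x]}Z_r$. Applying the bound \eqref{Bound} to a pair with $D(x,y)=0$ already yields $Z_x=Z_y=:z$, so the content of the proposition reduces to the statement that one of the two arc-minima must equal $z$.

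I would then argue by contradiction: suppose that on an event of positive $\N_0$-measure there exist distinct points $x,y\in\T$ with $D(x,y)=0$, $Z_x=Z_y=z$, and both $\min_{[x,y]}Z<z$ and $\min_{[y,x]}Z<z$. Exploiting the re-rooting invariance of the Brownian snake, together with a Fubini argument, I would reduce to analysing a single ``typical'' configuration. Next, the tree $\T$ is decomposed along the spine between $x$ and $y$ by means of Lemma \ref{excursion}: conditionally on the labels along this spine, the subtrees branching off it form an independent Poisson family of Brownian snake excursions, and on each of the two arcs $[x,y]$ and $[y,x]$ the label process $Z$ can be described by a main contribution along the spine plus independent snake fluctuations on the grafts.

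From the assumption $D(x,y)=0$ I would extract an approximating sequence $x=x_0,x_1,\ldots,x_p=y$ with $\sum_{i=1}^p D^\circ(x_{i-1},x_i)$ arbitrarily small. Combining the lower bound
\[D^\circ(u,v)\geq Z_u+Z_v-2\max\!\bigl(\min_{[u,v]}Z,\min_{[v,u]}Z\bigr)\]
applied to consecutive pairs with a combinatorial analysis of how the corresponding arcs of $\T$ chain together, the objective is to show that at least one consecutive pair must ``straddle'' the deepest point of one of the two low-label arcs between $x$ and $y$, incurring an unavoidable positive cost that bounds $\sum D^\circ(x_{i-1},x_i)$ below, contradicting $D(x,y)=0$.

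The main obstacle is this combinatorial winding argument: the sequence $(x_i)$ may zig-zag through the tree in a complicated way, and individual arc-minimum bounds do not add up naturally. Making the estimate precise requires tracking a parity/winding index on the sequence that records how often it has crossed each low-label arc, and then using the spatial Markov decomposition above to show that the cost of each crossing is bounded below by an almost surely positive quantity depending only on how deep the arc dips. This is the heart of Le Gall's analysis in \cite{TopologicalStructure}, and a complete proof would follow that strategy. Finally, a scaling and Borel-Cantelli argument based on the scaling relation \eqref{scaling} upgrades the exclusion of the bad configuration from positive-measure to the claimed almost sure statement, valid simultaneously for all pairs $(x,y)\in\T^2$.
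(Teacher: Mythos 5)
The paper does not actually prove Proposition~\ref{Identification}; it cites the result from~\cite{TopologicalStructure} and uses it as a black box. So there is no in-paper proof to compare against. Against that backdrop, your proposal is not so much a proof as an annotated pointer: the reduction you give at the start (via the bound~\eqref{Bound}, $D(x,y)=0$ forces $Z_x=Z_y$, so the whole content is that one of the two arc-minima equals the common label) is correct and worth recording, but the step that would actually establish this — your ``combinatorial winding argument'' showing that an approximating chain must pay a strictly positive cost when both arc-minima dip below $Z_x$ — is exactly the hard part, and you explicitly defer it to Le Gall rather than supplying it. As written, the gap is not a small technicality; it is the entire theorem.

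Beyond the missing core, two aspects of the sketch would need repair even as an outline of Le Gall's argument. First, the ``re-rooting plus Fubini'' step reduces to a statement holding for $\mu\otimes\mu$-a.e. pair, but the proposition is quantified over \emph{all} $x,y\in\T$ simultaneously; you do gesture at a Borel--Cantelli/scaling upgrade at the end, but that upgrade is precisely where Le Gall's quantitative estimates (uniform Hölder-type bounds on $D$ and a careful covering of $\T$ by $D^\circ$-balls that separate the two candidate regions) are needed, and a spine decomposition via Lemma~\ref{excursion} along $[x,y]$ plus a ``parity/winding index'' is not the mechanism he uses. Second, the cost-per-crossing you want to bound below is not an almost-surely positive constant but degenerates as the approximating points approach the boundary arcs, so the argument has to be quantitative in $\delta=D^\circ(x,y)$ rather than pointwise. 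If you intend to reproduce the proof rather than cite it, you would need to import the actual estimates from~\cite{TopologicalStructure} (the separating-cycle lemma and the Hölder bound for $D$) rather than the spinal Poisson decomposition used elsewhere in this paper.
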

In this work, we will be interested in geodesics in the Brownian sphere. We recall a few results about these objects. For every $b\geq 0$, we set 
\[T_b=\inf\{t\geq 0,Z_t=-b\}\quad\text{and}\quad\widehat{T}_b=\sup\{t\geq 0,Z_t=-b\}\]
(by convention, we let $\inf\{\varnothing\}=+\infty$). The following result was proved in \cite[Theorem 7.4]{geodesic1}. 

\begin{proposition}
    Almost surely, there exists a unique geodesic path $\Gamma=(\Gamma_b)_{0\leq b\leq-W_*}$ in $\s$ between $\rho=\mathbf{p}(0)$ and $x_*$, and for every $b\in[0,-Z_{s_*}]$, we have  
    \[\Gamma_b=\mathbf{p}(T_b)=\mathbf{p}(\widehat{T}_b).\]
\end{proposition}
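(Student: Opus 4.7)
The central identity $D(s, x_*) = Z_s - W_*$ established just above the statement forces any geodesic from $\rho$ to $x_*$ parameterized by length to satisfy $Z_{\gamma_b} = -b$. This constrains the level sets of $Z$ where such a geodesic can live, and suggests the natural candidate $b \mapsto \mathbf{p}(T_b)$.

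For existence, I would show directly that $b \mapsto \mathbf{p}(T_b)$ is a geodesic on $[0, -W_*]$. For $0 \leq b < b' \leq -W_*$, \eqref{Bound} gives $D(\mathbf{p}(T_b), \mathbf{p}(T_{b'})) \geq b' - b$. The matching upper bound comes from $D \leq D^\circ$: since $T_{b'}$ is the first hitting time of $-b'$, we have $\min_{[T_b, T_{b'}]} Z = -b'$, while on the cyclic complement $[T_{b'}, T_b]$ the minimum equals $W_* < -b'$ (attained at $s_*$). Hence $D^\circ(\mathbf{p}(T_b), \mathbf{p}(T_{b'})) = -b - b' - 2(-b') = b' - b$, and the symmetric computation gives that $b \mapsto \mathbf{p}(\widehat{T}_b)$ is also a geodesic. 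To see the two paths coincide, I would next show $\mathbf{p}(T_b) = \mathbf{p}(\widehat{T}_b)$ for every $b \in (0, -W_*)$: since $s_* \in (T_b, \widehat{T}_b)$ we have $\min_{[T_b, \widehat{T}_b]} Z = W_*$, whereas on the cyclic complement $Z \geq -b$ with equality only at the endpoints, so the minimum there equals $-b$. Thus $D^\circ(\mathbf{p}(T_b), \mathbf{p}(\widehat{T}_b)) = -2b - 2 \max(W_*, -b) = 0$, so the two points are identified in $\s$ and $\Gamma_b$ is unambiguously defined.

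For uniqueness, let $\gamma$ be any geodesic from $\rho$ to $x_*$. At time $b$, $\gamma_b = \mathbf{p}(s)$ for some $s$ with $Z_s = -b$, which forces $s \in [T_b, \widehat{T}_b]$; one must then show $\mathbf{p}(s) = \Gamma_b$. Via Proposition \ref{Identification}, this reduces to exhibiting a chain $x_0 = \mathbf{p}(s), x_1, \dots, x_p = \mathbf{p}(T_b)$ along which $\sum D^\circ(x_i, x_{i-1}) = 0$. This is the main obstacle: a direct one-step computation of $D^\circ(\mathbf{p}(s), \mathbf{p}(T_b))$ is generally strictly positive when $s$ lies deep inside $[T_b, \widehat{T}_b]$, because the cyclic complement $[s, T_b]$ now reaches labels below $-b$, so the $D^\circ$ formula involves $W_*$ on both sides. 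Overcoming this requires decomposing along the ancestral line from $s$ to $s_*$ in $\T$ and exploiting the branching structure of subtrees grafted along the spine at labels below $-b$, which is precisely the argument carried out in \cite{geodesic1} and which I would follow.
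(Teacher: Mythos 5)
First, note that the paper does not prove this proposition at all: it is quoted from \cite[Theorem 7.4]{geodesic1}, so the only meaningful comparison is with what you supply versus what is delegated to that reference. Your existence half is correct and complete: the lower bound from \eqref{Bound}, the one-step computation $D^\circ(\mathbf{p}(T_b),\mathbf{p}(T_{b'}))=b'-b$ (valid because the cyclic complement $[T_{b'},T_b]$ contains $s_*$, hence has minimum $W_*<-b'$), the identification $D^\circ(\mathbf{p}(T_b),\mathbf{p}(\widehat{T}_b))=0$, and the observation that any geodesic from $\rho$ to $x_*$ parameterized by arc length must satisfy $Z_{\gamma_b}=-b$ are all accurate, and together they do prove that $b\mapsto\mathbf{p}(T_b)=\mathbf{p}(\widehat{T}_b)$ is a geodesic between $\rho$ and $x_*$. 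This is in fact more than the paper writes down.

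For uniqueness you do not give a proof; you defer to \cite{geodesic1}, which is exactly what the paper does, so the proposal is acceptable modulo the citation. However, your framing of the remaining obstacle is off in one respect. Since $D\le D^\circ$ and Proposition \ref{Identification} gives the converse implication for identified points, the equality $\gamma_b=\Gamma_b$ is \emph{equivalent} to the vanishing of the single quantity $D^\circ(\gamma_b,\mathbf{p}(T_b))$; multi-step chains cannot help, because chains only give upper bounds on $D$, and the danger to be excluded is precisely that $\gamma_b$ is some other point of the level set $\{Z=-b\}$, for which $D^\circ$ is strictly positive and correctly so (that level set is a.s. not a single point of $\s$). So the difficulty is not that the one-step $D^\circ$ computation is ``generally strictly positive'' for geodesic points --- if the theorem holds it is exactly zero there --- but that one must use the hypothesis that $\gamma$ is a geodesic from $\rho$, i.e.\ prove that every geodesic toward $x_*$ is a simple geodesic. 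That structural statement is the real content of \cite[Theorem 7.4]{geodesic1} and cannot be obtained by soft manipulations of $D^\circ$; your description of the argument there (a spine decomposition at labels below $-b$) should be read as a pointer to the reference rather than as a faithful summary. As a self-contained proof your proposal therefore has a genuine gap at uniqueness; as a proof modulo the cited theorem it matches, and slightly exceeds, what the paper itself provides.
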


\subsection{Gromov-Hausdorff convergence}

In this subsection, we recall some facts about geometry and convergence of metric spaces. \\ A pointed metric space $(E,d,\rho)$ is a metric space with a distinguished point $\rho\in E$. For every $r>0$, we denote the closed ball of radius $r$ centered at $\alpha\in E$ by $B_r(E,\alpha)$. If $\alpha=\rho$, we simply write $B_r(E)$. \\
If $X$ and $Y$ are two compact subsets of a metric space $(E,d)$, we define the Hausdorff distance between $X$ and $Y$ by 
\[d_\mathrm{H}^E(X,Y)=\inf\{\varepsilon>0:X\subset Y^\varepsilon \text{ and }Y\subset X^\varepsilon\}\]
where $X^\varepsilon=\{x\in E:d(x,X)\leq \varepsilon\}$. If $(E,d,\rho)$ and $(E',d',\rho')$ are two pointed compact metric spaces, the Gromov-Hausdorff distance between those spaces is 
\[d_\mathrm{GH}(E,E')=\inf\{d_\mathrm{H}^F(\phi(E),\phi'(E'))\vee \delta(\phi(\rho),\phi'(\rho'))\}\]
where the infimum is taken over all possible isometric embeddings $\phi,\phi'$ of $E,E'$ into a common metric space $(F,\delta)$. If $\mathbb{K}$ denotes the set of all isometric classes of pointed compact metric spaces, then $d_{\mathrm{GH}}$ induces a distance on $\mathbb{K}$, and $(\mathbb{K},d_{\mathrm{GH}})$ is a Polish space. \\ 
We also need to introduce the local Gromov-Hausdorff topology for non-compact spaces, but to avoid technicalities, we will restrict ourselves to boundedly compact length space. \\
Let $(E,d)$ be a metric space, and $\gamma$ a continuous curve of $E$ defined on $[0,T]$. We define the length of $\gamma$ by 
\[\text{length}(\gamma)=\sup_{0=t_0<...<t_k=T}\sum_{i=1}^k d(\gamma(t_i),\gamma(t_{i-1}))\]
where the supremum is taken over all subdivisions of $[0,T]$. The space $(E,d)$ is called a length space if, for every $x,y\in E$, the distance $d(x,y)$ is the infimum of the lengths of continuous paths between $x$ and $y$; it is called boundedly compact if the closed balls of $(E,d)$ are compact. Note that, by the Hopf-Rinow Theorem, it is equivalent to the property that $(E,d)$ is complete and locally compact. \\
Let $((E_n,d_n,\rho_n))_{n\geq1}$ and $(E,d,\rho)$ be pointed boundedly compact length spaces. We say that $(E_n,d_n,\rho_n)$ converges towards $(E,d,\rho)$ for the local Gromov-Hausdorff topology if, for every $r\geq 0$ we have
\[d_\mathrm{GH}(B_r(E_n),B_r(E))\xrightarrow[n\rightarrow\infty]{}0.\]
This notion of convergence is compatible with the distance 
\[d_\mathrm{LGH}(E,E')=\sum_{k\geq1}2^{-k}(d_{\mathrm{GH}}(B_k(E),B_k(E'))\wedge 1).\]
As previously, the set $\mathbb{K}_{\mathrm{bcl}}$ of all isometry classes of pointed boundedly compact length spaces, equipped with the distance $d_{\mathrm{LGH}}$, is a Polish space. \\
Consequently, if $(X_n)_{n\geq1}$ and $X$ are random variables taking values in $\mathbb{K}_{\mathrm{bcl}}$, then $X_n$ converges in distribution towards $X$ in $\mathbb{K}_{\mathrm{bcl}}$ if and only if, for every $r\geq0$, $B_r(X_n)$ converges in distribution to $B_r(X)$ in $\mathbb{K}$.\\
We also present the Gromov-Hausdorff-Prokhorov-Uniform topology, which generalizes the Gromov-Hausdorff topology (see \cite{scalingUIHPQ} for a more detailed introduction to this topology and its properties). As previously, we begin with the compact case. \\
Consider $(E,d)$ a compact metric space, equipped with two Borel measures $\mu$ and $\nu$. We define the Prokhorov distance between $\mu$ and $\nu$ by the following formula
\begin{equation*}
    d_\mathrm{P}^E(\mu,\nu)=\inf\{\varepsilon>0:\forall A\in\mathcal{B}(E),\mu(A)\leq\nu(A^\varepsilon)+\varepsilon\,\text{and}\,\nu(A)\leq\mu(A^\varepsilon)+\varepsilon\}
\end{equation*}
(one can check that this defines a distance).\\
Then, let $C_0(\R,E)$ be the set of continuous curves $\eta:\R\rightarrow E$ such that for each $\varepsilon>0$, there exists $T>0$ such that $d(\eta(t),\eta(T))\leq\varepsilon$ and $d(\eta(-t),\eta(-T))\leq\varepsilon$ when $t\geq T$. If $\eta$ is a continuous curve defined on a compact interval $[a,b]$, we identify it with the element of $C_0(\R,E)$ which agrees with $\eta$ on $[a,b]$, and that satisfy $\eta(t)=\eta(a)$ if $t\leq a$ and $\eta (t)=\eta(b)$ if $t\geq b$. We equip $C_0(\R,E)$ with the $d-$uniform distance :
\begin{equation*}
    d_\mathrm{U}^E(\eta,\eta')=\sup_{t\in\R}d(\eta(t),\eta'(t)).
\end{equation*}
Now, let $K_\mathrm{{GHPU}}$ be the set of $4-$tuples $\mathcal{X}=(E,d,\mu,\eta)$, where $(E,d)$ is a compact metric space, $\mu$ a finite Borel measure on $E$, and $\eta\in C_0(\R,E)$ (note that the space $E$ has a natural root, which is $\eta(0)$).\\
For any elements $\mathcal{X}=(E,d,\mu,\eta)$ and $\mathcal{X'}=(E',d',\mu',\eta')$ of $K_{\mathrm{GHPU}}$, we define
\begin{equation*}
    d_{\mathrm{GHPU}}(\mathcal{X},\mathcal{X'})=\inf \{d_{\mathrm{H}}^F(\phi(E),\phi'(E'))+d_\mathrm{P}^F((\phi)_*\mu,(\phi')_*\mu')+d_\mathrm{U}^F(\phi\circ\eta,\phi'\circ\eta')\}
\end{equation*}
where the infimum is taken over all possible isometric embeddings $\phi,\phi'$ of $E,E'$ into a common metric space $(F,\delta)$.
\begin{proposition}
    Let $\mathbb{K}_\mathrm{{GHPU}}$ be the set of equivalence classes of $K_\mathrm{{GHPU}}$ under the equivalence relation where $(E,d,\mu,\eta)\sim(E',d',\mu',\eta')$ if there exists an isometry $f:(E,d)\rightarrow(E',d')$ such that $f_*\mu=\mu'$ and $f\circ \eta=\eta'$. Then $d_\mathrm{{GHPU}}$ induces a distance on $\mathbb{K}_\mathrm{{GHPU}}$, and $(\mathbb{K}_\mathrm{{GHPU}},d_\mathrm{{GHPU}})$ is a Polish space.
\end{proposition}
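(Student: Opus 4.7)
My plan is to adapt the classical proof that the Gromov–Hausdorff–Prokhorov distance makes the corresponding quotient space Polish (see \cite{scalingUIHPQ}) by incorporating the uniform distance on curves as a third additive term. Symmetry of $d_\mathrm{GHPU}$ is immediate from the definition, so I would first verify the triangle inequality on $K_\mathrm{GHPU}$ by the standard gluing trick: given near-optimal embeddings of $\mathcal{X},\mathcal{X}'$ into $(F_1,\delta_1)$ and of $\mathcal{X}',\mathcal{X}''$ into $(F_2,\delta_2)$, I would glue $F_1$ and $F_2$ along the common isometric image of $E'$ to form an amalgamated metric space into which $E$ and $E''$ both embed. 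Each of the three building blocks (Hausdorff, Prokhorov, uniform) satisfies the ordinary triangle inequality inside this common space, and taking the infimum over embeddings transfers the inequality to $d_\mathrm{GHPU}$.

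To show that $d_\mathrm{GHPU}(\mathcal{X},\mathcal{X}')=0$ forces $\mathcal{X}\sim\mathcal{X}'$, I would fix for each $n\geq 1$ embeddings $\phi_n\colon E\hookrightarrow(F_n,\delta_n)$ and $\phi'_n\colon E'\hookrightarrow(F_n,\delta_n)$ making the sum of the three defining terms smaller than $1/n$. Since $(E,d)$ and $(E',d')$ are compact, a standard diagonal extraction on a countable dense family of pairs produces a correspondence $R\subset E\times E'$ of zero distortion, which is the graph of an isometry $f\colon E\to E'$. Passing to the limit in the Prokhorov bound yields $f_*\mu=\mu'$, while the pointwise inequality $\delta_n(\phi_n(\eta(t)),\phi'_n(\eta'(t)))\leq 1/n$, combined with continuity of $\eta,\eta'$, gives $f\circ\eta=\eta'$ on a dense countable subset of $\mathbb{R}$ and hence everywhere.

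For separability, I would approximate any $\mathcal{X}$ by finite pointed metric spaces with rational distances, atomic measures with rational weights, and piecewise constant curves taking finitely many values and stationary outside a compact rational interval; this countable family is dense by direct estimates on each of the three terms. For completeness, from a Cauchy sequence $(\mathcal{X}_n)$ I would extract a subsequence with $d_\mathrm{GHPU}(\mathcal{X}_{n_k},\mathcal{X}_{n_{k+1}})<2^{-k}$ and, proceeding inductively via successive gluings, embed all $E_{n_k}$ into a common Polish space $Z$ so that the images form a Cauchy sequence for the Hausdorff distance (with compact limit $E\subset Z$), the pushed measures form a Cauchy sequence for the Prokhorov distance on $Z$ (converging to a finite Borel measure supported on $E$), and the pushed curves form a Cauchy sequence for the uniform distance. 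The main obstacle is this last point: producing a limit curve that actually lies in $C_0(\mathbb{R},E)$ requires exploiting that each $\eta_{n_k}$ is constant up to $\varepsilon$ outside some bounded interval, so that a uniform-Cauchy sequence of such curves has a continuous limit that itself satisfies the $C_0$ condition, and one can verify that the resulting four-tuple is the sought-after limit in $\mathbb{K}_\mathrm{GHPU}$.
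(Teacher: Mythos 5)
The paper itself gives no proof of this proposition (it is quoted from \cite{scalingUIHPQ}), so your sketch can only be measured against the standard argument, and in outline it follows it correctly: the gluing trick for the triangle inequality, the zero-distortion correspondence argument (together with the pointwise bound $\delta_n(\phi_n(\eta(t)),\phi'_n(\eta'(t)))\leq 1/n$) for the separation property, and the successive-gluing construction for completeness, including the observation that the $C_0$ condition survives uniform limits, are all sound.

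There is, however, a genuine gap in your separability step. Your proposed countable family consists of finite metric spaces carrying ``piecewise constant curves taking finitely many values''. Such a curve is not continuous unless it is constant (a finite metric space is discrete and $\mathbb{R}$ is connected), so these $4$-tuples are not elements of $K_{\mathrm{GHPU}}$ at all, and the defect cannot be repaired by passing to constant curves: if $\eta$ has range of diameter $\geq c>0$, then for any tuple whose underlying space is finite the curve component is forced to be constant, and the uniform term in $d_{\mathrm{GHPU}}$ is bounded below by roughly $c/2$. Hence finite metric spaces are simply not dense in $\mathbb{K}_{\mathrm{GHPU}}$, and the step as written fails. To fix it you need approximating spaces that can support non-trivial continuous curves — for instance finite metric graphs (or finite unions of segments) with rational edge lengths, equipped with rational atomic measures and piecewise geodesic curves with rational breakpoints, the curve of $\mathcal{X}$ being approximated by the geodesic interpolation of its values along an $\varepsilon$-net of times. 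With such a family the direct estimates on the three terms do go through, but this is a different construction from the one you describe and it is precisely where the curve decoration makes the GHPU setting more delicate than the GHP one. The rest of your argument needs no essential change.
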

Just like we did for the Gromov-Hausdorff topology, it is possible to extend the GHPU topology for non-compact spaces by truncation, but we refer to \cite{scalingUIHPQ} for more details.

\section{The Brownian sphere rooted along its geodesic}\label{section 3}

In this section, we give a new representation of the Brownian sphere when rooted along its simple geodesic from $\rho$. This representation relies on a spinal decomposition, in the same spirit as in \cite{bessel}. From now on, we fix $b>0$, that will remain unchanged though this paper.

\subsection{Structure of the subtrees along a trajectory}

On the event $\{W_*<-b)$ and for every $s\geq0$, we set : 
\begin{equation*}    \hat{\zeta}_s=\zeta_{(T_b+s)\wedge\sigma},\quad\check{\zeta}_s=\zeta_{(T_b-s)\vee 0}. 
\end{equation*}
Then we let $(\hat{a}_i,\hat{b}_i)_{j\in J}$ be the excursions of $\hat{\zeta}$ above its past minimum, that is the connected components of :
\begin{equation*}
    \bigg\{s\geq 0:\hat{\zeta}_s>\min_{0\leq r\leq s}\hat{\zeta}_r\bigg\}.
\end{equation*}
Similarly, we let $(\check{a}_j,\check{b}_j)_{i\in I}$ be the excursions of $\check{\zeta}$ above its past minimum. Each of those intervals represents a subtree $\T_i$ of $\T$ branching off the trajectory $W_{T_b}$. Then, for every $j\in J$, we define $W^i\in\mathcal{C}(\R_+,\mathcal{W})$ by :
\begin{equation*}
    W^{j}_s(t)=W_{T_b+(\hat{a}_j+s)\wedge\hat{b}_j}(\hat{\zeta}_{\hat{a}_j}+t),\quad 0\leq t\leq \hat{\zeta}_{(\hat{a}_j+s)\wedge\hat{b}_j}-\hat{\zeta}_{\hat{a}_j},\quad s\geq0.
\end{equation*}
Similarly, for $i\in I$, we set : 
\begin{equation*}
    W^{j}_s(t)=W_{T_b-(\check{a}_i+s)\wedge\check{b}_i}(\check{\zeta}_{\check{a}_i}+t),\quad 0\leq t\leq \check{\zeta}_{(\check{a}_i+s)\wedge\check{b}_i}-\check{\zeta}_{\check{a}_i},\quad s\geq0.
\end{equation*}
Note that we browse the trajectory $W_{T_b}$ backward. Finally, we introduce point measures on $\R_+\times\mathcal{C}(\R_+,\mathcal{W})$ defined by :
\begin{equation}\label{Poisson point}
    \widehat{\mathcal{N}}=\sum_{j\in J}\delta_{(\hat{\zeta}_{\hat{a}_j},W^j)},\quad \check{\mathcal{N}}=\sum_{i\in I}\delta_{(\check{\zeta}_{\check{a}_i},W^{I})}.
\end{equation}
\begin{theorem} \label{poisson}
    Under $\N_0(\cdot\,|\,W_*<-b)$ and conditionally on the trajectory $W_{T_b}$, the point measures $\widehat{\mathcal{N}}$ and $\check{\mathcal{N}}$ are independent Poisson point measures with respective intensities : 
     \begin{equation*}
        2\mathbf{1}_{[0,\zeta_{T_b}]}(t)dt\N_{W_{T_b}(t)}(d\omega)
    \end{equation*}
    and
     \begin{equation*}
        2\mathbf{1}_{[0,\zeta_{T_b}]}(t)\mathbf{1}_{\{\omega_*>-b\}}dt\N_{W_{T_b}(t)}(d\omega).
    \end{equation*}  
\end{theorem}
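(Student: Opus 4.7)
The plan is to analyze the forward (post-$T_b$) and backward (pre-$T_b$) parts of the snake trajectory separately, then use the strong Markov property at $T_b$ to glue them together.

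Since $\hat{W}$ is continuous and adapted, $T_b$ is a stopping time for the natural filtration of the Brownian snake and $\{W_*<-b\}=\{T_b<\infty\}$. For the forward subtrees $\widehat{\mathcal{N}}$, I would apply the strong Markov property at $T_b$: conditionally on $\mathcal{F}_{T_b}$ (hence on $W_{T_b}$), the process $(W_{T_b+s})_{s\geq 0}$ is the Brownian snake started from the path $W_{T_b}$, run until its lifetime first returns to $0$. A ``subtrees along the initial path'' decomposition, which is essentially Lemma \ref{excursion} transferred to the setting of a snake starting from a nontrivial path, then shows that the subtrees branching off $W_{T_b}$ on the forward side form a Poisson point measure with intensity $2\1_{[0,\zeta_{T_b}]}(t)\,dt\,\N_{W_{T_b}(t)}(d\omega)$. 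This identifies the conditional law of $\widehat{\mathcal{N}}$.

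For the backward subtrees $\check{\mathcal{N}}$, the key observation is that, by definition of $T_b$ as a first hitting time, $\hat{W}_s>-b$ for every $s\in[0,T_b)$, so every backward subtree automatically satisfies $\omega_*>-b$; this accounts for the indicator $\1_{\{\omega_*>-b\}}$. To obtain the full PPM structure, I would invoke the time-reversal invariance of $\N_0$: $(W_{\sigma-s})_{0\leq s\leq\sigma}$ has the same law as $(W_s)_{0\leq s\leq\sigma}$. Under this involution the pre-$T_b$ snake corresponds to the post-$\hat{T}_b$ snake (with $\hat{T}_b=\sup\{s:\hat{W}_s=-b\}$) of the time-reversed process, and the latter, given $W_{\hat{T}_b}$, is distributed as a Brownian snake started from $W_{\hat{T}_b}$ and conditioned on its tip remaining strictly above $-b$. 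This last identification can be obtained by approximating $\hat{T}_b$ from above by stopping times (e.g.\ the successive returns of $\hat W$ to $-b$) and applying the strong Markov property together with a suitable limiting/disintegration argument. Applying the subtrees decomposition to this conditioned snake, and using that a PPM of intensity $\mu$ conditioned on the event ``no atom in $\{\omega_*\leq-b\}$'' is a PPM with restricted intensity $\1_{\{\omega_*>-b\}}\,d\mu$, yields the claimed distribution of $\check{\mathcal{N}}$.

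Conditional independence of $\widehat{\mathcal{N}}$ and $\check{\mathcal{N}}$ given $W_{T_b}$ is then immediate from the strong Markov property at $T_b$: the past and the future of the snake are conditionally independent given $\mathcal{F}_{T_b}$, and the two point measures are measurable with respect to these respective pieces.

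The main obstacle is the rigorous identification of the law of the backward snake given $W_{T_b}$. The strong Markov property naturally addresses the future of a stopping time, not its past, and time-reversal under $\N_0$ interchanges $T_b$ with $\sigma-\hat{T}_b$ rather than with $\sigma-T_b$, so the ``co-stopping'' structure of $\hat{T}_b$ has to be handled carefully. An alternative route, bypassing time-reversal altogether, would be to compute the Laplace functional of the pair $(\widehat{\mathcal{N}},\check{\mathcal{N}})$ directly by disintegrating $\N_0$ along the values of $(T_b,W_{T_b})$; this avoids the delicate last-hitting-time analysis but is more computational.
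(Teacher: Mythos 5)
Your proposal follows essentially the same route as the paper: the forward measure $\widehat{\mathcal{N}}$ via the strong Markov property at $T_b$ combined with Lemma \ref{excursion}; the backward measure via time-reversal invariance of $\N_0$, which turns it into the measure of subtrees along $W_{\widehat{T}_b}$, whose conditional law is then identified by approximating the last hitting time $\widehat{T}_b$ from above and using that a Poisson measure conditioned to have no atom with $\omega_*\leq -b$ is a Poisson measure with the restricted intensity; and conditional independence from the measurability of $\check{\mathcal{N}}$ with respect to the pre-$T_b$ part together with the strong Markov property. The only implementation difference is that the paper carries out the approximation with the deterministic grid times $\lceil n\widehat{T}_b\rceil/n$ and a truncation of the spine at level $-\delta$ near its endpoint (rather than with ``successive returns of $\hat{W}$ to $-b$'', which are not discretely ordered and so do not furnish a usable sequence of stopping times), but this is precisely the kind of limiting/disintegration argument you anticipated as the main obstacle.
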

\begin{proof}
First, observe that the conditional distribution of $\widehat{\mathcal{N}}$ is just a consequence of Lemma \ref{excursion} and of the strong Markov property applied at time $T_b$. Therefore, the difficult part of the statement is the description of the law of $\check{\mathcal{N}}$. Recall that
\begin{equation*}
    \widehat{T}_b=\sup\{s\geq 0:\widehat{W}_s=-b\}.
\end{equation*}
Note that $\{W_*<-b\}=\{T_b<\infty\}=\{\widehat{T}_b<\infty\}$.
Then, we can introduce two point measures associated to the trajectory $W_{\widehat{T}_b}$
\begin{equation*}
    \widehat{\mathcal{N}}'=\sum_{i\in I'}\delta_{(\hat{\zeta}'_{\hat{a}'_i},W^i)},\quad \check{\mathcal{N}}'=\sum_{j\in Jl}\delta_{(\check{\zeta}'_{\check{a}'_j},W^{j})},
\end{equation*}
like we did for $W_{T_b}$ ($\hat{a}'_i,\hat{\zeta}',$ etc… are defined just like $\hat{a}_i,\hat{\zeta},$etc… just by replacing $W_{T_b}$ by $W_{\widehat{T}_b}$). By time-reversal invariance of the measure $\N_0$, the trajectories $W_{T_b}$ and $W_{\widehat{T}_b}$ have the same distribution, as for the measures $\check{\mathcal{N}}$ and $\widehat{\mathcal{N}}'$ . In what follows, we will work with the measure $\widehat{\mathcal{N}}'$, which is more convenient to use the Markov property. \\
For every $n\in N$, we set :
\begin{equation*}
    [\widehat{T}_b]_n=\frac{\lceil n\widehat{T}_b\rceil}{n}.
\end{equation*}
    Clearly, we have $\lim_{n\rightarrow\infty}[\widehat{T}_b]_n=\widehat{T}_b$ a.s. We also write $\widehat{\mathcal{N}}'_{(n)}$ to describe the point measure of the subtrees branching off the trajectory $W_{[\widehat{T}_b]_n}$, and $\widehat{\mathcal{N}}'_t$ to refer to the same quantity with respect to $W_t$.  
Fix $0<\delta<b$ ($\delta$ will become close to $b$), and for every $w\in\mathcal{W}$, set $\tau_\delta(w)=\inf\{t\geq 0:w(t)=-\delta\}$. Then, we define :
\begin{equation*}
    \mathcal{\hat{N}}'_{[\delta]}=\displaystyle\sum_{\substack{i\in I' \\ \zeta_{\widehat{T}_b+\hat{a}'_i}\leq\tau_\delta(W_{\widehat{T}_b}) \phantom{-}}} \delta_{(\zeta_{\widehat{T}_b+\hat{a}'_i},W^i)}.
\end{equation*}
The measure $\widehat{\mathcal{N}}'_{[\delta]}$ represents the subtrees branching off the trajectory $W_{\widehat{T}_b}$ before it hits $-\delta$; such a truncation is needed to avoid the pathological part near the endpoint of $W_{\widehat{T}_b}$. We also let $W_{\leq t}$ be the process $(W_{s\wedge t})_{s\geq 0}$. Consider a bounded, measurable and positive function $g$ on the space of point measures on  $\R_+\times\mathcal{C}(\R_+,\mathcal{W})$, and $f$ a bounded continuous function on $\mathcal{C}(\R_+,\mathcal{W})$. Then, we have :
\begin{align*}
    \N_0(f(W_{\leq \widehat{T}_b})\mathbf{1}_{\{\widehat{T}_b<\infty\}}g(\widehat{\mathcal{N}}'_{[\delta]}))=& \lim_{n\rightarrow \infty } \N_0(f(W_{\leq[\widehat{T}_b]_n})\mathbf{1}_{\{\widehat{T}_b<\infty\}}g(\widehat{\mathcal{N}}_{(n),{[\delta]}}')) \\
    =& \lim_{n\rightarrow\infty}\sum_{k\in\N}\N_0(f(W_{\leq k/n})\mathbf{1}_{\{\widehat{T}_b<\infty\}}\\
    & \quad\quad\quad\quad\quad\mathbf{1}_{\{\inf_{t\in[(k-1)/n,k/n]}\hat{W}_t<-b,\inf_{s\geq k/n} \hat{W}_s>-b\} }g(\mathcal{\widehat{\mathcal{N}}}_{k/n,{[\delta]}}')).
\end{align*}
The first equality holds because a.s, under $N_0(\cdot\,|\,W_*<-b)$, for $n$ large enough, the trajectories $W_{\widehat{T}_b}$ and $W_{[\widehat{T}_b]_n}$ coincide up to a time which is greater than or equal to $\tau_\delta(W_{\widehat{T}_b})$, and the point measures $\widehat{\mathcal{N}}'_{[\delta]}$ and $\widehat{\mathcal{N}}'_{(n),{[\delta]}}$ also coincide. Therefore, for $\varepsilon>0,\,t>0$ and $0<\delta<b$, we have to evaluate : 
\begin{equation*}\label{zoubiii}
    \N_0(f(W_{\leq t})\mathbf{1}_{\{\widehat{T}_b<\infty\}}\mathbf{1}_{\{\inf_{s\in[t-\varepsilon,t]}\hat{W_{s}}<-b\}}\mathbf{1}_{\{\inf_{s\geq t} \hat{W}_s>-b\}}g(\mathcal{\hat{N}}_{t,{[\delta]}}')).
\end{equation*}
By the Markov property applied at time $t$, and by Lemma \ref{excursion}, this quantity is equal to:
\begin{equation*} 
\N_0\bigg(f(W_{\leq t})\mathbf{1}_{\{\widehat{T}_b<\infty\}}\mathbf{1}_{\{\inf_{t-\varepsilon\leq s\leq t}\hat{W}_{s}<-b\}}\Pi_{W_t}\bigg[\1\{\mathcal{M}(\{(t,\omega):\omega_*\leq-b\})=0\}g(\mathcal{M}_{\leq\tau_\delta(W_t)})\bigg]\bigg)   
\end{equation*}
where for every $w\in\mathcal{W}_0$, under the probability measure $\Pi_w$, $\mathcal{M}(dt,d\omega)$ is a Poisson point measure on $\R_+\times C(\R_+,\mathcal{W})$ with intensity :
\begin{equation*}
 2\mathbf{1}_{[0,\zeta_{(w)}]}(t)dt\N_{w(t)}(d\omega)   
\end{equation*}
and $\mathcal{M}_{\leq\tau_\delta(W_t)}$ stands for the restriction of $\mathcal{M}$ to $[0,\tau_\delta(W_t)]\times C(\R_+,\mathcal{W})$. Using the conditional independence of $\mathcal{M}_{\leq \tau_\delta(W_t)}$ and $\mathcal{M}|_{[\tau_\delta(W_t),\zeta_t]\times\s}$ (given $W_t$), we have :
\begin{align*}
  &\Pi_{W_t}\big[\1\{\mathcal{M}(\{(t,\omega):\omega_*\leq-b\})=0\}g(\mathcal{M}_{\leq\tau_\delta(W_t)})\big]\\  
  &=  \Pi_{W_t}\big[\mathcal{M}(\{(t,\omega):\omega_*\leq-b\})=0\big]\Pi_{W_t}\big[(g(\mathcal{M}_{\leq\tau_\delta(W_t)})\,|\,\mathcal{M}(\{(t,\omega):\omega_*\leq-b\})=0)\big]\\
  &=\Pi_{W_t}\big[\mathcal{M}(\{(t,\omega):\omega_*\leq-b\})=0)\big]\Pi_{W_t^{(\delta)}}\big[g(\mathcal{M})\,|\,\mathcal{M}(\{(t,\omega):
  \omega_*\leq-b\})=0)\big]
\end{align*}
where $W_t^{(\delta)}$ stands for the restriction of $W_t$ to $[0,\tau_\delta(W_t)]$. Consequently, if for every $w\in\mathcal{W}_0$ such that $\tau_\delta(w)<\infty$, we set $H(w)=\Tilde{H}((w(t))_{0\leq t\leq \tau_\delta(w)})$, where
\begin{equation*}
    \Tilde{H}(w)=\Pi_{w}\big[g(\mathcal{M})\,|\,\mathcal{M}(\{(t,\omega):\omega_*\leq-b\})=0\big],
\end{equation*}
we proved that the quantity \eqref{zoubiii} is equal to : 
\begin{equation*}
    \N_0\bigg(f(W_{\leq t})\mathbf{1}_{\{\widehat{T}_b<\infty\}}\mathbf{1}_{\{\inf_{s\in[t-\varepsilon]}\hat{W}_{s}<-b\}}H(W_t)\Pi_{W_t}\big[\mathcal{M}(\{(t,\omega):\omega_*\leq-b\})=0\big]\bigg).
\end{equation*}
By applying the Markov property one more time at time $t$, this quantity is also equal to :
\begin{equation*}
   \N_0\bigg(f(W_{\leq t})\mathbf{1}_{\{\widehat{T}_b<\infty\}}\mathbf{1}_{\{\inf_{s\in[t-\varepsilon,t]}\hat{W}_{s}<-b\}}H(W_t)\mathbf{1}_{\{\inf_{s\geq t} \hat{W}_s>-b\}}\bigg). 
\end{equation*}
Coming back to our previous computations, we obtain that :
\begin{align*}
  &\N_0(f(W_{\leq \widehat{T}_b})\mathbf{1}_{\{\widehat{T}_b<\infty\}}g(\widehat{\mathcal{N}}'_{[\delta]}))\\
    &= \lim_{n\rightarrow\infty}\sum_{k\in\N}\N_0(f(W_{\leq k/n})\mathbf{1}_{\{\widehat{T}_b<\infty\}}\mathbf{1}_{\{\inf_{s\in[(k-1)/n,l/n]}\hat{W}_{s}<-b\}}H(W_{k/n})\mathbf{1}_{\{\inf_{s\geq k/n} \hat{W}_s>-b\}})\\ 
    &=\lim_{n\rightarrow\infty}\N_0(f(W_{\leq [\widehat{T}_b]_n})\mathbf{1}_{\{\widehat{T}_b<\infty\}}H(W_{{[\widehat{T}_b]}_n}))\\
    &=\N_0(f(W_{\leq \widehat{T}_b})\mathbf{1}_{\{\widehat{T}_b<\infty\}}H(W_{\widehat{T}_b})).
\end{align*}
(the last convergence holds because $H$ only depends on the trajectory $w$ up to time $\tau_\delta(w)$, and for $n$ large enough, the considered trajectories coincide up to this time). This means that the conditional distribution of $\widehat{\mathcal{N}}'_{[\delta]}$ given $\{\widehat{T}_b<\infty\}$ and $W_{\leq \widehat{T}_b}$ is the law of a Poisson point measure with intensity 
\begin{equation*}
   2\mathbf{1}_{[0,\tau_\delta(W_{\widehat{T}_b})]}(t)\mathbf{1}_{\{\omega_*>-b\}}dt\N_{W_{\widehat{T}_b}(t)}(d\omega).
\end{equation*}
Because $0<\delta<b$ is arbitrary, it follows that the conditional distribution of $\widehat{\mathcal{N}}'$ given $\{\widehat{T}_b<\infty\}$ and $W_{\leq \widehat{T}_b}$ is a Poisson point measure with intensity :
\begin{equation*}
   2\mathbf{1}_{[0,\zeta_{\widehat{T}_b}]}(t)\mathbf{1}_{\{\omega_*>-b\}}dt\N_{W_{\widehat{T}_b}(t)}(d\omega).
\end{equation*}
Moreover, note that the conditional distribution only depends on $W_{\widehat{T}_b}$, which completes the description of the law of $\check{\mathcal{N}}$. Finally, note that $\check{\mathcal{N}}$ is a measurable function of $W_{\leq T_b}$ and that, by the strong Markov property, $\widehat{\mathcal{N}}$ is independent of $W_{\leq T_b}$ given $W_{T_b}$. This means that, given $W_{T_b}$, the two measures are independent, which concludes the proof. 
\end{proof} 

\subsection{Spine representations of the Brownian sphere}\label{construction}

In order to construct the Brownian sphere rooted along its geodesic, we start by recalling a result of \cite{bessel}, which completes the description of the tree $\T$ (under $\N_0(\cdot\,|\,W_*<-b)$) given by Theorem \ref{poisson}.
  \begin{proposition}\label{Bessel}
        Under the probability measure $\N_0(\cdot\,|\,W_*<-b)$, the law of the random path $W_{T_b}$ is the law of the process $(X_t^{(-3)}-b)_{0\leq t\leq S^{(-3)}}$, where $X^{(-3)}$ is a Bessel process of dimension $-3$ starting at $b$, and $S^{(-3)}=\inf\{t>0,X_t^{(-3)}=0\}$.
    \end{proposition}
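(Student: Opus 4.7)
The plan is to identify the law of $W_{T_b}$ by combining the Poisson spinal description of Theorem \ref{poisson} with a Doob $h$-transform. Shifting coordinates via $Y_t := W_{T_b}(t)+b$, so that $Y_0 = b$ and $Y_{\zeta_{T_b}} = 0$, the target becomes showing that $(Y_t)_{0\le t\le \zeta_{T_b}}$ is distributed as a Bessel process of dimension $-3$ started at $b$ and killed on hitting $0$.

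The Feynman--Kac ingredient comes from formula \eqref{inf}: for any $t$ such that $W_{T_b}(t)>-b$,
\[
2\,\N_{W_{T_b}(t)}(W_*<-b) \;=\; \frac{3}{(W_{T_b}(t)+b)^2} \;=\; \frac{3}{Y_t^2}.
\]
This is exactly the intensity of the subtrees on the $\check{\mathcal{N}}$ side that have to be excluded in Theorem \ref{poisson} in order to ensure that $T_b$ is the \emph{first} hitting time of $-b$. Via a spinal decomposition of $\N_0$ in the spirit of the proof of Theorem \ref{poisson}, one would write $\N_0(\cdot,\,W_*<-b)$ as an appropriate reference measure on the spine $W_{T_b}$ along which two independent Poisson families of subtrees are attached. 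Integrating out the constrained side $\check{\mathcal{N}}$ through the standard Poisson exponential formula then yields that the marginal law of $W_{T_b}$ under $\N_0(\cdot\mid W_*<-b)$ is absolutely continuous with respect to this reference, with Radon--Nikodym density proportional to $\exp\bigl(-\int_0^{\zeta_{T_b}} 3/Y_s^2\,ds\bigr)$.

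The final step identifies this tilted measure as BES$(-3)$ via Doob's $h$-transform. The function $h(y)=y^{-2}$ is harmonic for the Schr\"odinger-type operator $\tfrac12\partial_y^2 - \tfrac{3}{y^2}$, since
\[
\tfrac12 h''(y)-\tfrac{3}{y^2}h(y) \;=\; 3y^{-4}-3y^{-4} \;=\; 0,
\]
and its logarithmic derivative $h'(y)/h(y) = -2/y$ is precisely the drift of BES$(-3)$; the normalization $h(b) = 1/b^2$ is moreover consistent with $\N_0(W_*<-b) = 3/(2b^2)$ from \eqref{inf}. The main obstacle is to make the spinal decomposition of the second paragraph rigorous in the $\sigma$-finite setting of $\N_0$: the naive choice of a ``Brownian motion from $0$ killed at $-b$'' as reference is actually incompatible with the tilting weight (one checks that $\int_0^{\tau_0} Y_s^{-2}\,ds = \infty$ almost surely under such a reference), so one must carefully identify the correct Bismut-type spinal decomposition of $\N_0$ along the ancestor line to the minimum, which is the content of the result of \cite{bessel} invoked here.
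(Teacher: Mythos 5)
The paper itself offers no proof of this proposition: it is quoted verbatim from \cite{bessel}. Your sketch therefore has to stand on its own, and it does not. The central gap is in your second paragraph: to ``integrate out the constrained side $\check{\mathcal{N}}$'' you need a decomposition of the unconditioned measure $\N_0$ restricted to $\{W_*<-b\}$ in which the spine $W_{T_b}$ carries an explicitly identified reference law and the subtrees attached to it are unconstrained Poisson measures. Theorem \ref{poisson} gives nothing of the sort: it is a purely conditional statement (the law of the subtrees \emph{given} $W_{T_b}$) and carries no information about the marginal law of the spine, which is exactly what Proposition \ref{Bessel} asserts. You acknowledge this and defer the identification of the correct ``Bismut-type decomposition along the first hitting path'' to \cite{bessel}; since that identification essentially \emph{is} the proposition, the argument is circular at precisely the point where the work lies.

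Moreover, the shape of the tilting is wrong as stated, not merely hard to justify. Against any Brownian-like reference the full-horizon weight $\exp\bigl(-3\int_0^{\zeta_{T_b}}Y_s^{-2}\,ds\bigr)$ vanishes almost surely (by Williams' reversal the path near its hitting time of $0$ looks like a Bessel process of dimension $3$ started at $0$, for which $\int_{0+}R_s^{-2}\,ds=\infty$), as you observe yourself. The resolution is not a cleverer reference measure but a different form of the density: on a finite horizon, on $\{t<\zeta_{T_b}\}$, the law of $Y|_{[0,t]}=(W_{T_b}(s)+b)_{0\le s\le t}$ under $\N_0(\cdot\,|\,W_*<-b)$ is absolutely continuous with respect to Brownian motion started at $b$ (restricted to $\{t<\tau_0\}$) with density
\begin{equation*}
\Bigl(\frac{b}{Y_t}\Bigr)^{2}\exp\Bigl(-3\int_0^t\frac{ds}{Y_s^{2}}\Bigr)
=\frac{\N_{W_{T_b}(t)}(W_*<-b)}{\N_0(W_*<-b)}\,\exp\Bigl(-2\int_0^t\N_{W_{T_b}(s)}(W_*<-b)\,ds\Bigr),
\end{equation*}
which by Girsanov is exactly the Bessel$(-3)$-versus-Brownian-motion density. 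The harmonic prefactor records that the hitting of $-b$ must be realized by descendants of the height-$t$ point of the ancestral line (the size-biasing that selects the \emph{first} hitting path); it cannot be recovered a posteriori from the divergent exponential, and your ``integrate out $\check{\mathcal{N}}$'' step is structurally unable to produce it. Your $h$-transform computation generates this factor formally from the generator, but nothing in the sketch derives it from the snake. A genuine proof would establish the displayed finite-horizon identity directly (for instance by a Markov property/first-moment argument for the snake along the ancestral line at height $t$, using \eqref{inf}) and then let $t\uparrow\zeta_{T_b}$; with that in hand, the remaining ingredients of your sketch (the rate $3/Y^2$ from \eqref{inf} and the identification of the drift $-2/y$) are correct.
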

We also recall that by William's time reversal theorem, the law of the process $(X^{(-3)}_{S^{(-3)}-t})_{0\leq t\leq S^{(-3)}}$ is the law of a Bessel process of dimension $7$ starting at $0$ and stopped at it last hitting time of $b$ (which we denote by $S_b$). We also recall the scaling property of Bessel processes: if $(X_t)_{t\geq0}$ is a Bessel process (of any dimension), then for every $\varepsilon>0$, the processes $(X_t)_{t\geq 0}$ and $(\frac{1}{\varepsilon}X_{\varepsilon^2t})_{t\geq 0}$ have the same law.
\newline
Now, we present our construction. Consider $(R_t)_{t\geq0}$ a Bessel process of dimension $7$, and let $S_b$ be last hitting time of $b$ by $R$ (notice that the scaling property of Bessel processes implies that $S_b$ and $b^2S_1$ have the same law). Then, given $R$, consider two independent Poisson point measures $\mathcal{N}$ and $\mathcal{\widehat{N}}$ of respective intensities
\begin{equation*}
        2\mathbf{1}_{[0,S_b]}(t)\mathbf{1}_{\{\omega_*>0\}}dt\,\N_{R_t}(d\omega)
    \end{equation*}
    and
    \begin{equation*}
        2\mathbf{1}_{[0,S_b]}(t)dt\,\N_{R_t}(d\omega).
    \end{equation*}
    We write $I$ and $J$ for sets indexing the atoms of these Poisson point measures. Note that we can associate to every $i\in I\cup J$ a random labeled tree $\T_i$.\\
    We introduce a random compact metric space $\T_b$ obtained from the spine decomposition 
    
    \begin{equation*}
        [0,S_b]\cup\bigcup_{i\in I\cup J}\T_i
    \end{equation*}
    by identifying, for every $i\in I\cup J$, the root $\rho_i$ of the tree $\T_i$ with the point $t_i$ of $[0,S_b]$. The resulting space is a compact random tree (the fact that it is compact is not trivial, but is left as an exercise for the reader). More precisely, the distance $d$ is defined as follows. First, the restriction of $d$ to a subtree $\T_i$ is just the distance $d_i$. If $u,v\in[0,S_b]$, we set $d(u,v)=|u-v|$. If $u\in\T_i$ and $v\in[0,S_b]$, we take $d(u,v)=d(u,\rho_i)+d(\rho_i,v)$. Finally, if $u\in\T_i$ and $v\in\T_j$ with $i\neq j$, we let $d(u,v)=d(u,\rho_i)+d(\rho_i,\rho_j)+d(\rho_j,v)$. Note that $\T_b$ comes with a volume measure, which is just the sum of the volume measures on the trees $\T_i,i\in I\cup J$. 
    \newline
    We can then define an exploration $\mathcal{E}$ of the tree $\T_b$, given by concatenating the explorations of the trees $\T_i$. For $s\in [0,2S_b]$, set 
    \begin{equation*}
        \beta_s=\sum_{i\in I}\1_{\{t_i\leq s\}}\sigma(\omega_i)+\sum_{j\in J}\1_{\{2S_b-t_j\leq s\}}\sigma(\omega_j),\quad\beta_{s-}=\sum_{i\in I}\1_{\{t_i< s\}}\sigma(\omega_i)+\sum_{j\in J}\1_{\{2S_b-t_j<s\}}\sigma(\omega_j).
    \end{equation*}
    Then, if we set $\sigma=\sum_{i\in I\cup J}\sigma(\omega_i)$,for every $t\in[0,\sigma]$ we define $\mathcal{E}_t\in\T_b$ as follows. Observe that there is a unique $s\in[0,2S_b]$ such that $\beta_{s-}\leq t\leq \beta_s$. If $s\in[0,S_b]$,
    \begin{itemize}[label=\textbullet]
        \item either we have $s=t_i$ for some $i\in I$ and we set $\mathcal{E}_t=p_i(t-\beta_{t_i-})$,
        \item or there is no such $i$ and we set $\mathcal{E}_t=s$.
    \end{itemize}
    If, on the other hand, $s\in]S_b,2S_b]$, then  
     \begin{itemize}[label=\textbullet]
        \item Either we have $2S_b-s=t_j$ for some $j\in J$ and we set $\mathcal{E}_t=p_j(\beta_{t_j}-t)$
        \item Or there is no such $j$ and we set $\mathcal{E}_t=2S_b-s$.
    \end{itemize}
    The exploration process allows us to define intervals on the tree $\T_b$, as we did in section \ref{intervalle}. First, we make the convention that if $s>t$, the interval $[s,t]$ is defined by $[s,\sigma]\cup[0,t]$. Then, for every $u\neq v\in\T_b $, there exists a smallest interval $[s,t]$ with $s,t\in[0,\sigma]$ such that $\mathcal{E}_s=u$ and $\mathcal{E}_t=v$, and we define 
    \[[u,v]=\{\mathcal{E}_r,r\in[s,t]\}.\]
    Note that we usually have $[u,v]\neq[v,u]$.\\
    \begin{figure}
        \centering
        \includegraphics[scale=0.5]{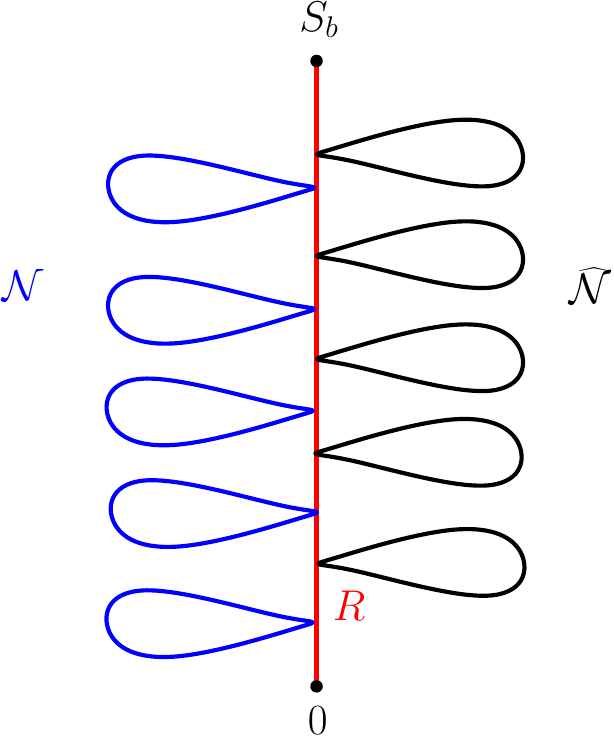}
        \caption{Illustration of the labelled tree $\T_b$. Each bubble represents a subtree $\T_i$, the blue ones being conditioned to carry positive labels. Labels on the red segment evolve as a Bessel process of dimension $7$.}
    \end{figure}
    Next, we assign labels to the points of $\T_b$. For $t\in[0,S_b]$, we set $Z_t=R_t$ and for $u\in\T_i$, we take $Z_u=\widehat{W}_u(\omega_i)$. The following proposition is just a consequence of Theorem \ref{poisson} and Proposition \ref{Bessel}.
    \begin{proposition}\label{tree}
        We have the following equality in distribution 
        \[\left(\T_b,(Z_u-b)_{u\in \T_b},S_b\right)\overset{(d)}{=}\left(\T,(Z_u)_{u\in\T},\rho_\T\right)\]
        where the right hand-member is distributed under $\N_0(\cdot\,|\,W_*<-b)$.
    \end{proposition}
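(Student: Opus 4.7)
The proposition is essentially a bookkeeping consequence of the two preceding results, so the plan is to assemble them along the spine in three steps.

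\textbf{Step 1: Identifying the spine.} Under $\N_0(\cdot\mid W_*<-b)$, Proposition \ref{Bessel} asserts that $W_{T_b}$ has the law of $(X_t^{(-3)}-b)_{0\le t\le S^{(-3)}}$, that is, a Bessel-$3$ process run from $b$ down to $0$ and then translated by $-b$. By Williams' time reversal, the reversed path $(X^{(-3)}_{S^{(-3)}-t})_{0\le t\le S^{(-3)}}$ is a Bessel-$7$ process started at $0$ and stopped at its last hitting time of $b$. After shifting labels by $+b$ and reversing time on the spine, $W_{T_b}$ is therefore identified in law with the process $(R_t)_{0\le t\le S_b}$ of the construction, with the endpoint $S_b$ (label $b$) corresponding to the root $\rho_\T$ (label $0$ after shifting back by $-b$) and the endpoint $0$ (label $0$) corresponding to the tip $p_\T(T_b)$ (label $-b$).

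\textbf{Step 2: Identifying the subtrees.} Conditionally on $W_{T_b}$, Theorem \ref{poisson} gives the joint conditional law of $\widehat{\mathcal{N}}$ and $\check{\mathcal{N}}$ as two independent Poisson point measures with the stated intensities. Translating labels by $+b$ and using the translation covariance of $\N_x$ (the pushforward of $\N_x$ by a global label shift of $+b$ is $\N_{x+b}$), the intensity $2\mathbf{1}_{[0,\zeta_{T_b}]}(t)\mathbf{1}_{\{\omega_*>-b\}}dt\,\N_{W_{T_b}(t)}(d\omega)$ of $\check{\mathcal{N}}$ becomes, after the time change $u=S_b-t$ induced by the reversal of Step 1, exactly $2\mathbf{1}_{[0,S_b]}(u)\mathbf{1}_{\{\omega_*>0\}}du\,\N_{R_u}(d\omega)$, which is the intensity of $\mathcal{N}$ in the construction. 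Similarly the intensity of $\widehat{\mathcal{N}}$ on the $\N_0$-side matches that of $\widehat{\mathcal{N}}$ on the construction side (no reversal is needed here since these subtrees are already parametrized forward from $T_b$, which corresponds to reading $R$ forward from $S_b$ back to $0$ — or equivalently we simply swap the roles of the two measures and the labelling is symmetric in the intensities after reversal). The required conditional independence of the two Poisson measures given the spine is furnished by Theorem \ref{poisson}.

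\textbf{Step 3: Gluing and exploration.} Once Steps 1 and 2 have identified jointly in law the spine and the collection of decorating subtrees, it remains to observe that both procedures reconstruct a labelled $\mathbb R$-tree by grafting each subtree at its spinal insertion point and concatenating the explorations in the prescribed order. On the $\N_0$-side this is the standard recovery of $\T$ from $W_{T_b}$ together with $\widehat{\mathcal{N}}$ and $\check{\mathcal{N}}$ via the contour; on the construction side this is the explicit definition of $\T_b$ and the exploration $\mathcal{E}$ given in Subsection \ref{construction}. Shifting labels by $-b$ turns $Z_u$ into $Z_u-b$ on the left-hand side, matching the right-hand side, and the identification of $S_b$ with $\rho_\T$ follows from Step 1. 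The main (and essentially only nontrivial) obstacle is the bookkeeping in this last step: one must carefully check that after the spine reversal of Step 1, the exploration $\mathcal{E}$ of $\T_b$ — which browses the subtrees indexed by $\mathcal{N}$ along the forward direction of $[0,S_b]$ and those indexed by $\widehat{\mathcal{N}}$ along the backward direction — reproduces the contour order of $\T$ read from $0$ to $\sigma$, namely the subtrees rooted before $T_b$ in the original time, then the spinal tip $p_\T(T_b)$, then the subtrees rooted after $T_b$, and finally the return to the root.
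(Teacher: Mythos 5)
Your argument is correct and is essentially the paper's own (unwritten) proof: the paper states Proposition \ref{tree} as an immediate reformulation of Theorem \ref{poisson} and Proposition \ref{Bessel}, which is exactly the assembly you carry out (Williams' reversal for the spine, matching of the Poisson intensities after the time change $t\mapsto S_b-t$ and the label shift, conditional independence, then gluing). Only minor slips, none of which affect the proof: $X^{(-3)}$ is a Bessel process of dimension $-3$, not $3$; the same reversal $t\mapsto S_b-t$ is applied to \emph{both} point measures (not only to $\check{\mathcal{N}}$), which is harmless since both intensities transform correctly; and the exploration-order check you single out in Step 3 is not actually required, since the statement concerns only the labelled tree with its distinguished point — and in any case the exploration $\mathcal{E}$ of $\T_b$ corresponds to the contour of $\T$ started at $p_\T(T_b)$, not to the contour read from the root.
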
 
    \begin{remark}
        The previous proposition does not really make sense at this point, since we have not introduced any topology on the set of labelled trees. However, its meaning should be clear, and can be seen as a reformulation of Theorem \ref{poisson} and Proposition \ref{Bessel}.
    \end{remark}
    Note that the element $p_\T(T_b)$ of $\T$ is naturally identified with $0\in\T_b$. 
    The labels allow us to define a pseudo-distance on $\T_b$ as follows. For $u,v\in\T_b$, we set:
    \begin{equation*}
        D^\circ(u,v)=Z_u+Z_v-2\max\left(\inf_{w\in[u,v]}Z_w,\inf_{w\in[v,u]}Z_w\right)
    \end{equation*}
    and then 
    \begin{equation*}
        D(u,v)=\inf\bigg\{\sum_{i=1}^pD^\circ(u_{i-1},u_i)\bigg\}
    \end{equation*}
    where the infimum is taken over all the choices of integer $p\geq1$ and of the finite sequences $u_0,...,u_p$ such that $u_0=u$ and $u_p=v$. We write $\s_b=\T/\{D=0\}$ for the quotient space, which naturally comes with the distance induced by $D$, and is rooted at the projection of $0\in\T$, denoted by $\rho_b$. The following proposition is just a consequence of Proposition \ref{tree}, and due to the fact that shifting labels do not change the distance.
    
    \begin{proposition}
       The pointed metric space $(\s_b,D,\rho_b)$ has law of the Brownian sphere under $\N_0(\cdot\,|\,W_*<-b)$ rooted at $\Gamma_b$.  
    \end{proposition}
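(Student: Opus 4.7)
The approach I would take is to recognize that Proposition \ref{tree} already gives the equality in distribution of labelled trees, so the remaining work is essentially deterministic: I need to check that the quotient metric construction transfers this to an equality of distributions of pointed metric spaces, and then identify the root.

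The first observation is that the pseudo-distance $D^\circ$, and hence $D$, is a measurable functional of the labelled tree, since both depend only on $(Z_u)_{u \in \T_b}$ and on the interval structure on $\T_b$ induced by the exploration $\mathcal{E}$. Moreover, $D^\circ$ is invariant under constant shifts of labels: replacing $(Z_u)$ by $(Z_u + c)$ shifts $Z_u + Z_v$ by $2c$ and $2 \max(\inf_{w \in [u,v]} Z_w, \inf_{w \in [v,u]} Z_w)$ also by $2c$, so their difference is unchanged. Consequently $D$ built from $(Z_u)_{u \in \T_b}$ coincides with $D$ built from $(Z_u - b)_{u \in \T_b}$. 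Combining this shift-invariance with Proposition \ref{tree}, applied to the measurable functional that sends a labelled tree to its associated quotient metric space, yields
\[
(\s_b, D) \overset{(d)}{=} (\s, D) \quad \text{under } \N_0(\cdot \mid W_* < -b).
\]

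To identify the root, I would use that $\rho_b$ is by construction the projection onto $\s_b$ of the point $0 \in [0, S_b] \subset \T_b$. Under the correspondence of Proposition \ref{tree}, this point is identified with $p_\T(T_b) \in \T$, as remarked just above the statement in the excerpt. The image of $p_\T(T_b)$ under the canonical projection $\mathbf{p}$ in $\s$ is $\mathbf{p}(T_b) = \Gamma_b$, by the formula recalled at the end of Subsection \ref{sphere}. This gives precisely the identification $(\s_b, D, \rho_b) \overset{(d)}{=} (\s, D, \Gamma_b)$ under the conditioning.

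The only mild obstacle is to check carefully that the exploration $\mathcal{E}$ on $\T_b$ induces, under the correspondence of Proposition \ref{tree}, the same interval structure on $\T_b$ as the one on $\T$ under $\N_0(\cdot \mid W_* < -b)$; this is essentially built into the definition of $\mathcal{E}$, since concatenating the explorations of the subtrees $(\T_i)_{i \in I}$ forward along the spine $[0, S_b]$ and of $(\T_j)_{j \in J}$ backward precisely matches the two Poisson decompositions for the forward and backward subtrees branching off $W_{T_b}$ provided by Theorem \ref{poisson}. Once this compatibility is verified, the proof reduces to the shift-invariance argument outlined above.
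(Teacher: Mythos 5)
Your proposal is correct and follows the same route as the paper, which deduces the statement directly from Proposition \ref{tree} together with the observation that shifting all labels by a constant leaves $D^\circ$, hence $D$, unchanged. The additional details you supply (measurability of the quotient construction, the identification $0\in\T_b\leftrightarrow p_\T(T_b)$ and $\mathbf{p}(T_b)=\Gamma_b$, and the compatibility of interval structures) are exactly the points the paper leaves implicit.
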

    
\subsection{The three-arms decomposition}\label{three arms}

In this section, we give another construction of the labelled tree $\T$ under $\N_0(\cdot\,|\,W_*<-b)$, which enjoys some independence properties.\\

Consider the Poisson point measures defined in \eqref{Poisson point}, and let $j_b\in J$ be the unique index of $J$ such that 
   \[(W^{j_b})_*<-b\quad\text{ and }\quad \text{for every $j\in J$ such that $t_i<t_{j_b}$, $(W^i)_*>-b$.} \]
   In words, $W^{j_b}$ encodes the labelled subtree containing $p_\T(\widehat{T}_b)$. 
    Note that the element $u\in\T_b$ corresponding to $t_{j_b}$ is the last common ancestor of $p_\T(T_b)$ and $p_\T(\widehat{T}_b)$ in $\T$. Consequently, the trajectories $W_{T_b}$ and $W_{\widehat{T}_b}$ coincide up to time $t_{j_b}$. In what follows, we will write $H=t_{j_b}$.
    We can then introduce three trajectories $Y,X,\widehat{X}$ defined in the following way :
    \begin{itemize}
        \item $Y=W_{T_b}|_{[0,H]}$
        \item $X=W_{T_b}(H+\cdot)$
        \item $\widehat{X}=W_{\widehat{T}_b}(H+\cdot)$.
    \end{itemize}
    These trajectories also come with point measures $\mathcal{N}_0,\widehat{\mathcal{N}_0},\mathcal{N}_1,\mathcal{N}_2,\widehat{\mathcal{N}}_1,\widehat{\mathcal{N}}_2$ (as shown in Figure \ref{3 Arms}). Note that these trajectories and their associated point measures fully describe the tree $\T$. 
     \begin{figure}
        \centering
        \includegraphics[scale=0.4]{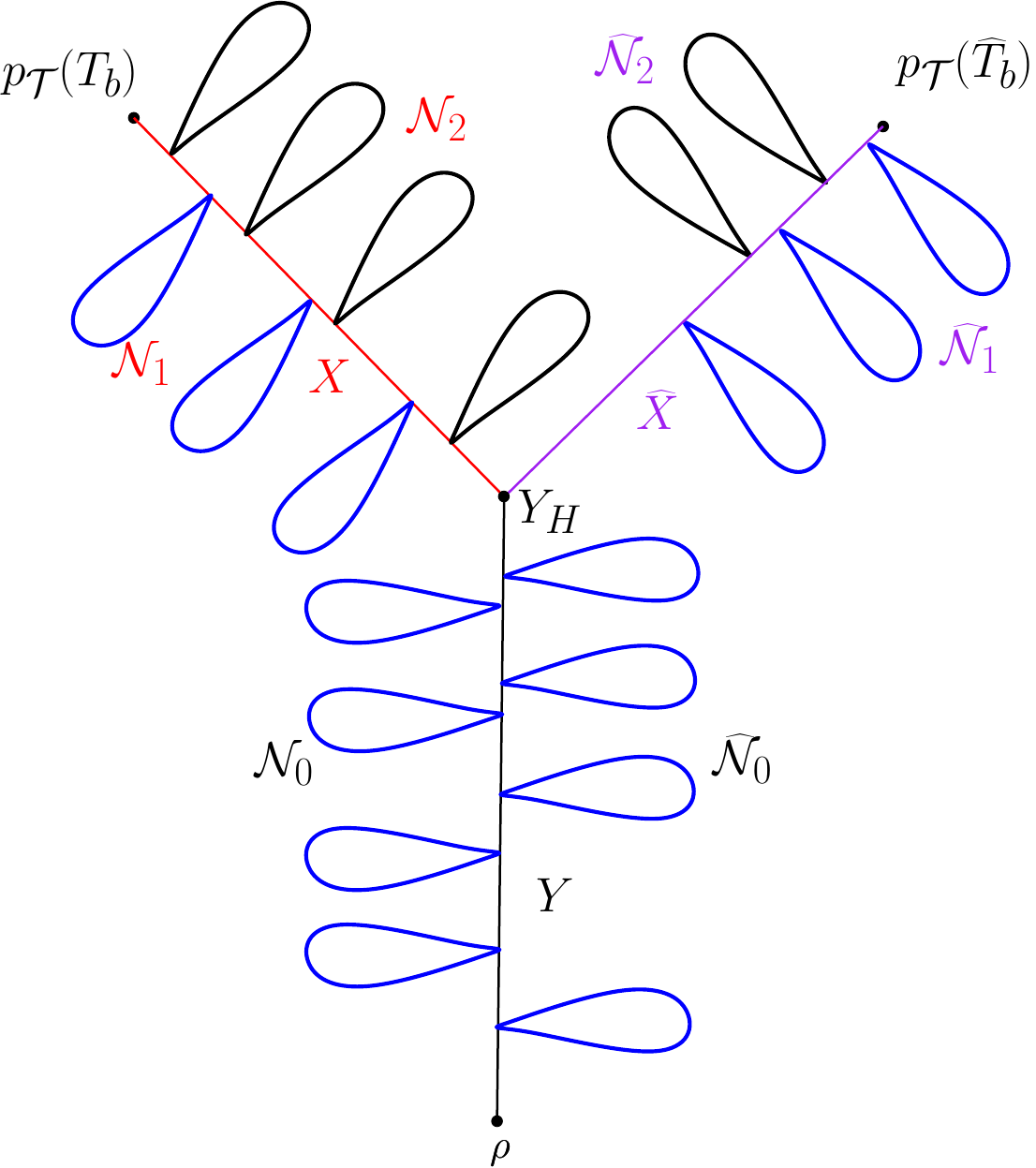}
        \caption{Decomposition of the labelled-tree $\T$. Each bubble represents a subtree; the blue ones are conditioned to have labels strictly above $-b$.}
        \label{3 Arms}
    \end{figure}
    The following proposition identifies the distribution of $H$. 
    \begin{proposition}
        Given $W_{T_b}$, the random variable $H$ satisfies 
        \[\p(H>t\,|\,W_{T_b})=\exp\left(-2\int_0^t\N_{W_{T_b}(s)}(W_*<-b)ds\right)\quad\text{ for every $t\in[0,\zeta_{W_{T_b}}]$}.\]
    \end{proposition}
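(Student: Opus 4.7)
The plan is to recognize $H$ as the first atom (with respect to the spine height) of a Poisson point measure that we can read off directly from Theorem \ref{poisson}. By that theorem, conditionally on $W_{T_b}$, the measure $\widehat{\mathcal{N}}=\sum_{j\in J}\delta_{(t_j,W^j)}$ (with $t_j=\hat{\zeta}_{\hat{a}_j}$) is a Poisson point measure on $\R_+\times \mathcal{C}(\R_+,\mathcal{W})$ with intensity $2\mathbf{1}_{[0,\zeta_{T_b}]}(t)\,dt\,\N_{W_{T_b}(t)}(d\omega)$. By the very definition of $j_b$,
\[ H = \inf\{t_j : j\in J,\ (W^j)_*<-b\}, \]
so $H$ is the position of the first atom of $\widehat{\mathcal{N}}$ whose trajectory component satisfies $\omega_*<-b$.

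The second step is a routine Poisson computation. Restricting $\widehat{\mathcal{N}}$ to $\R_+\times\{\omega_*<-b\}$ still yields a Poisson point measure, with intensity $2\mathbf{1}_{[0,\zeta_{T_b}]}(t)\mathbf{1}_{\{\omega_*<-b\}}\,dt\,\N_{W_{T_b}(t)}(d\omega)$. Projecting on the first coordinate (i.e.\ integrating out $\omega$) produces a Poisson point measure on $\R_+$ of intensity
\[ \lambda(t)\,dt := 2\,\mathbf{1}_{[0,\zeta_{T_b}]}(t)\,\N_{W_{T_b}(t)}(W_*<-b)\,dt, \]
and $H$ is precisely the location of its first atom. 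The formula \eqref{inf} ensures that $\lambda$ is locally integrable on $[0,\zeta_{T_b})$, since $W_{T_b}(s)>-b$ for $s<\zeta_{T_b}$.

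The classical expression for the first atom of an inhomogeneous Poisson process now gives, for every $t\in[0,\zeta_{T_b}]$,
\[ \p(H>t\,|\,W_{T_b}) = \exp\left(-\int_0^t\lambda(s)\,ds\right) = \exp\left(-2\int_0^t\N_{W_{T_b}(s)}(W_*<-b)\,ds\right), \]
as claimed. There is no real obstacle in this argument; the only point worth checking is that $j_b$ is almost surely well-defined under $\N_0(\cdot\,|\,W_*<-b)$, which holds because the global minimum $W_*<-b$ is attained at some $s_*$ that cannot lie on the spine $W_{T_b}$ (whose labels remain in $[-b,0]$) and must therefore belong to some subtree $W^j$, forcing $(W^j)_*<-b$ for at least one index $j\in J$.
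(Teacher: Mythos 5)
Your proof is correct and follows essentially the same route as the paper: the paper also identifies $\{H>t\}$ with the event that the Poisson measure of Theorem \ref{poisson} (equivalently, the construction of Subsection \ref{construction}) has no atom $(s,\omega)$ with $s<t$ and $\omega_*<-b$, and computes the void probability $\Pi_{W_{T_b}}\left(\mathcal{M}(\{(s,\omega):s<t,\,\omega_*<-b\})=0\right)=\exp\left(-2\int_0^t\N_{W_{T_b}(s)}(W_*<-b)\,ds\right)$. Only a cosmetic remark: the labels along the spine $W_{T_b}$ lie in $[-b,\infty)$ rather than $[-b,0]$, but since they are bounded below by $-b$ your justification that $j_b$ is well defined still stands.
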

    \begin{proof}
        The formula follows from the construction given \ref{construction} ; we have 
        \begin{align*}
        \p(H>t\,|\,W_{T_b})&=\Pi_{W_{T_b}}\left(\mathcal{M}(\{(s,\omega):s<t,\,\omega_*<-b\})=0\right)\\
        &=\exp\left(-2\int_0^t\N_{W_{T_b}(s)}(W_*<-b)ds\right).
        \end{align*}      
    \end{proof}
    \begin{proposition}\label{decomposition}
        Given $Y$, the trajectories $X+b$ and $\widehat{X}+b$ are independent Bessel processes of dimension $-3$ starting from $Y_H+b$ and stopped when they reach $0$. Furthermore, conditionally on $(Y,X,\widehat{X})$, the associated point measures are independent Poisson point processes, and their intensity are the following :
        \begin{itemize}[label=\textbullet]
            \item $\mathcal{N}_0,\widehat{\mathcal{N}_0}$ have the same intensity 
            \[2\1_{[0,H]}(t)\1_{\{W_*>-b\}}dt\,\N_{Y(t)}(dW)\]
            \item $\mathcal{N}_1,\mathcal{N}_2$ have respective intensities
            \[2\1_{[0,\zeta(X)]}(t)\1_{\{W_*>-b\}}dt\,\N_{X(t)}(dW)\]
            and 
            \[2\1_{[0,\zeta(X)]}(t)dt\,\N_{X(t)}(dW)\]
            \item  $\widehat{\mathcal{N}_1},\widehat{\mathcal{N}_2}$ have respective intensities 
            \[2\1_{[0,\zeta(\widehat{X})]}(t)\1_{\{W_*>-b\}}dt\,\N_{\widehat{X}(t)}(dW)\]
            and
            \[2\1_{[0,\zeta(\widehat{X})]}(t)dt\,\N_{\widehat{X}(t)}(dW).\]
        \end{itemize}
    \end{proposition}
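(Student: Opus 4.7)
The plan is to derive the decomposition from Proposition \ref{Bessel}, Theorem \ref{poisson} and the classical colouring/restriction properties of Poisson point measures, together with a time-reversal argument inside the bad atom $W^{j_b}$. Write $R=W_{T_b}+b$, which by Proposition \ref{Bessel} is a Bessel$(-3)$ process from $b$ stopped at $0$. Splitting $\widehat{\mathcal{N}}$ according to whether $\omega_*>-b$ or $\omega_*<-b$ produces, by the colouring theorem, two independent Poisson point measures $\widehat{\mathcal{N}}^{\pm}$; the random time $H$ is the smallest first-coordinate of $\widehat{\mathcal{N}}^-$, and the corresponding atom is $W^{j_b}$, distributed conditionally on $(W_{T_b},H)$ as $\N_{W_{T_b}(H)}(\cdot\,|\,W_*<-b)$.

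For the Bessel part, $H$ is the first epoch of a Cox process with hazard rate $3/R_s^2$ driven by $R$, and a standard Markov-plus-Cox-killing identity yields that, conditionally on $R|_{[0,H]}$, the process $R(H+\cdot)=X+b$ is a Bessel$(-3)$ starting from $R_H=Y_H+b$ stopped at $0$, independent of $R|_{[0,H]}$. Conditionally on $R|_{[0,H]}$ and independently of $R(H+\cdot)$, the atom $W^{j_b}$ has law $\N_{Y_H}(\cdot\,|\,W_*<-b)$; applying Proposition \ref{Bessel} inside it gives that $W^{j_b}_{T_b^{W^{j_b}}}+b$ is Bessel$(-3)$ from $Y_H+b$ stopped at $0$, and the time-reversal invariance of $\N_{Y_H}$ shows that the same law is shared by $\widehat{X}+b=W^{j_b}_{\widehat{T}_b^{W^{j_b}}}+b$. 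This yields the joint distribution of $(X,\widehat{X})$ given $Y$ together with their conditional independence.

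For the six point measures, one simply reads them off as restrictions: $\widehat{\mathcal{N}}_0=\widehat{\mathcal{N}}|_{[0,H)}$ (which by definition of $H$ only carries atoms with $\omega_*>-b$) and $\mathcal{N}_0=\check{\mathcal{N}}|_{[0,H]}$ have intensity $2\1_{[0,H]}(t)\1_{\{\omega_*>-b\}}dt\,\N_{Y(t)}(d\omega)$; $\mathcal{N}_2=\widehat{\mathcal{N}}|_{(H,\zeta_{T_b}]}$ (unconstrained) and $\mathcal{N}_1=\check{\mathcal{N}}|_{(H,\zeta_{T_b}]}$ (constrained to $\omega_*>-b$) have the stated intensities after the change of variables $t\mapsto t-H$; finally $\widehat{\mathcal{N}}_1$ and $\widehat{\mathcal{N}}_2$ live inside the subtree encoded by $W^{j_b}$ and are obtained by applying Theorem \ref{poisson} to $W^{j_b}$ and invoking time-reversal to parametrise them along the last-hit path $\widehat{X}$. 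The global conditional independence of the six measures given $(Y,X,\widehat{X})$ follows from the independence of $\widehat{\mathcal{N}}$ and $\check{\mathcal{N}}$ given $W_{T_b}$, the independence of disjoint restrictions of a Poisson measure, and the independence of what happens inside $W^{j_b}$ from what happens outside.

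The main obstacle is the delicate handling of $H$, which is neither a stopping time of $R$ alone nor a deterministic function of $R$: one has to verify carefully that conditioning on the path $Y$ up to this doubly-randomised time does not distort the post-$H$ Bessel law, which is exactly what the Markov-plus-Cox-killing argument delivers. Symmetrically, the identification of $\widehat{X}$ requires transferring every first-hit description inside $W^{j_b}$ to its last-hit counterpart, via the time-reversal invariance of $\N_{Y_H}$.
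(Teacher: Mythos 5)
Your proposal is correct and takes essentially the same route as the paper: decompose along the spine at the first atom of the unconstrained measure whose tree reaches below $-b$, use the Markov property of the Bessel spine at that (Cox) time, identify $W^{j_b}$ as $\N_{Y_H}(\cdot\,|\,W_*<-b)$ and apply Proposition \ref{Bessel}, Theorem \ref{poisson} and time-reversal inside it, and read off the six point measures from colouring/restriction independence of Poisson measures. The only difference is presentational: the paper implements these steps through two explicit applications of Palm's formula, which is exactly the computation underlying your ``first point of a Cox process'' and ``Markov-plus-Cox-killing'' identities.
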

    \begin{proof}
        We start with the derivation of the law of the trajectories. The proof relies on Palm's formula, that we recall here : if $\mathcal{M}$ is a Poisson point measure of intensity $m$, then for every measurable function $F\geq 0$, 
    \begin{equation*}
        \E\bigg[\int_E F(x,\mathcal{M})\mathcal{M}(dx)\bigg]=\int_E \E[F(x,\mathcal{M}+\delta_x)]m(dx).
    \end{equation*}    
    Recall the notation $\Pi_w$ introduced in the proof of Theorem \ref{poisson}. In what follows, $(R_t+b)_{0\leq t\leq T_0}$ stands for a Bessel process of dimension $-3$ starting at $b$ and stopped when it reaches $0$. Under $\N_0(\cdot\,|\,W_*<-b)$, we have, for $f,g,h$ non-negative measurable functions, 
        \begin{align*}
            &\E[f(Y)g(X)h(W^{j_b})]\\
            &=\E\bigg[f(Y)g(X)\Pi_R\bigg(\sum_{i\in I}h(W^i)\1_{t_i=t_{j_b}}\bigg)\bigg]\\
            &=\E\bigg[2\int_0^{T_0}dt f(R|_{[0,t]})g(R|_{[t,T_0]})\int_\s h(W)\1_{\{W_*<-b\}}\Pi_R\bigg(\mathcal{M}(\{(s,\omega):s<t,\omega_*<-b\})=0\bigg)\N_{R_t}(dW)\bigg]
            \end{align*}
            where we used Theorem \ref{poisson}. Using properties of Poisson point measures and the Markov property for the Bessel process $X$, we obtain 
            \begin{align*}
             &\E[f(Y)g(X)h(W^{j_b})]\\
            &=\E\bigg[2\int_0^{T_0}dt f(R|_{[0,t]})g(R|_{[t,T_0]})\int_\s h(W)\1_{\{W_*<-b\}}\exp\bigg(-2\int_0^t\N_{R_s}(W_*<-b)ds\bigg)\N_{R_t}(dW)\bigg]\\
            &=2\int_0^\infty dt\E\bigg[\1_{\{T_0>t\}}f(R|_{[0,t]})g(R|_{[t,T_0]})\int_\s h(W)\1_{\{W_*<-b\}}\exp\bigg(-2\int_0^t\N_{R_s}(W_*<-b)ds\bigg)\N_{R_t}(dW)\bigg]\\
            &=2\int_0^\infty dt\E\bigg[\1_{\{T_0>t\}}f(R|_{[0,t]})\E_{R_t}[g(R'|_{[0,T'_0]})]\exp\bigg(-2\int_0^t\N_{R_s}(W_*<-b)ds\bigg)\int_\s h(W)\1_{\{W_*<-b\}}\N_{R_t}(dW)\bigg]\\
            &=\E\big[f(R|_{[0,H]})\E_{R_H}[g(R'|_{[0,T'_0]})]\N_{R_H}(h(W)\,|\,W_*<-b)\big]
        \end{align*}
        where under $\E_x[\cdot]$, $R'+b$ has the law of a Bessel process of dimension $-3$ starting from $x$ and stopped when it reaches $0$.\\
        Therefore, given $Y$, $W^{j_b}$ is independent of $X$, and is distributed according to the measure $\N_{Y_H}(\cdot\,|\,W_*<-b)$ (note that the distribution of $W^{j_b}$ only depends on $Y_H$). Now, notice that the trajectory $\widehat{X}$ corresponds to the path $W_{\widehat{T}_b}$ in $W^{j_b}$. Therefore, given $Y$, the conditional distribution of $\widehat{X}$ is a consequence of Proposition \ref{Bessel}. Similarly, the distributions of $\widehat{\mathcal{N}_1}$ and $\widehat{\mathcal{N}}_2$ are given by Theorem \ref{poisson}. The law of $X$ follows from the Markov property applied at time $t$ in the previous calculation. \\
        For a Poisson point measure $\mathcal{N}$ on $[0,\zeta]\times\s$ and $0<t<\zeta$, we set 
        \[\mathcal{N}_{\leq t}=\mathcal{N}|_{[0,t]\times\s}\quad\text{ and }\mathcal{N}_{\geq t}=\mathcal{N}|_{[t,\zeta]\times\s}\]
        (note that $\mathcal{N}_{\leq t}$ and $\mathcal{N}_{\geq t}$ are independent). Another application of the Palm formula shows that, for $f,g,h,h_1,h_2$, non-negative measurable functions, we have 
          \begin{align*}
            &\E[f(Y)g(X)h(W^{j_b})h_1(\widehat{\mathcal{N}}_0)h_2(\mathcal{N}_2))]\\
            &=\E\bigg[2\int_0^{T_0}dtf(R|_{[0,t]})\E_{R_t}[g(R'|_{[0,T'_0]})]\int_\s h(W)\1_{\{W_*<-b\}}\Phi(t,R)\N_{R_t}(d W)\bigg]
        \end{align*}
        where 
        \begin{equation*}
            \Phi(t,R)=\Pi_R\big[h_1(\mathcal{M}_{\leq t})h_2(\mathcal{M}_{\geq t})\1\{\mathcal{M}(\{(s,\omega):s<t,\omega_*<-b\})=0)\}\big].
        \end{equation*}
        Using independence properties of Poisson point measures, we obtain 
        \begin{equation*}
            \Phi(t,R)=\exp\left(-2\int_0^t\N_{R_s}(W_*<-b)ds\right)\Pi_Y\big[h_1(\mathcal{M})\,\big|\,\mathcal{M}(\{(s,\omega):\omega_*<-b\})=0\big]\Pi_{X}\big[h_2(\mathcal{M})\big].
        \end{equation*}
        Hence, we have 
        \begin{align*}
        &\E[f(Y)g(X)h(W^{j_b})h_1(\widehat{\mathcal{N}}_0)h_2(\mathcal{N}_2))]\\
         &=\E\bigg[f(Y)g(X)\N_{Y_H}(h(W)\,|\,W_*<-b)\Pi_Y\big[h_1(\mathcal{M})\,\big|\,\mathcal{M}(\{(s,\omega):s<t,w_*<-b\})=0\big]\Pi_{X}\big[h_2(\mathcal{M}))\big]\bigg].
        \end{align*}
        Therefore, $\widehat{\mathcal{N}}_0$ and $\mathcal{N}_2$ have the distributions described by Proposition \ref{decomposition} (the fact that they are independent of the other point measures is a consequence of the independence between $Y,\,X$ and $\widehat{X}$).\\
        Finally, notice that given $X$, $\mathcal{N}$ is independent of $H$ (because the Poisson point measures of Theorem \ref{poisson} are independent). Hence, $(\mathcal{N}_0,\mathcal{N}_1)$ has the law of $(\mathcal{N}_{\leq H},\mathcal{N}_{\geq H})$. Furthermore, by properties of Poisson point measures, $\mathcal{N}_0$ (respectively $\mathcal{N}_1$) only depends on $Y$ (respectively $X$). \\
        Finally, the independence of the trajectories and the independence coming from Theorem \ref{poisson} shows that given $(Y,X,\widehat{X})$, the point measures $\mathcal{N}_0,\widehat{\mathcal{N}}_0,\mathcal{N}_1,\mathcal{N}_2,\widehat{\mathcal{N}}_1,\widehat{\mathcal{N}}_2$ are independent, which concludes the proof. 
    \end{proof}
   The previous proposition allows us to give a new representation of the Brownian sphere under the probability measure $\N_0(\cdot\,|\,W_*<-b)$. In this section, we abuse notations by using the same letters as in Proposition \ref{decomposition} to designate the processes and the Poisson point measures.\\
    Consider $(Y_t)_{t\in[0,T_0]}$ a Bessel process of dimension $-3$ starting from $b$ and stopped when it reaches $0$, and given $Y$, consider a random variable $H$ on $[0,T_0]$, with distribution 
    \begin{equation}\label{temps branchement}
        \p(H>t)=\exp\left(-2\int_0^t\N_{Y_s}(W_*<0)ds\right)\quad\text{ for every $t\in[0,T_0]$}.
    \end{equation}
    Then, consider $X$ and $\widehat{X}$ two Bessel processes of dimension $7$  starting from $0$, independent of $Y$ and stopped at their last hitting time of $Y_H$. Finally, given ($Y,X,\widehat{X}$), consider $\mathcal{N}_0,\widehat{\mathcal{N}}_0,\mathcal{N}_1,\mathcal{N}_2,\widehat{\mathcal{N}}_1,\widehat{\mathcal{N}}_2$ independent Poisson point measures with following intensities:
        \begin{itemize}[label=\textbullet]
            \item $\mathcal{N}_0,\widehat{\mathcal{N}_0}$ have the same intensity 
            \[2\1_{[0,H]}(t)\1_{\{W_*>0\}}dt\,\N_{Y(t)}(dW)\]
            \item $\mathcal{N}_1,\mathcal{N}_2$ have respective intensity
            \[2\1_{[0,S_{Y_H}]}(t)\1_{\{W_*>0\}}dt\,\N_{X(t)}(dW)\]
            and 
            \[2\1_{[0,S_{Y_H}]}(t)dt\,\N_{X(t)}(dW)\]
            where $S_{Y_H}$ is the last hitting time of $Y_H$ by $X$. 
            \item  $\widehat{\mathcal{N}}_1,\widehat{\mathcal{N}_2}$ have respective intensity 
            \[2\1_{[0,\widehat{S}_{Y_H}]}(t)\1_{\{W_*>0\}}dt\,\N_{\widehat{X}(t)}(dW)\]
            and
            \[2\1_{[0,\widehat{S}_{Y_H}]}(t)dt\,\N_{\widehat{X}(t)}(dW)\]
            where $\widehat{S}_{Y_H}$ is the last hitting time of $Y_H$ by $\widehat{X}$.
        \end{itemize}    
    We write $I_0,J_0,I_1,I_2,J_1,J_2$ for sets indexing the atoms of these Poisson point measures. Note that every atom $i$ encodes a random labelled tree $\T_i$. \\
    Each tuple $(Y,\mathcal{N}_0,\widehat{\mathcal{N}}_0)$, $(X,\mathcal{N}_1,\mathcal{N}_2)$ and $(\widehat{X},\widehat{\mathcal{N}}_1,\widehat{\mathcal{N}}_2)$ encodes a random labelled tree in the following way (we give the construction for $(Y,\mathcal{N}_0,\widehat{\mathcal{N}}_0)$).\\
    We introduce a random compact metric space $\T_{(0)}$ obtained from the spine decomposition 
    
    \begin{equation*}
        [0,H]\cup\bigcup_{i\in I_0\cup J_0}\T_i
    \end{equation*}
    by identifying, for every $i\in I_0\cup J_0$, the root $\rho_i$ of the tree $\T_i$ with the point $t_i$ of $[0,L]$, which gives us a compact random tree. Then, the distance $d$ is defined as follows. First, the restriction of $d$ to a subtree $\T_i$ is just the distance $d_i$. If $u,v\in[0,L]$, we set $d(u,v)=|u-v|$. If $u\in\T_i$ and $v\in[0,L]$, we take $d(u,v)=d(u,\rho_i)+d(\rho_i,v)$. Finally, if $u\in\T_i$ and $v\in\T_j$ with $i\neq j$, $d(u,v)=d(u,\rho_i)+d(\rho_i,\rho_j)+d(\rho_j,v)$. Note that $\T$ comes with a volume measure, which is just the sum of the volume measures on the trees $\T_i,i\in I_0\cup J_0$. 
    We can also define an exploration of the tree $\T_{(0)}$, as we did previously in subsection \ref{construction}. \\
   
    Similarly, $(X,\mathcal{N}_1,\mathcal{N}_2)$ and $(\widehat{X},\widehat{\mathcal{N}}_1,\widehat{\mathcal{N}_2})$ encode two random trees $\T_{(1)}$ and $\T_{(2)}$ (which have the same law). Then, we can define a new random tree $\T_b$ (we abuse notations, by keeping the one introduced in the previous section) obtained by gluing $\T_{(0)}$, $\T_{(1)}$ and $\T_{(2)}$. More precisely, set 
    \[\T_b=\T_{(0)}\cup\T_{(1)}\cup\T_{(2)}\]
    the space obtained by identifying the points $H$ of $\T_{(0)}$, $S_{Y_H}$ of $\T_{(1)}$ and $\widehat{S}_{Y_H}$ of $\T_{(2)}$, in such a way that the exploration process $\mathcal{E}$ of $\T_b$ proceeds as described in figure \ref{3 Arms}. The distance on $\T_b$ is defined as follows. 
    First, the restriction of $d$ to $\T_{(0)}$ (respectively $\T_{(1)}$, $\T_{(2)}$) is $d_0$ (respectively $d_1$, $d_2$). If $u\in\T_{(0)}$, and $v\in\T_{(1)}$, we set $d(u,v)=d_0(u,H)+d_1(S_{Y_H},v)$. We proceed in the same way for the other cases. \\
   
    Next, we assign labels to the points of $\T_b$. If $t\in[0,L]$ (respectively $t\in[0,T_0]$, $t\in[0,T'_0]$), we set $Z_t=Y(t)$ (respectively $Z_t=X(t)$, $Z_t=\widehat{X}(t)$). 
    Otherwise, if $u\in\T_i$, we take $Z_u=\hat{W}_u(\omega_i)$. \\
    For every $s\in[W_*,b]$, we set 
    \begin{equation*}
        \eta_s=\inf\{t\geq0,Z_{\mathcal{E}_t}=s\}\quad\text{and}\quad\widehat{\eta}_s=\sup\{t\geq0,Z_{\mathcal{E}_t}=s\}.
    \end{equation*}
    Note that $\eta_s=T_{b-s}$ and $\widehat{\eta}_s=\widehat{T}_{b-s}$. Therefore, we will abuse notations by using $p_\T(T_s)$ and $p_\T(\widehat{T}_s)$ to refer to $\mathcal{E}_{\eta_{b-s}}$ and $\mathcal{E}_{\widehat{\eta}_{b-s}}$. In particular, $p_\T(T_b)$ and $p_\T(\widehat{T}_b)$ stand for the roots of $\T_{(1)}$ and $\T_{(2)}$. Therefore, with these notations, we have $Z_{p_\T(T_s)}=b-s$.\\
    Those labels allow us to define a pseudo-distance on $\T_b$ as follows. For $u,v\in\T_b$, we set:
    \begin{equation*}
        D^\circ(u,v)=Z_u+Z_v-2\max\left( \inf_{w\in[u,v]}Z_w,\inf_{w\in[v,u]}Z_w\right)
    \end{equation*}
    and then 
    \begin{equation*}
        D(u,v)=\inf\bigg\{\sum_{i=1}^pD^\circ(u_{i-1},u_i)\bigg\}
    \end{equation*}
    where the infimum is taken over all the choices of integer $p\geq1$ and of the finite sequences $u_0,...,u_p$ such that $u_0=u$ and $u_p=v$. We write $\s_b=\T_b/\{D=0\}$ for the quotient space, which naturally comes with the distance induced by $D$, and is rooted at the projection of $0\in\T_b$, denoted by $\rho$. We abuse notations, by denoting by $p_\s$ the canonical projection onto $\s_b$. The following proposition is just a consequence of Proposition \ref{decomposition}, and Williams time reversal theorem.
    
\begin{proposition}\label{construction 3 arms}
      We have
      \[\left(\T_b,(Z_u-b)_{u\in \T_b},\rho\right)\overset{(d)}{=}\left(\T,(Z_u)_{u\in\T},\rho_\T\right)\]
      and 
      \[\left(\s_b,D,\rho\right)\overset{(d)}{=}\left(\s,D,\rho\right)\]
     where the right-hand members are distributed under $\N_0(\cdot\,|\,W_*<-b)$.
\end{proposition}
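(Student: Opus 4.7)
The plan is to obtain both equalities from Proposition \ref{decomposition} via a global label shift by $+b$ combined with Williams' time-reversal theorem applied to the two arms $X$ and $\widehat{X}$. Start by applying Proposition \ref{decomposition} to $\T$ under $\N_0(\cdot\,|\,W_*<-b)$, producing three trajectories $Y^\dagger, X^\dagger, \widehat{X}^\dagger$ together with six conditionally independent Poisson point measures of subtrees. Since the map $(Z_u)_{u\in\T}\mapsto(Z_u+b)_{u\in\T}$ is a measurable bijection on labelled trees, it suffices to show that the shifted data have the same joint law as $(\T_b,(Z_u)_{u\in\T_b},\rho)$ in the construction. After this shift, Proposition \ref{Bessel} gives that $Y^\dagger+b$ is Bessel of dimension $-3$ starting at $b$ and stopped at $0$; conditionally on $Y^\dagger+b$, the processes $X^\dagger+b$ and $\widehat{X}^\dagger+b$ are independent Bessel $(-3)$ processes from $Y^\dagger_H+b$ stopped at $0$. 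Moreover, under the shift, $\N_y$ becomes $\N_{y+b}$ and $\{W_*>-b\}$ becomes $\{W_*>0\}$, while the conditional law of the branching time $H$ given $Y^\dagger$ already matches \eqref{temps branchement}, since $\N_{W_{T_b}(s)}(W_*<-b)=\N_{Y^\dagger_s+b}(W_*<0)$.

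Next, apply Williams' time-reversal theorem (recalled after Proposition \ref{Bessel}) separately to $X^\dagger+b$ and $\widehat{X}^\dagger+b$: the reversed trajectories are, conditionally on $Y^\dagger+b$, two independent Bessel $(7)$ processes from $0$ stopped at the last hitting time of $Y^\dagger_H+b$, which is exactly the law of $X$ and $\widehat{X}$ in the construction. Reversing a Poisson point measure on $[0,\zeta]\times C(\R_+,\mathcal{W})$ with intensity $2\1_{[0,\zeta]}(t)\,dt\,\N_{X^\dagger(t)+b}(d\omega)$ via the change of variable $u=\zeta-t$ yields a Poisson point measure with intensity $2\1_{[0,S_{Y^\dagger_H+b}]}(u)\,du\,\N_{X(u)}(d\omega)$, matching $\mathcal{N}_2$ in the construction; inserting the indicator $\1_{\{W_*>0\}}$ gives $\mathcal{N}_1$, and the hatted side is identical. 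The measures $\mathcal{N}_0,\widehat{\mathcal{N}}_0$ attached to the first arm carry over without reversal. The gluing pattern of the construction (identifying the tip $H$ of $\T_{(0)}$ with the points $S_{Y_H}\in\T_{(1)}$ and $\widehat{S}_{Y_H}\in\T_{(2)}$, and using the exploration $\mathcal{E}$ of Figure \ref{3 Arms}) is dictated by the definition of the three arms in Subsection \ref{three arms}, since $H$ is the last common ancestor of $p_\T(T_b)$ and $p_\T(\widehat{T}_b)$ in $\T$. Trajectories, Poisson measures and gluing therefore match in law, yielding the first equality.

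The second equality is then immediate: $D^\circ$, and hence $D$, depends on labels only through expressions of the form $Z_u+Z_v-2\inf Z_w$, which are invariant under a global constant label shift. Thus $(\s_b,D,\rho)\overset{(d)}{=}(\s,D,\rho)$ under $\N_0(\cdot\,|\,W_*<-b)$, where both roots are images under the canonical projection of the starting point of the first arm (the root of the underlying tree). The main technical obstacle is the joint time-reversal of the two arms together with their attached Poisson point measures: conceptually routine, since a Poisson point measure is pushed forward to a Poisson point measure under any measurable bijection, but it requires careful tracking of the six intensities, their conditioning indicators, and the reparameterization of the gluing data.
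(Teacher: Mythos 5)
Your argument is correct and follows exactly the route the paper intends: the paper's own ``proof'' is the single remark that the proposition is a consequence of Proposition \ref{decomposition} together with Williams' time-reversal theorem, and your write-up simply fills in the routine details (label shift by $b$, reversal of the two Bessel $(-3)$ arms into Bessel $(7)$ arms stopped at the last hitting time of $Y_H$, pushforward of the Poisson point measures under time reversal, and invariance of $D^\circ$, hence $D$, under a constant label shift). No gap to report.
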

\begin{remark}
    Note that the exploration process of $\T_b$ (with labels shifted by $-b$) allows us to define a snake trajectory associated to $\T_b$, and this snake trajectory has the law of $\N_0(\cdot\,|\,W_*<-b)$. Therefore, we will often consider $\left(\T_b,(Z_u-b)_{u\in \T_b}\right)$ and $\left(\T,(Z_u)_{u\in \T}\right)$ (under $\N_0(\cdot\,|\,W_*<-b)$) as being the same object, and view the processes $(Y,X,\widehat{X},\mathcal{N}_0,\widehat{\mathcal{N}}_0,\mathcal{N}_1,\mathcal{N}_2,\widehat{\mathcal{N}}_1,\widehat{\mathcal{N}}_2)$ as being part of $\T$. 
\end{remark}
    
\subsection{The bigeodesic Brownian plane}\label{presentation}

In this section, we define the limiting object that will appear in our convergence results. The following objects are the infinite-volume equivalent of those introduced in section \ref{construction}. 

For this purpose, we let $(X^\infty_t)_{t\geq0}$ be a Bessel process of dimension $7$  and, given $X^\infty$, we consider two independent Poisson point measures $\mathcal{N}^\infty_1$ and $\mathcal{N}^\infty_2$ of respective intensities
    \begin{equation*}
       2\mathbf{1}_{\{\omega_*>0\}}dt\N_{X^\infty_t}(d\omega)
    \end{equation*}
    and
    \begin{equation*}
      2dt\N_{X^\infty_t}(d\omega).
    \end{equation*}
   We write $I_1^\infty$ (respectively $I_2^\infty$) for sets indexing the atoms of the measure $\mathcal{N}_1^\infty$ (respectively $\mathcal{N}_2^\infty$). Each of these atoms $i\in I_1^\infty\cup I_2^\infty$ encodes a tree $\T_i$, which naturally comes with labels. Then, we construct our random metric space from the spine decomposition 
    \begin{equation*}
        [0,+\infty)\cup\bigcup_{i\in I_1^\infty\cup I_2^\infty}\mathcal{T}_i
    \end{equation*}
    by identifying, for every $i\in I_1^\infty\cup I_2^\infty$, the root $\rho_i$ of the tree $\T_i$ to the point $t_i$ of $[0,+\infty)$, which give us a non-compact random tree $\T_\infty$, that we root at 0. Note that according to the representation of Section \ref{construction}, $\T_\infty$ is the infinite volume version of the tree $\T$ under $\N_0(\cdot\,|\,W_*<-b)$ rooted at $T_b$. The distance $d_\infty$ on $\T_\infty$ is defined just as the distance $d$ on $\T_b$ in section \ref{construction}. We also have two exploration functions $\mathcal{E}^-$ and $\mathcal{E}^+$ defined as in section \ref{construction} (that explore respectively the sides coded by $\mathcal{N}^\infty_1$ and $\mathcal{N}_2^\infty$).\\
    These exploration functions allow us to define intervals on $\T_\infty$, as we did in \ref{construction} with $\T_b$. Noting that $\mathcal{E}^-_0=\mathcal{E}^+_0=0$, we define $(\mathcal{E}^\infty_s)_{s\in\R}$ by 
    \begin{equation*}
        \mathcal{E}^\infty_s=
        \begin{cases}
            \mathcal{E}_s^+ &\text{if }s\geq 0,\\
            \mathcal{E}_{-s}^- &\text{if }s\leq 0.
        \end{cases}
    \end{equation*}
    We make the convention that, for $s>t$, the  ``interval'' $[s,t]$ is defined by $[s,t]=[s,+\infty)\cup(-\infty,t]$. Now, for every $u,v\in\T_\infty$, there exists a unique smallest interval $[s,t]$ such that $\mathcal{E}^\infty_s=u$ and $\mathcal{E}^\infty_t=v$, and we define 
    \begin{equation*}
        [u,v]=\{\mathcal{E}^\infty_r,\,r\in[s,t]\}.
    \end{equation*}
    \newline
    The tree $\T_\infty$ naturally comes with labels from the process $X^\infty$ and the labeled trees $\T_i,i\in I_1^\infty\cup I_2^\infty$. More precisely, for $t\in[0,\infty)$, we set $Z^\infty_t=X^\infty_t$ and for $u\in\T_i$, we set $Z^\infty_u=\hat{\omega}_i(u)$. These labels allow us to define a function $D^\circ_\infty$ on $\T_\infty\times\T_\infty$ with the same formulas as the ones used in Section \ref{construction}:
    \begin{equation*}
        D^\circ_\infty(u,v)=Z^\infty_u+Z^\infty_v-2\max\left(\inf_{w\in[u,v]}Z^\infty_w,\inf_{w\in[v,u]}Z^\infty_w\right).
    \end{equation*}
    Note that this pseudo-distance satisfies the bound \eqref{Bound}.
    \newline
    We also introduce hitting times in the tree $\T_\infty$. For every $s\in\R$, we set 
    \begin{equation*}
    T^\infty_s=\inf\{t\in\R,\;Z^\infty_{\mathcal{E}^\infty_t}=s\}.       
    \end{equation*}   
    
    \begin{lemme}
       Almost surely, for every $s\in\R$, it holds that $T^\infty_s<\+\infty$. 
    \end{lemme}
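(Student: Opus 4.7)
The plan is to deduce the lemma from two almost-sure facts about $\T_\infty$: (i) every point of $\T_\infty$ is reached by the exploration $\mathcal{E}^\infty$ in finite time, and (ii) the label map $Z^\infty$ is surjective onto $\R$. For (i), the exploration time along $\mathcal{E}^+$ to reach the spine point at spinal time $u \geq 0$ is $\beta_u := \sum_{i \in I_2^\infty,\, t_i \leq u} \sigma(\omega_i)$. I would split this as $\beta_u = \sum \sigma_i \mathbf{1}_{\{\sigma_i \leq 1\}} + \sum \sigma_i \mathbf{1}_{\{\sigma_i > 1\}}$. Using the explicit density $(2\sqrt{2\pi t^3})^{-1}$ of $\sigma$ under $\N_x$, which does not depend on $x$, Campbell's formula gives that the first sum has finite mean $\tfrac{2u}{\sqrt{2\pi}}$, hence is a.s.\ finite; for the second sum, the number of atoms with $\sigma_i > 1$ in $[0, u]$ is Poisson with the same finite mean, so it is a.s.\ a sum of finitely many a.s.\ finite terms. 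Monotonicity and restriction to rational $u$ then yield $\beta_u < \infty$ a.s.\ for all $u \geq 0$ simultaneously, and symmetrically for $\mathcal{E}^-$. Any point inside a subtree $\T_i$ is then reached in the finite exploration window $[\beta_{t_i-}, \beta_{t_i}]$, so (i) holds.

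For (ii), the tree $\T_\infty$ is path-connected and $Z^\infty$ is continuous on it (the matching of the spine value $X^\infty_{t_i}$ with the root label of each $\T_i$ ensures continuity at every attachment point), so $Z^\infty(\T_\infty)$ is an interval of $\R$. Unboundedness above is immediate: $X^\infty$ is a Bessel process of dimension $7$, hence transient with $X^\infty_t \to +\infty$ a.s., and it visits every $s \geq 0$ at an a.s.\ finite time. For unboundedness below, fix $M > 0$ and consider the atoms of $\mathcal{N}_2^\infty$ with $W_*(\omega_i) \leq -M$: conditionally on $X^\infty$, their count in $[0, T]$ is Poisson with mean $\int_0^T 3(X^\infty_t + M)^{-2}\, dt$ by \eqref{inf}. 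The crux of the proof is that this integral diverges a.s.\ as $T \to \infty$; by the upper law of the iterated logarithm for Bessel processes of dimension $7$, a.s.\ $X^\infty_t \leq C_\omega \sqrt{t \log\log t}$ for large $t$, so the integrand is bounded below by $c_\omega / (t \log\log t)$, which is non-integrable at $+\infty$. Hence at least one such atom exists a.s., and applying the intermediate value theorem to the continuous label process along the associated subtree (whose root carries label $X^\infty_{t_i} \geq 0$ and whose tip minimum is $\leq -M$) produces a point of $\T_\infty$ with label exactly $-M$. Taking $M = n \in \N$ along a countable sequence gives a.s.\ unboundedness below.

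Combining the two ingredients finishes the argument: a.s., $Z^\infty(\T_\infty) = \R$, so for every $s \in \R$ there exists $u \in \T_\infty$ with $Z^\infty_u = s$, and by (i) the exploration reaches $u$ at a finite time, yielding $T^\infty_s < +\infty$. The main obstacle is the a.s.\ divergence of $\int_0^\infty (X^\infty_t + M)^{-2}\, dt$; the LIL approach above is the most elementary route, but a cleaner alternative would be to invoke the scaling invariance of Bessel processes of dimension $7$ together with a $0$--$1$ law to directly rule out finiteness of this additive functional without estimating the upper envelope.
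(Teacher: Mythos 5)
Your argument is correct as far as it goes, and it does establish the literal inequality $T^\infty_s<+\infty$: for $s\le 0$ your route is essentially the paper's (a.s.\ existence of an atom of $\mathcal{N}_2^\infty$ with $\omega_*<s$, reduced to the a.s.\ divergence of $\int_0^\infty (X^\infty_t-s)^{-2}\,dt$; you get the divergence from the upper LIL envelope where the paper invokes Bessel scaling — both work), and your step (i), showing $\sum_{t_i\le u}\sigma(\omega_i)<\infty$ so that every point is reached at a finite exploration time, is a legitimate ingredient that the paper leaves implicit.

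However, there is a genuine gap relative to what this lemma has to deliver, and it is exactly the half of the paper's proof you skipped. Since $T^\infty_s=\inf\{t\in\R:Z^\infty_{\mathcal{E}^\infty_t}=s\}$ with the infimum over \emph{all} real times, and since $\tau_s=\mathcal{E}^\infty_{T^\infty_s}$ is defined immediately afterwards, the real content for $s>0$ is not that the level set is non-empty (the spine trivially hits $s$), but that the infimum is not $-\infty$: a priori, points with label $s$ could occur in subtrees $\T_i$, $i\in I_1^\infty$, grafted at arbitrarily large spine times $t_i$, hence at arbitrarily negative times of $\mathcal{E}^\infty$. This is why the paper proves that for every $A>0$,
\[\lim_{R\rightarrow\infty}\p\Big(\inf_{i\in I_1^\infty,\,t_i>R}\omega_{i*}<A\Big)=0,\]
using the transience of the Bessel process of dimension $7$ together with the a.s.\ convergence of $\int_R^\infty\big((X^\infty_t-A)^{-2}-(X^\infty_t)^{-2}\big)\,dt$; combined with $\inf_{t>R}X^\infty_t>A$, this shows that beyond a random finite spine time no label $\le A$ appears on the $\mathcal{N}_1^\infty$ side, so the level-$s$ set is bounded below in exploration time and $T^\infty_s$ is a genuine real number. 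Your proposal never engages with this: observing that the spine hits $s$ and that each fixed point is explored in finite time gives $T^\infty_s<+\infty$ but says nothing about ruling out $T^\infty_s=-\infty$, without which $\tau_s$ (and hence the bigeodesic $\Gamma_\infty$) is not well-defined. Note also that, conversely, the convergence (rather than divergence) of the tail integral is what makes this second estimate work, so it cannot be absorbed into your divergence argument; it requires a separate transience-type computation of the kind carried out in the paper.
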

    \begin{proof}
      We first assume that $s\leq 0$. By properties of Poisson point process, we have 
        \begin{align*}
           \p(T_s=\infty)= \E\left[\Pi_{X^\infty}\left(\mathcal{M}(\{(t,\omega),\,\omega_*<s\})=0\right)\right]&=\E\left[\exp\left(-2\int_0^\infty\N_{X^\infty_t}(W_*<s)dt\right)\right]\\
            &=\E\left[\exp\left(-3\int_0^\infty\frac{1}{(X^\infty_t-s)^2}dt\right)\right]\\
            &= 0
        \end{align*}
        because the integral diverges almost surely (this is a consequence of the scaling property of Bessel processes). \newline
        To prove the result for $s>0$ , it is enough to show that for every $A>0$, we have
        \begin{equation*}
            \lim_{R\rightarrow\infty}\p\bigg(\inf_{i\in I_1^\infty,t_i>R}\omega_{i_*}<A\bigg)=0. 
        \end{equation*} 
        We proceed as in the proof of Lemma $3.3$ of \cite{Hullprocess2016}. We have, for every $R>0$,
        \begin{align*}
          &\p\left(\inf_{i\in I_1^\infty,t_i>R}\omega_{i_*}<A\right)\\  
          &=\p\left(\inf_{t>R}X^\infty_t<A\right)+\E\left[\1\{\inf_{t>R}X^\infty_t>A\}\left(1-\exp\left(-2\int_R^\infty \N_{X^\infty_t}(0\leq W_*\leq A)dt\right)\right)\right]\\
          &=\p\left(\inf_{t>R}X^\infty_t<A\right)+\E\left[\1\{\inf_{t>R}X^\infty_t>A\}\left(1-\exp\left(-3\int_R^\infty (\frac{1}{(X^\infty_t-A)^2}-\frac{1}{(X^\infty_t)^2})dt\right)\right)\right].
        \end{align*}
        The result follows from the convergence of the last integral and from the transience of the Bessel processes of dimension 7. 
    \end{proof}
    For every $s\in \R$, set
    \begin{equation*}
        \tau_s=
            \mathcal{E}^\infty_{T^\infty_s},
    \end{equation*}
    so that (roughly speaking), the set $(\tau_s)_{s\in \R}$ is the counterpart of the set $(p_\T(T_{b+s}))_{s\in\R}$ in the finite tree $\T_b$.\\
    Consider two independent and identically distributed processes $(X^\infty,\mathcal{N}_1^\infty,\mathcal{N}_2^\infty)$ and $(\widehat{X}^\infty,\widehat{\mathcal{N}}_1^\infty,\widehat{\mathcal{N}}_2^\infty$ with the law described at the beginning of Section \ref{presentation}. As explained before, these processes encode two independent labelled trees $\T_\infty$ and $\widehat{\T}_\infty$. We will construct a new space by gluing together $\T_\infty$ and $\widehat{\T}_\infty$ along the sets $(\tau_s)_{s\in \R}$ and $(\widehat{\tau}_s)_{s\in \R}$ (where the notation $\widehat{\tau}$ refers to the space $\widehat{\T}_\infty$) as follows. For every $u,v\in\T_\infty\cup\widehat{\T}_\infty$, we define:     
    \begin{equation*}
        \overline{D}_\infty^\circ(u,v)=
        \begin{cases}
            D^\circ_\infty(u,v)\text{ (or }\widehat{D}^\circ_\infty(u,v))&\quad \text{if }u,v\in \T_\infty\text{ (or }u,v\in\widehat{\T}_\infty)\\    \inf_{s\in\R}\big(D^\circ_\infty(u,\tau_s)+\widehat{D}^\circ_\infty(\widehat{\tau}_s,v)\big)&\quad\text{if }u\in\T_\infty,\,v\in\widehat{\T}_\infty.
        \end{cases}
    \end{equation*}
    Note that, for every $s\in\R$, $\overline{D}^\circ_\infty(\tau_s,\widehat{\tau}_s)=0$. Then, set 
    \begin{equation*}
        \overline{D}_\infty(u,v)=\inf\bigg\{\sum_{i=1}^p\overline{D}_\infty^\circ(u_{i-1},u_i)\bigg\}   
    \end{equation*}
    where the infimum is taken over all the choices of the integer $p\geq1$ and of the finite sequences $u_0,...,u_p\in\T_\infty\cup\widehat{\T}_\infty$ such that $u_0=u$ and $u_p=v$.
 
    \begin{definition}
    The \textit{bigeodesic Brownian plane} is the quotient space $\BP=(\T_\infty\cup\widehat{\T}_\infty)/\{\overline{D}_\infty=0\}$, equipped with the distance $\overline{D}_\infty$ and with the distinguished point $\overline{\rho}_\infty$ which is the equivalence class of $0$. 
    \end{definition}
The space $\BP$ also has a natural projection $p_{\BP} :\T_\infty\cup\widehat{\T}_\infty\longrightarrow\BP$. Furthermore, it comes with a natural volume measure $\overline{\mu}$, which is the pushforward of the volume measure on the trees $\T_\infty\cup\widehat{\T}_\infty$ under the projection $p_{\BP}$. Note that the bound \eqref{Bound} remains valid for this space. \\
Let $x\in\BP$. A geodesic ray from $x$ is a continuous path $\gamma:\R_+\longrightarrow \BP$ such that $\gamma(0)=x$ and $\overline{D}_\infty(\gamma(s),\gamma(t))=|t-s|$ for every $s,t\geq0$. A bigeodesic from $x$ is a continuous path $\gamma:\R\longrightarrow \BP$ such that $\gamma(0)=x$ and $\overline{D}_\infty(\gamma(s),\gamma(t))=|t-s|$ for every $s,t\in\R$. Note that every bigeodesic $\gamma$ from $x$ naturally contains two geodesic rays from $x$, which are $\gamma|_{\R_+}$ and $\gamma(-\,\cdot)|_{\R_+}$.  \\
Note that there are at least two distinct geodesic rays starting from the root $\overline{\rho}_\infty$, which are 
\[\Gamma^+_\infty(s)=p_{\overline{\mathcal{BP}}}(\tau_s),\quad \text{for $s\geq 0$}\]
and
\[\Gamma^-_\infty(s)=p_{\overline{\mathcal{BP}}}(\tau_{-s}),\quad \text{for $s\geq 0$}.\]
Furthermore, these two geodesics form a bigeodesic, which is 
\[\Gamma_\infty(s)=p_{\BP}(\tau_s)\quad\text{ for every }s\in\R\] (these results are just consequences of the bounds \eqref{Bound} and $\overline{D}_\infty\leq\overline{D}^\circ_\infty$). Consequently, by Proposition 15 of \cite{brownianplane}, the random space $\BP$ is not the Brownian plane, more precisely, the two spaces have mutually singular laws.  
    The following result is a consequence of Theorem \ref{convergence}, but could also be obtained directly by using scaling properties of the labelled tree $\T_\infty$. 
    \begin{proposition}
    $\BP$ is scale invariant, meaning that for every $\lambda>0$, the metric space $\big(\BP,\lambda\cdot \overline{D}_\infty,\overline{\rho}_\infty\big)$ has the same distribution than $\big(\BP,\overline{D}_\infty,\overline{\rho}_\infty\big)$.
    \end{proposition}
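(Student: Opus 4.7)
The plan is to exhibit an explicit measure-preserving transformation at the level of the underlying data $(X^\infty, \widehat{X}^\infty, \mathcal{N}_1^\infty, \mathcal{N}_2^\infty, \widehat{\mathcal{N}}_1^\infty, \widehat{\mathcal{N}}_2^\infty)$ whose effect on the constructed space $\BP$ is precisely to multiply $\overline{D}_\infty$ by $\lambda$. Since $\overline{D}_\infty$ is built from the labels $Z^\infty$ and from the interval structure of $\T_\infty\cup\widehat{\T}_\infty$ alone (via $D^\circ_\infty$ and its infimum-of-chains definition), it suffices to find a bijection of labelled trees that preserves intervals and scales all labels by $\lambda$.

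Concretely, I would first use the scaling property of Bessel processes recalled in Section \ref{construction}: the process $X^{\infty,\lambda}_t := \lambda X^\infty_{t/\lambda^2}$ is again a Bessel of dimension $7$ starting from $0$, and similarly for $\widehat{X}^{\infty,\lambda}$. For each atom $(t_i,\omega_i)$ of $\mathcal{N}_1^\infty$ (resp.\ $\mathcal{N}_2^\infty$), I consider the transformed atom $(\lambda^2 t_i,\Theta_{\lambda^2}(\omega_i))$. Using the scaling property \eqref{scaling} of the Brownian snake excursion measure, which says that the pushforward of $\N_x$ by $\Theta_{\lambda^2}$ is $\lambda^2 \N_{\lambda x}$, together with the change of variables $t'=\lambda^2 t$ and the observation that $\Theta_{\lambda^2}(\omega)_*=\lambda\,\omega_*$ (so $\{\omega_*>0\}$ is preserved), the image point measure has, conditionally on $X^{\infty,\lambda}$, intensity
\begin{equation*}
  2\1_{\{\omega'_*>0\}}\,dt'\,\N_{X^{\infty,\lambda}_{t'}}(d\omega')\quad\text{(resp.\ without the indicator)},
\end{equation*}
which is exactly the intensity defining the corresponding measures in the construction of a fresh $\BP^{\lambda}$. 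Thus the transformed tuple has the same law as the original one.

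Next, I would define a natural bijection $\Phi:\T_\infty\cup\widehat{\T}_\infty\longrightarrow\T^\lambda_\infty\cup\widehat{\T}^\lambda_\infty$: on the spine, the point at position $t$ is sent to the point at position $\lambda^2 t$; on each subtree $\T_i$, a point at exploration time $s$ is sent to the point at exploration time $\lambda^4 s$ in $\T_{\Theta_{\lambda^2}(\omega_i)}$ (since $\Theta_{\lambda^2}$ multiplies the duration by $\lambda^4$). Because the exploration processes $\mathcal{E}^\infty$ and $\widehat{\mathcal{E}}^\infty$ are just concatenations of individual explorations in the order of the spine positions, $\Phi$ transports the exploration process by a strictly increasing time change, so it preserves intervals: $\Phi([u,v])=[\Phi(u),\Phi(v)]$. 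From the formula $\Theta_{\lambda^2}(\omega)_s(t)=\lambda\,\omega_{s/\lambda^4}(t/\lambda^2)$, one directly checks that labels satisfy $Z^{\lambda}_{\Phi(u)}=\lambda\,Z^\infty_u$. Moreover the gluing sets $\tau_s$ and $\widehat{\tau}_s$ are defined by level sets $\{Z^\infty=s\}$ of the spine label, and $\Phi(\tau_s)=\tau^\lambda_{\lambda s}$, $\Phi(\widehat{\tau}_s)=\widehat{\tau}^\lambda_{\lambda s}$; thus $\Phi$ is compatible with the identification $\tau_s\sim\widehat{\tau}_s$ that defines $\BP$.

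Plugging this into the definitions of $D^\circ_\infty$ and $\overline{D}_\infty^\circ$, I then get $\overline{D}_\infty^\circ(\Phi(u),\Phi(v))=\lambda\,\overline{D}_\infty^\circ(u,v)$, and taking infima over chains yields $\overline{D}_\infty(\Phi(u),\Phi(v))=\lambda\,\overline{D}_\infty(u,v)$. Hence $\Phi$ descends to a root-preserving isometry $(\BP,\lambda\overline{D}_\infty,\overline{\rho}_\infty)\simeq(\BP^\lambda,\overline{D}^{\lambda}_\infty,\overline{\rho}^{\lambda}_\infty)$, and by the equality in law of the transformed data, the right-hand side has the same distribution as $(\BP,\overline{D}_\infty,\overline{\rho}_\infty)$, concluding the proof. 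The only real work is the bookkeeping for the change of variables in the Poisson intensities and verifying that the exploration-time rescaling by $\lambda^4$ is consistent across the whole spine decomposition; there is no serious analytic obstacle.
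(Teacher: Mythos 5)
Your argument is correct, but it is not the route the paper officially takes: the paper states the proposition as a consequence of Theorem \ref{convergence} (if $(\s,\varepsilon^{-1}D,\Gamma_b)$ converges in law to $\BP$, then running the same convergence along $\varepsilon'=\lambda\varepsilon$ identifies $\lambda^{-1}\cdot\BP$ with $\BP$ in distribution), and only remarks in passing that one ``could also obtain it directly by using scaling properties of the labelled tree $\T_\infty$''. What you have written out is precisely that direct argument, with the correct bookkeeping: spine time scales by $\lambda^2$ (Bessel scaling for $X^\infty,\widehat X^\infty$), excursions are transformed by $\Theta_{\lambda^2}$ so that the factor $\lambda^{2}$ from the pushforward $\Theta_{\lambda^2}{}_*\N_x=\lambda^2\N_{\lambda x}$ cancels the Jacobian $\lambda^{-2}$ of the time change in the Poisson intensities, exploration time scales by $\lambda^4$, labels by $\lambda$, intervals are preserved, and the gluing sets satisfy $\Phi(\tau_s)=\tau^\lambda_{\lambda s}$, $\Phi(\widehat\tau_s)=\widehat\tau^\lambda_{\lambda s}$, so that $\overline{D}^\circ_\infty$ and hence $\overline{D}_\infty$ scale by $\lambda$ and $\Phi$ descends to a root-preserving isometry of the quotients. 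Your approach buys self-containedness: it does not invoke Theorem \ref{convergence}, which matters here because the paper itself uses scale invariance of $\BP$ at intermediate points (e.g.\ to extend the second statement of Proposition \ref{localisation} to all $\varepsilon$ and to upgrade Proposition \ref{coupling} to the first statement of Theorem \ref{convergence}), so the construction-level proof avoids any appearance of circularity; the paper's route, conversely, is softer and shorter once the convergence theorem is available.
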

    
\section{The localization lemma}\label{Section4}

In this section, we prove that the local behavior in the Brownian sphere (around $\Gamma_b$) only depends on local information in the tree $\T_b$, in a quantitative way, meaning that points that are close to $\Gamma_b$ in the Brownian sphere must also be concentrated in small regions of the  tree. This result will be used several times later to justify that these small regions encode all the necessary information to perform a local convergence. This kind of results were already known when $\Gamma_b$ is replaced by $\Gamma_0$ and in the discrete case (see \cite{brownianplane} for instance), but their proofs rely on the so-called cactus-bound, a refinement of \eqref{Bound}, which cannot be applied in our setting.\\

Recall that $I_1^\infty$ (respectively $I_2^\infty,\,J_1^\infty,\,J_2^\infty$) is a set indexing the atoms of the Poisson point measure $\mathcal{N}_1^\infty$ (respectively $\mathcal{N}_2^\infty,\,\widehat{\mathcal{N}}_1^\infty,\,\widehat{\mathcal{N}}_2^\infty$). For every $L>0$, we define the following subset of $\T_\infty$:
\begin{equation*}
    A_{L}^\infty:=[0,S^\infty_L]\cup\bigcup_{i\in I_1^\infty\cup I_2^\infty,t_i<S^\infty_L}\T_i
\end{equation*}
where $S^\infty_L$ is the last hitting time of $L$ by the Bessel process $X^\infty$ used to build $\T_\infty$.
Similarly, for $\widehat{\T}_\infty$, we set
\begin{equation*}
    \widehat{A}_{L}^\infty:=[0,\widehat{S}^\infty_L]\cup\bigcup_{i\in J_1^\infty\cup J_2^\infty,t_i<\widehat{S}^\infty_L}{\T}_i.
\end{equation*}
These sets correspond to the ancestral lines of the trees $\T_\infty$ and $\widehat{\T}_\infty$ up to the last point of their spines with label $L$, together with the subtrees branching off the spine before these points. \\
We can define similar sets for the tree $\T_b$. For every $L>0$, set 
\begin{equation*}
    A_{L}:=\llbracket p_\T(T_b),S_L\rrbracket\cup\bigcup_{i\in I_1\cup I_2,t_i<S_L}\T_i
\end{equation*}
and 
\begin{equation*}
    \widehat{A}_{L}:=\llbracket p_\T(\widehat{T}_b),\widehat{S}_L\rrbracket\cup\bigcup_{i\in J_1\cup J_2,t_i<\widehat{S}_L}{\T}_i.
\end{equation*}
\begin{figure}
    \centering
    \includegraphics[scale=0.5]{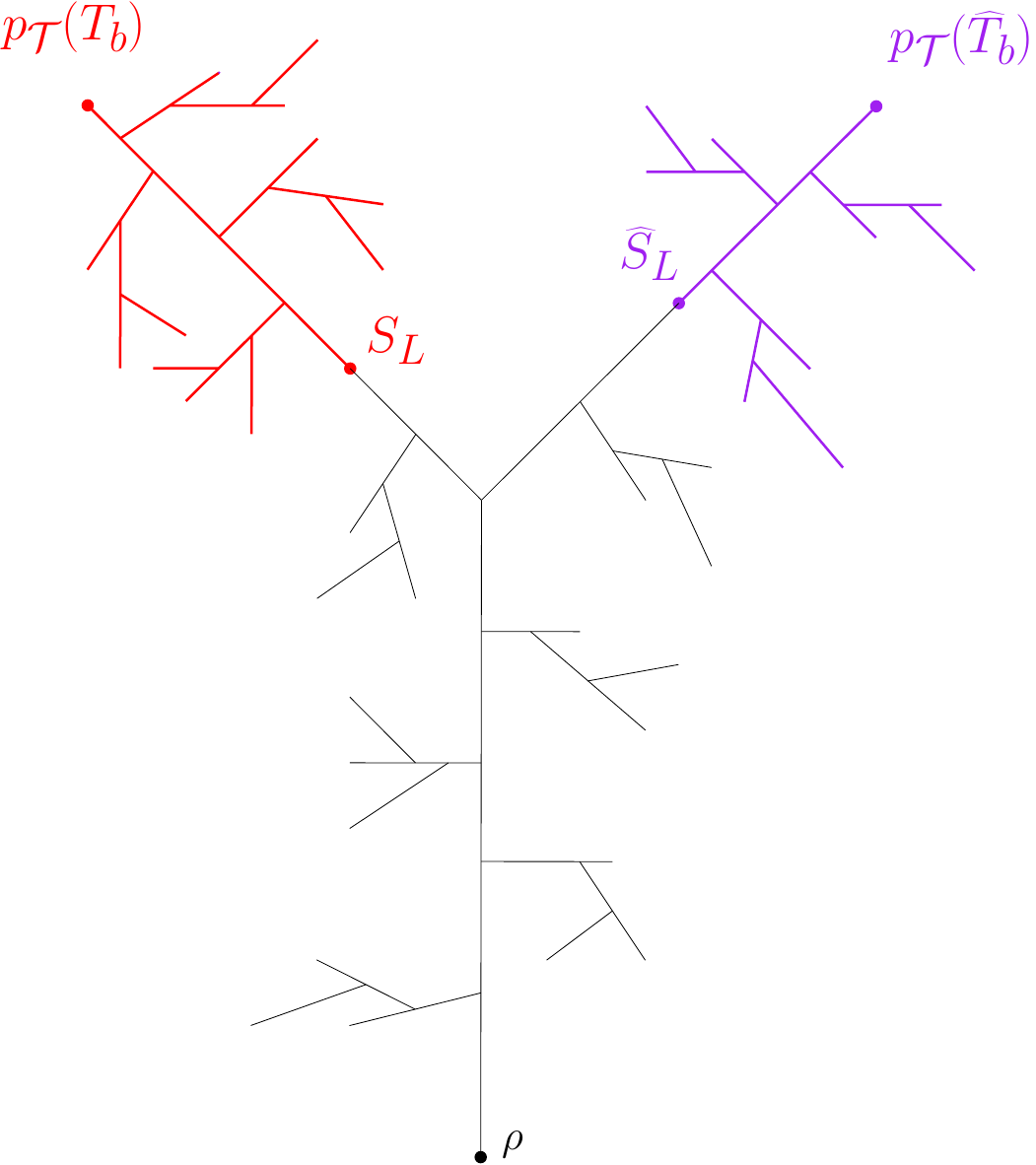}
    \caption{The sets $A_L$ and $\widehat{A}_L$ are respectively represented in red and purple.}
\end{figure}
These sets are well-defined when $Y_H>L$, and also correspond to a portion of the ancestral lines from $p_\T(T_b)$ and $p_\T(\widehat{T}_b)$ to the root $\rho_\T$, together with the subtrees branching off those paths. On the event $\{Y_H<L\}$, we can still give a meaning to these definitions, for instance using the representation of Subsection \ref{construction}, but the two sets are no longer disjoint. These sets also come with labels, which (after a shift of $b$) are given by Bessel processes of dimension 7 for the spine, and by independent Poisson point measures for the subtrees. As explained at the end of Subsection \ref{three arms}, these sets can be seen as being part of $\T$. Note that these spaces can be defined using snake trajectories as follows (we present the construction for $A_{L}$, but by symmetry, it also works for $\widehat{A}_L$ after obvious adaptation). Set
\[a^{(L)}=\inf\{t\geq0,\mathcal{E}_t=S_{L}\}\quad \text{and}\quad b^{(L)}=\sup\{t\geq0,\mathcal{E}_t=S_{L}\}.\]
We can define a new snake trajectory $W^{(L)}=\left(W^{(L)}_t\right)_{\leq t \leq b^{(L)}- a^{(L)}}$ as
\begin{equation}\label{snake traj}   
W^{(L)}_t(s)=W_{a^{(L)}+t}(\zeta_{a^{(L)}}+s),\quad 0\leq t \leq b^{(L)}- a^{(L)}, 0\leq s \leq \zeta_{a^{(L)}+s} - \zeta_{a^{(L)}}. 
\end{equation}

Then, $A_L$ is the labelled tree associated to the snake trajectory $W^{(L)}$. As explained at the end of Subsection \ref{three arms}, we can also recover this snake trajectory from $A_L$. Therefore, these two points of view are equivalent.
This section is devoted to the proof of the following proposition.
\begin{proposition}\label{localisation}
 For every $\delta>0$, there exists $L>0$ and $\varepsilon_0>0$ such that for every $0<\varepsilon<\varepsilon_0$ :
\begin{itemize}[label=\textbullet]
    \item \centering$\p(B_\varepsilon(\s,\Gamma_b)\subset p_\s(A_{L\varepsilon}\cup\widehat{A}_{L\varepsilon}))>1-\delta$
    \item \centering $\p(B_\varepsilon(\BP ,\overline{\rho}_\infty)\subset p_{\BP }(A^\infty_{L\varepsilon}\cup\widehat{A}^\infty_{L\varepsilon}))>1-\delta$
\end{itemize}   
\end{proposition}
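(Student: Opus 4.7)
The plan is to prove both bullets by parallel arguments; I focus on the bigeodesic Brownian plane statement, from which the Brownian sphere version follows with some extra work. Using the Bessel-$7$ scaling of $X^\infty$ together with the snake scaling \eqref{scaling} applied atom by atom in $\mathcal{N}_1^\infty, \mathcal{N}_2^\infty$, the labelled tree $\T_\infty$ (and hence $\BP$) is scale invariant, so that it suffices to find $L = L(\delta)$ with $\p\bigl(B_1(\BP) \subset p_{\BP}(A_L^\infty \cup \widehat{A}_L^\infty)\bigr) > 1-\delta$. Concretely, I aim at the stronger contrapositive: with probability at least $1-\delta$, every representative $\tilde u \in (\T_\infty \cup \widehat{\T}_\infty) \setminus (A_L^\infty \cup \widehat{A}_L^\infty)$ satisfies $\overline{D}_\infty(p_{\BP}(\tilde u), \overline{\rho}_\infty) > 1$.

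Take $\tilde u \in \T_\infty \setminus A_L^\infty$ (the other side being symmetric). If $\tilde u$ is on the spine past $S_L^\infty$, then $Z_{\tilde u} > L$ and \eqref{Bound} immediately yields $\overline{D}_\infty > L > 1$. Otherwise $\tilde u$ lies in a subtree $\T_i$ with $t_i > S_L^\infty$; the bound \eqref{Bound} restricts attention to $|Z_{\tilde u}| \leq 1$, which forces $\omega_{i*} \leq 1$ while $X^\infty(t_i) > L$. For $i \in I_1^\infty$ (positive-conditioned), the Poisson intensity of dangerous atoms, computed via \eqref{inf} and Bessel-$7$ scaling, is $O(L^{-1})$ in probability, so with probability $\geq 1 - \delta/2$ no such atom occurs. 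For $i \in I_2^\infty$ (unconditioned) the analogous intensity diverges and infinitely many dangerous subtrees exist almost surely; this is the core difficulty.

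To handle the $I_2^\infty$ case, I would argue directly that any finite chain $\tilde u = u_0, u_1, \ldots, u_p = \overline{\rho}_\infty$ in $\T_\infty \cup \widehat{\T}_\infty$ has $\sum_{j=1}^{p} D^\circ(u_{j-1}, u_j) > 1$ with high probability. The key input is that the ancestral path from $\tilde u$ to the root of $\T_i$ inside $\T_i$ traverses continuously the labels between $Z_{\tilde u}$ and $X^\infty(t_i) > L$, while any detour avoiding this ``vertical ascent'' must hop across the high-label region of the spine past $S_L^\infty$ (or through $\widehat{\T}_\infty$), and each such hop pays a $D^\circ$-cost lower-bounded via \eqref{Bound} combined with the fact that its two endpoints have labels differing by at least $L - 1$. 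A Palm-type computation on $\mathcal{N}_1^\infty$ and $\mathcal{N}_2^\infty$, combined with the conditional independence of the two sides of the gluing, then completes the bound.

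The main obstacle is precisely this last step for $I_2^\infty$ subtrees: the cactus bound used in the analogue of \cite{brownianplane} is unavailable, and the plain bound \eqref{Bound} is too weak on its own, so one must extract additional structural information from the joint law of the exploration intervals and the Brownian snake trajectories along the ancestral paths. For the Brownian sphere bullet, the three-arms decomposition of Subsection \ref{three arms} introduces an additional $Y$-spine whose labels lie in $[Y_H, b]$: points there are handled similarly, with the extra ingredient that $\p(Y_H \leq \varepsilon)$ vanishes as $\varepsilon \to 0$, which follows from the explicit description of $H$ and the law of the Bessel process of dimension $-3$ governing $Y$. A union bound over all contributions yields the claimed probability $\geq 1-\delta$.
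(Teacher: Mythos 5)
Your reduction by scale invariance and your treatment of the spine and of the positively-conditioned subtrees are fine (they correspond to estimates the paper also uses, in the spirit of Lemma \ref{inc} and the finiteness of $T^\infty_s$), but the proposal does not actually contain a proof: the step you yourself flag as ``the core difficulty'' --- the unconditioned atoms of $\mathcal{N}_2^\infty$ (and $\widehat{\mathcal{N}}_2^\infty$) attached to the spine beyond $S^\infty_L$ --- is left unresolved, and the idea you sketch for it does not work. Since almost surely infinitely many of these subtrees reach labels in $[-1,1]$, the dangerous chains $u_0,\dots,u_p$ are chains all of whose points have labels of order $1$, hopping from one such subtree to another; for such a chain the bound \eqref{Bound} gives essentially nothing, and your claim that any ``detour'' must contain a hop whose endpoints have labels differing by at least $L-1$ is false: a chain can avoid the high-label region of the spine entirely, and $\overline{D}^\circ_\infty(u_{j-1},u_j)$ between two low-label points in distinct far-away subtrees can be small whenever the contour interval between them has its minimum close to their common label level. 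Ruling out that such chains connect $\overline{\rho}_\infty$ cheaply to far regions is exactly the content of the proposition, and it is exactly where the cactus bound (unavailable here, as the paper stresses) would normally enter; a ``Palm-type computation'' on the point measures is not a substitute, because the question is metric, not just about which subtrees exist.

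The paper's proof takes a genuinely different, more geometric route that you would need to reproduce or replace: it interprets $A_{L\varepsilon}$ and $\widehat{A}_{L\varepsilon}$ as Brownian slices $\Tilde{\s}_1,\Tilde{\s}_2$ embedded in $\s$, proves that their topological boundary in $\s$ is contained in the two distinguished boundary geodesics $\gamma_i^L\cup\gamma_i^R$ (via Proposition \ref{Identification} and Lemma \ref{Identification2}), shows that the ambient distance $D$ and the intrinsic slice distance $\Tilde{D}_i$ coincide along injective paths away from the boundary (Proposition \ref{iso locale} and Corollary \ref{length2}), and then uses a quantitative separation of the two boundary geodesics of a slice, $\Tilde{D}\bigl(B_\varepsilon(\Tilde{\s},\gamma^L(L\varepsilon)),\gamma^R\bigr)>C\varepsilon$ with high probability (Proposition \ref{Distance-frontière}, a scaling argument), together with label estimates showing the entry point of any geodesic from outside cannot lie on $\Gamma|_{[b-\varepsilon,b+\varepsilon]}$. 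This converts the problem of bounding arbitrary chains into a boundary-crossing argument, which is precisely what your direct approach is missing. Also, as a minor point, the paper proves the sphere bullet and notes the $\BP$ bullet is the easier analogue, whereas you propose the reverse implication; the sphere case needs the additional estimates on $Y_H$ and on $\inf_{A_{L\varepsilon}}Z$ (Proposition \ref{Encadrement}), which your last paragraph only gestures at.
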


We will only prove the first statement, but every step of the proof can be adjusted to prove the second statement (which is in fact a little bit easier to prove). Note that the scaling property of the space $\BP$ implies that the second statement holds for every $\varepsilon>0$.
   The proof will be separated in three parts. First, we will give some lemmas about the sets $A_{L\varepsilon}$ and $\widehat{A}_{L\varepsilon}$. Then, we will see how to interpret these subsets as metric spaces (that are called Brownian slices), and use these spaces to prove Proposition \ref{localisation}.
   \begin{remark}
      In Proposition \ref{localisation}, we could have replaced $A_{L\varepsilon}$ and $\widehat{A}_{L\varepsilon}$ by $B_{M\varepsilon^2}(\T,p_\T(T_b))$ and $B_{M\varepsilon^2}(\T,p_\T(\widehat{T}_b))$ (for some constant $M>0$), but this formulation is less suited for our work. Furthermore, note that these sets can be seen as hulls centered at $p_\T(T_b)$ and $p_\T(\widehat{T}_b)$ relative to $\rho_\T$.
   \end{remark}
   
   \subsection{First estimates}
   
We begin with some results about the behavior of these sets that will be used later.

\begin{proposition}\label{Encadrement}
    For every $\delta>0$, there exists $L>0$ such that for every $\varepsilon$ small enough,
    \[\N_0(Y_H>L\varepsilon,\,\inf_{u\in A_{L\varepsilon}}Z_u<-b-\varepsilon\,|\,W_*<-b)>1-\delta\]
    and
    \[\N_0(Y_H>L\varepsilon,\,\inf_{u\in A_{L\varepsilon}}Z_u>-b-\sqrt{\varepsilon}\,|\,W_*<-b)>1-\delta.\]
\end{proposition}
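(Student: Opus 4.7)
The plan is to work in the three-arms representation of Subsection \ref{three arms}, shifting labels so that $p_\T(T_b)$ carries label $0$; the spine of $\T_{(1)}$ is then a Bessel process $X$ of dimension $7$ starting from $0$ and stopped at the last hitting time $S_{Y_H}$ of $Y_H$, with subtrees encoded by $\mathcal{N}_1$ (conditioned $W_*>0$) and $\mathcal{N}_2$ (unconstrained). In these shifted coordinates the two target inequalities read $\inf_{u\in A_{L\varepsilon}} Z_u<-\varepsilon$ and $\inf_{u\in A_{L\varepsilon}} Z_u>-\sqrt{\varepsilon}$. Since the spine itself has labels in $[0,L\varepsilon]$ and $\mathcal{N}_1$-subtrees remain nonnegative, any label below $0$ must come from an $\mathcal{N}_2$-subtree, so everything reduces to a Poisson computation on $\mathcal{N}_2$.

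First I would check that $Y_H>0$ almost surely, so that $\p(Y_H>L\varepsilon)\to 1$ as $\varepsilon\to 0$ for each fixed $L$. The explicit formula $\p(H>t\,|\,Y)=\exp(-3\int_0^t ds/Y_s^2)$, combined with Williams' time reversal (which shows that $Y_s\sim c\sqrt{T_0-s}$ near the hitting time $T_0$ of $0$ by the Bessel$(-3)$ process $Y$), gives $\int_0^{T_0} ds/Y_s^2=+\infty$ a.s., hence $H<T_0$ a.s.

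Next, conditionally on $X$ and on $\{Y_H>L\varepsilon\}$, \eqref{inf} gives that the probability that no $\mathcal{N}_2$-atom with spine time in $[0,S_{L\varepsilon}]$ produces a subtree reaching label $-c$ equals $\exp(-3\int_0^{S_{L\varepsilon}} dt/(X(t)+c)^2)$. Because $X$ is independent of $Y_H$ in the construction of Subsection \ref{three arms}, on $\{Y_H>L\varepsilon\}$ the truncated spine $(X_t)_{t\leq S_{L\varepsilon}}$ has the unconditional law of a Bessel$(7)$ from $0$ stopped at its last hitting of $L\varepsilon$. Brownian scaling with $\tilde X_u:=\varepsilon^{-1}X_{\varepsilon^2 u}$, itself a Bessel$(7)$ from $0$, yields
\[\int_0^{S_{L\varepsilon}(X)}\frac{dt}{(X(t)+c)^2}=\int_0^{S_L(\tilde X)}\frac{du}{(\tilde X_u+c/\varepsilon)^2}.\]

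For the first inequality, take $c=\varepsilon$ to get the $\varepsilon$-free quantity $I_L:=\int_0^{S_L(\tilde X)}du/(\tilde X_u+1)^2$. The key step, which I expect to be the main obstacle, is to show $I_L\to+\infty$ almost surely as $L\to\infty$. A second scaling $\tilde X_u=L\tilde Y_{u/L^2}$ with $\tilde Y$ another Bessel$(7)$ from $0$ rewrites $I_L$ as $\int_0^{S_1(\tilde Y)}dv/(\tilde Y_v+1/L)^2$, which increases monotonically to $\int_0^{S_1(\tilde Y)}dv/\tilde Y_v^2$; the latter is $+\infty$ almost surely, as one sees by realizing $\tilde Y$ as the norm of a $7$-dimensional Brownian motion from $0$ and applying the law of the iterated logarithm near $0$, which gives $\tilde Y_v\leq C\sqrt{v\log\log(1/v)}$ eventually and hence $1/\tilde Y_v^2\geq c/(v\log\log(1/v))$, non-integrable at $0$. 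Therefore $\E[\exp(-3I_L)]\to 0$; choosing $L=L_\delta$ large enough and then $\varepsilon$ small enough that $\p(Y_H>L\varepsilon)$ is close to $1$ gives the first inequality. For the second, with the same $L_\delta$ and $c=\sqrt{\varepsilon}$, the integral in the display is bounded by $\varepsilon\,S_{L_\delta}(\tilde X)$, which tends to $0$ in probability, so the corresponding no-atom probability tends to $1$; combined with $\p(Y_H>L\varepsilon)\to 1$, the second inequality follows.
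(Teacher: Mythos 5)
Your proposal is correct and follows essentially the same route as the paper: the three-arms representation, reduction of both events to the probability that no $\mathcal{N}_2$-atom attached to the spine before $S_{L\varepsilon}$ dips below the threshold, the Poisson/no-atom formula $\exp(-3\int_0^{S_{L\varepsilon}}(X_t+c)^{-2}dt)$, Bessel scaling to an $\varepsilon$-free quantity, divergence of $\int_0^{S_L}(X_t+1)^{-2}dt$ as $L\to\infty$ for the first bound, the crude bound $\leq \varepsilon S_L$ for the second, and $\p(Y_H>L\varepsilon)\to1$. The only difference is that you supply explicit justifications (via a further scaling plus the law of the iterated logarithm) for the two facts the paper asserts without proof, namely $Y_H>0$ a.s. and the almost sure divergence of the key integral, and these justifications are sound.
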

\begin{proof}
    Using the representation $\ref{three arms}$, we have :
    \begin{align*}
        &\N_0(Y_H>L\varepsilon,\inf_{u\in A_{L\varepsilon}}Z_u<-b-\varepsilon\,|\,W_*<-b)\\
        &=\N_0(Y_H>L\varepsilon\,|\,W_*<-b)-\E\left[\1_{\{Y_H>L\varepsilon\}}\Pi_{X-b}(\mathcal{M}(\{(t,\omega):t<S_{L\varepsilon},\,\omega_*<-b-\varepsilon\})=0)\right]\\
        &=\N_0(Y_H>L\varepsilon\,|\,W_*<-b)-\E\left[\1_{\{Y_H>L\varepsilon\}}\exp\left(-2\int_0^{S_{L\varepsilon}}\N_{X_t-b}(W_*<-b-\varepsilon)dt\right)\right]\\
        &\geq\N_0(Y_H>L\varepsilon\,|\,W_*<-b)-\E\left[\exp\left(-3\int_0^{S_{L\varepsilon}}\frac{1}{(X_t+\varepsilon)^2}dt\right)\right].
    \end{align*}
    Using the scaling properties on Bessel processes, we obtain 
    \[\N_0(Y_H>L\varepsilon,\,\inf_{u\in A_{L\varepsilon}}Z_u<-b-\varepsilon\,|\,W_*<-b)\geq\N_0(Y_H>L\varepsilon\,|\,W_*<-b)-\E\left[\exp\left(-3\int_0^{S_{L}}\frac{1}{(X_t+1)^2}dt\right)\right].\]
   Since the integral diverges to $+\infty$ almost surely as $L\rightarrow\infty$, we can choose $L>0$ large enough so that 
    \[\E\left[\exp\left(-3\int_0^{S_{L}}\frac{1}{(X_t+1)^2}dt\right)\right]<\delta/2.\]
    Then, by \eqref{temps branchement}, for $\varepsilon>0$ small enough, we have
    \[\N_0(Y_H>L\varepsilon,\,|\,W_*<-b)\geq1-\delta/2\]
    which gives the first inequality. Similarly,
    \begin{align*}
        &\N_0(Y_H>L\varepsilon,\,\inf_{u\in A_{L\varepsilon}}Z_u>-b-\sqrt{\varepsilon}\,|\,W_*<-b)\\
        &=\E\left[\1_{\{Y_H>L\varepsilon\}}\Pi_{X-b}(\mathcal{M}(\{(t,\omega):t<S_{L\varepsilon},\,\omega_*<-b-\sqrt{\varepsilon}\})=0)\right]\\       
        &=\E\left[\1_{\{Y_H>L\varepsilon\}}\exp\left(-3\int_0^{S_{L\varepsilon}}\frac{1}{(X_t+\sqrt{\varepsilon})^2}dt\right)\right]\\
        &\geq\p(Y_H>L\varepsilon)\E\left[\exp\left(-\frac{3S_L}{\sqrt{\varepsilon}}\right)\right]\xrightarrow[\varepsilon\rightarrow0]{}1
    \end{align*}
    using the independence between $Y_H$ and $X$, and the scaling property of Bessel processes.
\end{proof}

Then, we need to check that for $L$ large enough, the set $p_\T(T_{b-\varepsilon})$ is included in $A_{L\varepsilon}$ with an arbitrary large probability (and similarly for $p_\T(\widehat{T}_{b-\varepsilon})$ and $\widehat{A}_{L\varepsilon}$). Recall that $S_x$ is the last hitting time of $x$ by $X$. We start with an intermediate result. 

\begin{lemme}\label{inc}

For every $\delta>0$, there exist $L>1$ and $\varepsilon_1>0$ such that for every $\varepsilon<\varepsilon_1$ :
    \begin{equation*}
        \N_0(Y_H>L\varepsilon,\,t_{i_\varepsilon}<S_{L\varepsilon}\,|\,W_*<-b)>1-\delta
    \end{equation*}
    where $i_\varepsilon$ is the unique index $i\in I_1$ such that $p_\T(T_{b-\varepsilon})\in\mathcal{T}_j$ (we set $t_{i_\varepsilon}=\infty$ if no such index exists).
\end{lemme}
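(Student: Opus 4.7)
My plan is to combine Proposition~\ref{Encadrement} with an explicit Poisson estimate on the $\mathcal{N}_1$-subtrees of the three-arms decomposition (Proposition~\ref{decomposition}). The starting point is the simple observation that $T_{b-\varepsilon} \leq T_b$ in contour order, so that $p_\T(T_{b-\varepsilon})$ is visited strictly before $p_\T(T_b)$. In the three-arms picture, this rules out a priori that $p_\T(T_{b-\varepsilon})$ lies in an $\mathcal{N}_2$-subtree or in the $\widehat X$-side of the tree: the only possibilities left are the spine (either $Y$ or $X$), an $\mathcal{N}_0$-subtree, or an $\mathcal{N}_1$-subtree. Standard continuity arguments for the Brownian snake will discard the spine cases, since first-passage points at a generic level are a.s.~leaves of the coded tree.

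First, applying Proposition~\ref{Encadrement} with $\delta/3$ gives an $L_0 \geq 1$ such that, for every $\varepsilon$ small enough,
\[
\N_0\bigl(Y_H > L_0\varepsilon,\; \inf_{u \in A_{L_0\varepsilon}} Z_u < -b - \varepsilon \,\big|\, W_* < -b\bigr) > 1 - \delta/3.
\]
On this event, continuity of the label function $Z$ on $\T$ forces the existence of an $\mathcal{N}_1$-subtree branching off $X$ at some $t < S_{L_0\varepsilon}$ whose label set reaches $-b+\varepsilon$. This provides a candidate subtree for containing $p_\T(T_{b-\varepsilon})$, provided no subtree strictly earlier in contour order reaches $-b+\varepsilon$.

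The central step is then the following tail estimate. For $L \geq L_0$, Proposition~\ref{decomposition} shows that conditionally on $X$, the number of $\mathcal{N}_1$-subtrees with branching time $t > S_{L\varepsilon}$ and $W_* \leq -b + \varepsilon$ is Poisson with parameter
\[
\Lambda_L^\varepsilon := 2 \int_{S_{L\varepsilon}}^{\zeta(X)} \N_{X_t - b}(-b < W_* \leq -b + \varepsilon)\, dt.
\]
Formula~\eqref{inf} gives
\[
\N_{X_t - b}(-b < W_* \leq -b + \varepsilon) = \frac{3}{2(X_t - \varepsilon)^2} - \frac{3}{2 X_t^2} \leq \frac{C\varepsilon}{X_t^3}
\]
for $X_t \geq L\varepsilon$ with $L \geq 2$. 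The Bessel-$7$ scaling $\tilde X_s := \varepsilon^{-1} X_{\varepsilon^2 s}$ then turns this into
\[
\Lambda_L^\varepsilon \leq 2C \int_{\tilde S_L}^{+\infty} \frac{ds}{\tilde X_s^3},
\]
where $\tilde X$ is again Bessel-$7$ from $0$ and $\tilde S_L$ is its last hit of $L$. Since Bessel-$7$ is transient with $\tilde X_s \asymp \sqrt{s}$ at infinity, this integral converges a.s.~and $\tilde S_L \to \infty$ a.s.~as $L \to \infty$. I can therefore pick $L \geq L_0$ such that $\{\Lambda_L^\varepsilon = 0\}$ has probability $> 1 - \delta/3$, uniformly in $\varepsilon$. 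On this event, the first $\mathcal{N}_1$-subtree in contour order to contain the label $-b + \varepsilon$ has branching time $<S_{L\varepsilon}$, and by construction it contains $p_\T(T_{b-\varepsilon})$, yielding $t_{i_\varepsilon} < S_{L\varepsilon}$.

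The main obstacle will be rigorously excluding the possibility that $p_\T(T_{b-\varepsilon})$ lies on the $Y$- or $X$-spine or in an $\mathcal{N}_0$-subtree. The spine cases should follow from the classical fact that first-passage points at generic levels are a.s.~leaves of $\T$. For the $\mathcal{N}_0$-subtrees a parallel Poisson+Bessel-scaling estimate will be needed: on $\{Y_H > L\varepsilon\}$ the process $Y$ is (by the explicit distribution of $H$ recalled in the paper) unlikely to spend too long near $0$ before time $H$, and the intensity of $\mathcal{N}_0$-subtrees attaining $-b + \varepsilon$ can be bounded analogously, with $L$ large playing the role of a cutoff.
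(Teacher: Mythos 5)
Your central estimate is the same as the paper's: on the relevant event, $\{t_{i_\varepsilon}<S_{L\varepsilon}\}$ amounts to the absence of constrained subtrees branching off the ancestral line of $p_\T(T_b)$ beyond $S_{L\varepsilon}$ whose labels dip below $-b+\varepsilon$, and your Poisson/\eqref{inf}/scaling/transience computation for the $\mathcal{N}_1$-atoms with $t>S_{L\varepsilon}$ is exactly the paper's (the paper keeps the exact difference $\frac{1}{(\tilde X_t-\varepsilon)^2}-\frac{1}{\tilde X_t^2}$ rather than your $C\varepsilon/X_t^3$ bound). The genuine gap is the $Y$-side. The paper never treats it separately: it concatenates $X$ with the time-reversed $Y$, which by Proposition \ref{tree} (Williams) is a single Bessel-$7$ run $\tilde X$ up to its last hit of $b$, so that one exponential formula $\exp\big(-2\int_{S_{L\varepsilon}}^{S_b}\N_{\tilde X_t-b}(-b<W_*<-b+\varepsilon)\,dt\big)$ covers simultaneously the $\mathcal{N}_1$-atoms with $t_i>S_{L\varepsilon}$ and all the $\mathcal{N}_0$-atoms, and the scaling argument applies to the whole integral. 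You leave this piece as an announced ``obstacle'', and the mechanism you sketch (``$L$ large playing the role of a cutoff'') is not the right one: the $\mathcal{N}_0$-contribution is $\exp\big(-3\int_0^H\big[\frac{1}{(Y_t-\varepsilon)^2}-\frac{1}{Y_t^2}\big]dt\big)$, which involves no $L$ at all; what makes it close to $1$ is that $\inf_{[0,H]}Y>0$ a.s.\ (since $H<T_0$ a.s.), so the integral tends to $0$ as $\varepsilon\to0$ by dominated convergence — an $\varepsilon$-small effect producing the threshold $\varepsilon_1$, not an $L$-large effect. Without this piece (or the paper's concatenation trick) you have not excluded that $p_\T(T_{b-\varepsilon})$ sits in an $\mathcal{N}_0$-subtree, in which case $t_{i_\varepsilon}=\infty$ and the event fails.

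Two smaller points. Your use of Proposition \ref{Encadrement} is both unnecessary and incorrect as stated: on $\{\inf_{u\in A_{L_0\varepsilon}}Z_u<-b-\varepsilon\}$ the witnessing subtree is an unconstrained $I_2$-atom (all $I_1$-atoms have $W_*>-b$), so it provides no $\mathcal{N}_1$-candidate; none is needed anyway, because once earlier hits are excluded, the divergence of $\int\N_{X_t-b}(-b<W_*<-b+\varepsilon)\,dt$ near the last hitting time $S_\varepsilon$ guarantees a.s.\ that the first passage occurs in an $I_1$-subtree with $t_i\in(S_\varepsilon,S_{L\varepsilon})$ (your leaf argument correctly rules out the spine). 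Finally, the event $\{\Lambda_L^\varepsilon=0\}$ has probability zero since $\Lambda_L^\varepsilon>0$ a.s.; what you need is $\E[\exp(-\Lambda_L^\varepsilon)]>1-\delta/3$, which your domination $\Lambda_L^\varepsilon\le 2C\int_{\tilde S_L}^\infty\tilde X_s^{-3}\,ds$ does give uniformly in $\varepsilon$, exactly as in the paper's bound via $I_L$.
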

\begin{proof}
 Let $\Tilde{X}$ be the concatenation of $X$ and of the process $Y$ reversed in time. By Proposition \ref{tree}, $\Tilde{X}$ is a Bessel process of dimension $7$ starting at $0$ and stopped at its last hitting time of $b$. Using the representation of Section \ref{three arms}, \eqref{inf} and properties of Poisson point measure, we have 
    \begin{align*}        
   \N_0(Y_H>L\varepsilon,\,t_{i_\varepsilon}<S_{L\varepsilon}\,|\,W_*<-b)&=\E\bigg[\1_{\{Y_H>L\varepsilon\}}\Pi_{\Tilde{X}-b}(\mathcal{M}(\{(t,\omega):t>S_{L\varepsilon},\,-b<\omega_*<-b+\varepsilon\}=0)\bigg]\\
    &=\E\bigg[\1_{\{Y_H>L\varepsilon\}}\exp\bigg(-2\int_{S_{L\varepsilon}}^{S_b}\N_{\Tilde{X}_t-b}(-b<W_*<-b+\varepsilon)dt\bigg)\bigg]\\
    &=\E\bigg[\1_{\{Y_H>L\varepsilon\}}\exp\bigg(-3\int_{S_{L\varepsilon}}^{S_b}\left(\frac{1}{(\Tilde{X}_t-\varepsilon)^2}-\frac{1}{\Tilde{X}_t^2}\right)dt\bigg)\bigg].
    \end{align*}
    Now, by the scaling property of Bessel processes, for every $\varepsilon>0$, $\int_{S_{L\varepsilon}}^{S_b}\left(\frac{1}{(\Tilde{X}_t-\varepsilon)^2}-\frac{1}{\Tilde{X}_t^2}\right)dt$ has the same law as $\int_{S_{L}}^{S_{b/\varepsilon}}\left(\frac{1}{(\Tilde{X}_t-1)^2}-\frac{1}{\Tilde{X}_t^2}\right)dt$, which is stochastically dominated by $\int_{S_{L}}^\infty\left(\frac{1}{(\Tilde{X}_t-1)^2}-\frac{1}{\Tilde{X}_t^2}\right)dt$. We denote this random variable by $I_L$ (note that it does not depend on $\varepsilon$). Then, we have:
    \begin{equation}\label{coincoin}
     \N_0(Y_H>L\varepsilon,\,t_{i_\varepsilon}<S_{L\varepsilon}\,|\,W_*<-b)\geq\E[\exp(-3I_L)]-\N_0(Y_H<L\varepsilon\,|\,W_*<-b)\\
    \end{equation}
    Then, because of the convergence of the integral, we can choose $L$ (that does not depend on $\varepsilon$) large enough such that 
    \begin{equation*}
\E[\exp(-3I_L)]>1-\delta/2.
    \end{equation*}
     Finally, for $\varepsilon$ small enough, we have $\N_0(Y_H<L\varepsilon\,|\,W_*<-b)<\delta/2$. For these choices and using \eqref{coincoin}, we obtain the desired result. 
\end{proof}
\begin{remark}
    Note that we always have $S_\varepsilon<t_{i_\varepsilon}$, which implies that $B_\varepsilon(\s,\Gamma_b) 
\nsubseteq p_\s(A_{\varepsilon}\cup\widehat{A}_{\varepsilon}) $. This shows that the order of magnitude in Proposition \ref{localisation} is optimal. 
\end{remark}

\subsection{Brownian slices}

Now, we introduce the notion of Brownian slice and study some of its properties. Fix a snake trajectory $\omega\in\s_0$ with duration $\sigma$. We can define a pseudo-distance $\Tilde{d}$ on $[0,\sigma]$ :
\begin{equation*}
    \Tilde{d}_{(\omega)}(s,t)=Z_s+Z_t-2\inf_{r\in [s\land t,s\lor t]}Z_r.
\end{equation*}
The difference with the distance $d$ of Section \ref{sphere} is that we forbid  ``to go around the root of $\T_\omega$''  when computing the distance. Then, just as we did to construct the Brownian sphere, we can define a pseudo-distance $\Tilde{D}_{(\omega)}^\circ$ on $\T_\omega$:
\begin{equation}\label{pseudo dist slice}
    \Tilde{D}_{(\omega)}^\circ(u,v)=\inf\{\Tilde{d}_{(\omega)}(s,t):s,t\in[0,\sigma],p_{\T_\omega}(s)=u,p_{\T_\omega}(t)=v\}.
\end{equation}
Then, we can define another pseudo-distance on $\T_\omega$ :
\begin{equation*}
    \Tilde{D}_{(\omega)}(u,v)=\inf_{u_0,...,u_p}\sum_{i=1}^p\Tilde{D}_{(\omega)}^\circ(u_i,u_{i-1})
\end{equation*}
here the infimum is taken over every $p\in\N^*$ and sequences in $\T$ such that $u_0=u$ and $u_p=v$.
\begin{definition}
    The \textit{free Brownian slice} is defined under the measure $\N_0$ as the metric space $\Tilde{\s}=\T/\{\Tilde{D}=0\}$, equipped with the distance $\Tilde{D}$. We write $p_{\Tilde{\s}}:\T\mapsto \Tilde{\s}$ for the canonical projection, $\Tilde{\mathbf{p}}$ for the projection $[0,\sigma]\mapsto\Tilde{\s}$ and $\Tilde{\rho}=\Tilde{\mathbf{p}}(0)$.
\end{definition}
This space has already been studied in \cite{uniqueness} to prove the convergence of quadrangulations toward the Brownian sphere, and in \cite{Browniandisk} to prove the convergence of quadrangulation with a boundary toward the Brownian disk (see also \cite{Geodesicstars}). It is also the scaling limit of some models of random planar maps with geodesic boundaries. Moreover, it can be seen as the Brownian sphere cut along its geodesic $\Gamma$ (see \cite[Section 3.2]{uniqueness} for more details).\\
Note that because $d\leq\Tilde{d}$, we have $D\leq\Tilde{D}$. Furthermore, $\Tilde{\s}$ has the same scaling property as the Brownian sphere. Contrary to $\s$, the space $\Tilde{\s}$ has two geodesics between $\Tilde{\rho}$ and its point of minimal label $x_*$: they are given by $(\Tilde{\mathbf{p}}(T_s))_{0\leq s\leq W_*}$ and $(\Tilde{\mathbf{p}}(\widehat{T}_s))_{0\leq s\leq W_*}$, which we denote by $\gamma^L$ and $\gamma^R$. Moreover, these curves intercept only at their endpoints. \\
The proof of \ref{localisation} is based on the following observation (recall that the snake trajectory $W^{(L)}$ was defined in \eqref{snake traj}).
\begin{lemme}\label{law slice}
    Conditionally on $\{Y_H>L\varepsilon\}$, the snake trajectory $W^{(L\varepsilon)}$ has the law of $\N_{L\varepsilon}(\cdot\,|\,W_*<0)$. 
\end{lemme}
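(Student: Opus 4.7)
The plan is to identify the three-arms decomposition of $W^{(L\varepsilon)}$ and show that it matches the corresponding decomposition of $\N_{L\varepsilon}(\cdot\,|\,W_*<0)$. Applying Proposition \ref{decomposition} mutatis mutandis (with the starting label $0$ replaced by $L\varepsilon$ and the target minimum $-b$ replaced by $0$), the measure $\N_{L\varepsilon}(\cdot\,|\,W_*<0)$ is described by analogous data $(Y',H',X',\widehat{X}')$ together with six Poisson point measures: $Y'$ is a Bessel process of dimension $-3$ from $L\varepsilon$ stopped upon reaching $0$, $H'$ has the associated Cox-type distribution $\p(H'>t\mid Y')=\exp(-2\int_0^t\N_{Y'_s}(W_*<0)ds)$, and $X',\widehat{X}'$ are independent Bessel 7 processes from $0$ stopped at their last hitting time of $Y'_{H'}$, with Poisson measures whose intensities are the natural analogs of those in Proposition \ref{decomposition}.

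On the event $\{Y_H>L\varepsilon\}$, the snake trajectory $W^{(L\varepsilon)}$ encodes the subtree of $\T_b$ consisting of the portion of $\T_{(1)}$'s spine above $S_{L\varepsilon}$, the Poisson subtrees in $\mathcal{N}_1,\mathcal{N}_2$ attached therein, and all of $\T_{(0)}\cup\T_{(2)}$. By the strong Markov property applied at the last hitting time $S_{L\varepsilon}$, the portion $X|_{[S_{L\varepsilon},S_{Y_H}]}$ is a Bessel 7 from $L\varepsilon$ stopped at its last hitting time of $Y_H$. The key ingredient is then Williams' time-reversal theorem for Bessel processes: a Bessel $-3$ from $L\varepsilon$ stopped at $0$ decomposes, at its maximum $M$, as the concatenation of (i) a Bessel 7 from $L\varepsilon$ stopped at its last hitting time of $M$, and (ii) an independent Bessel $-3$ from $M$ stopped at $0$. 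Identifying $M$ with $Y_H$, the concatenation of $X|_{[S_{L\varepsilon},S_{Y_H}]}$ with the time-reversal of $\widehat{X}$ then forms a single Bessel $-3$ trajectory from $L\varepsilon$ to $0$ playing the role of $Y'$; the remaining pieces (namely $\T_{(0)}$, the appropriate restrictions of the Poisson measures, and the structure inherited from the spines of $X$ and $\widehat{X}$) are identified with the two side arms $X',\widehat{X}'$ and their Poisson measures via the conditional independence structure of Proposition \ref{decomposition} and the Palm formula.

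The main technical obstacle is the distributional compatibility at the top level: one must verify that, conditional on $\{Y_H>L\varepsilon\}$, the law of $Y_H$ coincides with the marginal law of the maximum $M$ of a Bessel $-3$ from $L\varepsilon$ stopped at $0$. This marginal has density proportional to $(L\varepsilon)^5/m^6$ on $m\geq L\varepsilon$, obtained from the scale function $y\mapsto y^5$ of the Bessel $-3$ process. The verification proceeds by a direct computation using the explicit formula \eqref{temps branchement} for the conditional distribution of $H$ given $Y$, together with Feynman--Kac type martingale identities for the Bessel $-3$ process (the multiplicative functional $\exp(-3\int_0^t du/Y_u^2)$ combines nicely with powers of $Y$ to form local martingales). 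Once this top-level distributional match is established, all remaining identifications reduce to routine conditional independence arguments, completing the proof.
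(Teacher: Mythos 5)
Your proposal starts from a misreading of what $W^{(L\varepsilon)}$ encodes, and this derails the whole strategy. By definition, $A_{L\varepsilon}$ is the spine segment $\llbracket p_\T(T_b),S_{L\varepsilon}\rrbracket$ of $\T_{(1)}$ \emph{below} the last point of label $L\varepsilon$, together with the subtrees of $\mathcal{N}_1,\mathcal{N}_2$ grafted at times $t_i<S_{L\varepsilon}$; the trajectory $W^{(L\varepsilon)}$ is the piece of the exploration between the first and last visits of $S_{L\varepsilon}$, so it codes exactly $A_{L\varepsilon}$ (the descendants of $S_{L\varepsilon}$) and nothing else. It contains neither $\T_{(0)}$ nor $\T_{(2)}$ nor the part of the spine of $\T_{(1)}$ above $S_{L\varepsilon}$: what you describe is the complement of $A_{L\varepsilon}$. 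Consequently, on $\{Y_H>L\varepsilon\}$ the law of $W^{(L\varepsilon)}$ involves neither $Y$, $H$, $\widehat{X}$ nor their point measures (this independence is precisely what underlies Proposition \ref{Independence}), and no three-arms matching or identity for the law of $Y_H$ is needed. The intended argument is direct: on $\{Y_H>L\varepsilon\}$, $X|_{[0,S_{L\varepsilon}]}$ is a Bessel process of dimension $7$ started at $0$ and stopped at its last hitting time of $L\varepsilon$ (independence of the pre- and post-$S_{L\varepsilon}$ parts of a Bessel $7$, the stopped part of $X$ being unaffected since after $S_{Y_H}$ the path stays above $Y_H>L\varepsilon$), and the restrictions of $\mathcal{N}_1,\mathcal{N}_2$ to $[0,S_{L\varepsilon}]$ are independent Poisson measures with intensities $2\1_{[0,S_{L\varepsilon}]}(t)\1_{\{W_*>0\}}dt\,\N_{X(t)}(dW)$ and $2\1_{[0,S_{L\varepsilon}]}(t)dt\,\N_{X(t)}(dW)$. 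This is exactly the spine representation of Subsection \ref{construction} with $b$ replaced by $L\varepsilon$, so Proposition \ref{tree} identifies the labelled tree $A_{L\varepsilon}$, rooted at $S_{L\varepsilon}$, with $\T$ under $\N_0(\cdot\,|\,W_*<-L\varepsilon)$, i.e.\ $W^{(L\varepsilon)}$ has law $\N_{L\varepsilon}(\cdot\,|\,W_*<0)$.

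Moreover, the step you single out as the main technical obstacle is false. The conditional law of $Y_H$ given $\{Y_H>L\varepsilon\}$ is not that of the maximum of a Bessel $-3$ started at $L\varepsilon$ and absorbed at $0$ (tail $(L\varepsilon/m)^5$): as $\varepsilon\rightarrow0$ the former converges to the nondegenerate, $b$-dependent unconditional law of $Y_H$ determined by \eqref{temps branchement}, whereas the latter tail tends to $0$ for every fixed $m$, so the identity cannot hold for small $\varepsilon$; the killing rate $3/Y_s^2$ rules out any such scale-invariant description. Even inside your own (incorrect) picture, $Y_H$ is the label of the branch point of the concatenated spine, not its running maximum, so identifying it with the maximum $M$ in Williams' decomposition is not legitimate. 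In short, the proposal both misidentifies the object whose law is to be computed and hinges on an incorrect distributional identity; it cannot be repaired without abandoning this route in favour of the one-line application of Proposition \ref{tree} sketched above.
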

This readily follows from the spine decomposition of Section \ref{construction}, together with Proposition \ref{tree} and the definition of $A_{L\varepsilon}$ (that is a Bessel process of dimension $7$ stopped at its last hitting time of $L\varepsilon$, together with two Poisson point measures with the right intensities). 
Therefore, we can associate to $A_{L\varepsilon}$ (respectively $\widehat{A}_{L\varepsilon}$) a Brownian slice $\Tilde{\s}^\varepsilon_1$, with two distinguished curves $\gamma_1^{L,\varepsilon}$ and $\gamma_1^{R,\varepsilon}$ (respectively a Brownian slice $\Tilde{\s}^\varepsilon_2$, with two distinguished curves $\gamma_2^{L,\varepsilon}$ and $\gamma_2^{R,\varepsilon}$). To lighten notations, we will often omit the exponent $\varepsilon$. 
Each of these slices is equipped with a distance ($\Tilde{D}_1$ and $\Tilde{D}_2$), and we have two projections $p_{\Tilde{\s}_1}:A_{L\varepsilon}\rightarrow\Tilde{\s}_1$ and $p_{\Tilde{\s}_2}:\widehat{A}_{L\varepsilon}\rightarrow\Tilde{\s}_2$.\\
Finally, we have two natural applications $p_1:\Tilde{\s}_1\mapsto\s$ and $p_2:\Tilde{\s}_2\mapsto\s$, defined by 
\[\forall x\in\Tilde{\s}_1,\quad p_1(x)=p_\s(u)\quad\text{ for any $u\in A_{L\varepsilon}$ such that $p_{\Tilde{\s}_1}(u)=x$}\] and the definition is similar for $p_2$ (one can easily check that these functions are well-defined). Note that $p_1(\gamma_1^L(L\varepsilon))=p_2(\gamma_2^R(L\varepsilon))=\Gamma_b$. For every $\varepsilon,L>0$, we introduce the event $\mathcal{G}_{L,\varepsilon}$ defined as 
\[\mathcal{G}_{L,\varepsilon}=\{Y_H>L\varepsilon\}\cap\{W_*<-b-\sqrt{\varepsilon}\}\cap\left\{-b-\sqrt{\varepsilon}<\inf_{u\in A_{L\varepsilon}\cup\widehat{A}_{L\varepsilon}}Z_u<-b-\varepsilon\right\}.\] By Proposition \ref{Encadrement}, for every $\delta>0$, there exists $L>0$ such that for every $\varepsilon>0$ small enough, $\p(\mathcal{G}_{L,\varepsilon})>1-\delta$. 
\begin{proposition}
        Under $\mathcal{G}_{L,\varepsilon}$, the canonical projections $p_1$ and $p_2$ are homeomorphisms onto their images.
    \end{proposition}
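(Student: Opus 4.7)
The plan is to observe that $p_1$ (and similarly $p_2$) is automatically continuous, so that the content of the statement lies entirely in proving injectivity on the event $\mathcal{G}_{L,\varepsilon}$; I will focus on $p_1$, the case of $p_2$ being symmetric. For continuity, I note that the slice pseudo-distance $\Tilde{D}_1^\circ$ defined in \eqref{pseudo dist slice} uses only the ``direct'' contour interval between two points, whereas $D^\circ$ on $\T$ takes the maximum of the minimums over the two possible arcs; hence $\Tilde{D}_1^\circ\geq D^\circ$ on $A_{L\varepsilon}\times A_{L\varepsilon}$, and this passes to the chain-distances $\Tilde{D}_1\geq D$. In particular $p_1$ is well defined and $1$-Lipschitz. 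Since $\Tilde{\s}_1$ is compact (as a quotient of the compact subtree $A_{L\varepsilon}$), once injectivity of $p_1$ is established the standard fact that a continuous bijection from a compact space to a Hausdorff space is a homeomorphism onto its image will conclude the proof.

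For injectivity, I would take $u_1,u_2\in A_{L\varepsilon}$ with $D(u_1,u_2)=0$ and show they represent the same class in $\Tilde{\s}_1$. By Proposition \ref{Identification}, $D^\circ(u_1,u_2)=0$, which forces $Z_{u_1}=Z_{u_2}=:z$ and
\[\max\Big(\inf_{w\in[u_1,u_2]}Z_w,\,\inf_{w\in[u_2,u_1]}Z_w\Big)=z.\]
Picking contour representatives $s\leq t$ of $u_1,u_2$ in $[a^{(L\varepsilon)},b^{(L\varepsilon)}]$, the direct arc $[s,t]$ stays in $A_{L\varepsilon}$, while the reverse arc $[t,\sigma]\cup[0,s]$ entirely covers the complement $[0,a^{(L\varepsilon)})\cup(b^{(L\varepsilon)},\sigma]$ of $A_{L\varepsilon}$. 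On $\mathcal{G}_{L,\varepsilon}$ the minimum point $s_*$ satisfies $W_*<-b-\sqrt{\varepsilon}<\inf_{u\in A_{L\varepsilon}}Z_u$, so $s_*$ lies outside $A_{L\varepsilon}$ and therefore on the reverse arc. Hence $\inf_{w\in[u_2,u_1]}Z_w\leq W_*<z$, which forces $\inf_{w\in[u_1,u_2]}Z_w=z$. Via the shift $r\mapsto r-a^{(L\varepsilon)}$, the direct arc in $\T$ matches the direct arc in the contour of the snake trajectory $W^{(L\varepsilon)}$ encoding $\Tilde{\s}_1$, so this immediately yields $\Tilde{D}_1^\circ(u_1,u_2)=0$, and the classes of $u_1$ and $u_2$ in $\Tilde{\s}_1$ coincide.

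The hard part is really the label-gap step at the heart of the injectivity argument: one needs both the lower bound $z>-b-\sqrt{\varepsilon}$ on labels inside $A_{L\varepsilon}$ and the strict separation $W_*<-b-\sqrt{\varepsilon}$ in order to ensure that the outside detour is strictly cheaper than $z$. This is precisely the function of the event $\mathcal{G}_{L,\varepsilon}$, which has been tailored through Proposition \ref{Encadrement} to be of high probability. Once this gap is in hand, the remaining ingredients are the cactus-type identification of Proposition \ref{Identification} and a routine compact-to-Hausdorff topological argument.
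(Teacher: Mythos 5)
Your proof is correct and follows essentially the same route as the paper: continuity from $D\leq\Tilde D_1$, injectivity by combining Proposition \ref{Identification} with the fact that on $\mathcal{G}_{L,\varepsilon}$ the minimizing point $x_*$ lies outside $A_{L\varepsilon}$, so the ``outer'' arc realizes a strictly smaller infimum and forces $\Tilde D_1^\circ(u,v)=D^\circ(u,v)=0$, and finally compactness of $\Tilde\s_1$. The only cosmetic difference is that you phrase the dichotomy via contour representatives in $[a^{(L\varepsilon)},b^{(L\varepsilon)}]$ where the paper says that one of $[u,v]$, $[v,u]$ is contained in $A_{L\varepsilon}$; these are the same observation.
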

    Consequently, in what follows, we will identify $\Tilde{\s}_1$ and $\Tilde{\s}_2$ as subsets of $\s$. 
\begin{proof}
        We give the proof for $p_1$. First, note that for every $u,v\in A_{L\varepsilon}$, $D^\circ(u,v)\leq\Tilde{D}_1^\circ(u,v)$, which implies that for every $x,y\in\Tilde{\s}_1$, $D(p_1(x),p_1(y))\leq\Tilde{D}_1(x,y)$. This shows that the projection $p_1$ is continuous. \\
Let $x,y\in\Tilde{\s}_1$ such that $p_1(x)=p_1(y)$, and $u,v\in A_{L\varepsilon}$ such that $p_{\Tilde{\s}_1}(u)=x$ and $p_{\Tilde{\s}_1}(v)=y$. By Proposition \ref{Identification}, we have $D^\circ(u,v)=0$. Note that either $[u,v]\subset A_{L\varepsilon}$ or $[v,u]\subset A_{L\varepsilon}$. Suppose that $[u,v]\subset A_{L\varepsilon}$ and note that under $\mathcal{G}_{L,\varepsilon}$, $x_*\notin p_\s(A_{L\varepsilon}\cup\widehat{A}_{L\varepsilon})$. Then, we have 
\[\min_{w\in[v,u]}Z_w=Z_*<\min_{w\in[u,v]}Z_w\]
which implies that 
\[\Tilde{D}^\circ_1(u,v)=Z_u+Z_v-2\inf_{w\in[u,v]}Z_w=D^\circ(u,v)=0.\]
Hence, $x=y$ and $p_1$ is injective. Since $\Tilde{\s}_1$ is compact, $p_1$ is a homeomorphism onto its image. 
    \end{proof}
    Now, as we did to define \eqref{snake traj}, we fix $a_1,b_1>0$ (respectively $a_2,b_2$) such that $A_{L\varepsilon}=\{\mathcal{E}_t,\,t\in[a_1,b_1]\}$ (respectively $\widehat{A}_{L\varepsilon}=\{\mathcal{E}_t,\,t\in[a_2,b_2]\}$). In other words, $a_1=\inf\{t\geq0,\mathcal{E}_t=S_{L\varepsilon}\}$ and $b_1=\sup\{t\geq0,\mathcal{E}_t=S_{L\varepsilon}\}$.
For every $s\leq L\varepsilon$ and $i\in\{1,2\}$, we set 
\begin{equation*}
    T^i_s=\inf\{t\in[a_i,b_i],Z_{\mathcal{E}_t}=s\}\quad\text{and }\quad\widehat{T}^i_s=\sup\{t\in[a_i,b_i],Z_{\mathcal{E}_t}=s\}.
\end{equation*}
Note that $p_{\Tilde{\s}_i}(\mathcal{E}_{T^i_s})=\gamma^L_i(L\varepsilon-s)$ and $p_{\Tilde{\s}_i}(\mathcal{E}_{\widehat{T}^i_s})=\gamma^R_i(L\varepsilon-s)$. 

\begin{lemme}\label{Identification2}
Under $\{Y_H>L\varepsilon\}$, let $u\in A_{L\varepsilon}$ (respectively $u\in\widehat{A}_{L\varepsilon}$) be such that there exists $v\in\T\backslash A_{L\varepsilon}$ (respectively $v\in\T\backslash\widehat{A}_{L\varepsilon}$) satisfying $D(u,v)=0$. Then $p_\s(u)\in\gamma_1^R\cup\gamma_1^L$ (respectively $p_\s(u)\in\gamma_2^R\cup\gamma_2^L$).
\end{lemme}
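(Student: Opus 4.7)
The plan is to deduce the lemma from Proposition \ref{Identification} together with a careful analysis of the interval decomposition of $\T$ around the boundary of $A_{L\varepsilon}$ (the case $u\in\widehat{A}_{L\varepsilon}$ being entirely symmetric).

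I would first upgrade the hypothesis $D(u,v)=0$ to $D^\circ(u,v)=0$ by invoking Proposition \ref{Identification}. Set $m_1:=\inf_{w\in[u,v]}Z_w$ and $m_2:=\inf_{w\in[v,u]}Z_w$. Since both $u$ and $v$ are endpoints of the two intervals, one has $Z_u,Z_v\geq\max(m_1,m_2)$, so the vanishing of $D^\circ(u,v)=Z_u+Z_v-2\max(m_1,m_2)$ forces $Z_u=Z_v=\max(m_1,m_2)$. Up to relabelling, I may assume $m_1\geq m_2$, so that $Z_u$ attains the minimum $\min_{[u,v]}Z$ on $[u,v]$.

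Next, I would translate everything into the exploration-time picture. Pick representatives $s_u\in[a_1,b_1]$ of $u$ and $s_v\in[0,\sigma]\setminus[a_1,b_1]$ of $v$, recalling that $\mathcal{E}_{a_1}=\mathcal{E}_{b_1}=S_{L\varepsilon}$ is the only boundary point of $A_{L\varepsilon}$ in $\T$. Going through the cases of the wrap-around convention $[s,t]=[s,\sigma]\cup[0,t]$ when $t<s$, the interval of exploration times defining $[u,v]$ meets $[a_1,b_1]$ in exactly $[s_u,b_1]$; equivalently, $[u,S_{L\varepsilon}]\subset[u,v]$ and $[u,S_{L\varepsilon}]\subset A_{L\varepsilon}$. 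This is the combinatorial heart of the argument.

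Combining these two facts, since $[u,S_{L\varepsilon}]\subset[u,v]$ and $s_u\in[s_u,b_1]$, one obtains $Z_u=\min_{r\in[s_u,b_1]}Z_{\mathcal{E}_r}$. Setting $r^{*}:=\widehat{T}^1_{Z_u}$, so that $s_u\leq r^{*}\leq b_1$, the label $Z$ attains its minimum $Z_u$ on the sub-interval $[s_u,r^{*}]$; hence $\Tilde{D}^\circ_1(u,\mathcal{E}_{r^{*}})=0$, so that $u$ and $\mathcal{E}_{r^{*}}$ project to the same point of the slice $\Tilde{\s}_1$, and a fortiori of $\s$. Since $\mathcal{E}_{r^{*}}\in\gamma^R_1$ by definition, this gives $p_\s(u)\in\gamma^R_1$. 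The symmetric case $m_2\geq m_1$ uses the sub-interval $[a_1,s_u]$ instead and analogously yields $p_\s(u)\in\gamma^L_1$.

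I expect the main obstacle to be the exploration-time bookkeeping of the second step: one must verify, across all configurations of $s_u$ and $s_v$ relative to $[a_1,b_1]$ and the wrap-around convention, that the interval of exploration times for $[u,v]$ always splits into an initial segment $[s_u,b_1]$ contained in $A_{L\varepsilon}$ followed by a segment disjoint from $(a_1,b_1)$. This relies on $[a_1,b_1]$ being a connected range of exploration times and rules out any pathological re-entry into $A_{L\varepsilon}$. Once this combinatorial point is pinned down, the remaining arguments reduce to routine manipulations of the snake minima and of the definitions of $\gamma^R_1$ and $\gamma^L_1$.
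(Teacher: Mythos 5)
Your argument is correct and follows essentially the same route as the paper: apply Proposition \ref{Identification} to get $D^\circ(u,v)=0$, then use the fact that the exploration visits $A_{L\varepsilon}$ exactly on $[a_1,b_1]$ to see that the minimizing interval forces $D^\circ(u,\mathcal{E}_{T^1_{Z_u}})=0$ or $D^\circ(u,\mathcal{E}_{\widehat{T}^1_{Z_u}})=0$, hence $p_\s(u)\in\gamma_1^L\cup\gamma_1^R$. You have simply made explicit the exploration-time bookkeeping that the paper's one-line proof leaves implicit.
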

\begin{proof}
    Set $\alpha=Z_u$. By Proposition \ref{Identification}, we have $D^\circ(u,v)=0$. But note that this implies $D^\circ(u,\mathcal{E}_{T^i_\alpha})=0$ or $D^\circ(u,\mathcal{E}_{\widehat{T}^i_\alpha})=0$ which means that $p_\s(u)\in\gamma_i^R\cup\gamma_i^L$.
\end{proof}

Our first result is to identify the boundary $\Tilde{\s}_i$, viewed as a subset of $\s$.

\begin{proposition}\label{boundary}
Under $\{Y_H>L\varepsilon\}$, for every $i\in \{1,2\}$, the topological boundary of $\Tilde{\s}_i$ in $\s$ is included in $\gamma_i^R\cup\gamma_i^L$.
\end{proposition}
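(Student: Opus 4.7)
The plan is to reduce to a tree-level statement and then combine Lemma~\ref{Identification2} with an explicit description of the topological boundary of $A_{L\varepsilon}$ in $\T$. Since $A_{L\varepsilon}$ is compact in $\T$ and $p_\s$ is continuous, $\Tilde{\s}_1=p_\s(A_{L\varepsilon})$ is closed in $\s$, hence $\partial\Tilde{\s}_1\subset\Tilde{\s}_1$. Fix $x\in\partial\Tilde{\s}_1$ and choose $x_n=p_\s(v_n)\to x$ with $x_n\in\s\setminus\Tilde{\s}_1$; this forces $v_n\in\T\setminus A_{L\varepsilon}$ for every $n$, otherwise $x_n$ would lie in $\Tilde{\s}_1$. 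By compactness of $\T$ I extract a subsequence $v_{n_k}\to v\in\T$, and by continuity of $p_\s$ I obtain $p_\s(v)=x$; I also fix some $u\in A_{L\varepsilon}$ with $p_\s(u)=x$, so that $D(u,v)=0$.

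The argument then splits according to whether $v$ stays outside $A_{L\varepsilon}$ or is absorbed at its boundary. If $v\in\T\setminus A_{L\varepsilon}$, I apply Lemma~\ref{Identification2} directly to the pair $(u,v)$ and conclude $x=p_\s(u)\in\gamma_1^L\cup\gamma_1^R$. If instead $v\in A_{L\varepsilon}$, then $v$ is a topological boundary point of $A_{L\varepsilon}$ inside $\T$, being the limit of $v_{n_k}\in\T\setminus A_{L\varepsilon}$. The key geometric observation is that, under $\{Y_H>L\varepsilon\}$, this boundary reduces to the two extremities $\{p_\T(T_b),p_\T(S_{L\varepsilon})\}$: any spine parameter $t\in(0,S_{L\varepsilon})$ admits a neighborhood in $\T$ made of a small piece of spine around $t$ together with the subtrees $\T_i$ branching at parameters $t_i$ in that range, all of which lie in $A_{L\varepsilon}$, so $p_\T(t)$ is interior; the same holds inside any such $\T_i$ and at its attachment point on the spine. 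Only the two extremities can be boundary points, and they are, because in the three-arms structure $\T_{(0)}\cup\T_{(2)}$ emanates from $p_\T(T_b)$ while the spine of $\T_{(1)}$ continues past $S_{L\varepsilon}$.

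It then suffices to observe that both extremities project onto $\gamma_1^L\cup\gamma_1^R$ in $\s$: the point $p_\T(S_{L\varepsilon})$ corresponds to the root of the slice $\Tilde{\s}_1$, which is the common starting point of $\gamma_1^L$ and $\gamma_1^R$, and $p_\T(T_b)$ projects onto $\Gamma_b=p_1(\gamma_1^L(L\varepsilon))=p_1(\gamma_1^R(L\varepsilon))$. In each sub-case $x=p_\s(v)\in\gamma_1^L\cup\gamma_1^R$, which completes the argument; the case of $\Tilde{\s}_2$ is symmetric. The hard part is the explicit identification of the boundary of $A_{L\varepsilon}$ in $\T$ in the second case, which relies on the density of the Poisson rain of subtrees along the spine of $\T_{(1)}$ and on the three-arms structure at $p_\T(T_b)$; the rest is a routine compactness-plus-continuity argument feeding into Lemma~\ref{Identification2}.
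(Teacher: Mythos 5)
Your argument is correct and is essentially the paper's: both proofs reduce everything to Lemma \ref{Identification2} after handling separately the single exceptional point of $A_{L\varepsilon}$ accessible from its complement, namely the spine point $S_{L\varepsilon}$ (the root of the slice); the paper encodes exactly the topological fact you make explicit by lifting into the closed set $\T\backslash(A_{L\varepsilon}\backslash S_{L\varepsilon})$ along a path entering $\Tilde{\s}_1$, rather than classifying $\partial A_{L\varepsilon}$ via sequences as you do. One small inaccuracy in your second case: $p_\T(T_b)$ is not a boundary point of $A_{L\varepsilon}$ in $\T$ — in the three-arms decomposition, $\T_{(0)}$ and $\T_{(2)}$ are glued to $\T_{(1)}$ at the branch point with label $Y_H$ (spine parameter $S_{Y_H}$), not at $p_\T(T_b)$, which is almost surely a leaf of $\T$ and hence interior to $A_{L\varepsilon}$; this is harmless, since including it as an extra candidate and checking that it projects to $\Gamma_b\in\gamma_1^L\cup\gamma_1^R$ does not affect the conclusion.
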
 
\begin{proof}
Without loss of generality, suppose that $i=1$. Consider a continuous path $\eta:[0,1]\rightarrow \s$ such that $\eta(0)\notin\Tilde{\s}_1$ and $\eta(1)\in\Tilde{\s}_1$. We set
\begin{equation*}
        \tau=\inf\{0\leq s\leq 1:\eta(s)\in\Tilde{\s}_1\}
\end{equation*}
   and we want to show that $\eta(\tau)\in\gamma_1^R\cup\gamma_1^L$. If $\eta(\tau)=p_\s(S_{L\varepsilon})$, there is nothing to add. Suppose it is not the case, and for $s\in[0,\tau]$, fix $z_s\in\T\backslash(A_{L\varepsilon}\backslash S_{L\varepsilon})$ (which is a closed set) such that $p_\s(z_s)=\eta(s)$. By compactness, we can suppose that $z_s\rightarrow z$ when $s\rightarrow\tau$, and thus $\eta(\tau)=p_\s(z)$. \\
   However, $\Tilde{\s}_1$ is a closed subset of $\s$, which implies that $\eta(\tau)\in\Tilde{\s}_1$. Thus, we can write $\eta(\tau)=p_\s(u)$, with $u\in A_{L\varepsilon}$. Therefore, by Proposition \ref{Identification}, we have $D^\circ(u,z)=0$. But by Lemma \ref{Identification2}, this implies that $p_\s(z)\in\gamma_1^R\cup\gamma_1^L$, which concludes the proof. 
\end{proof}
        We start by proving that for points of $\Tilde{\s}_i$ which are far from a boundary, the distances $D$ and $\Tilde{D}_i$ coincide.
        \begin{proposition}\label{iso locale}
    The following holds almost surely, on $\mathcal{G}_{L,\varepsilon}$. Fix $\delta>0$, $i\in \{1,2\}$, and $x,y\in\Tilde{\s}_i$ such that $D(x,\gamma_i^L)\wedge D(y,\gamma_i^L)>\delta$ (or $D(x,\gamma_i^R)\wedge D(y,\gamma_i^R)>\delta$) and $D(x,y)<\delta/2$. Then $\Tilde{D}_i(x,y)=D(x,y)$. 
\end{proposition}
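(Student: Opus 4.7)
The plan is to reduce matters to the exact identity of pseudo-distances $D^\circ = \Tilde{D}_1^\circ$ on $A_{L\varepsilon}$, valid on the event $\mathcal{G}_{L,\varepsilon}$, combined with a topological argument that forces every intermediate point in a near-minimizer for $D(x,y)$ to lie in $A_{L\varepsilon}$. Since $d \leq \Tilde{d}$ gives $D \leq \Tilde{D}_1$ for free, only the reverse inequality is at stake. I write the argument for $i=1$, the case $i=2$ being symmetric.

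The first ingredient is: for any $u, v \in A_{L\varepsilon}$, on $\mathcal{G}_{L,\varepsilon}$, $D^\circ(u,v) = \Tilde{D}_1^\circ(u,v)$. Of the two tree intervals $[u,v]_\T$ and $[v,u]_\T$, exactly one is contained in $A_{L\varepsilon}$; the other wraps through the complement of $A_{L\varepsilon}$ and necessarily passes through $s_*$, because the bounds defining $\mathcal{G}_{L,\varepsilon}$ imply $x_* \notin \Tilde{\s}_1$. Hence the minimum on the outer interval equals $W_* < -b-\sqrt{\varepsilon}$ while the one on the inner interval is at least $-b-\sqrt{\varepsilon}$, so the $\max$ in $D^\circ$ selects the inner minimum. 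This is precisely what $\Tilde{D}_1^\circ$ computes via the snake trajectory $W^{(L\varepsilon)}$ coding $A_{L\varepsilon}$, which gives the stated identity.

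Now fix $\eta \in (0, \delta/2)$ and pick $u_0, \ldots, u_p \in \T$ with $p_\s(u_0) = x$, $p_\s(u_p) = y$, and $\sum_{j=1}^p D^\circ(u_{j-1}, u_j) < D(x,y) + \eta < \delta/2 + \eta$. The bound $D \leq D^\circ$ together with the triangle inequality for $D$ applied to the partial sums yields $D(x, p_\s(u_j)) < \delta/2 + \eta$ for every $j$. I claim each $u_j$ lies in $A_{L\varepsilon}$. First, $p_\s(u_j) \in \Tilde{\s}_1$: if not, since $\s$ is a geodesic space and $\Tilde{\s}_1$ is closed, any geodesic from $x$ to $p_\s(u_j)$ must meet $\partial \Tilde{\s}_1 \subset \gamma_1^L \cup \gamma_1^R$ (Proposition~\ref{boundary}), forcing $D(x, \gamma_1^L) \leq D(x, p_\s(u_j)) < \delta$, contradicting the hypothesis. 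Second, by the triangle inequality $D(p_\s(u_j), \gamma_1^L) > \delta - (\delta/2 + \eta) > 0$, so $p_\s(u_j) \notin \gamma_1^L \cup \gamma_1^R$, and the contrapositive of Lemma~\ref{Identification2} then forces $u_j \in A_{L\varepsilon}$. Using the identity of the previous paragraph on each consecutive pair, $\Tilde{D}_1(x,y) \leq \sum_j \Tilde{D}_1^\circ(u_{j-1}, u_j) = \sum_j D^\circ(u_{j-1}, u_j) < D(x,y) + \eta$, and letting $\eta \downarrow 0$ finishes the proof.

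The main obstacle is the confinement step showing $p_\s(u_j) \in \Tilde{\s}_1$: it combines the length space structure of $\s$ with the precise description of $\partial \Tilde{\s}_1$ from Proposition~\ref{boundary}, and is the only place where the actual value of $\delta$ (rather than just the fact that consecutive $p_\s(u_j)$ are close) is used. The rest is bookkeeping built on the estimates that define $\mathcal{G}_{L,\varepsilon}$.
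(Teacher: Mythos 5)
Your first ingredient (on $\mathcal{G}_{L,\varepsilon}$, $D^\circ=\Tilde{D}_1^\circ$ on $A_{L\varepsilon}\times A_{L\varepsilon}$, because the complementary interval runs through $s_*$ whose label lies strictly below $\inf_{A_{L\varepsilon}}Z$) is correct and is exactly the paper's first step, and the reduction to bounding $\Tilde{D}_1$ from above is fine. The gap is the confinement step, and it is a genuine one: the hypothesis of the proposition is an \emph{or} — $x,y$ are only assumed far from \emph{one} of the two boundary geodesics, say $\gamma_1^L$ — whereas your argument needs them (and the intermediate points) to stay away from \emph{both}. Concretely: if $p_\s(u_j)\notin\Tilde{\s}_1$, a geodesic from $x$ to $p_\s(u_j)$ meets $\partial\Tilde{\s}_1\subset\gamma_1^L\cup\gamma_1^R$, but it may well meet it on $\gamma_1^R$, which contradicts nothing since $D(x,\gamma_1^R)$ is not controlled; and your second step only gives $D(p_\s(u_j),\gamma_1^L)>0$, which does not exclude $p_\s(u_j)\in\gamma_1^R$, so the contrapositive of Lemma \ref{Identification2} cannot be applied to force $u_j\in A_{L\varepsilon}$. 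The confinement claim is in fact false in this generality: $x,y$ may lie on (or arbitrarily close to) $\gamma_1^R$, and points of $\T\setminus A_{L\varepsilon}$ identified in $\s$ with points of $\gamma_1^R$ can be inserted in a near-minimizing chain at zero extra cost, so near-minimizers need not stay in $A_{L\varepsilon}$. This "or" is not cosmetic — it is used later (Corollary \ref{length2}) precisely for points sitting on one of the two boundary curves.

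The paper's proof avoids confinement altogether: given a chain with $\sum D^\circ\leq\delta$, it performs surgery on each excursion outside $A_{L\varepsilon}$, decomposing $D^\circ(u_{\tau-1},u_\tau)$ at the point of $A_{L\varepsilon}$ realizing the interval minimum (a point $p_\T(T^1_\alpha)$ or $p_\T(\widehat{T}^1_\alpha)$ on a boundary geodesic), then replacing the whole outside excursion by the pair of such boundary points, the bound \eqref{Bound} guaranteeing the total sum does not increase. The hypothesis $D(x,\gamma_1^L)\wedge D(y,\gamma_1^L)>\delta$ enters only to rule out the cases where this projection would force the chain through $\gamma_1^L$ (or below the slice minimum), since that would make the sum exceed $\delta$. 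To repair your argument you would either have to strengthen the hypothesis to "far from both boundaries" (which would no longer suffice for the later applications) or incorporate a chain-modification step of this kind for crossings of the uncontrolled boundary $\gamma_1^R$.
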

\begin{proof} 
    Without loss of generality, suppose that $D(x,\gamma_1^L)\wedge D(y,\gamma_1^L)>\delta$. Fix $u,v\in A_{L\varepsilon}$ such that $x=p_\s(u)$ and $y=p_\s(v)$. First, note that on $\mathcal{G}_{L,\varepsilon}$, for any $u,v\in A_{L\varepsilon}$, it holds that $D^\circ(u,v)=\Tilde{D}_1^\circ(u,v)$. Indeed, if $[u,v]\subset A_{L\varepsilon}$, we have
    \[D^\circ(u,v)=Z_u+Z_v-2\max\left(\inf_{w\in[u,v]}Z_w,\inf_{w\in[v,u]}Z_w\right)=Z_u+Z_v-2\inf_{w\in[u,v]}Z_w=\Tilde{D}^\circ_1(u,v),
    \]
    and the case $[v,u]\subset A_{L\varepsilon}$ is treated similarly. 
    We want to show that, when computing $D(u,v)$, we can restrict ourselves to sequences of points that belong to $A_{L\varepsilon}$. Fix a sequence $u_0,...,u_p\in\T_b$ such that $u_0=u$ and $u_p=v$. Without loss of generality, we can suppose that
        \begin{equation*}
            \sum_{i=1}^pD^\circ(u_i,u_{i-1})\leq\delta.
        \end{equation*}
        We need to show that there exists another sequence $v_0,...,v_q\in A_{L\varepsilon}$ satisfying $v_0=u$ and $v_q=v$ such that
        \begin{equation}\label{2}
            \sum_{i=1}^pD^\circ(u_i,u_{i-1})\geq\sum_{i=1}^qD^\circ(v_i,v_{i-1}).
        \end{equation}
        We construct the sequence $v_0,...,v_q$ from $u_0,...,u_p$ as follows. If $u_0,...,u_p\in A_{L\varepsilon}$, there is nothing to do. Otherwise, we introduce 
        \begin{equation*}
            \tau=\inf\{j>0,u_j\notin A_{L\varepsilon}\}\quad\text{ and }\quad\eta=\inf\{j>\tau,u_j\in A_{L\varepsilon}\}
        \end{equation*}
        (these quantities are well-defined because $u_p\in A_{L\varepsilon}$). 
        Define 
        \begin{equation*}
        \alpha:=\max\bigg(\inf_{w\in[u_\tau,u_{\tau-1}]}Z_w,\inf_{w\in[u_{\tau-1},u_\tau]}Z_w\bigg)\quad\text{and}\quad(W^1)_*=\inf_{w\in A_{L\varepsilon}}Z_w.
        \end{equation*}
         
        \begin{itemize}[label = \textbullet]
              \item Suppose that $\alpha<(W^1)_* $ : it implies that
              \[D^\circ(u_{\tau-1},u_\tau)=D^\circ(u_{\tau-1},p_\T(T^1_{(W^1)_* }))+D^\circ(u_\tau,p_\T(T^1_{(W^1)_* })).\]
            Therefore, we can suppose that $p_\T(T^1_{(W^1)_* })$ is between $u_{\tau-1}$ and $u_\tau$ in the sequence. But this implies that 
            \[\sum_{i=1}^pD^\circ(u_i,u_{i-1})\geq\sum_{i=1}^{\tau-1} D^\circ(u_i,u_{i-1})+D^\circ(u_{\tau-1},p_\T(T^1_{(W^1)_*}))\geq D(x,\gamma_1^L)>\delta\] 
            which contradicts our assumption on the sequence $u_0,...,u_p$ Consequently, this case does not happen.
            \item If $\alpha=\inf_{w\in[u_{\tau-1},u_\tau]}Z_w$ and $\alpha\geq (W^1)_* $, we have
             \begin{equation*}
                D^\circ(u_\tau,u_{\tau-1})=D^\circ(u_\tau,p_\T(\widehat{T}^1_\alpha))+D^\circ(u_{\tau-1},p_\T(\widehat{T}^1_\alpha)).
            \end{equation*}
            As previously, this implies that 
            \[\sum_{i=1}^pD^\circ(u_i,u_{i-1})\geq\sum_{i=1}^\tau D^\circ(u_i,u_{i-1})+D^\circ(u_\tau,p_\T(\widehat{T}^1_\alpha))\geq D(x,\gamma_1^L)>\delta\] 
            which contradicts our assumption on the sequence $u_0,...,u_p$. Consequently, this case does not happen either.          
        \end{itemize}     
    This means that $\alpha=\inf_{w\in[u_\tau,u_{\tau-1}]}Z_w$ and $\alpha\geq (W^1)_* $. Then, we have
            \begin{equation*}
                D^\circ(u_\tau,u_{\tau-1})=D^\circ(u_\tau,p_\T(T^1_\alpha))+D^\circ(u_{\tau-1},p_\T(T^1_\alpha)).
            \end{equation*}
            Therefore, we can suppose that $p_\T(T^1_\alpha)$ is between $u_{\tau-1}$ and $u_\tau$ in the sequence.
        Similarly, if we set 
        \[\beta:=\max\bigg(\inf_{w\in[u_\eta,u_{\eta-1}]}Z_w,\inf_{w\in[u_{\eta-1},u_\eta]}Z_w\bigg),\]
        we can suppose that $p_\T(T^1_\beta)$ is between $u_{\eta-1}$ and $u_\eta$. But note that 
        \[D^\circ(u_\tau,p_\T(T^1_\alpha))+\sum_{i=\tau+1}^{\eta -1}D^\circ(u_i,u_{i-1})+D^\circ(u_{\eta-1},p_\T(T^1_\beta))\geq D^\circ(p_\T(T^1_\alpha),p_\T(T^1_\beta)).\] Therefore, we could erase $u_\tau,...,u_{\eta-1}$ from the sequence and replace it with $(p_\T(T^1_\alpha),p_\T(T^1_\beta))$, and it would decrease the value of the left-hand side of \eqref{2}. By iterating this procedure, we obtain a sequence which takes its values in $A_{L\varepsilon}$, and satisfies \eqref{2}. \\
        Thus, for every sequence $u_0,...,u_p$ such that $u_0=u,u_p=v$ and $u_k\in A_{L\varepsilon}$, we have : 
        \[\sum_{i=1}^pD^\circ(u_i,u_{i-1})=\sum_{i=1}^p\Tilde{D}_1^\circ(u_i,u_{i-1}).\] Taking the infimum over all such sequences, we obtain 
        \[D(x,y)=\Tilde{D}_1(x,y)\] which concludes the proof. 
\end{proof}
We proceed as in section $4.4$ of \cite{Browniandisk} to turn this local result into a global one.
\begin{corollary}\label{length2}
    Fix $x,y\in\Tilde{\s}_i$ and a continuous, injective path $f:[0,1]\rightarrow\Tilde{\s}_i$ such that $f(0)=x,f(1)=y$. Then 
    \begin{equation*}
        \mathrm{length}_D(f)=\mathrm{length}_{\Tilde{D}_i}(f)
    \end{equation*}
\end{corollary}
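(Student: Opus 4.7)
The inequality $\mathrm{length}_D(f) \leq \mathrm{length}_{\tilde{D}_i}(f)$ is immediate: since $D \leq \tilde{D}_i$ on $\tilde{\s}_i \times \tilde{\s}_i$, every Riemann-type sum computing the $D$-length of $f$ is bounded above by the corresponding sum for $\tilde{D}_i$, hence the same holds for the suprema.

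For the reverse inequality, the natural strategy is to globalize Proposition \ref{iso locale} via a partition argument. The path $f$ is continuous from $[0,1]$ to $\tilde{\s}_i$; as $\tilde{\s}_i$ is compact and $p_i$ is a homeomorphism onto its image in $\s$ (on $\mathcal{G}_{L,\varepsilon}$), the two metrics $D$ and $\tilde{D}_i$ induce the same topology on $\tilde{\s}_i$, so $f$ is uniformly continuous with respect to both. Fix $\eta > 0$ and choose $\delta \in (0, \eta)$ small. Then pick a partition $0 = t_0 < t_1 < \cdots < t_n = 1$ fine enough that each $f([t_{k-1}, t_k])$ has $D$-diameter (hence also $\tilde{D}_i$-diameter) less than $\delta / 4$. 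I call an index $k$ \emph{good} if $f([t_{k-1}, t_k])$ stays at $D$-distance strictly greater than $\delta$ from $\gamma_i^L \cup \gamma_i^R$, and \emph{bad} otherwise. For good $k$, Proposition \ref{iso locale} (applied with threshold $\delta$, noting that both endpoints are at distance $> \delta$ from both geodesic boundaries and at $D$-distance $< \delta/4 < \delta/2$ from each other) gives
\begin{equation*}
\tilde{D}_i(f(t_{k-1}), f(t_k)) = D(f(t_{k-1}), f(t_k)).
\end{equation*}
Summing over good $k$ yields a quantity bounded above by $\mathrm{length}_D(f)$. It remains to control the contribution of bad indices.

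The crux of the argument, and the main obstacle, is to show that the sum $\sum_{k \text{ bad}} \tilde{D}_i(f(t_{k-1}), f(t_k))$ can be made arbitrarily small by refining the partition. Each bad index corresponds to a subinterval on which $f$ stays within $D$-distance $\delta$ of the compact boundary set $\gamma_i^L \cup \gamma_i^R$. Using the injectivity of $f$ together with the fact that $\gamma_i^L$ and $\gamma_i^R$ are rectifiable curves of $\tilde{D}_i$-length exactly $L\varepsilon$, one shows that the closed set $f^{-1}(\mathcal{V}_\delta)$, where $\mathcal{V}_\delta$ is the $\delta$-neighborhood of $\gamma_i^L \cup \gamma_i^R$ in $(\s, D)$, has image with total $\tilde{D}_i$-length vanishing with $\delta$. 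More concretely, one bounds $\sum_{k \text{ bad}} \tilde{D}_i(f(t_{k-1}), f(t_k))$ by a sum over maximal blocks of consecutive bad indices; each such block is a subarc of $f$ whose endpoints are close to the boundary (in $D$, hence in $\tilde{D}_i$ by Proposition \ref{iso locale} applied near those endpoints). The injectivity of $f$ together with the rectifiability of $\gamma_i^L, \gamma_i^R$ ensures that only finitely many blocks can be of $\tilde{D}_i$-diameter bounded below, so refining the partition makes the total contribution $O(\delta) + o(1)$.

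Combining the two estimates gives $\sum_{k=1}^n \tilde{D}_i(f(t_{k-1}), f(t_k)) \leq \mathrm{length}_D(f) + \eta$; taking the supremum over all such partitions and then $\eta \to 0$ yields $\mathrm{length}_{\tilde{D}_i}(f) \leq \mathrm{length}_D(f)$. This approach mirrors the scheme used in \cite[Section 4.4]{Browniandisk}, with Proposition \ref{iso locale} playing here the role of the local comparison between the intrinsic and ambient distances. The principal technical difficulty is the injectivity-based bound on the boundary contribution: it is what forbids the path from wasting $\tilde{D}_i$-length by oscillating along $\gamma_i^L \cup \gamma_i^R$, and it is the reason the hypothesis of injectivity is imposed in the statement.
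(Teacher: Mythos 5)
There is a genuine gap in the control of the ``bad'' blocks, and it stems from a misreading of the hypothesis of Proposition \ref{iso locale}. That proposition does not require the two points to be far from the whole boundary $\gamma_i^L\cup\gamma_i^R$: it requires them to be at distance $>\delta$ from $\gamma_i^L$ \emph{or} at distance $>\delta$ from $\gamma_i^R$. Consequently, points lying on (or very near) one of the two boundary geodesics are not problematic at all, as long as they stay away from the other one; the only genuinely exceptional points are the two points where $\gamma_i^L$ and $\gamma_i^R$ meet, namely $p_{\Tilde{\s}_i}(S_{L\varepsilon})$ and the point of minimal label of the slice. By declaring ``bad'' every subinterval that approaches $\gamma_i^L\cup\gamma_i^R$, you create a set whose contribution you then cannot control: nothing in the statement prevents $f$ from running along a nontrivial subarc of $\gamma_i^L$ (away from $\gamma_i^R$), in which case the $\Tilde{D}_i$-length accumulated in your bad set is bounded below by the length of that subarc, uniformly in $\delta$, and your claimed bound ``$O(\delta)+o(1)$'' via injectivity and rectifiability of the boundary cannot hold. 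In that scenario your argument only yields $\mathrm{length}_{\Tilde{D}_i}(f)\leq\mathrm{length}_D(f)+\ell+\eta$ with $\ell>0$ fixed, which is not the statement; and even apart from this counterexample, the step ``only finitely many blocks can have $\Tilde{D}_i$-diameter bounded below, so refining the partition makes the total contribution small'' is asserted rather than proved.

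The paper's route avoids this entirely by exploiting the disjunction in Proposition \ref{iso locale}: if $f$ avoids the two corner points, then every $f(r)$ is at positive distance from at least one of $\gamma_i^L$, $\gamma_i^R$, so around every $r$ one gets a neighborhood on which $D$ and $\Tilde{D}_i$ \emph{coincide} on pairs of points of the path; a compactness (Lebesgue number) argument then gives a uniform mesh $\varepsilon$ such that all partition sums with mesh $<\varepsilon$ are literally equal for the two metrics, whence exact equality of lengths, with no error term to absorb. The two exceptional corner points (visited at most at finitely many times, by injectivity) are then reinstated using additivity and left-continuity of the length functional, as in Section 4.4 of \cite{Browniandisk}. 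Your easy inequality $\mathrm{length}_D(f)\leq\mathrm{length}_{\Tilde{D}_i}(f)$ and your use of Proposition \ref{iso locale} on good blocks are fine, but to repair the proof you should replace your good/bad dichotomy by this pointwise ``far from at least one of the two geodesics'' argument, which is where the injectivity hypothesis is actually used (finitely many visits to the two corner points), rather than in a length estimate along the boundary.
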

\begin{proof}
    Without loss of generality, fix $i=1$. Suppose first that $f$ does not visit $p_{\Tilde{\s}_1}(S_{L\varepsilon})$ and $p_{\Tilde{\s}_1}(p_\T(T^1_{(W^1)_* }))$. Then, for every $t\geq0$, $f(t)$ is either not in $\gamma_1^R$ or not in $\gamma_1^L$. Suppose that we are in the first case; by Proposition \ref{iso locale}, we can find a neighborhood $V_r$ of $r$ in $[0,1]$ and $\varepsilon_r>0$ such that for every $r'\in V_r$, $\Tilde{D}_1(f(r),f(r'))<\varepsilon_r/2$ and $\Tilde{D}_1(f(r'),\gamma_1^R)>\varepsilon_r$. By compactness, we can find $\varepsilon>0$ such that, for every $r,r'\in[0,1],|r-r'|<\varepsilon$ implies that 
    \begin{equation*}
        D(f(r),f(r'))=\Tilde{D}_1(f(r),f(r')).
    \end{equation*}
    Hence, for every partition $0=r_0<r_1<...<r_k=1$ such that $|r_{i+1}-r_i|<\varepsilon$ for every $1\leq i\leq k$, we have 
    \begin{equation*}
        \sum_{i=1}^k D(f(r_i),f(r_{i-1}))=\sum_{i=1}^k \Tilde{D}_1(f(r_i),f(r_{i-1})).
    \end{equation*}
    Taking the supremum over all partitions, we obtain 
    \begin{equation}\label{length}
        \text{length}_D(f)=\text{length}_{\Tilde{D}_1}(f).
    \end{equation}
    Now, by left-continuity and additivity of the length function, $\eqref{length}$ remains true if we allow $f$ to visit any point of $\Tilde{\s}_i$ (the reader is referred to the discussion at the end of section $4.4$ of \cite{Browniandisk} for more details), which concludes the proof.
\end{proof}
\subsection{End of the proof of Proposition \ref{localisation}} Now, we just need one last estimate to prove Proposition \ref{localisation}.
\begin{proposition}\label{Distance-frontière}
    For every $\delta>0$, there exists $L,C>0$ such that for every $\varepsilon>0$, 
    \[\N_0\bigg(\Tilde{D}\bigg(B_\varepsilon(\Tilde{\s},\gamma^L(L\varepsilon)),\gamma^R\bigg)>C\varepsilon\,\bigg|\,W_*<-L\varepsilon\bigg)>1-\delta.\]
\end{proposition}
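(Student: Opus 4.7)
My plan is to exploit the scaling invariance of $\N_0$ together with a triangle-inequality argument to reduce everything to the strict positivity of the slice-width at a single point. By the scaling property of the Brownian snake excursion measure, the pushforward of $\N_0(\cdot \mid W_* < -L\varepsilon)$ under $\Theta_{1/\varepsilon^2}$ is $\N_0(\cdot \mid W_* < -L)$, and the slice distance $\Tilde D$ is simultaneously multiplied by $1/\varepsilon$. Hence the proposition is equivalent to the following $\varepsilon$-free statement: for every $\delta > 0$, there exist $L_0, C > 0$ such that for every $L \geq L_0$,
\begin{equation*}
\N_0\bigl(\Tilde D(B_1(\Tilde\s, \gamma^L(L)), \gamma^R) > C \,\big|\, W_* < -L\bigr) > 1 - \delta.
\end{equation*}

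To control this, I would first observe that the bound $\Tilde D \geq |Z_u - Z_v|$ forces any $u \in B_1(\Tilde\s, \gamma^L(L))$ to satisfy $Z_u \in [-L-1, -L+1]$, and already yields $\Tilde D(u, \gamma^R(t)) > C$ for every $\gamma^R(t)$ with $|t - L| > C + 2$. On the remaining bounded range $t \in [L - C - 2, L + C + 2] \cap [0, -W_*]$, the triangle inequality combined with the fact that $\gamma^R$ is a unit-speed geodesic in $\Tilde\s$ yields
\begin{equation*}
\Tilde D(u, \gamma^R(t)) \geq \Tilde D(\gamma^L(L), \gamma^R(L)) - \Tilde D(u, \gamma^L(L)) - |t - L| \geq \Tilde D(\gamma^L(L), \gamma^R(L)) - C - 3.
\end{equation*}
So it suffices to prove that $\Tilde D(\gamma^L(L), \gamma^R(L)) > 2C + 3$ with probability at least $1 - \delta$ under $\N_0(\cdot \mid W_* < -L)$ for $L$ large enough.

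Applying scaling once more, now with $\lambda = 1/L^2$, the random variable $\Tilde D(\gamma^L(L), \gamma^R(L))$ under $\N_0(\cdot \mid W_* < -L)$ has the same distribution as $L \cdot \Tilde D(\gamma^L(1), \gamma^R(1))$ under $\N_0(\cdot \mid W_* < -1)$. Under this last measure $-W_* > 1$ almost surely, so $1 \in (0, -W_*)$; by the property, stated just after the definition of the Brownian slice, that $\gamma^L$ and $\gamma^R$ intersect only at their endpoints $\Tilde\rho$ and $x_*$, the points $\gamma^L(1)$ and $\gamma^R(1)$ are distinct elements of the metric space $\Tilde\s$, whence $\Tilde D(\gamma^L(1), \gamma^R(1)) > 0$ almost surely. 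Therefore $L \cdot \Tilde D(\gamma^L(1), \gamma^R(1)) \to +\infty$ almost surely as $L \to \infty$, and $L_0$ can be chosen so large that this variable exceeds $2C + 3$ with probability at least $1 - \delta$ for every $L \geq L_0$. The main obstacle is conceptual rather than computational: one must recognise that the scaling invariance of $\N_0$ converts the $\varepsilon \to 0$ problem under the moving conditioning $\{W_* < -L\varepsilon\}$ into the $L \to \infty$ problem under the fixed conditioning $\{W_* < -1\}$, where the only nontrivial ingredient is the positivity of the slice-width at a single interior boundary point.
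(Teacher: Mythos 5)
Your argument is correct, and it rests on the same two pillars as the paper's proof: the scaling invariance of $\N_0$ (reducing the statement to a single normalized configuration) and the almost sure positivity of the separation between $\gamma^L$ and $\gamma^R$ away from their common endpoints. The implementations differ in how the ball radius and the conditioning level are matched. The paper fixes an arbitrary level $A$, uses the a.s. positivity of the distance from the single point $\gamma^L(A)$ to the \emph{whole compact curve} $\gamma^R$ to extract constants $C_1,C_2$ with $\Tilde D\big(B_{C_1}(\Tilde\s,\gamma^L(A)),\gamma^R\big)>C_2$ off an event of probability $\delta$, and then performs a second scaling by $C_1$ to turn the radius-$C_1$ ball into a radius-$1$ ball, yielding $L=A/C_1$, $C=C_2/C_1$. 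You instead keep the radius equal to $1$ throughout, dispose of the part of $\gamma^R$ whose labels are far from $-L$ deterministically via the bound $\Tilde D\geq|Z_u-Z_v|$, reduce the remaining part by the triangle inequality to the single point-to-point distance $\Tilde D(\gamma^L(L),\gamma^R(L))$, and then use scaling to write this as $L$ times a fixed positive random variable under $\N_0(\cdot\,|\,W_*<-1)$, so that letting $L\to\infty$ replaces the paper's second rescaling. Your route is slightly longer but avoids invoking compactness of $\gamma^R$ (positivity of a point-to-set distance), needing only that $\gamma^L(1)\neq\gamma^R(1)$; the paper's route is shorter and gets the conclusion for a single cleverly chosen $L$ rather than for all large $L$. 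Both are valid proofs of the proposition.
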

\begin{proof}
    By the scaling property of the Brownian snake, it is enough to prove the result for $\varepsilon=1$. Fix $A>0$. Under $\N_0(\cdot\,|\,W_*<-A)$, we have 
    \[\Tilde{D}(\gamma^L(A),\gamma^R)>0\quad\text{a.s}.\]
    because the curves $\gamma^L$ and $\gamma^R$ are disjoint except at their endpoints. 
    Therefore, there exists $C_1,C_2>0$ such that 
    \[\N_0\bigg(\Tilde{D}\bigg(B_{C_1}(\Tilde{\s},\gamma^L(A)),\gamma^R\bigg)>C_2\,\bigg|\,W_*<-A\bigg)>1-\delta.\]
    By scaling, we also have 
    \begin{equation*}
        \N_0\bigg(\Tilde{D}\bigg(B_{1}(\Tilde{\s},\gamma^L(A/C_1)),\gamma^R\bigg)>C_2/C_1\,\bigg|\,W_*<-A/C_1\bigg)>1-\delta
    \end{equation*}
    which gives us the result by taking $C=C_2/C_1$ and $L=A/C_1$. 
\end{proof}
\begin{proposition}\label{Voisinage}
    Fix $\varepsilon>0$. Then, on the event $\{Y_H>L\varepsilon\}$, $p_{\s}(A_{L\varepsilon}\cup\widehat{A}_{L\varepsilon})$ is a neighborhood of $\Gamma_b$ in $\s$.
\end{proposition}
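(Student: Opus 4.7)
The approach is by contradiction, combined with a compactness argument in the tree $\T$. Set $V := p_\s(A_{L\varepsilon}\cup\widehat{A}_{L\varepsilon})$, which is a closed subset of $\s$ (as the continuous image of a compact subset of $\T$) and which contains $\Gamma_b$. Assume that $V$ is not a neighborhood of $\Gamma_b$: then there exists a sequence $(y_n)_{n\geq 1}$ in $\s\setminus V$ with $y_n\to\Gamma_b$. Lifting each $y_n$ to $u_n\in\T$ with $p_\s(u_n)=y_n$, the condition $y_n\notin V$ forces $u_n\in \T\setminus(A_{L\varepsilon}\cup\widehat{A}_{L\varepsilon})$. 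Using the compactness of $\T$, I would extract a subsequential limit $u_n\to u\in\T$; continuity of $p_\s$ yields $p_\s(u)=\Gamma_b$, and the bound \eqref{Bound} then forces $Z_u=-b$.

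The next step is to locate $u$ in $\T$. On the event $\{Y_H>L\varepsilon\}$, the subtrees $A_{L\varepsilon}$ and $\widehat{A}_{L\varepsilon}$ are disjoint closed subsets of $\T$, each with topological boundary reduced respectively to $\{S_{L\varepsilon}\}$ and $\{\widehat{S}_{L\varepsilon}\}$, since removing these single points disconnects each subtree from the rest of $\T$. Consequently, $u$, being the limit of points in the open complement, lies in
\[
(\T\setminus(A_{L\varepsilon}\cup\widehat{A}_{L\varepsilon}))\cup\{S_{L\varepsilon},\widehat{S}_{L\varepsilon}\}.
\]
Since $Z_{S_{L\varepsilon}}=Z_{\widehat{S}_{L\varepsilon}}=L\varepsilon-b\neq -b$, the two boundary points are ruled out, and $u$ must be a preimage of $\Gamma_b$ lying strictly outside $A_{L\varepsilon}\cup\widehat{A}_{L\varepsilon}$.

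To finish I would invoke the known characterization of preimages of interior geodesic points in the Brownian sphere (cf.\ \cite[Theorem 7.4]{geodesic1} and the proposition recalled in Subsection \ref{sphere}): the only preimages of $\Gamma_b$ under $p_\s$ are $p_\T(T_b)$ and $p_\T(\widehat{T}_b)$. Both of these lie in the interiors of $A_{L\varepsilon}$ and $\widehat{A}_{L\varepsilon}$ respectively, which contradicts the previous step and concludes the argument.

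The main obstacle is the last input: the rest of the proof is a routine compactness and closure analysis driven by the tree structure of $A_{L\varepsilon}$ and $\widehat{A}_{L\varepsilon}$ on $\{Y_H>L\varepsilon\}$, but the crucial inclusion $p_\s^{-1}(\Gamma_b)\subseteq\{p_\T(T_b),p_\T(\widehat{T}_b)\}$ is not proved here and relies on the geodesic structure theory of the Brownian sphere from \cite{geodesic1}. An alternative route would be to analyse directly, via Proposition \ref{Identification} and the decomposition of Subsection \ref{three arms}, the set of $v\in\T$ satisfying $D^\circ(v,p_\T(T_b))=0$, but this reduces to precisely the same characterisation.
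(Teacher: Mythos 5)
Your proposal is correct and takes essentially the same approach as the paper: a contradiction-and-compactness argument in the tree, reduced to the fact that $p_\T(T_b)$ and $p_\T(\widehat{T}_b)$ are the only preimages of $\Gamma_b$, which the paper likewise invokes without further proof. Your explicit elimination of the boundary points $S_{L\varepsilon}$ and $\widehat{S}_{L\varepsilon}$ via the label bound \eqref{Bound} is if anything slightly more careful than the paper's proof, which asserts directly that the subsequential limit lies outside $A_{L\varepsilon}\cup\widehat{A}_{L\varepsilon}$.
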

\begin{proof}
    Suppose that the statement does not hold. Then, we could find a sequence $(u_n)_{n\geq0}\in\T_b$ so that for every $n>0$, $D(u_n,p_\T(T_b))<\frac{1}{n}$ and $u_n\notin A_{L\varepsilon}\cup\widehat {A}_{L\varepsilon}$. By compactness, we could extract a subsequence of $(u_n)_{n\geq0}$ that converges toward an element $u\in\T_b$ that satisfies $u\notin A_{L\varepsilon}\cup\widehat A_{L\varepsilon}$ and $D(u,p_\T(T_b))=0$. However, this is not possible because $p_\T(T_b)$ and $p_\T(\widehat{T}_b)$ are the only elements of $\T$ that satisfy this equality. Therefore, $p_{\s}(A_{L\varepsilon}\cup\widehat A_{L\varepsilon})$ contains a ball centered at $\Gamma_b$, which concludes.
\end{proof}
\begin{proof}[Proof of Proposition \ref{localisation}]
Let $\mathcal{H}_\varepsilon$ the event where :
\begin{itemize}[label=\textbullet]
    \item $Y_H>L\varepsilon$
    \item $p_\T(T_{b+2\varepsilon})\in A_{L\varepsilon}$ and $p_\T(\widehat{T}_{b+2\varepsilon})\in\widehat{A}_{L\varepsilon}$
    \item $p_\T(T_{b-2\varepsilon})\in A_{L\varepsilon}$ and $p_\T(\widehat{T}_{b-2\varepsilon})\in\widehat{A}_{L\varepsilon}$
    \item $\Tilde{D}_1(B_\varepsilon(\Tilde{\s}_1,\gamma_1^L(L\varepsilon)),\gamma_1^R)>\varepsilon$ and $\Tilde{D}_2(B_\varepsilon(\Tilde{\s}_2,\gamma_2^R(L\varepsilon)),\gamma_2^L)>\varepsilon$.
\end{itemize}
By Propositions \ref{Encadrement}, \ref{Distance-frontière} and Lemma \ref{inc}, for every $\delta>0$, there exists $L>0$ such that $\N_0(\mathcal{H}_\varepsilon\,|\,W_*<-b)>1-\delta$ for every $\varepsilon$ small enough. We will show that \[\mathcal{H}_\varepsilon\subset\{B_\varepsilon(\s,\Gamma_b)\subset p_\s(A_{L\varepsilon}\cup\widehat{A}_{L\varepsilon})\},\] which will give the wanted result. \\
From now on, we work on the event $\mathcal{H}_\varepsilon$. Fix $u\in\T_b\backslash(A_{L\varepsilon}\cup\widehat{A}_{L\varepsilon})$, and let $\eta$ be a geodesic between $p_\s(u)$ and $\Gamma_b$. By Lemma \ref{Voisinage}, there exists $t_0>0$ such that for every $t\in[t_0,D(u,T_b)]$, $\eta(t)\in p_\s(A_{L\varepsilon}\cup\widehat{A}_{L\varepsilon})$. By Lemma \ref{boundary}, we have $\eta(t_0)\in\gamma_1^R\cup\gamma_1^L\cup\gamma_2^R\cup\gamma_2^L$. Without loss of generality, suppose that $\eta(t_0)\in\gamma_1^R\cup\gamma_1^L$. We can also suppose that for every $t\in[t_0,D(u,T_b)]$, $\eta(t)\in\Tilde{\s}_1$. Indeed, if we set $t_1=\sup\{t>t_0:\eta(t)\in\Tilde{\s}_2\}$, we must have $\eta(t_1)=\gamma_1^L(s_1)=\gamma_2^R(s_2)$ for some $s_1,s_2>0$, and the concatenation of $\eta|_{[t_0,t_1]}$ and $\gamma_1^L|_{[s_1\wedge L\varepsilon,s_1\vee L\varepsilon]}$ is a geodesic between $\eta(t_0)$ and $\Gamma_b$ which stays in $\Tilde{\s}_1$ (see Figure \ref{loc}).
\begin{figure}
    \centering
    \includegraphics[scale=0.75]{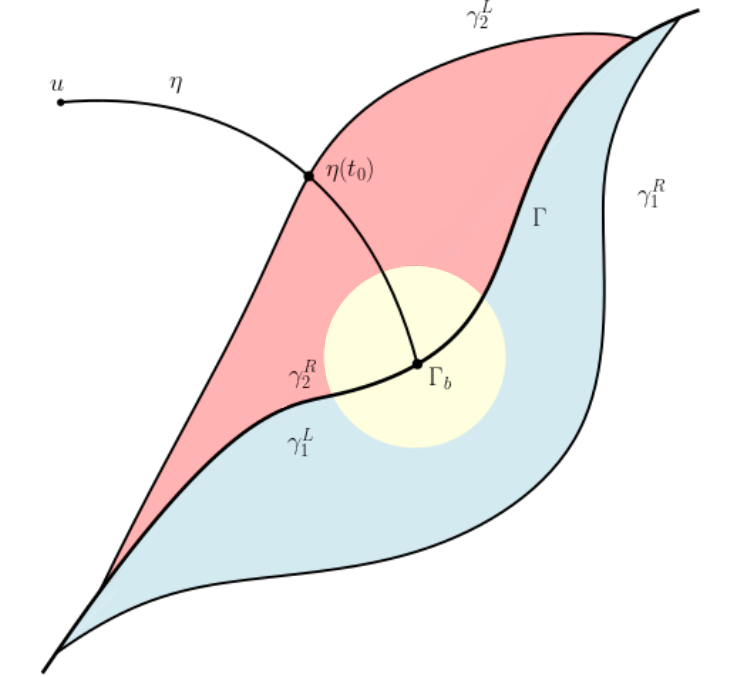}
    \caption{Illustration of the proof of Proposition \ref{localisation}.}
    \label{loc} The red and blue parts represent $\Tilde{\s}_1$ and $\Tilde{\s}_2$, and the yellow part is $B_\varepsilon(\s,\Gamma_b)$.
\end{figure} \\
Let us prove that on $\mathcal{H}_\varepsilon$, $\eta(t_0)\notin\Gamma|_{[b-\varepsilon,b+\varepsilon]}$, which will be enough to conclude the proof. Consider a sequence $(v_n)_{n\geq0}\in\T_b\backslash(A_{L\varepsilon}\cup\widehat{A}_{L\varepsilon})$ such that $\eta((t_0-\frac{1}{n})_+)=p_\s(v_n)$. By compactness, we can suppose that $(v_n)_{n\geq0}$ converges toward some $v\in\T\backslash(A_{L\varepsilon}\cup\widehat{A}_{L\varepsilon})$. There are two possibilities :
\begin{itemize}[label=\textbullet]
    \item If $|Z_v-b|\geq2\varepsilon$, then for every $a\in[-\varepsilon,\varepsilon]$,
    \[D^\circ(v,p_\T(T_{b+a}))\geq|Z_v-(b+a)|\geq\varepsilon>0.\]
    \item If $|Z_v-b|<2\varepsilon$, we have $v\in[p_\T(T_{b-2\varepsilon}),p_\T(\widehat{T}_{b-2\varepsilon})]$. Consequently, for every $a\in[-\varepsilon,\varepsilon]$, we have
    \begin{align*}
        D^\circ(v,p_\T(T_{b+a}))&=-b-a+Z_v-2\max\left(\inf_{w\in[v,p_\T(T_{b+a})]}Z_w,\inf_{w\in[p_\T(T_{b+a}),v]}Z_w\right)\\
        &>-b-a+Z_v-2(-b-2\varepsilon)>\varepsilon>0.
    \end{align*} 
\end{itemize}
Either way, on $\mathcal{H}_\varepsilon$, we have $\eta(t_0)\notin\Gamma|_{[b-\varepsilon,b+\varepsilon]}$. But note that on $\mathcal{H}_\varepsilon$, for every $x\in\gamma_1^R\cup\gamma_1^L$ such that $x\notin\Gamma|_{[b-\varepsilon,b+\varepsilon]}$, we have 
\[\Tilde{D}_1(x,\Gamma_b)>\varepsilon.\]
Hence, by Corollary \ref{length2}, 
\[D(u,\Gamma_b)\geq D(\eta(t_0),\Gamma_b)=\text{length}_D(\eta(t_0+\cdot))=\text{length}_{\Tilde{D}_1}(\eta(t_0+\cdot))\geq\Tilde{D}_1(\eta(t_0),\Gamma_b)>\varepsilon.\]
Therefore, $p_\s(u)\notin B_\varepsilon(\s,\Gamma_b)$, which concludes the proof. 
\end{proof}

\section{Convergence results}\label{section 5}
The purpose of this section is to prove Theorem \ref{convergence}, and some extensions of this theorem to other conditionings. The main idea of the proofs is to obtain some coupling results of the labeled trees that encode the different surfaces, and to use Proposition \ref{localisation} to obtain a coupling of the surfaces. 

\subsection{Proof of Theorem \ref{convergence}}

This subsection is devoted to the proof of Theorem \ref{convergence}. To do so, we will prove the following coupling result. 

\begin{proposition}\label{coupling}
    Fix $\delta>0$ and $r>0$. Then, there exists $\lambda_0$ such that, for every $\lambda>\lambda_0$, we can construct $\lambda\cdot\mathcal{S}$ and $\BP$ on the same probability space such that the equality 
    \begin{equation*}
        B_r(\lambda\cdot\mathcal{S},\Gamma_b)=B_r(\BP)
    \end{equation*}
    holds with probability at least $1-\delta$.
\end{proposition}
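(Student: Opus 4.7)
The approach is to combine the built-in scale invariance with the localisation lemma of Section \ref{Section4}, so that the problem reduces to a coupling of pairs of labelled trees rather than of the full surfaces. By definition, $B_r(\lambda\cdot\mathcal{S},\Gamma_b)$ and $B_{r/\lambda}(\mathcal{S},\Gamma_b)$ are the same subset of $\mathcal{S}$, and the metric on the former is $\lambda$ times the metric on the latter. Setting $\varepsilon:=r/\lambda$, the goal becomes to construct $\mathcal{S}$ under $\N_0(\cdot\,|\,W_*<-b)$ and $\BP$ on the same probability space so that $(B_\varepsilon(\mathcal{S},\Gamma_b),\lambda D,\Gamma_b)$ is isometric to $(B_r(\BP),\overline{D}_\infty,\overline{\rho}_\infty)$ with probability $\geq 1-\delta$ for $\lambda$ large. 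Applying Proposition \ref{localisation} to both spaces gives $L,\varepsilon_0>0$ such that for every $\varepsilon<\varepsilon_0$,
\[
B_\varepsilon(\mathcal{S},\Gamma_b)\subset p_{\mathcal{S}}(A_{L\varepsilon}\cup\widehat{A}_{L\varepsilon})\quad\text{and}\quad B_r(\BP)\subset p_{\BP}(A^\infty_{Lr}\cup\widehat{A}^\infty_{Lr}),
\]
each with probability $\geq 1-\delta/3$. It thus suffices to couple the two pairs of local labelled trees and compare the metrics they induce on their images.

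\textbf{Coupling the labelled trees.} By the three-arms decomposition of Subsection \ref{three arms} and its infinite-volume analogue of Subsection \ref{presentation}, both pairs of labelled trees are encoded by two Bessel processes of dimension $7$ (the two spines) together with conditionally independent Poisson point measures of explicit intensities. On the event $\mathcal{G}_{L,\varepsilon}$, the common ancestral branch of the sphere decomposition sits above the level $-b-L\varepsilon$, so inside $A_{L\varepsilon}\cup\widehat{A}_{L\varepsilon}$ the two arms evolve as two independent Bessel $7$ processes, just as the two spines of $\BP$. Take two independent unstopped Bessel $7$ processes $X^\infty,\widehat{X}^\infty$ from $0$ (used to build $\BP$); by Bessel scaling one may realize the rescaled sphere arms as the restrictions of $X^\infty$ and $\widehat{X}^\infty$ to their respective last hitting times of $\lambda Y_H$ and $\lambda\widehat{Y}_H$. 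Since the Bessel $7$ process is transient, its last hit of any level $\ell$ strictly precedes its last hit of any $\ell'>\ell$; hence on $\mathcal{G}_{L,\varepsilon}\cap\{\lambda Y_H>Lr\}$ (an event of probability tending to $1$ as $\lambda\to\infty$) the two sphere spines agree with $X^\infty,\widehat{X}^\infty$ on $[0,S^\infty_{Lr}]$ and $[0,\widehat{S}^\infty_{Lr}]$ respectively. Conditionally on the common spines, the Poisson point measures have matching intensities on these truncation windows and can be chosen identical, their attached Brownian snake excursions being coupled through the scaling relation \eqref{scaling}. This yields, for $\lambda$ large enough, a coupling in which the rescaled $A_{L\varepsilon}\cup\widehat{A}_{L\varepsilon}$ and $A^\infty_{Lr}\cup\widehat{A}^\infty_{Lr}$ coincide as labelled trees with probability $\geq 1-\delta/3$.

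\textbf{Metric equality and main obstacle.} On the event that the labelled trees coincide, the local pseudo-distances $D^\circ$ agree identically, but the global distances $\lambda D$ and $\overline{D}_\infty$ are defined as infima of sums $\sum D^\circ$ over sequences in the whole ambient trees, which are strictly larger than the local truncated trees. For $\mathcal{S}$, Proposition \ref{iso locale} together with Corollary \ref{length2} shows precisely that on $\mathcal{G}_{L,\varepsilon}$ the infimum defining $D$ for pairs of points in $B_\varepsilon(\mathcal{S},\Gamma_b)$ is achieved within $A_{L\varepsilon}\cup\widehat{A}_{L\varepsilon}$, so the restriction of $\lambda D$ to this ball agrees with the intrinsic slice metric that depends only on the labelled tree. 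The main obstacle is proving the analogous statement for $\overline{D}_\infty$ and $B_r(\BP)$: no sequence leaving $A^\infty_{Lr}\cup\widehat{A}^\infty_{Lr}$ can short-cut and produce a strictly smaller distance. I would prove this by adapting verbatim the arguments of Section \ref{Section4}, namely the infinite-volume analogues of Propositions \ref{Encadrement}, \ref{boundary}, \ref{iso locale} and \ref{Distance-frontière}, exploiting the fact that $\BP$ is itself obtained by gluing two Brownian slices along a bigeodesic (with $\Gamma_\infty$ playing the role of $\Gamma$). Extra care is needed because of the non-compactness and because the definition of $\overline{D}^\circ_\infty$ involves an infimum over all $s\in\R$, but the transience of the two spines together with the bound \eqref{Bound} should force this infimum to be achieved at $s$ of bounded label for points in $B_r(\BP)$. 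A union bound over the three bad events then delivers the claimed coupling with probability $\geq 1-\delta$.
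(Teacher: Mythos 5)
Your overall strategy (reduce by scaling, localize with Proposition \ref{localisation}, couple the local labelled trees, then identify the two metrics from the local data) is the same as the paper's, and your tree-coupling step is essentially the paper's Proposition \ref{f} (there it is carried out through a total-variation estimate absorbing the event $\{Y_H<\varepsilon\}$; note also that both arms are stopped at the last hit of the \emph{same} level $Y_H$, there is no separate $\widehat{Y}_H$). The genuine gap lies in the step you yourself call ``the main obstacle'', which is in fact the heart of the proof and is not correctly dispatched on either side. On the sphere side, Proposition \ref{iso locale} and Corollary \ref{length2} do \emph{not} show ``precisely'' that $D$ restricted to $B_\varepsilon(\s,\Gamma_b)$ depends only on the labelled tree $A_{L\varepsilon}\cup\widehat{A}_{L\varepsilon}$: those results compare $D$ with the slice distance $\Tilde{D}_i$ inside a single slice, under the hypothesis that the two points are close to each other and far from one boundary geodesic, and via lengths of paths taking values in $\Tilde{\s}_i$. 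They say nothing about pairs of points lying on opposite sides of $\Gamma$, for which the distance is realized by paths crossing the geodesic and no single slice metric computes it; and even for same-side pairs a geodesic may exit the slice, so Corollary \ref{length2} cannot be invoked without an extra surgery argument. This is exactly why the paper proves the separate Propositions \ref{d} and \ref{e}: by rearranging sequences (using the bound \eqref{Bound} and insertion of the points $p_\T(T_\alpha)$) it shows that the infimum defining $D$ between points of the ball can be restricted to sequences staying in $A_{2L\varepsilon}\cup\widehat{A}_{2L\varepsilon}$, and that for points on opposite sides one may restrict to sequences crossing $\Gamma$ exactly once.

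On the $\BP$ side you defer the analogous statement (the paper's Proposition \ref{g}) entirely, and you also do not explain how the two reductions are matched under the coupling: one must check that the two constrained infima are equal term by term through the bijection $\Phi$ of Proposition \ref{f} (including the correspondence of crossing points $p_\T(T_s)\leftrightarrow\tau_s$ and the fact that $D(\Gamma_b,\cdot)<\varepsilon$ corresponds to $\overline{D}_\infty(\overline{\rho}_\infty,\cdot)<1$), which is the content of the paper's Proposition \ref{isométrie}. Without (a) a restriction statement on the sphere side that covers the cross-geodesic case, (b) its analogue for $\overline{D}_\infty$, and (c) the identification of the two constrained infima through the coupling, the claimed isometry of balls does not follow. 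So the proposal is on the right track and the missing ingredients are all available in Section \ref{Section4}, but as written the proof is incomplete at its central step.
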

By definition of the convergence for the local Gromov-Hausdorff topology, the second statement of Theorem \ref{convergence} follows from Proposition \ref{coupling}, and the first one follows from the scale invariance of $\overline{\mathcal{BP}}$. \\ 
We need to describe ``how to compute distances'' between elements of $p_\s(A_{L\varepsilon})$ and $p_\s(\widehat{A}_{L\varepsilon})$. In particular, we want to show that distances behave as in the space $\BP $. First, we need to introduce some events. Fix $\delta>0$ as in Proposition \ref{coupling}. We define 
\[\mathcal{F}_\varepsilon:=\{B_\varepsilon(\s,\Gamma_b)\subset p_\s(A_{L\varepsilon}\cup\widehat{A}_{L\varepsilon})\}\cap\{B_{2\varepsilon}(\s,\Gamma_b)\subset p_\s(A_{2L\varepsilon}\cup\widehat{A}_{2L\varepsilon})\}\]
and
\[\mathcal{F}_\varepsilon^\infty:=\{B_\varepsilon(\BP,\overline{\rho}_\infty)\subset p_{\BP}(A_{L\varepsilon}^\infty\cup\widehat{A}_{L\varepsilon}^\infty)\}\cap\{B_{2\varepsilon}(\BP,\overline{\rho}_\infty)\subset p_{\BP}(A_{2L\varepsilon}^\infty\cup\widehat{A}_{2L\varepsilon}^\infty)\}\]
where $L>0$ is chosen large enough such that, for every $\varepsilon$ small enough $\p(\mathcal{F}_\varepsilon)>1-\delta/2$ and $\p(\mathcal{F}_\varepsilon^\infty)>1-\delta/2$, which exists by Proposition \ref{localisation}.
\begin{proposition}\label{d}
    Suppose that $\mathcal{F}_\varepsilon$ holds. Then, for every $u,v\in A_{L\varepsilon}$ such that $D(p_\s(u),\Gamma_b)<\varepsilon$, and $D(p_\s(v),\Gamma_b)<\varepsilon$, we have :
    \begin{equation}\label{f1}
        D(p_\s(u),p_\s(y))=\inf_{u_0,...,u_p}\sum_{i=1}^p D^\circ(u_i,u_{i-1})
    \end{equation} where the infimum is over all choices $p\in\N$ and sequences $u_0,...,u_p\in\T_b$ such that $u_0=u,u_p=v$ and $u_k\in A_{2L\varepsilon}$. 
\end{proposition}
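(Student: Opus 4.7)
The direction $\geq$ is immediate, since the right-hand infimum is taken over a smaller class of sequences. For the reverse inequality $\leq$, I would start from a near-optimal sequence $u_0 = u, u_1, \ldots, u_p = v$ in $\T_b$ with $\sum_i D^\circ(u_i, u_{i-1}) < D(p_\s(u), p_\s(v)) + \eta$ for an arbitrarily small $\eta > 0$, and show that it can be modified into one lying in $A_{2L\varepsilon}$ without increasing the total sum by more than $O(\eta)$.

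The first step is a localization argument: for every index $i$, bounding the partial sums from both ends of the sequence yields
\[
2\,D(p_\s(u_i), \Gamma_b) \leq D(p_\s(u), \Gamma_b) + D(p_\s(v), \Gamma_b) + \sum_{j=1}^p D^\circ(u_j, u_{j-1}) < 4\varepsilon + \eta.
\]
For $\eta$ small enough, this gives $D(p_\s(u_i), \Gamma_b) < 2\varepsilon$, and the second inclusion in the definition of $\mathcal{F}_\varepsilon$ then yields $p_\s(u_i) \in p_\s(A_{2L\varepsilon} \cup \widehat{A}_{2L\varepsilon})$ for every $i$.

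The second step mimics the excursion-removal argument in the proof of Proposition \ref{iso locale}. I set $\tau = \inf\{i > 0 : u_i \notin A_{2L\varepsilon}\}$ and $\tau' = \inf\{i > \tau : u_i \in A_{2L\varepsilon}\}$, then analyze $\alpha = \max(\inf_{w \in [u_{\tau-1}, u_\tau]} Z_w, \inf_{w \in [u_\tau, u_{\tau-1}]} Z_w)$. Using Proposition \ref{Encadrement} to control $\inf_{A_{2L\varepsilon}} Z$ and the bound $|Z_{u_i} + b| < 2\varepsilon$ coming from \eqref{Bound}, the same case distinction as in Proposition \ref{iso locale} should show that the detour either forces some $D^\circ(u_j, u_{j-1})$ to exceed $2\varepsilon$ (contradicting our bound on the total sum, provided $\eta$ is small) or can be collapsed by substituting the spinal points $p_\T(T^1_\alpha)$ and $p_\T(\widehat{T}^1_\alpha)$, which lie in $A_{2L\varepsilon}$, without increasing the total sum. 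Iterating this procedure removes all excursions outside $A_{2L\varepsilon}$.

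The main obstacle will be the case in which the excursion crosses into $\widehat{A}_{2L\varepsilon}$, i.e.\ to the other side of the geodesic $\Gamma$; this case does not arise in Proposition \ref{iso locale}, which operates strictly within one slice. The key observation is that $p_\s(A_{2L\varepsilon}) \cap p_\s(\widehat{A}_{2L\varepsilon})$ equals the portion of $\Gamma$ near $\Gamma_b$, and Lemma \ref{inc} ensures that $p_\T(T_{b \pm \varepsilon})$ lies in $A_{L\varepsilon}$ (hence in $A_{2L\varepsilon}$) with high probability. Using these $\Gamma$-preimages in $A_{2L\varepsilon}$, the reroute through $\Gamma$ can be performed while staying in $A_{2L\varepsilon}$; carefully verifying that the bookkeeping on $D^\circ$ works out is the most delicate part of the argument.
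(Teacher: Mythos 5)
Your plan has the same skeleton as the paper's argument (first localize a near-optimal chain inside $A_{2L\varepsilon}\cup\widehat{A}_{2L\varepsilon}$ using the second inclusion in $\mathcal{F}_\varepsilon$, then remove the excursions outside $A_{2L\varepsilon}$), but the step you postpone as ``the most delicate part'' is exactly the content of the proposition, and the tool you propose for it does not apply. After your first step, every $u_i$ already lies in $A_{2L\varepsilon}\cup\widehat{A}_{2L\varepsilon}$, so the only excursions left to treat are maximal blocks $u_r,\dots,u_{r'}\in\widehat{A}_{2L\varepsilon}$, i.e.\ the crossings of $\Gamma$. The case distinction of Proposition \ref{iso locale} cannot be ``mimicked'' here: its two impossible cases are ruled out there only because of the standing hypothesis $D(x,\gamma_i^L)\wedge D(y,\gamma_i^L)>\delta$ (the endpoints are far from a boundary geodesic), whereas in Proposition \ref{d} the points $u,v$ are within $\varepsilon$ of $\Gamma_b$, which sits on that boundary; no contradiction of that type is available, and indeed chains crossing $\Gamma$ are not expensive --- they must be collapsed, not excluded.

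The missing ingredient is the exact splitting of $D^\circ$ at a crossing, which makes the collapse lossless rather than an $O(\eta)$ bookkeeping issue. Since the two sides meet only along $\Gamma$, for the entry step one has, with $\alpha:=\max\big(\inf_{w\in[u_r,u_{r-1}]}Z_w,\inf_{w\in[u_{r-1},u_r]}Z_w\big)$,
\begin{equation*}
D^\circ(u_{r-1},u_r)=D^\circ\big(u_{r-1},p_\T(T_\alpha)\big)+D^\circ\big(p_\T(T_\alpha),p_\T(\widehat{T}_\alpha)\big)+D^\circ\big(p_\T(\widehat{T}_\alpha),u_r\big),
\end{equation*}
the middle term vanishing because the two preimages are identified; similarly at the exit step with a level $\beta$. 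The bound \eqref{Bound} then dominates the whole middle stretch from below by $|\alpha-\beta|=D^\circ(p_\T(T_\alpha),p_\T(T_\beta))$, so the entire block can be replaced by the two points $p_\T(T_\alpha),p_\T(T_\beta)$ \emph{without increasing the sum at all}; their membership in $A_{2L\varepsilon}$ comes from the fact that, by \eqref{Bound}, all labels along the chain (hence $\alpha,\beta$) are within roughly $2\varepsilon$ of $-b$ --- note that Lemma \ref{inc} is only a high-probability statement and is not part of the event $\mathcal{F}_\varepsilon$, so it cannot be invoked inside this deterministic claim. Without the displayed identity and the resulting non-increase of the sum, your argument stops precisely where this proposition differs from Proposition \ref{iso locale}, so as written the proof is incomplete.
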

\begin{proof}
    First, by Proposition \ref{localisation}, we know that we can restrict the choices of $u_0,...,u_k$ to elements of $A_{2L\varepsilon}\cup \widehat{A}_{2L\varepsilon}$. Then, consider a sequence $u_0,...,u_p$ and $0<r<r'<p$ such that $u_r,...,u_{r'}\in \widehat{A}_{L\varepsilon}$.
    First, note that if we set :
    \begin{equation*}
        \alpha:=\max\bigg(\inf_{u\in[u_r,u_{r-1}]}Z_u,\inf_{u\in[u_{r-1},u_r]}Z_u\bigg),
    \end{equation*}
   then, we have

    \begin{equation*}
        D^\circ(u_{r-1},u_r)=D^\circ(u_{r-1},p_\T(T_\alpha))+D^\circ(p_\T(T_\alpha),p_\T(\widehat{T}_\alpha))+D^\circ(p_\T(\widehat{T}_\alpha),u_r)
    \end{equation*}
    (note that the middle term vanishes). By symmetry, we have a similar identity for $D^\circ(u_{r'},u_{r'+1})$, where $\alpha$ is replaced by 
        \begin{equation*}
        \beta:=\max\bigg(\inf_{u\in[u_{r'},u_{r'+1}]}Z_u,\inf_{u\in[u_{r'+1},u_{r'}]}Z_u\bigg).
    \end{equation*}
    This implies that :
    \begin{align*}
        \sum_{i=r}^{r'+1}D^\circ(u_i,u_{i-1})&=D^\circ(u_{r-1},p_\T(T_\alpha))+D^\circ(p_\T(\widehat{T}_\alpha),u_r)+\sum_{i=r+1}^{r'}D^\circ(u_i,u_{i-1})+D^\circ(p_\T(T_\beta),u_{r'+1})\\
        &\geq D^\circ(p_\T(T_\alpha),p_\T(T_\beta))
    \end{align*}
    where the last inequality follows from \eqref{Bound} and the triangle inequality. 
    
Therefore, we could erase $u_r,...,u_{r'}$ from the sequence and replace it by $p_\T(T_\alpha),p_\T(T_\beta)$. But note that $p_\T(T_\alpha)\in A_{L\varepsilon}$ because $\alpha\in[-\varepsilon,\varepsilon]$ and the same holds for $p_\T(T_\beta)$. Therefore, by iterating the previous procedure, we obtain a sequence which satisfies the requirement of the statement, and this concludes the proof. 
\end{proof}
The previous proposition means that distances between element of $p_\s(A_{L\varepsilon})$ `` do not interact '' with elements of $p_\s(\widehat{A}_{L\varepsilon})$. By symmetry, Proposition \ref{d} also holds (with obvious modifications) if we take $u,v\in\widehat{A}_{L\varepsilon}$. The following proposition explains how to compute distances between elements that are on different sides of the geodesic $\Gamma$. 
\begin{proposition}\label{e}
     Suppose that $\mathcal{F}_\varepsilon$ holds. Then, for every $u\in A_{L\varepsilon}$, $v\in \widehat{A}_{L\varepsilon}$ such that $D(p_\s(u),\Gamma_b)<\varepsilon$ and $D(p_\s(v),\Gamma_b)<\varepsilon$, we have :
    \begin{equation}\label{f2}
        D(p_\s(u),p_\s(v))=\inf_{u_0,...,u_p}\sum_{i=1}^p D^\circ(u_i,u_{i-1})
    \end{equation} where the infimum is over all choices $p\in\N$ and sequences $u_0,...,u_p\in\T_b$ such that : 
    \begin{itemize}
        \item $u_0=u$ and $u_p=v$
        \item there exists $0\leq n\leq p$ such that $u_n=p_\T(T_s)$ for some $s\in\R$
        \item for every $0\leq k<n$, $u_k\in A_{2L\varepsilon}$, and for every $n<k\leq p$, $u_k\in \widehat{A}_{2L\varepsilon}$
    \end{itemize}
\end{proposition}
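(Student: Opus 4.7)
The plan is to adapt the reduction argument of Proposition \ref{d} to the present ``crossing'' situation, by performing two successive clean-ups of an arbitrary approximating sequence $u_0,\dots,u_p$.

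First, since $D(p_\s(u),p_\s(v))\le D(p_\s(u),\Gamma_b)+D(p_\s(v),\Gamma_b)<2\varepsilon$, any approximating sequence with total sum close to $D(p_\s(u),p_\s(v))$ may be taken so that each $p_\s(u_k)$ lies in $B_{2\varepsilon}(\s,\Gamma_b)$, and hence, thanks to the $2\varepsilon$-inclusion built into $\mathcal{F}_\varepsilon$, so that $u_k\in A_{2L\varepsilon}\cup\widehat{A}_{2L\varepsilon}$ for every $k$, exactly as in the first reduction of Proposition \ref{d}.

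Next, I would eliminate every ``unnecessary'' side switch. Whenever the sequence contains a block $u_r,\dots,u_{r'}\in\widehat{A}_{2L\varepsilon}$ with $0<r\le r'<p$ and $u_{r-1},u_{r'+1}\in A_{2L\varepsilon}$, the computation used in Proposition \ref{d} (which rests on the identity $D^\circ(p_\T(T_s),p_\T(\widehat{T}_s))=0$) replaces this block by two points $p_\T(T_\alpha),p_\T(T_\beta)\in A_{L\varepsilon}\subset A_{2L\varepsilon}$ without increasing the sum; a symmetric argument handles blocks in $A_{2L\varepsilon}$ sandwiched between blocks in $\widehat{A}_{2L\varepsilon}$. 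Iterating leaves a single side switch, i.e.\ a sequence of the form $u_0,\dots,u_n\in A_{2L\varepsilon}$ followed by $u_{n+1},\dots,u_p\in\widehat{A}_{2L\varepsilon}$.

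Finally, at the unique remaining transition I must insert a point of the form $p_\T(T_s)$. Writing $\alpha=\max\bigl(\inf_{w\in[u_n,u_{n+1}]}Z_w,\inf_{w\in[u_{n+1},u_n]}Z_w\bigr)$, the key topological observation is that in the three-arms decomposition, any interval in $\T$ joining a point of $\T_{(1)}$ to a point of $\T_{(2)}$ must pass through the common branch point, which lies on the geodesic $\Gamma$. Combined with the label control provided by $\mathcal{G}_{L,\varepsilon}$ (implicit in $\mathcal{F}_\varepsilon$), this forces the maximizing infimum to be attained at some $p_\T(T_s)$ on the spine, with $|s-b|$ small enough that $p_\T(T_s)\in A_{L\varepsilon}$. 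As in Proposition \ref{d}, one obtains the splitting
\[D^\circ(u_n,u_{n+1})=D^\circ(u_n,p_\T(T_s))+D^\circ(p_\T(T_s),u_{n+1}),\]
so inserting $p_\T(T_s)$ between $u_n$ and $u_{n+1}$ produces a sequence satisfying the three bullet points of the statement, with total sum no larger than before. The main obstacle is precisely this final verification, which requires pairing the topological structure of the three-arms decomposition (to locate the maximizing infimum on the geodesic $\Gamma$) with the quantitative label bounds from $\mathcal{G}_{L,\varepsilon}$ (to ensure the resulting spine point lies in $A_{L\varepsilon}$ and not farther along $\Gamma$).
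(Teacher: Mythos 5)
Your overall route uses the same toolbox as the paper (restrict the chain to $A_{2L\varepsilon}\cup\widehat{A}_{2L\varepsilon}$, then manipulate it using splittings at points of the form $p_\T(T_s)$), only in a different order: the paper splits at the \emph{first} entry into $\widehat{A}_{2L\varepsilon}$ and then collapses any later return into $A_{2L\varepsilon}$ by the label bound \eqref{Bound}, inserting further points $p_\T(T_\beta)$, whereas you first reduce to a single side switch by Proposition~\ref{d}-type block eliminations and then insert one crossing point. That reordering is legitimate. The genuine gap is in the justification of your final step. The ``key topological observation'' is false on both counts: $D^\circ$ is computed along \emph{contour} intervals $[u,v]$, not along tree segments, so nothing forces the relevant interval to pass through the branch point; and the branch point (the most recent common ancestor of $p_\T(T_b)$ and $p_\T(\widehat{T}_b)$) has label $Y_H-b$, which on the good events is at least $-b+L\varepsilon$, and is a.s.\ \emph{not} a point of $\Gamma$ (the points of $\Gamma$ are the hitting points $\mathbf{p}(T_s)=\mathbf{p}(\widehat{T}_s)$, and the MRCA is generically identified with none of them). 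In fact the splitting level is within $O(\varepsilon)$ of $-b$, hence far below the branch point's label, so the crossing cannot be located there: the observation as stated cannot produce a point of the form $p_\T(T_s)$.

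What is actually needed (and what the paper uses, exactly as in the displayed identity of Proposition~\ref{d}) is a pure label argument: since the total $D^\circ$-sum can be taken smaller than $2\varepsilon$ and the endpoints have labels within $\varepsilon$ of $-b$, the bound \eqref{Bound} keeps all labels along the chain, hence also $\alpha=\max\bigl(\inf_{[u_n,u_{n+1}]}Z,\inf_{[u_{n+1},u_n]}Z\bigr)$, within $O(\varepsilon)$ of $-b$; the first- and last-hitting points $p_\T(T_\alpha)$, $p_\T(\widehat{T}_\alpha)$ of that level then give the exact splitting $D^\circ(u_n,u_{n+1})=D^\circ(u_n,p_\T(T_\alpha))+D^\circ(p_\T(T_\alpha),u_{n+1})$ (using $D^\circ(p_\T(T_\alpha),p_\T(\widehat{T}_\alpha))=0$), with no reference to the branch point. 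Two smaller inaccuracies: $\mathcal{G}_{L,\varepsilon}$ is \emph{not} implicit in $\mathcal{F}_\varepsilon$ (the latter only records the two ball inclusions), and it is not needed here; and the statement only requires the crossing point $u_n$ to be of the form $p_\T(T_s)$, with no membership in $A_{L\varepsilon}$, so the second half of the ``obstacle'' you flag does not arise. Your first reduction is stated at the same level of detail as the paper's (``as previously''), so I do not count it against you, but note that it too silently replaces tree points by representatives lying in $A_{2L\varepsilon}\cup\widehat{A}_{2L\varepsilon}$, which deserves the argument of Proposition~\ref{iso locale} rather than just the inclusion of balls.
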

The previous proposition can be interpreted as follows: to compute the distance between elements of $p_\s(A_{L\varepsilon})$ and $p_\s(\widehat{A}_{L\varepsilon})$, we can restrict ourselves to paths that ``cross exactly once'' the geodesic $\Gamma$.
\begin{proof}
    As previously, we know that we can restrict our choices of $u_0,...,u_p$ to elements of $A_{2L\varepsilon}\cup \widehat{A}_{2L\varepsilon}$. Set $m=\min\{n>0:u_n\in \widehat{A}_{2L\varepsilon}\}$, and  
    \begin{equation*}
        \alpha:=\max\bigg(\inf_{u\in[u_m,u_{m-1}]}Z_u,\inf_{u\in[u_{m-1},u_m]}Z_u\bigg).
    \end{equation*} Note that 
    \begin{equation*}
        D^\circ(u_{m-1},u_m)=D^\circ(u_{m-1},p_\T(T_\alpha))+D^\circ(u_m,p_\T(T_\alpha)).
    \end{equation*}
    Therefore, we can suppose that there exists $k$ such that $u_k=p_\T(T_r)$ for $r\in[-2\varepsilon,2\varepsilon]$, and that for every $n<k$, $u_n\in A_{2L\varepsilon}$.
    \\
    Set $\ell=\inf\{n>k:u_n\in A_{2L\varepsilon}\}$, and suppose that $\ell<\infty$. Also define 
    \[\beta:=\max\bigg(\inf_{u\in[u_\ell,u_{\ell-1}]}Z_u,\inf_{u\in[u_{\ell-1},u_\ell]}Z_u\bigg).\] We have : 

    \begin{align*}
        \sum_{i=k+1}^{\ell}D^\circ(u_i,u_{i-1})&=\sum_{i=k+1}^{\ell-1}D^\circ(u_i,u_{i-1})+D^\circ(u_{\ell-1},p_\T(T_\beta))+D^\circ(p_\T(T_\beta),u_\ell)\\
        &\geq D^\circ(p_\T(T_r),p_\T(T_\beta))+D^\circ(p_\T(T_\beta),u_\ell)
    \end{align*}
    Where the last inequality comes from the bound \eqref{Bound} :
    \begin{equation*}
        \sum_{i=k+1}^{\ell-1}D^\circ(u_i,u_{i-1})+D^\circ(u_{\ell-1},p_\T(T_\beta))\geq\sum_{i=k+1}^{\ell-1}|Z_{u_i}-Z_{u_{i-1}}|+|Z_{u_{\ell-1}}-\beta|\geq|r-\beta|=D^\circ(p_\T(T_r),p_\T(T_\beta)).
    \end{equation*}
    Therefore, we could add $p_\T(T_\beta)$ to the sequence, and erase $u_{k+1},...,u_{\ell-1}$. By iterating this procedure, we obtain a sequence as described in the statement, which concludes the proof. 
\end{proof}
We give a similar result for the space $\overline{\mathcal{BP}}$. We omit the proof, which is similar and easier to those of Propositions \ref{d} and \ref{g}. 
\begin{proposition}\label{g}
    Suppose that $\mathcal{F}^\infty_\varepsilon$ holds.
    \begin{itemize}[label=\textbullet]
        \item  For every $u,v\in A^\infty_{L\varepsilon}$ such that $\overline{D}_\infty(p_{\overline{\mathcal{BP}}}(u),\overline{\rho}_\infty)<\varepsilon$, and $\overline{D}_\infty(p_{\overline{\mathcal{BP}}}(v),\overline{\rho}_\infty)<\varepsilon$, we have :
    \begin{equation}\label{f3}
        \overline{D}_\infty(p_{\overline{\mathcal{BP}}}(u),p_{\overline{\mathcal{BP}}}(v))=\inf_{u_0,...,u_p}\sum_{i=1}^p \overline{D}_\infty^\circ(u_i,u_{i-1})
    \end{equation} where the infimum is over all choices $p\in\N$ and sequences $u_0,...,u_p\in\T_\infty\cup\widehat{\T}_\infty
    $ such that $u_0=u,u_p=v$ and $u_k\in A^\infty_{2L\varepsilon}$.
    \item For every $u\in A^\infty_{L\varepsilon}$ and $v\in \widehat{A}^\infty_{L\varepsilon}$ such that $\overline{D}_\infty(p_{\overline{\mathcal{BP}}}(u),\overline{\rho}_\infty)<\varepsilon$, and $\overline{D}_\infty(p_{\overline{\mathcal{BP}}}(v),\overline{\rho}_\infty)<\varepsilon$, we have :
    \begin{equation}\label{f4}
        \overline{D}_\infty(p_{\overline{\mathcal{BP}}}(u),p_{\overline{\mathcal{BP}}}(v))=\inf_{u_0,...,u_p}\sum_{i=1}^p \overline{D}_\infty^\circ(u_i,u_{i-1})
    \end{equation} where the infimum is over all choices $p\in\N$ and sequences $u_0,...,u_p\in\T_\infty\cup\widehat{\T}_\infty$ such that : 
    \begin{itemize}
        \item $u_0=u$ and $u_p=v$
        \item there exists $0\leq n\leq p$ such that $u_n=T_u$ for some $u\in\R$
        \item for every $0\leq k<n$, $u_k\in A^\infty_{2L\varepsilon}$, and for every $n<k\leq p$, $u_k\in \widehat{A}^\infty_{2L\varepsilon}$
    \end{itemize}
    \end{itemize}
\end{proposition}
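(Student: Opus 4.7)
The proof follows the strategy of Propositions \ref{d} and \ref{e}, adapted to the infinite-volume setting. The two key ingredients are: the localization event $\mathcal{F}^\infty_\varepsilon$, which (since $\overline{D}_\infty(p_{\BP}(u),p_{\BP}(v))<2\varepsilon$ by the triangle inequality) allows one to restrict the infimum defining $\overline{D}_\infty$ to sequences taking values in $A^\infty_{2L\varepsilon}\cup\widehat{A}^\infty_{2L\varepsilon}$; and the identity $\overline{D}^\circ_\infty(\tau_s,\widehat{\tau}_s)=0$ for every $s\in\R$, which plays the role of the identity $D^\circ(p_\T(T_\alpha),p_\T(\widehat{T}_\alpha))=0$ used in Proposition \ref{e}. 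The argument is in fact slightly simpler than in the compact case because there is no minimal-label point $x_*$, so the case analysis involving $(W^1)_*$ from the proof of Proposition \ref{iso locale} disappears entirely.

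For the first assertion, given any sequence $u_0,\ldots,u_p\in A^\infty_{2L\varepsilon}\cup\widehat{A}^\infty_{2L\varepsilon}$ from $u$ to $v$, I would eliminate visits to $\widehat{A}^\infty_{2L\varepsilon}$ one excursion at a time. Let $u_r,\ldots,u_{r'}$ be a maximal run inside $\widehat{A}^\infty_{2L\varepsilon}\subset\widehat{\T}_\infty$, with $u_{r-1},u_{r'+1}\in A^\infty_{2L\varepsilon}\subset\T_\infty$. By the definition of $\overline{D}^\circ_\infty$ on pairs straddling $\T_\infty$ and $\widehat{\T}_\infty$, there exist $\alpha,\beta\in\R$ realizing
\[\overline{D}^\circ_\infty(u_{r-1},u_r)=D^\circ_\infty(u_{r-1},\tau_\alpha)+\widehat{D}^\circ_\infty(\widehat{\tau}_\alpha,u_r),\]
and symmetrically for $(u_{r'},u_{r'+1})$ with parameter $\beta$. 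Combining these decompositions with \eqref{Bound}, the triangle inequality, and $\overline{D}^\circ_\infty(\tau_s,\widehat{\tau}_s)=0$, the same manipulation as in the proof of Proposition \ref{e} gives
\[\sum_{i=r}^{r'+1}\overline{D}^\circ_\infty(u_i,u_{i-1})\geq\overline{D}^\circ_\infty(u_{r-1},\tau_\alpha)+\overline{D}^\circ_\infty(\tau_\alpha,\tau_\beta)+\overline{D}^\circ_\infty(\tau_\beta,u_{r'+1}),\]
so the sub-sequence $u_r,\ldots,u_{r'}$ can be replaced by $(\tau_\alpha,\tau_\beta)$ without increasing the total sum. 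Iterating yields a sequence taking values in $A^\infty_{2L\varepsilon}\cap\T_\infty$, which establishes \eqref{f3}.

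For the second assertion, I would isolate the first index $m$ at which the sequence enters $\widehat{A}^\infty_{2L\varepsilon}$, apply the decomposition above to the pair $(u_{m-1},u_m)$ to insert the optimal $\tau_\alpha$ as the required crossing point $u_n$ (using that $\overline{D}^\circ_\infty(u_{m-1},\tau_\alpha)+\overline{D}^\circ_\infty(\tau_\alpha,u_m)\leq\overline{D}^\circ_\infty(u_{m-1},u_m)$, which follows from $\overline{D}^\circ_\infty(\tau_\alpha,u_m)\leq\widehat{D}^\circ_\infty(\widehat{\tau}_\alpha,u_m)$ by the very definition of $\overline{D}^\circ_\infty$), and then run the replacement procedure of the first part on any later excursion of the sequence back into $\T_\infty$, this time operating on the post-crossing half of the path. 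The main technical obstacle I foresee, in both parts, is verifying that each inserted spine point $\tau_\alpha$ actually lies in $A^\infty_{2L\varepsilon}$, i.e., that its label $\alpha$ belongs to $[0,2L\varepsilon]$: this should follow from the observation that a near-optimal sequence has total $\overline{D}^\circ_\infty$-sum at most $2\varepsilon$, so by \eqref{Bound} each intermediate label lies in $[-2\varepsilon,2\varepsilon]$, and hence so does the infimum-realizing spine parameter $\alpha$ (provided $L>1$).
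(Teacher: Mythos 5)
Your proposal is correct and follows essentially the route the paper intends: the paper omits the proof of this proposition, saying only that it is ``similar and easier'' to Propositions \ref{d} and \ref{e}, and your adaptation (restricting to sequences in $A^\infty_{2L\varepsilon}\cup\widehat{A}^\infty_{2L\varepsilon}$ via $\mathcal{F}^\infty_\varepsilon$, then removing excursions across the gluing line using the defining infimum of $\overline{D}^\circ_\infty$ for cross-tree pairs, the identity $\overline{D}^\circ_\infty(\tau_s,\widehat{\tau}_s)=0$ and the label bound \eqref{Bound}) is exactly that argument transposed to the infinite-volume setting. The one point you flag, membership of the inserted spine points $\tau_\alpha$ in $A^\infty_{2L\varepsilon}$, is handled by you at the same level of detail as in the paper's own proofs of Propositions \ref{d} and \ref{e}, so nothing further is required.
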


We can now state our first coupling result, which tells us that the local limit of the tree $\T$ around $p_\T(T_b)$ and $p_\T(\widehat{T}_b)$ is $(\T_\infty,\widehat{\T}_\infty)$. Recall that each set $A_{L\varepsilon},\widehat{A}_{L\varepsilon},A^\infty_L...$ is identified as a collection $(X,\mathcal{N},\widehat{\mathcal{N}})$ of a trajectory with two point measures. 

\begin{proposition}\label{f}
    For every $\delta>0$, there exists $\varepsilon_0>0$ such that, for every $\varepsilon<\varepsilon_0$, the following holds: we can couple $(A_{L\varepsilon},\widehat{A}_{L\varepsilon})$ and $(A_L^\infty,\widehat{A}_L^\infty)$ such that there exists a bijection $\Phi:A_{L\varepsilon}\cup\widehat{A}_{L\varepsilon}\longrightarrow A_L^\infty\cup\widehat{A}_L^\infty$ satisfying 
    \begin{equation}\label{6}
        D^\circ(u,v)=\varepsilon D^\circ_\infty(\Phi(u),\Phi(v))\quad\text{ for every $u,v\in A_{L\varepsilon}$ or $u,v\in \widehat{A}_{L\varepsilon}$}.
    \end{equation}
    with probability at least $1-\delta$. 
\end{proposition}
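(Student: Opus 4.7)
The plan is to exploit the scaling property of the Brownian snake, combined with the three-arm decomposition of Proposition \ref{decomposition}, to realise the rescaled $(A_{L\varepsilon},\widehat{A}_{L\varepsilon})$ on the same probability space as $(A^\infty_L,\widehat{A}^\infty_L)$ in such a way that the two sets coincide on a high-probability event.

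First I would do some bookkeeping of scaling. By Subsection \ref{three arms}, the data encoding $A_{L\varepsilon}\cup\widehat{A}_{L\varepsilon}$ under $\N_0(\cdot\,|\,W_*<-b)$ (with labels shifted by $+b$) consists of the random variable $Y_H$ together with two conditionally independent triples $(X,\mathcal{N}_1,\mathcal{N}_2)$ and $(\widehat{X},\widehat{\mathcal{N}}_1,\widehat{\mathcal{N}}_2)$, each made of a Bessel process of dimension $7$ starting from $0$ stopped at its last hitting time of $Y_H$, together with two conditionally independent Poisson point measures on $[0,S_{Y_H}]$ whose intensities are given in Proposition \ref{decomposition}. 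Setting $\tilde{X}_s:=\varepsilon^{-1}X_{\varepsilon^2 s}$ and $\tilde{\omega}:=\Theta_{\varepsilon^{-2}}\omega$ on each snake atom, and using the identity $(\Theta_\lambda)_*\N_x=\lambda\N_{x\sqrt{\lambda}}$, one sees that $\tilde{X}$ is again Bessel of dimension $7$ from $0$, stopped at its last hitting time $\tilde{S}_{Y_H/\varepsilon}$ of $Y_H/\varepsilon$, while the rescaled Poisson measures have intensities
\[2\mathbf{1}_{[0,\tilde{S}_{Y_H/\varepsilon}]}(s)\mathbf{1}_{\{\omega_*>0\}}ds\,\N_{\tilde{X}_s}(d\omega),\quad 2\mathbf{1}_{[0,\tilde{S}_{Y_H/\varepsilon}]}(s)ds\,\N_{\tilde{X}_s}(d\omega),\]
matching exactly the restrictions of $\mathcal{N}^\infty_1,\mathcal{N}^\infty_2$ to $[0,\tilde{S}_{Y_H/\varepsilon}]$. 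Since $D^\circ$ depends linearly on labels, rescaling labels by $\varepsilon^{-1}$ divides $D^\circ$ by $\varepsilon$, producing the factor $\varepsilon$ in \eqref{6}.

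Next I would construct the coupling itself. Sample $Y_H$ from its marginal, then independently sample $(X^\infty,\mathcal{N}^\infty_1,\mathcal{N}^\infty_2)$ and $(\widehat{X}^\infty,\widehat{\mathcal{N}}^\infty_1,\widehat{\mathcal{N}}^\infty_2)$ as in Subsection \ref{presentation}; these directly encode $(A^\infty_L,\widehat{A}^\infty_L)$. To build the rescaled $(A_{L\varepsilon},\widehat{A}_{L\varepsilon})$ on the same probability space, take the infinite-volume data restricted to $[0,S^\infty_{Y_H/\varepsilon}]$ and $[0,\widehat{S}^\infty_{Y_H/\varepsilon}]$ respectively (writing $S^\infty_c$ for the last hitting time of $c$ by $X^\infty$, and similarly on the hat-side); since the intensity of each $\mathcal{N}^\infty_i$ on such an interval depends only on the corresponding spine restricted to that interval, standard properties of Poisson point measures ensure that this restriction has the correct conditional distribution identified in the previous paragraph. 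On the event
\[\mathcal{E}_\varepsilon:=\{Y_H>L\varepsilon\}\cap\{S^\infty_L<S^\infty_{Y_H/\varepsilon}\}\cap\{\widehat{S}^\infty_L<\widehat{S}^\infty_{Y_H/\varepsilon}\},\]
the last hitting time of $L$ by the restricted spine coincides with $S^\infty_L$ (and similarly on the hat-side), so the data used to build the rescaled $A_{L\varepsilon}$ (resp.\ $\widehat{A}_{L\varepsilon}$) and that used to build $A^\infty_L$ (resp.\ $\widehat{A}^\infty_L$) are literally the same, and the identity map on the shared underlying atoms yields the required bijection $\Phi$ satisfying \eqref{6}.

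The main obstacle is then to show $\p(\mathcal{E}_\varepsilon)\geq 1-\delta$ for $\varepsilon$ small. The event $\{Y_H>L\varepsilon\}$ has probability tending to $1$ since $Y_H>0$ almost surely. For the other two events I would apply the strong Markov property to $X^\infty$ at its first hitting time of $Y_H/\varepsilon$, together with the classical Bessel hitting probability that a dimension-$7$ Bessel process starting from $M>L$ visits $L$ in the future with probability $(L/M)^5$. This bounds the conditional probability of $\{S^\infty_L\geq S^\infty_{Y_H/\varepsilon}\}$ given $Y_H$ by $(L\varepsilon/Y_H)^5$ on $\{Y_H>L\varepsilon\}$; since this quantity is at most $1$ and tends to $0$ almost surely as $\varepsilon\to 0$, dominated convergence yields that the unconditional probability vanishes, and the hat-version is treated identically by symmetry.
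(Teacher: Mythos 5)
Your proposal is correct and follows essentially the same route as the paper: both rest on the fact that, conditionally on $\{Y_H>L\varepsilon\}$, the data encoding $(A_{L\varepsilon},\widehat{A}_{L\varepsilon})$ (spines stopped at their last hit of $L\varepsilon$ plus the attached Poisson measures) coincides in law, after applying the snake/Bessel scaling $\Theta_{\varepsilon^{-2}}$, with the data encoding $(A^\infty_L,\widehat{A}^\infty_L)$, so that a coupling making them literally equal exists up to probability $\p(Y_H\leq L\varepsilon)\to0$, and the factor $\varepsilon$ in \eqref{6} comes from the affine invariance of $D^\circ$ in the labels — the only difference being that the paper packages this as a total variation estimate followed by a maximal coupling, while you realize the truncated finite-volume data directly as a restriction of the infinite-volume one. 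One small remark: your third event $\{S^\infty_L<S^\infty_{Y_H/\varepsilon}\}$ is automatic on $\{Y_H>L\varepsilon\}$, since after its last visit to $Y_H/\varepsilon$ a Bessel process of dimension $7$ never returns below that level and hence never hits $L$ again, so the $(L\varepsilon/Y_H)^5$ estimate, though valid, is not needed.
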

\begin{proof}
        We give the proof for $L=1$, the ideas being the same in the general case. First, note that the total variation distance between $(X|_{[0,S_{\varepsilon\wedge Y_H}]},\widehat{X}|_{[0,\widehat{S}_{\varepsilon\wedge Y_H}]})$ and $(X^\infty|_{[0,S^\infty_{\varepsilon}]},\widehat{X}^\infty|_{[0,\widehat{S}^\infty_{\varepsilon}]})$ goes to $0$, when $\varepsilon\rightarrow0$. 
            Indeed, let $F$ be a measurable function bounded by $1$. We have 
            \begin{equation}\label{Corinne}
            |\E[F(X|_{[0,S_{\varepsilon\wedge Y_H}]},\widehat{X}|_{[0,\widehat{S}_{\varepsilon\wedge Y_H}]})-F(X|_{[0,S_{\varepsilon\wedge Y_H}]},\widehat{X}|_{[0,\widehat{S}_{\varepsilon\wedge Y_H}]})\1_{\{Y_H>\varepsilon}\}]|\leq\p(Y_H<\varepsilon).
            \end{equation}
            Then, by independence, 
            \begin{align*}
           \E[F(X|_{[0,S_{\varepsilon\wedge Y_H}]},\widehat{X}|_{[0,\widehat{S}_{\varepsilon\wedge Y_H}]})\1_{\{Y_H>\varepsilon}\}]&=\E[F(X|_{[0,S_{\varepsilon}]},\widehat{X}|_{[0,\widehat{S}_{\varepsilon}]})]\p(Y_H>\varepsilon)\\           &=\E[F(X^\infty|_{[0,S^\infty_{\varepsilon}]},\widehat{X}^\infty|_{[0,\widehat{S}^\infty_{\varepsilon}]})]\p(Y_H>\varepsilon).
           \end{align*}
           Using \eqref{Corinne}, we obtain 
           \begin{equation}\label{35}
               |\E[F(X|_{[0,S_{\varepsilon\wedge Y_H}]},\widehat{X}|_{[0,\widehat{S}_{\varepsilon\wedge Y_H}]})-F(X^\infty|_{[0,S^\infty_{\varepsilon}]},\widehat{X}^\infty|_{[0,\widehat{S}_{\varepsilon}]})]|\leq2\p(Y_H<\varepsilon)\xrightarrow[]{\varepsilon\rightarrow0}0,
           \end{equation} 
           which gives us the result.\\
           
           Next, fix $\delta>0$. By \eqref{35} and the scaling property of Bessel processes, there exists $\varepsilon>0$ such that for every $0<\varepsilon<\varepsilon_0$, we can construct $(X|_{[0,S_{\varepsilon\wedge Y_H}]},\widehat{X}|_{[0,\widehat{S}_{\varepsilon\wedge Y_H}]})$ and $(X^\infty|_{[0,S^\infty_1]},\widehat{X}^\infty|_{[0,\widehat{S}_1]})$ on the same probability space such that with probability at least $1-\delta$, we have 
    \begin{equation}\label{couplage}
      X^\infty(t)=\varepsilon^{-1}X(t\varepsilon^{-2})\quad \text{and}\quad \widehat{X}^\infty(t')=\varepsilon^{-1}\widehat{X}(t'\varepsilon^{-2}),\quad \text{for every }0\leq t\leq S_1,\,0\leq t'\leq\widehat{S}_1. 
\end{equation}
Then, by scaling properties of the Brownian snake, we can couple ($\mathcal{N}_1,\mathcal{N}_2,\widehat{\mathcal{N}}_1,\widehat{\mathcal{N}}_2$) and ($\mathcal{N}^\infty_1,\mathcal{N}^\infty_2,\widehat{\mathcal{N}}^\infty_1,\widehat{\mathcal{N}}^\infty_2$) such that, on the event where \eqref{couplage} holds,
\begin{equation*}
    \mathcal{N}^\infty_1|_{[0,S_1]}=\{(t_i\varepsilon^{-2},\Theta_{\varepsilon^{-2}}(W^i)):(t_i,W^i)\in\mathcal{N}_1|_{[0,S_\varepsilon]}\} 
\end{equation*}
where $\Theta$ is the operator defined in \eqref{scaling} (and similarly for the other measures). \\
Then, on the event where \eqref{couplage} holds, we can naturally define an application $\phi$ which send a point $u\in A_{\varepsilon}$ to the corresponding element $\phi(u)\in A_1^\infty$ in the coupling. Notice that $\phi$ preserves intervals, in the sense that if $[u,v]\subset A_{\varepsilon}$ then $\phi([u,v])=[\phi(u),\phi(v)]\subset A_1^\infty$, and that $Z^\infty_{\phi(u)}=\varepsilon^{-1}(Z_u+b)$. \\
Finally, for $u,v\in A_{\varepsilon}$ such that $[u,v]\subset A_{\varepsilon}$, we have : 
\begin{align*}
D^\circ(u,v)&= Z_u+Z_v-2\inf_{w\in[u,v]}Z_w\\
&=\varepsilon(Z^\infty_{\phi(u)}+Z^\infty_{\phi(v)}-2\inf_{w\in[\phi(u),\phi(v)]}Z^\infty_w)\\
&=\varepsilon \overline{D}_\infty^\circ(\phi(u),\phi(v)). 
\end{align*}
Similarly, we can define an application $\widehat{\phi}$ between $\widehat{A}_{\varepsilon}$ and $\widehat{A}_1^\infty$, which satisfies the same properties, mutatis mutandis.\\
 Then, we can define $\Phi:A_{\varepsilon}\cup\widehat{A}_{\varepsilon}\longrightarrow A_L^\infty\cup\widehat{A}_L^\infty$ whose restriction to $A_{\varepsilon}$ (respectively $\widehat{A}_{\varepsilon}$) is $\phi$ (respectively $\widehat{\phi}$). By our previous results on $\phi$ and $\widehat{\phi}$, it is clear that $\Phi$ satisfies the identity \eqref{6} on the event where the processes are equal, which concludes the proof. 
\end{proof}
\begin{remark}\label{uniform1}
    Note that the proof shows that this result is uniform in $b$, in the following sense. Let $0<a<a'<\infty$. Then, for every $\delta>0$, there exists $\varepsilon_0>0$ such that for every $\varepsilon<\varepsilon_0$ and $b\in[a,a']$, we can couple $(A_{L\varepsilon},\widehat{A}_{L\varepsilon})$ under $\N_0(\cdot\,|\,W_*<-b)$ and $(A^\infty_{L},\widehat{A}^\infty_{L})$ as described in Proposition \ref{f}.
\end{remark}
The following result is a simple consequence of Propositions \ref{d}, \ref{e} and \ref{f}. 
\begin{proposition}\label{isométrie}
    Assume that $\mathcal{F_\varepsilon}$ holds. Fix $u,v\in A_{L\varepsilon}\cup\widehat{A}_{L\varepsilon}$. Then $D(\Gamma_b,p_\s(u))<\varepsilon$ and $D(\Gamma_b,p_\s(v))<\varepsilon$ if and only if $\overline{D}_\infty(\overline{\rho}_\infty,p_{\BP }(\Phi(u)))<1$ and $\overline{D}_\infty(\overline{\rho}_\infty,p_{\BP }(\Phi(v)))<1$. \\
    Furthermore, if those conditions are satisfied, we have : 
    \begin{equation*}
        D(p_\s(u),p_\s(v))=\varepsilon \overline{D}_\infty(p_{\BP }(\Phi(u)),p_{\BP }(\Phi(u))).
    \end{equation*}
\end{proposition}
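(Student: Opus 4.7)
The plan is to combine the restricted-path distance formulas of Propositions \ref{d}--\ref{e} (for $\s$) with their infinite-volume counterparts in Proposition \ref{g} (for $\BP$), using the bijection $\Phi$ of Proposition \ref{f} as the dictionary. I first apply Proposition \ref{f} with $L$ replaced by $2L$, so that $\Phi$ is defined on the whole set $A_{2L\varepsilon} \cup \widehat{A}_{2L\varepsilon}$ and satisfies \eqref{6} throughout that set. Under this coupling, the special point $p_\T(T_b)$, which is identified with $0 \in [0,H] \subset \T_b$ in the spine decomposition of Subsection \ref{construction}, is sent by $\Phi$ to $0 \in [0,+\infty) \subset \T_\infty$; hence $p_\s(p_\T(T_b)) = \Gamma_b$ corresponds to $p_{\BP}(\Phi(p_\T(T_b))) = \overline{\rho}_\infty$.

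The heart of the argument is the distance identity $D(p_\s(u), p_\s(v)) = \varepsilon\, \overline{D}_\infty(p_{\BP}(\Phi(u)), p_{\BP}(\Phi(v)))$ under the hypothesis that both $D(\Gamma_b, p_\s(u))$ and $D(\Gamma_b, p_\s(v))$ are less than $\varepsilon$. I would split into two cases depending on whether $u$ and $v$ lie on the same side of $\Gamma$ or on opposite sides. In the same-side case, Proposition \ref{d} writes $D(p_\s(u), p_\s(v))$ as an infimum of $\sum D^\circ(u_i, u_{i-1})$ over sequences staying in $A_{2L\varepsilon}$ (or $\widehat{A}_{2L\varepsilon}$); applying \eqref{6} term by term turns this into $\varepsilon$ times an infimum of $\sum D_\infty^\circ(\Phi(u_i), \Phi(u_{i-1}))$ over sequences in $A_{2L}^\infty$ (or $\widehat{A}_{2L}^\infty$), which by the first item of Proposition \ref{g} equals $\varepsilon\,\overline{D}_\infty(p_{\BP}(\Phi(u)), p_{\BP}(\Phi(v)))$. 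The opposite-side case is handled identically using Proposition \ref{e} and the second item of Proposition \ref{g}; the one-crossing constraint on admissible sequences is preserved by $\Phi$, since $\Phi$ maps the two sides of $\T_b$ bijectively onto $\T_\infty$ and $\widehat{\T}_\infty$ and sends $p_\T(T_s)$ to $\tau_s$ for the relevant labels $s$.

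The iff statement then follows by specializing the distance identity to $v = p_\T(T_b)$: the forward implication is immediate, and the backward implication is obtained by running the same chain of equalities starting from Proposition \ref{g} and translating back into $\s$ via \eqref{6} and Propositions \ref{d}--\ref{e}. For this reverse step to go through one also needs $\mathcal{F}_\varepsilon^\infty$ to hold; this is arranged by working on the intersection $\mathcal{F}_\varepsilon \cap \mathcal{F}_\varepsilon^\infty$, which still has probability at least $1-\delta$ after possibly increasing $L$, thanks to Proposition \ref{localisation} applied to both $\s$ and $\BP$. I expect the main subtlety to be the bookkeeping verifying that the structural constraints on admissible discrete paths in Propositions \ref{d}--\ref{e} exactly match those in Proposition \ref{g} under $\Phi$, particularly the one-crossing condition in the opposite-side case; once this correspondence is checked, everything else reduces to a termwise application of \eqref{6}.
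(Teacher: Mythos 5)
Your overall strategy is the same as the paper's: restrict the minimizing sequences via Propositions \ref{d} and \ref{e}, transport them through the bijection $\Phi$ of Proposition \ref{f} using \eqref{6}, and match the resulting restricted infima with those of Proposition \ref{g}. Your two additional remarks (taking the coupling of Proposition \ref{f} at level $2L$ so that $\Phi$ is defined on $A_{2L\varepsilon}\cup\widehat{A}_{2L\varepsilon}$, and working on $\mathcal{F}_\varepsilon\cap\mathcal{F}_\varepsilon^\infty$) are sound and in fact make explicit two points the paper leaves implicit.

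However, there is a genuine gap in the logical order of your argument. Proposition \ref{g} is not an unconditional identity: its conclusion \eqref{f3} (resp. \eqref{f4}) only holds for points $x$ with $\overline{D}_\infty(\overline{\rho}_\infty,x)$ small, i.e.\ exactly under the conclusion of the ``only if'' half of the equivalence you are trying to prove. In your write-up you first derive the distance identity by invoking Proposition \ref{g} directly, and then declare that the forward implication of the equivalence is ``immediate'' by specializing that identity to $v=p_\T(T_b)$; this is circular, since the application of Proposition \ref{g} inside the derivation already presupposes $\overline{D}_\infty(\overline{\rho}_\infty,p_{\BP}(\Phi(u)))<1$ and $\overline{D}_\infty(\overline{\rho}_\infty,p_{\BP}(\Phi(v)))<1$. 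The missing step, which is precisely where the paper spends the end of its proof, is a one-directional bootstrap that needs no hypothesis: the restricted infimum on the right-hand side of \eqref{f3}--\eqref{f4} is always an \emph{upper} bound for $\overline{D}_\infty$, because it ranges over a subclass of the sequences defining $\overline{D}_\infty$. So one first applies Proposition \ref{d} (or \ref{e}) with $v=p_\T(T_b)$ to write $D(p_\s(u),\Gamma_b)<\varepsilon$ as a restricted infimum, pushes a near-minimizing sequence through $\Phi$ using \eqref{6}, and concludes $\overline{D}_\infty(\overline{\rho}_\infty,p_{\BP}(\Phi(u)))\leq\varepsilon^{-1}D(p_\s(u),\Gamma_b)<1$; only after this is Proposition \ref{g} applicable and the full equality $D(p_\s(u),p_\s(v))=\varepsilon\,\overline{D}_\infty(p_{\BP}(\Phi(u)),p_{\BP}(\Phi(v)))$ follows. (The backward implication is fine as you sketch it, but note it only uses the trivial upper-bound direction again — $D(\Gamma_b,p_\s(u))$ is bounded by the $\Phi^{-1}$-image sums — not the full strength of Propositions \ref{d}--\ref{e}, whose hypotheses would not be available at that point.) With this reordering your argument coincides with the paper's proof.
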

\begin{proof}
    Fix $u,v\in A_{L\varepsilon}\cup\widehat{A}_{L\varepsilon}$ such that $D(\Gamma_b,p_\s(u))<\varepsilon$ and $D(\Gamma_b,p_\s(v))<\varepsilon$. By Lemmas \ref{d} and \ref{e}, $D(p_\s(u),p_\s(v))$ is given by \eqref{f1} or \eqref{f2}. Therefore, we have to show that it coincides with the formulas of \ref{g} (with a factor $\varepsilon$).\\
    Suppose that $u,v\in A_{L\varepsilon}$ (the other cases can be treated the same way), which implies that $\Phi(u),\Phi(v)\in A^\infty_L$, and consider a sequence $u_0,...,u_p$ satisfying conditions of Lemma \ref{d}. We can then define a new sequence $\Phi(u_0),...,\Phi(u_p)$, and it is immediate that it satisfies the conditions of the first statement of Lemma \ref{g}. But by Lemma \ref{f}, we have 
    \begin{equation*}
        \sum_{i=1}^p D^\circ(u_i,u_{i-1})=\varepsilon\sum_{i=1}^p \overline{D}_\infty^\circ(\Phi(u_i),\Phi(u_{i-1})).
    \end{equation*}
    To conclude, we just need to check that $\overline{D}_\infty(\overline{\rho}_\infty,p_{\BP }(\Phi(u))<\varepsilon$ and $\overline{D}_\infty(\overline{\rho}_\infty,p_{\BP }(\Phi(v))<\varepsilon$ in order to apply Lemma \ref{g}. However, the right-hand side of \eqref{f3} is an upper bound for $\overline{D}_\infty(p_{\BP }(\Phi(u)),p_{\BP }(\Phi(u)))$. Therefore, by taking $u=\overline{\rho}_\infty$ and then $v=\overline{\rho}_\infty$, we obtain that $\overline{D}_\infty(\overline{\rho}_\infty,p_{\BP }(\Phi(u))<\varepsilon$ and $\overline{D}_\infty(\overline{\rho}_\infty,p_{\BP }(\Phi(v))<\varepsilon$, which concludes the proof (the other cases can be treated the same way). 
\end{proof} 
\begin{proof}[Proof of Proposition \ref{coupling}]
    We work under the event described by Proposition \ref{isométrie}. By Proposition \ref{localisation}, any point of $B_1(\varepsilon^{-1}\cdot\s,\Gamma_b)$ is of the form $p_\s(u)$ for some $u\in A_{L\varepsilon}\cup\widehat{A}_{L\varepsilon}$. We can define
    \begin{equation*}
        \mathcal{I}(p_\s(u))=p_{\BP}(\Phi(u)).
    \end{equation*}
     By Lemma \ref{f}, this formula does not depend on the choice of $u$. Furthermore, by Proposition \ref{isométrie}, $\mathcal{I}$ is an isometry, and it maps $B_1(\varepsilon^{-1}\cdot\s,\Gamma_b)$ onto $B_1(\BP ,\overline{\rho}_\infty)$. Finally, we have $\mathcal{I}(\Gamma_b)=\overline{\rho}_\infty$, which concludes the proof. 
\end{proof}
Next, we extend the result of Theorem \ref{convergence} for the GHPU topology. To this end, for every $\varepsilon>0$ and $t\in[-b\varepsilon^{-1},-(W_*+b)\varepsilon^{-1}]$, we define
\begin{equation*}
    \Gamma^\varepsilon(t)=
        \Gamma(b+\varepsilon  t).
\end{equation*}
\begin{proposition}\label{convergence GHPU}
    We have the following convergence
    \begin{equation*}
        \left(\s,\varepsilon^{-1}\cdot D,\varepsilon^{-4}\cdot\mu,\Gamma^\varepsilon\right)\xrightarrow[\varepsilon\rightarrow0]{(d)}\left(\BP,\overline{D},\overline{\mu},\Gamma_\infty\right)
    \end{equation*}
    in distribution for the local GHPU topology.
\end{proposition}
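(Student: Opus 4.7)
The approach is to strengthen the coupling behind Proposition \ref{coupling} so that it also transports the rescaled volume measure and the curve $\Gamma^\varepsilon$ onto $\overline\mu$ and $\Gamma_\infty$, and then to invoke the characterization of local GHPU convergence by truncation to balls. Concretely, I would fix $R>0$ and $\delta>0$, apply Proposition \ref{localisation} at scale $R\varepsilon$ together with Proposition \ref{f} (and the uniformity statement of Remark \ref{uniform1}), and promote the bijection $\Phi:A_{LR\varepsilon}\cup\widehat{A}_{LR\varepsilon}\to A_{LR}^\infty\cup\widehat{A}_{LR}^\infty$ into the isometry $\mathcal{I}:B_R(\varepsilon^{-1}\cdot\s,\Gamma_b)\to B_R(\BP,\overline\rho_\infty)$ of Proposition \ref{isométrie}. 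It then remains to check that, on the high-probability event $\mathcal{F}_{R\varepsilon}\cap\mathcal{F}_{R\varepsilon}^\infty$, the map $\mathcal{I}$ also sends $\varepsilon^{-4}\mu|_{B_R}$ to $\overline\mu|_{B_R}$ and the truncation of $\Gamma^\varepsilon$ to $B_R$ to the truncation of $\Gamma_\infty$ to $B_R$.

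The measure transport would follow from the Brownian snake scaling \eqref{scaling}. Each subtree $\omega_i$ branching off the spine of $A_{LR\varepsilon}$ is realized by the coupling of Proposition \ref{f} as the $\Theta_{\varepsilon^{-2}}^{-1}$-preimage of the matching subtree in $A_{LR}^\infty$, so its duration is $\varepsilon^{-4}$ times smaller than that of its image. Since the volume measure on $\s$ (respectively $\BP$) is the pushforward under $p_\s$ (respectively $p_{\BP}$) of the Lebesgue measure supported by the subtree intervals, and since the spines themselves carry zero volume, summing over subtrees would give $\Phi_*(\mu|_{A_{LR\varepsilon}})=\varepsilon^4\,\overline\mu|_{A_{LR}^\infty}$, whence $\mathcal{I}$ carries $\varepsilon^{-4}\mu|_{B_R}$ onto $\overline\mu|_{B_R}$.

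For the curve, I would exploit the label correspondence $Z^\infty_{\Phi(u)}=\varepsilon^{-1}(Z_u+b)$ built into the coupling. The point $p_\T(T_{b+\varepsilon t})$ has $\T$-label $-(b+\varepsilon t)$, so its image under $\Phi$ has $\T_\infty\cup\widehat\T_\infty$-label $-t$ and thus projects in $\BP$ to a point on the bigeodesic $\Gamma_\infty$ at parameter $\pm t$, the sign being dictated by whether the exploration leading to $p_\T(T_{b+\varepsilon t})$ sits on the $X$ or $\widehat X$ arm of the three-arms decomposition. The swap invariance $(\T_\infty,\widehat\T_\infty)\overset{(d)}{=}(\widehat\T_\infty,\T_\infty)$ ensures that $\Gamma_\infty$ has the same law as $\Gamma_\infty(-\,\cdot)$, so this sign does not affect the distributional statement. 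The main obstacle is precisely this identification between the parametrizations of $\Gamma^\varepsilon$ and $\Gamma_\infty$ through the exploration processes $\mathcal{E}$ and $\mathcal{E}^\infty$; once that step is carried out, the GHPU convergence of the truncated spaces follows by combining the isometry, the measure transport and the curve identification, and passage to the local GHPU convergence of the full spaces is standard.
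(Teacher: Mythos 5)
Your proposal is essentially the paper's proof: the paper disposes of the statement in two lines by observing that the isometry $\mathcal{I}$ constructed in the proof of Proposition \ref{coupling} also satisfies $\mathcal{I}(\Gamma^\varepsilon)=\Gamma_\infty|_{[-1,1]}$ and $\mathcal{I}_*(\varepsilon^{-4}\mu)|_{B_1(\varepsilon^{-1}\cdot\s,\Gamma_b)}=\overline{\mu}|_{B_1(\BP,\overline{\rho}_\infty)}$, and then concludes from Proposition \ref{coupling} by truncation; your measure transport via the snake scaling $\Theta_{\varepsilon^{-2}}$ (durations multiplied by $\varepsilon^{-4}$, spines carrying no volume) and your curve identification via $Z^\infty_{\Phi(u)}=\varepsilon^{-1}(Z_u+b)$ are exactly the details behind this observation.

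One sub-step of your write-up is not correct as stated, although it is also not needed. Exchanging $\T_\infty$ and $\widehat{\T}_\infty$ does not reverse the parametrization of $\Gamma_\infty$: since $\overline{D}^\circ_\infty(\tau_s,\widehat{\tau}_s)=0$ for every $s$, the points $\tau_s$ and $\widehat{\tau}_s$ project to the same point $\Gamma_\infty(s)$ of $\BP$, so the swapped configuration produces the same parametrized curve and not $\Gamma_\infty(-\,\cdot)$. The identity in law of $(\BP,\Gamma_\infty)$ and $(\BP,\Gamma_\infty(-\,\cdot))$ is precisely Theorem \ref{Retournement}, which the paper deduces \emph{from} the present proposition, so invoking it (or an unproved substitute) at this stage would be circular. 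Fortunately no distributional symmetry is required: on the coupling event, $\Phi$ rescales labels and preserves the exploration order, so the first exploration point of $\T_b$ at label $\varepsilon s$ is sent to the first exploration point of $\T_\infty$ at label $s$; this matches the parametrized curve $\Gamma^\varepsilon$ (truncated to the ball) with the corresponding restriction of $\Gamma_\infty$ directly, which is exactly the identity $\mathcal{I}(\Gamma^\varepsilon)=\Gamma_\infty|_{[-1,1]}$ used in the paper, up to fixing the orientation convention for the parameter once and for all.
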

\begin{proof}
    Note that the application $\mathcal{I}$ that appears in the proof of Proposition \ref{coupling} satisfies $\mathcal{I}(\Gamma^\varepsilon)=\Gamma_\infty|_{[-1,1]}$ and $\mathcal{I}_*(\varepsilon^{-4}\mu)|_{B_1(\varepsilon^{-1}\cdot\s,\Gamma_b)}=\overline{\mu}|_{B_1(\BP,\overline{\rho}_\infty)}$. The result is just a consequence of this observation and Proposition \ref{coupling}.
\end{proof}

\subsection{Conditioning on the length of $\Gamma$}

Now, we would like to extend the result of Theorem \ref{convergence} to other conditionings for the measure $\N_0$. Indeed, the proof of Proposition \ref{coupling} only relies on the local structure of the tree $\T_b$ around $p_\T(T_b)$ and $p_\T(\widehat{T}_b)$. Therefore, it is reasonable to think that various conditionings (on the volume or the minimal label) do not change the local behavior of the tree and, a fortiori, the convergence result. In this subsection, we prove that Theorem \ref{convergence} still holds under the probability measure $\N_0(\cdot\,|\,W_*=-c)$. Once again, the proof relies on a coupling of labeled trees.\\
First, let us stress that here, $A_{L\varepsilon}$ is defined for every snake trajectory such that $W_*<-b $, as a collection of processes $(X,\mathcal{N},\mathcal{N}')$, where $X$ encodes the labels of the genealogical line of $T_b$ up to its last hitting time of $L\varepsilon$, and $\mathcal{N},\mathcal{N}'$ are point measures describing the subtrees branching off this path. As discussed at the end of Subsection \ref{three arms}, we can reconstruct the snake trajectory from these three processes. Similarly, we can always define $Y_H$ as $Z_{p_\T(T_b)\wedge p_\T(\widehat{T}_b)}+b$, where $u\wedge v$ stands for the most recent common ancestor of $u$ and $v$ in $\T$. This definition of $Y_H$ is coherent with the one used under the probability measure $\N_0(\cdot\,|\,W_*<-b)$ throughout this article. We begin with an intermediate result, that will be used several times in the forthcoming proofs. 
\begin{proposition}\label{Independence}
    For every $\varepsilon>0$, set $V_\varepsilon=A_{L\varepsilon}\cup\widehat{A}_{L\varepsilon}$. Then, under $\N_0(\cdot\,|\,W_*<-b)$ and conditionally on $\{Y_H>L\varepsilon\}$, the random variables $V_\varepsilon$ and $\T_b\backslash V_\varepsilon$ are independent. 
\end{proposition}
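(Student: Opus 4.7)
The plan is to leverage the three-arms decomposition of Proposition \ref{decomposition} together with Williams' time reversal, which will convert the co-optional time $S_{L\varepsilon}$ into a genuine stopping time and so make the strong Markov property applicable.

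\textbf{Step 1 (reduction to trajectories).} By the definitions in Section \ref{Section4}, the set $V_\varepsilon=A_{L\varepsilon}\cup\widehat{A}_{L\varepsilon}$ is a measurable functional of the pre-$S_{L\varepsilon}$ piece of $X$, the pre-$\widehat{S}_{L\varepsilon}$ piece of $\widehat{X}$, and the restrictions of $\mathcal{N}_1,\mathcal{N}_2,\widehat{\mathcal{N}}_1,\widehat{\mathcal{N}}_2$ to these time intervals. Symmetrically, $\T_b\setminus V_\varepsilon$ is a measurable functional of $Y$, the post-$S_{L\varepsilon}$ pieces of $X$ and $\widehat{X}$, the full measures $\mathcal{N}_0,\widehat{\mathcal{N}}_0$, and the restrictions of the remaining four measures to the complementary intervals. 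By Proposition \ref{decomposition}, conditionally on $(Y,X,\widehat{X})$, the six Poisson point measures are mutually independent, and restricting a Poisson measure to disjoint time intervals yields two independent Poisson measures. Hence it suffices to prove that, conditionally on $\{Y_H>L\varepsilon\}$, the pair $\bigl(X|_{[0,S_{L\varepsilon}]},\widehat{X}|_{[0,\widehat{S}_{L\varepsilon}]}\bigr)$ is independent of $\bigl(Y,X|_{[S_{L\varepsilon},S_{Y_H}]},\widehat{X}|_{[\widehat{S}_{L\varepsilon},\widehat{S}_{Y_H}]}\bigr)$.

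\textbf{Step 2 (trajectory independence via time reversal).} By Proposition \ref{decomposition}, given $Y$, the trajectories $X+b$ and $\widehat{X}+b$ are independent Bessel processes of dimension $-3$ starting from $Y_H+b$ and stopped at $0$. Reparametrize by Williams' time reversal: in the reversed parametrization, $X$ is described by a Bessel($-3$) starting at $Y_H$, run up to its hitting time of $0$. Under this reversal, the last hitting time $S_{L\varepsilon}$ of $L\varepsilon$ by the original Bessel(7) corresponds to the \emph{first} hitting time of $L\varepsilon$ by the Bessel($-3$), which is a genuine stopping time in its natural filtration. Applying the strong Markov property at this stopping time on the event $\{Y_H>L\varepsilon\}$: the post-hitting piece of the Bessel($-3$) (which in forward time becomes $X|_{[0,S_{L\varepsilon}]}$) is independent of the pre-hitting piece (which gives $X|_{[S_{L\varepsilon},S_{Y_H}]}$ and determines $Y_H$), and its conditional law is that of a Bessel($-3$) from $L\varepsilon$ stopped at $0$, which does not depend on $Y_H$. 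The same argument applies to $\widehat{X}$; combined with the conditional independence of $X$ and $\widehat{X}$ given $Y$, this yields the required independence, which closes the proof when combined with Step 1.

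\textbf{Main obstacle.} The only genuine subtlety is that $S_{L\varepsilon}$ is merely a co-optional (last-exit) time in forward time, so the strong Markov property cannot be applied directly; Williams' time reversal is precisely the tool that turns this into a stopping time. Care must also be taken to check that the event $\{Y_H>L\varepsilon\}$ is compatible with the reversed-filtration argument (which it is, since $Y_H$ is the starting value of the Bessel($-3$) and the event $\{Y_H > L\varepsilon\}$ is exactly the event that this Bessel($-3$) reaches $L\varepsilon$ before $0$), after which everything follows from the standard restriction and independence properties of Poisson point measures.
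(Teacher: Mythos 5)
Your proposal is correct and follows essentially the same route as the paper: reduce to the spine trajectories via the three-arms decomposition and the restriction/independence properties of the Poisson point measures, then use the independence of a Bessel(7) path before and after its last hitting time of a level. The only difference is cosmetic: the paper simply quotes this last-exit independence for the Bessel(7) spine, whereas you rederive it through Williams' time reversal and the strong Markov property of the Bessel($-3$) at its first hitting time of $L\varepsilon$, which is a valid justification of the same fact.
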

\begin{proof}
    Recall that if $X$ is a Bessel process of dimension $7$ and $S_x$ its last hitting time of $x$, then $(X_t)_{0\leq t\leq S_x}$ and $(X_{S_x+t})_{t\geq0}$ are independent. Therefore, the result follows from Proposition \ref{construction 3 arms}, the definition of $V_\varepsilon$ and independence properties of Poisson point measures. 
\end{proof}
\begin{theorem}\label{conditionnement}
        Fix $0<b<1$. Then, the total variation distance between the law of $V_\varepsilon$ under $\N_0(\cdot\,|\,W_*<-b)$ and $V_\varepsilon$ under $\N_0(\cdot\,|\,W_*=-1)$ goes to $0$ when $\varepsilon\rightarrow 0$. 
    \end{theorem}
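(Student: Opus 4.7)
The plan is to use Proposition \ref{Independence} to argue that conditioning on $\{W_*=-1\}$ rather than on $\{W_*<-b\}$ only affects the complementary part $\T_b\setminus V_\varepsilon$ of the tree, leaving the marginal law of $V_\varepsilon$ essentially unchanged modulo events of vanishing probability. The proof reduces to identifying a high-probability good event on which the minimum $W_*$ is achieved in the complement, and then exploiting the conditional independence.

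Fix $L$ large enough that Proposition \ref{Encadrement} applies, and define the good event
\begin{equation*}
\mathcal{G}_\varepsilon:=\{Y_H>L\varepsilon\}\cap\Bigl\{\inf_{u\in V_\varepsilon}Z_u>-b-\sqrt{\varepsilon}\Bigr\},
\end{equation*}
which is $\sigma(V_\varepsilon)$-measurable. By Proposition \ref{Encadrement}, $\N_0(\mathcal{G}_\varepsilon\,|\,W_*<-b)\to 1$. For $\varepsilon$ small enough that $1>b+\sqrt{\varepsilon}$, on $\mathcal{G}_\varepsilon\cap\{W_*=-1\}$ one has $W_*=-1<-b-\sqrt{\varepsilon}<\inf_{V_\varepsilon}Z$, hence the minimum is attained outside $V_\varepsilon$ and $W_*=\inf_{\T_b\setminus V_\varepsilon}Z$ is measurable with respect to $\T_b\setminus V_\varepsilon$.

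The key identity is: for every bounded measurable $F$ depending only on $V_\varepsilon$, every $c>b+\sqrt{\varepsilon}$ and all small enough $\varepsilon$,
\begin{equation*}
\N_0\bigl(F(V_\varepsilon)\,\1_{\mathcal{G}_\varepsilon}\,\big|\,W_*=-c,\,Y_H>L\varepsilon\bigr)=\N_0\bigl(F(V_\varepsilon)\,\1_{\mathcal{G}_\varepsilon}\,\big|\,W_*<-b,\,Y_H>L\varepsilon\bigr).
\end{equation*}
Indeed, by Proposition \ref{Independence}, under $\N_0(\cdot\,|\,W_*<-b,\,Y_H>L\varepsilon)$ the variables $V_\varepsilon$ and $\T_b\setminus V_\varepsilon$ are independent. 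For small $\eta>0$, the event $\{W_*\in(-c-\eta,-c+\eta)\}\cap\mathcal{G}_\varepsilon$ coincides with $\{\inf_{\T_b\setminus V_\varepsilon}Z\in(-c-\eta,-c+\eta)\}\cap\mathcal{G}_\varepsilon$, which factorizes as a $\sigma(\T_b\setminus V_\varepsilon)$-measurable part intersected with a $\sigma(V_\varepsilon)$-measurable part. By independence, conditioning on the former does not affect the marginal of $V_\varepsilon$; the identity then follows by letting $\eta\to 0$ along the regular disintegration of $\N_0$ with respect to $W_*$.

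Applying this identity with $F\equiv 1$ transfers the estimate of Proposition \ref{Encadrement} to the conditioning $\{W_*=-1\}$: $\N_0(\mathcal{G}_\varepsilon\,|\,W_*=-1,\,Y_H>L\varepsilon)\to 1$. The complementary estimate $\N_0(Y_H>L\varepsilon\,|\,W_*=-1)\to 1$ follows from the almost sure positivity of $Y_H$ under $\N_0(\cdot\,|\,W_*=-1)$, which can be obtained by an extension of the spinal decomposition of Section \ref{section 3} to this conditioning (replacing the Bessel processes of dimension $-3$ and $7$ by their bridge counterparts) or by a scaling argument from Proposition \ref{Bessel}. Combining these yields $\N_0(\mathcal{G}_\varepsilon\,|\,W_*=-1)\to 1$, and splitting the target TV distance along $\mathcal{G}_\varepsilon$ and its complement—the former being controlled by the key identity and the latter by $\N_0(\mathcal{G}_\varepsilon^c\,|\,\cdot)$—proves the theorem. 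The main obstacle is the rigorous justification of the disintegration with respect to the measure-zero event $\{W_*=-c\}$, especially verifying that $\N_0(\cdot\,|\,W_*=-c)$ admits a sufficiently regular version in $c$ for the limit $\eta\to 0$ to yield the correct marginal, which ultimately rests on extending the constructions of Section \ref{section 3} to this conditioning.
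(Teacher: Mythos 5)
Your overall strategy is the same as the paper's (a good event of high probability, the conditional independence of Proposition \ref{Independence}, and approximation of the conditioning $\{W_*=-1\}$ by the windows $\{-1-\eta\le W_*\le -1\}$), but the step you call the ``key identity'' is not correct as an exact equality, and it is precisely where the real work lies. Write $m_\varepsilon=\inf_{u\in V_\varepsilon}Z_u$ and $\overline m_\varepsilon=\inf_{u\in\T\setminus V_\varepsilon}Z_u$, which are independent under $\N_0(\cdot\,|\,W_*<-b,\,Y_H>L\varepsilon)$. You correctly observe that on $\mathcal{G}_\varepsilon$, for $\eta$ small, $\{W_*\in(-1-\eta,-1)\}$ coincides with $\{\overline m_\varepsilon\in(-1-\eta,-1)\}$, so the \emph{numerator} of the conditional expectation factorizes. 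But the \emph{denominator} is $\N_0(W_*\in(-1-\eta,-1)\,|\,W_*<-b,\,Y_H>L\varepsilon)$, not $\N_0(\overline m_\varepsilon\in(-1-\eta,-1)\,|\,\cdots)$: the global event also charges configurations outside $\mathcal{G}_\varepsilon$ where the minimum is attained inside $V_\varepsilon$, or where $\overline m_\varepsilon$ lies in the window but $m_\varepsilon$ is even lower. Hence what the limit $\eta\to0$ actually yields is
\begin{equation*}
\N_0\bigl(F(V_\varepsilon)\1_{\mathcal{G}_\varepsilon}\,\big|\,W_*=-1,\,Y_H>L\varepsilon\bigr)
=\frac{f_{\overline m_\varepsilon}(-1)}{f_{W_*}(-1)}\;\N_0\bigl(F(V_\varepsilon)\1_{\mathcal{G}_\varepsilon}\,\big|\,W_*<-b,\,Y_H>L\varepsilon\bigr),
\end{equation*}
where $f_{\overline m_\varepsilon}$ and $f_{W_*}$ are the densities at $-1$ under the base conditioning, and $f_{W_*}(-1)=f_{\overline m_\varepsilon}(-1)\,\p(m_\varepsilon>-1)+f_{m_\varepsilon}(-1)\,\p(\overline m_\varepsilon>-1)$ by independence. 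For fixed $\varepsilon>0$ this ratio is not $1$ (the subtrees grafted inside $V_\varepsilon$ can reach below $-1$ with positive density), so your identity fails; it only becomes $1$ in the limit $\varepsilon\to0$.

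Proving that this multiplicative factor tends to $1$, uniformly over $F$, is exactly the content of the paper's argument that you have skipped: one must first show that these densities exist and identify them, which the paper does through the three-arms representation of Subsection \ref{three arms}, the explicit formula \eqref{inf} for $\N_x(W_*<y)$, and a dominated-convergence computation of $\lim_{\eta\to0}C_\eta/\eta$ and $\lim_{\eta\to0}\N_0(J_\varepsilon,\,Y_H>L\varepsilon\,|\,W_*<-b)/\eta$ (Lemma \ref{limite}); and then show that the resulting density ratio converges to $1$ as $\varepsilon\to0$, which in the paper follows from the continuity of the function $f$ appearing in \eqref{densite} (so that $f(L\varepsilon)/f(0)\to1$) together with $\N_0(Y_H>L\varepsilon\,|\,W_*<-b)\to1$. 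Your closing remark treats the disintegration with respect to $\{W_*=-c\}$ as the ``main obstacle'', but the obstacle is not merely regularity of the disintegration: even with a perfectly regular version, the limit does not give equality of the two conditional laws of $V_\varepsilon$, only proportionality with an $\varepsilon$-dependent constant, and controlling that constant is the heart of the proof. (Your secondary claim $\N_0(Y_H>L\varepsilon\,|\,W_*=-1)\to1$ is also left as a sketch, but it is comparatively minor; the paper gets the analogous control by monotone convergence under $\N_0(\cdot\,|\,W_*=-1)$.)
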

    \begin{proof}[Proof of Theorem \ref{conditionnement}]
        Fix $\delta>0$, and let $F$ be a measurable function bounded by $1$. Let us define $H_\varepsilon:=\{\inf_{u\in V_\varepsilon} Z_u>-1\}$ and $G_\varepsilon:=\{Y_H>L\varepsilon\}$.  We have :
    \[|\N_0(F(V_\varepsilon)\1_{H_\varepsilon}\1_{G_\varepsilon}-F(V_\varepsilon)|W_*=-1)|\leq\N_0(\1_{(H_\varepsilon\cup G_\varepsilon)^c}|W_*=-1)\leq\delta/3\] 
         for $\varepsilon$ small enough, by monotone convergence. Note that this holds uniformly in the choice of $F$. Then, we have 
        \begin{align*}
            \N_0(F(V_\varepsilon)\1_{H_\varepsilon}\1_{G_\varepsilon}|W_*=-1)&=\lim_{\eta\rightarrow 0}\N_0(F(V_\varepsilon)\1_{H_\varepsilon}\1_{G_\varepsilon}|-1-\eta\leq W_*\leq-1)\\
            &=\lim_{\eta\rightarrow0}C_\eta^{-1}\N_0(F(V_\varepsilon)\1_{H_\varepsilon}\1_{\{Y_H>L\varepsilon\}}\1_{\{-1-\eta\leq W_*\leq-1\}}|W_*<-b)
        \end{align*} where 
        \[C_\eta=\frac{\N_0(-1-\eta<W_*<-1)}{\N_0(W_*<-b)}=\N_0(-1-\eta<W_*<-1|W_*<-b).\]
    \end{proof}
    By \eqref{inf}, we have 
    \begin{equation*}
     C_\eta=\frac{1-\frac{1}{(1+\eta)^2}}{\frac{1}{b^2}}=b^2-\big(\frac{b}{1+\eta}\big)^2.   
    \end{equation*}
    On the other hand, using Propositions \ref{poisson}, \ref{Bessel} and the representation of section \ref{three arms}, under $\N_0(\cdot \,|\,W_*<-b)$, the event $\{-1-\eta<W_*<-1\}$ occurs if and only if, after a shift of $b$ of the labels, every atom $W^i$ of $\widehat{\mathcal{N}}_1$ and $\widehat{\mathcal{N}}_2$ satisfy $(W^i)_*>-1-\eta$, and at least one satisfies $-1-\eta+<(W^i)_*<-1$. If we denote this event by $\mathcal{K}_\eta$, we have :
    \begin{multline*}
    C_\eta=\N_0(\mathcal{K}_\eta\,|\,W_*<-b)=\E\bigg[\Pi_D\bigg(\mathcal{M}\bigg(\{(s,\omega):\omega_*<-1-\eta\}\bigg)=0\bigg)\\
    \times\Pi_D\bigg(\mathcal{M}\bigg(\{(s,\omega):\omega_*<-1\}\bigg)\geq1\bigg)\,\bigg|\,\mathcal{M}\bigg(\{(s,\omega):\omega_*<-1-\eta\})=0\bigg)\bigg]
    \end{multline*}
    where $D$ is the concatenation of $X-b$ with the process $\widehat{X}-b$ reversed in time. More precisely, for every $x>0$, we consider a probability measure $\p_x$ such that under $\p_{x}(\cdot)$, $D$ is the concatenation of two independent processes $(X,X')$, where $X+b$ is a Bessel process of dimension $7$ starting from $0$ and stopped at its last hitting time of $x$, and $X'+b$ is a Bessel process of dimension $-3$ starting from $x$ and stopped when it reaches $0$. We keep the notation $S_y$ for the last hitting time of $y$ by the process $X$, and we set 
    \[\tau'_y=\inf\{t\geq\zeta_X:D_t+b=y\}\]
    where $\zeta_X$ denotes the lifetime of $X$. Note that $\tau'_y-\zeta_X$ corresponds to the first hitting time of $y$ by $X'$ (in particular, $\tau'_0$ is the lifetime of $D$).
    With this notation, we have : 
    \begin{equation}\label{POI}    C_\eta=\E\bigg[\E_{Y_H}\bigg[\exp\bigg(-4\int_0^{\tau'_0}\N_{D_t}(W_*<-1-\eta)dt\bigg)\bigg(1-\exp\bigg(-4\int_0^{\tau'_0}\N_{D_t}(-1-\eta\leq W_*\leq-1)dt\bigg)\bigg)\bigg]\bigg]
    \end{equation} 
    Note that under the event $H_\varepsilon$, the event $\{-1-\eta<W_*<-1\}$ can be described as the intersection of two events :
    \[\{-1-\eta<W_*<-1\}\cap H_\varepsilon=\left\{\inf_{u\in\T\backslash V_\varepsilon} Z_u\in[-1-\eta,-1]\right\}\cap H_\varepsilon.\]
    Let $J_\varepsilon$ denotes the first event of the right-hand side. By Proposition \ref{Independence}, conditionally on $\{Y_H>L\varepsilon\}$, $J_\varepsilon$ and $H_\varepsilon$ are independent. Hence, we have :
    \begin{multline}\label{coupure}
        \N_0(F(V_\varepsilon)\1_{H_\varepsilon}\1_{\{-1-\eta\leq W_*\leq-1\}}\,|\,W_*<-b,Y_H>L\varepsilon)=\N_0(J_\varepsilon\,|\,W_*<-b,Y_H>L\varepsilon)\\       \times\N_0(F(V_\varepsilon)\1_{H_\varepsilon}\,|\,W_*<-b,Y_H>L\varepsilon).
    \end{multline}
    As previously, we have :
    \begin{multline*}
   \N_0(J_\varepsilon,\,Y_H>L\varepsilon\,|\,W_*<-b)=\E\bigg[\1_{\{Y_H>L\varepsilon\}}\E_{Y_H}\bigg[\exp\bigg(-4\int_{S_{L\varepsilon}}^{\tau'_{L\varepsilon}}\N_{D_t}(W_*<-1-\eta)dt\bigg)\\
   \times\bigg(1-\exp\bigg(-4\int_{S_{L\varepsilon}}^{\tau'_{L\varepsilon}}\N_{D_t}(-1-\eta\leq W_*\leq-1)dt\bigg)\bigg)\bigg]\bigg]
   \end{multline*}
   We will need the following lemma. 
    \begin{lemme}\label{limite}
        We have 
        \begin{equation}\label{gne}
            \lim_{\eta\rightarrow 0}\frac{C_\eta}{\eta}=\E\bigg[\E_{Y_H}\bigg[\exp\bigg(-4\int_0^{\tau'_0}\N_{D_t}(W_*<-1)dt\bigg)\bigg(12\int_0^{\tau'_0}\frac{1}{(1+D_t)^3}dt\bigg)\bigg]\bigg]
        \end{equation}
        and
         \begin{multline*}
    \lim_{\eta\rightarrow0}\frac{\N_0(J_\varepsilon,Y_H>L\varepsilon\,|\,W_*<-b)}{\eta}=\E\bigg[\1_{\{Y_H>L\varepsilon\}}\E_{Y_H}\bigg[\exp\bigg(-4\int_{S_{L\varepsilon}}^{\tau'_{L\varepsilon}}\N_{D_t}(W_*<-1)dt\bigg)\\
    \times\bigg(12\int_{S_{L\varepsilon}}^{\tau'_{L\varepsilon}}\frac{1}{(1+D_t)^3}dt\bigg)\bigg]\bigg].
        \end{multline*}
    \end{lemme}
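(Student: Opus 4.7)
The plan is to apply dominated convergence inside the double expectation in \eqref{POI} (and its analogue for the second statement), using explicit Taylor expansions based on \eqref{inf}. Writing out
$$\N_{D_t}(W_*<-1-\eta)=\frac{3}{2(D_t+1+\eta)^2},\qquad \N_{D_t}(-1-\eta\leq W_*\leq -1)=\frac{3\eta(2(D_t+1)+\eta)}{2(D_t+1)^2(D_t+1+\eta)^2},$$
and noting that by construction $D_t\geq -b$ so $D_t+1\geq 1-b>0$, all these expressions are smooth and uniformly bounded in $\eta\in[0,1]$.

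For the pointwise limit inside the double expectation, the first factor $\exp\bigl(-4\int_0^{\tau'_0}\N_{D_t}(W_*<-1-\eta)dt\bigr)$ converges by continuity to $\exp\bigl(-4\int_0^{\tau'_0}\N_{D_t}(W_*<-1)dt\bigr)$. For the second factor, the Taylor expansion $1-e^{-x}=x+O(x^2)$ applied to $x=4\int_0^{\tau'_0}\N_{D_t}(-1-\eta\leq W_*\leq -1)dt=O(\eta)$ reduces the question to the pointwise convergence of the integrand divided by $\eta$, which tends to $\frac{3}{(D_t+1)^3}$. This yields the limit $12\int_0^{\tau'_0}\frac{1}{(D_t+1)^3}dt$, matching the right-hand side of \eqref{gne}.

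For dominated convergence, the first factor is bounded by $1$, and the inequality $1-e^{-x}\leq x$ combined with the uniform bound
$$\frac{1}{\eta}\N_{D_t}(-1-\eta\leq W_*\leq -1)\leq\frac{3(2(D_t+1)+1)}{2(1-b)^4}$$
yields a dominating function of the form $C(b)\bigl(1+\sup_{[0,\tau'_0]}D_t\bigr)\tau'_0$. Under the outer law, $Y_H\in[0,b]$ is bounded (it comes from the BES$(-3)$ process $Y$ starting at $b$ and stopped at $0$), and conditionally on $Y_H$ the process $D$ consists of a BES$(7)$ piece stopped at its last hitting time of $Y_H+b$ (equivalent via Williams' time reversal to a BES$(-3)$ to~$0$) and a BES$(-3)$ piece from $Y_H+b$ to $0$, both of which have polynomial moment bounds in $Y_H+b\leq 2b$. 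Hence the dominating function is integrable, dominated convergence applies, and \eqref{gne} follows. The second statement follows by the identical argument: the indicator $\1_{\{Y_H>L\varepsilon\}}$ is bounded and the integration domain $[S_{L\varepsilon},\tau'_{L\varepsilon}]$ is contained in $[0,\tau'_0]$, so the same dominating function works. The main technical obstacle is this uniform domination, but it reduces to standard moment estimates for Bessel functionals, which are routine given that $Y_H$ is bounded under our conditioning.
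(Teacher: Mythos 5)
Your strategy is the one the paper uses: start from \eqref{POI}, apply \eqref{inf} to get the pointwise limits (continuity for the exponential factor, a first-order expansion giving $12\int_0^{\tau'_0}(1+D_t)^{-3}dt$ for the other factor), and conclude by dominated convergence; your explicit expression for $\N_{D_t}(-1-\eta\leq W_*\leq-1)$ and the lower bound $D_t+1\geq 1-b>0$ are correct, and the second limit is indeed the same computation restricted to $[S_{L\varepsilon},\tau'_{L\varepsilon}]$.

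The one genuine problem is in your justification of integrability of the dominating function. You assert that $Y_H\in[0,b]$ "because $Y$ is a BES$(-3)$ started at $b$ and stopped at $0$". This is false: a Bessel process of dimension $-3$ started at $b$ overshoots $b$ before absorption with positive probability, so $Y_H$ is a.s. finite but unbounded (also, the level at which the two arms of $D$ are pinned is $Y_H$, not $Y_H+b$, though that slip is harmless). Hence "everything is bounded by $2b$" is not a valid reason for $\E\big[\E_{Y_H}\big[(1+\sup_{[0,\tau'_0]}D_t)\,\tau'_0\big]\big]<\infty$. The step can be repaired, but it needs an actual estimate: for instance $\p(\sup_{[0,T_0]}Y\geq r)=(b/r)^5$ gives $\E[Y_H^{3}]<\infty$, and Bessel scaling plus Cauchy--Schwarz yields $\E_{Y_H}\big[(1+\sup D_t)\tau'_0\big]\leq C(1+Y_H^3)$; alternatively, the mean value inequality gives the deterministic bound $\eta^{-1}\N_{D_t}(-1-\eta\leq W_*\leq-1)\leq 3(1-b)^{-3}$, so the cruder dominating function $C(b)\tau'_0$ suffices and one only needs $\E[\tau'_0]<\infty$, again via moments of $Y_H$. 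Note that the paper sidesteps moment estimates altogether by keeping the exponential factor and bounding the whole integrand by a quantity of the form $C_2\tau'_0\exp(-C_1\tau'_0)\leq C$; your route of bounding the exponential by $1$ is fine, but then the moment control of $Y_H$ is exactly the point that must be argued, not dismissed as boundedness.
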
    
Before proving this lemma, let us show how to conclude the proof of \ref{conditionnement}. Using \eqref{coupure} and Lemma \ref{limite}, we obtain that 
        \begin{equation}\label{densite}             \N_0(F(V_\varepsilon)\1_{H_\varepsilon}\1_{J_\varepsilon}\,|W_*=-1)=\frac{\N_0(W_*<-b)}{\N_0(W_*<-b,Y_H>L\varepsilon)}\frac{f(L\varepsilon)}{f(0)}\N_0(F(V_\varepsilon)\1_{H_\varepsilon}\1_{\{Y_H>L\varepsilon\}}\,|\,W_*<-b)
        \end{equation}
        where 
        \begin{equation*}
            f(s)=\E\bigg[\1_{\{Y_H>s\}}\E_{Y_H}\bigg[\exp\bigg(-4\int_{S_s}^{\tau'_s}\N_{D_t}(W_*<-1+b)dt\bigg)\bigg(12\int_{S_s}^{T'_s}\frac{1}{(1+D_t)^3}dt\bigg)\bigg]\bigg].
        \end{equation*}
        The result follows from the fact that $f$ is continuous, which is a consequence of the monotone convergence theorem.
    \begin{proof}[Proof of Lemma \ref{limite}]
       We just give the proof of \eqref{gne}. We will use \eqref{POI}; note that, by continuity and monotone convergence, 
       \[\lim_{\eta\rightarrow0}\exp\bigg(-4\int_0^{\tau'_0}\N_{D_t}(W_*<-1-\eta+b)dt\bigg)=\exp\bigg(-4\int_0^{\tau'_0}\N_{D_t}(W_*<-1)dt\bigg)\quad \text{a.s.}\]
       We also have, using \eqref{inf} :
       \begin{align*}
           &\lim_{\eta\rightarrow0}\eta^{-1}\bigg(1-\exp\bigg(-4\int_0^{\tau'_0}\N_{D_t}(-1-\eta\leq W_*\leq-1)dt\bigg)\bigg)\\
           &=\lim_{\eta\rightarrow 0} 4\eta^{-1}\int_0^{\tau'_0}\N_{D_t}(-1-\eta\leq W_*\leq-1)dt\\
           &=\lim_{\eta\rightarrow0}6\eta^{-1}\int_0^{\tau'_0}\bigg(\frac{1}{(1+D_t)^2}-\frac{1}{(1+\eta+D_t)^2}\bigg)dt\\
           &= 12\int_0^{\tau'_0}\frac{1}{(1+D_t)^3}dt\quad\text{a.s.}
       \end{align*}
       The result will follow if we can use the dominated convergence theorem. To justify this, note that there exists some constant $C,C_1,C_2>0$ such that :
       \begin{align*}
        \exp\bigg(-4\int_0^{\tau'_0}\N_{D_t}(W_*<-1-\eta)dt\bigg)
        \leq\exp(-C_1T'_0)  
       \end{align*}       
       and 
       \begin{align*}
           &\eta^{-1}\bigg(1-\exp\bigg(-4\int_0^{\tau'_0}\N_{D_t}(-1-\eta\leq W_*\leq-1)dt\bigg)\bigg)\\
           &\leq 4\eta^{-1} \int_0^{\tau'_0}\N_{D_t}(-1-\eta\leq W_*\leq-1)dt\leq C_2\tau'_0
       \end{align*}
       
       (we used \eqref{inf} and the mean value inequality). Hence, 
       \begin{align*}
          &\exp\bigg(-4\int_0^{\tau'_0}\N_{D_t}(W_*<-1-\eta)dt\bigg)\bigg(1-\exp\bigg(-4\int_0^{\tau'_0}\N_{D_t}(-1-\eta\leq W_*\leq-1)dt\bigg)\bigg)\\
          &\leq C_2 \tau'_0\exp(-C_1 \tau' _0)\leq C 
       \end{align*}
       for some constant $C,C_1,C_2>0$. Consequently, we can apply the dominated convergence theorem, which concludes the proof. 
    \end{proof}
    \begin{remark}
        A closer look at \eqref{densite} shows that the convergence of Theorem \ref{conditionnement} is uniform in $b\in[1/4,3/4]$, a fact that will be used later.
    \end{remark}
    We can now use the local couplings of the tree to deduce a convergence result for the spaces.
    \begin{theorem}\label{convergence conditionnée}
        For every $\delta>0$ and $0<b<c$, we can find $\varepsilon>0$ such that we can couple $\s$ under $\N_0(\cdot\,|\,W_*=-c)$ and $\overline{\mathcal{BP}}$ so that $B_\varepsilon(\s,\Gamma_b)$ and $B_\varepsilon(\overline{\mathcal{BP}})$ are isometric with probability at least $1-\delta$. Moreover, we have the convergence 
    \begin{equation}   (\s,\varepsilon^{-1}D,\Gamma_b)\xrightarrow[\varepsilon\rightarrow 0]{(d)}(\overline{\mathcal{BP}},\overline{D}_\infty,\overline{\rho}_\infty)
    \end{equation}
    in distribution for the local Gromov-Hausdorff topology. 
    \end{theorem}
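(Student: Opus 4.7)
The strategy is to reduce Theorem \ref{convergence conditionnée} to Proposition \ref{coupling} by transferring the coupling via the total variation convergence of Theorem \ref{conditionnement}.

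By the scaling property \eqref{scaling}, applying $\Theta_{1/c^2}$ sends $\N_0(\cdot\,|\,W_*=-c)$ to $\N_0(\cdot\,|\,W_*=-1)$, multiplies distances by $c^{-1}$, and turns $\Gamma_b$ into $\Gamma_{b/c}$ with $b/c\in(0,1)$. Thus it suffices to treat the case $c=1$ and $b\in(0,1)$.

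The key observation is that the isometric coupling built in the proof of Proposition \ref{coupling} is purely local with respect to the data $V_\varepsilon$, its doubled analogue $V'_\varepsilon:=A_{2L\varepsilon}\cup\widehat{A}_{2L\varepsilon}$, and the corresponding sets $A_L^\infty\cup\widehat{A}_L^\infty$ and $A_{2L}^\infty\cup\widehat{A}_{2L}^\infty$ of the limit space. Indeed, the bijection $\Phi$ of Proposition \ref{f} is constructed directly from the Bessel spines and Poisson point measures defining these sets; the distance formulas in Propositions \ref{d}, \ref{e} and \ref{g} express $D$ and $\overline{D}_\infty$ on the relevant balls in terms of the doubled sets alone; and the event $\mathcal{F}_\varepsilon$ holds on the event $\mathcal{H}_\varepsilon$ appearing at the end of the proof of Proposition \ref{localisation}, which a careful reading shows to be a measurable function of $V'_\varepsilon$ (it involves only the truncated Bessel spines, the subtrees branching off them before the times $S_{2L\varepsilon}$ and $\widehat{S}_{2L\varepsilon}$, and the intrinsic slice distances $\Tilde{D}_1,\Tilde{D}_2$).

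Consequently, Theorem \ref{conditionnement} (whose proof goes through unchanged if $V_\varepsilon$ is replaced by $V'_\varepsilon$) yields that for any $\delta>0$ and $\varepsilon$ small enough, $V'_\varepsilon$ under $\N_0(\cdot\,|\,W_*=-1)$ can be coupled with $V'_\varepsilon$ under $\N_0(\cdot\,|\,W_*<-b)$ so that they coincide with probability at least $1-\delta/3$. On this coupling event, together with $\mathcal{H}_\varepsilon$ (which has probability at least $1-\delta/3$ by Proposition \ref{localisation} and which, being a function of $V'_\varepsilon$, transfers to both conditionings) and with $\mathcal{F}_\varepsilon^\infty$ (which has probability at least $1-\delta/3$ and depends only on the limit space), the isometry $\mathcal{I}$ constructed in the proof of Proposition \ref{coupling} is well-defined and maps $B_\varepsilon(\s,\Gamma_b)$ onto $B_\varepsilon(\BP)$. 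This yields the desired coupling with probability at least $1-\delta$, and the convergence in distribution for the local Gromov-Hausdorff topology then follows as in the proof of Theorem \ref{convergence}, using also the scale invariance of $\BP$.

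The main obstacle I foresee is the locality verification: confirming carefully that $\mathcal{H}_\varepsilon$, the slice distances and the bijection $\Phi$ depend only on $V'_\varepsilon$. Although this is implicit in Section \ref{Section4}, it requires revisiting the proof of Proposition \ref{localisation} to check that no non-local aspect of the snake trajectory is actually used. Once this is established, the remainder of the argument is a routine combination of Theorem \ref{conditionnement} with Proposition \ref{coupling}.
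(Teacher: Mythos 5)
Your proposal is correct and follows essentially the same route as the paper: after the scaling reduction to $c=1$, the paper also uses Theorem \ref{conditionnement} to couple the local sets $V_{2\varepsilon}$ under $\N_0(\cdot\,|\,W_*<-b)$, under $\N_0(\cdot\,|\,W_*=-1)$, and $V_{2\varepsilon}^\infty$, and then invokes Proposition \ref{isométrie} (the isometry underlying Proposition \ref{coupling}) to transfer the ball isometry. The locality of the isometry and of the relevant events in the data $A_{2L\varepsilon}\cup\widehat{A}_{2L\varepsilon}$, which you flag as the main point to verify, is exactly what the paper's (terser) proof implicitly relies on.
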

    \begin{proof}
        First, by scaling properties of the Brownian snake, it is enough to prove the result for $c=1$ and $0<b<1$. \\
        Set $V_{\varepsilon}^\infty=A_{L\varepsilon}^\infty\cup\widehat{A}_{L\varepsilon}^\infty\subset\T_\infty\cup\widehat{\T}_\infty$. Then, using Theorem \ref{conditionnement}, for every $\delta>0$, we can choose $\varepsilon_0>0$ small enough so that for every $0<\varepsilon<\varepsilon_0$, we can couple the sets $V_{2\varepsilon}$ of $\T_b$, $V_{2\varepsilon}$ of $\T$ under $\N_0(\cdot\,|\,W_*=-1)$ and $V_{2\varepsilon}^\infty$ in a way that these sets are equal with probability at least $1-\delta$. On this event, using Lemma \ref{isométrie}, we obtain (with probability at least $1-\delta$) an isometry between $B_\varepsilon(\s,\Gamma_b)$ under $\N_0(\cdot\,|\,W_*=-1)$ and $B_\varepsilon(\overline{\mathcal{BP}},{\overline{\rho}_\infty})$, which concludes the proof.
    \end{proof}
    \begin{remark}\label{uniform2}
      By adapting the proofs and using the same ideas, one can show that the results of Theorem \ref{conditionnement} and \ref{convergence conditionnée} hold if we replace $\N_0(\cdot\,|\,W_*=-c)$ by $\N_0(\cdot\,|\,W_*\leq-c)$. 
    \end{remark}
    \subsection{Conditioning on the volume}
Our next goal is to show that the convergence result still holds under the measure $\N_0^{(1)}(\cdot\,|\,W_*<-b)$. Even though the ideas are similar to the one used in Proposition \ref{conditionnement}, the proof is a little bit more technical. Indeed, we don't have explicit formulas for the law of the volume (analog to Lemma \ref{limite}). 
   \begin{proposition}\label{conditionnement volume}
    The total variation distance between the law of $V_\varepsilon$ under $\N_0(\cdot\,|\,W_*<-b)$ and $V_\varepsilon$ under $\N_0^{(1)}(\cdot\,|\,W_*<-b)$ goes to $0$ as $\varepsilon\rightarrow0$. 
\end{proposition}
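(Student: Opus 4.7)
The plan is to adapt the strategy of Theorem~\ref{conditionnement}, conditioning on $\{\sigma=1\}$ instead of on $\{W_*=-1\}$. Fix $\delta>0$. By Proposition~\ref{Encadrement}, choose $L>0$ so that $\N_0(Y_H\leq L\varepsilon\,|\,W_*<-b)<\delta/4$ for all small $\varepsilon$. Since $Y_H>0$ almost surely under $\N_0^{(1)}(\cdot\,|\,W_*<-b)$, continuity of probability also gives $\N_0^{(1)}(Y_H\leq L\varepsilon\,|\,W_*<-b)<\delta/4$ for $\varepsilon$ small enough. The event $\mathcal{G}_\varepsilon:=\{Y_H>L\varepsilon\}$ is $V_\varepsilon$-measurable, so the total variation distance between the two laws of $V_\varepsilon$ is bounded by these two probabilities plus the distance between the laws restricted to $\mathcal{G}_\varepsilon$. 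It thus suffices to work under $\widetilde{\N}:=\N_0(\cdot\,|\,W_*<-b,\mathcal{G}_\varepsilon)$ and to show the restricted laws are close.

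Set $\sigma_\varepsilon:=\sigma(V_\varepsilon)$ and $\sigma'_\varepsilon:=\sigma-\sigma_\varepsilon$. By Proposition~\ref{Independence}, $V_\varepsilon$ and $\sigma'_\varepsilon$ are independent under $\widetilde{\N}$. Denote by $g_\varepsilon$, $f_\varepsilon$ and $\widetilde{\psi}_b$ the densities of $\sigma'_\varepsilon$, $\sigma_\varepsilon$ and $\sigma$ under $\widetilde{\N}$; independence yields the convolution identity $\widetilde{\psi}_b = g_\varepsilon * f_\varepsilon$. Disintegration on $\{\sigma=1\}$ then gives
\begin{equation*}
\N_0^{(1)}(V_\varepsilon \in dv \,|\, W_*<-b,\mathcal{G}_\varepsilon) \;=\; \frac{g_\varepsilon(1-\sigma_\varepsilon(v))}{\widetilde{\psi}_b(1)}\,\widetilde{\N}(V_\varepsilon \in dv),
\end{equation*}
so the restricted total variation distance is at most $\tfrac{1}{2}\widetilde{\E}\bigl|g_\varepsilon(1-\sigma_\varepsilon)/\widetilde{\psi}_b(1) - 1\bigr|$. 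It therefore suffices to show that $g_\varepsilon(1-\sigma_\varepsilon)\to \widetilde{\psi}_b(1)$ in $L^1(\widetilde{\N})$.

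By the scaling property of the Brownian snake, $\sigma_\varepsilon$ is of order $\varepsilon^4$ and tends to $0$ in probability, so $f_\varepsilon$ concentrates at $0$. The density $\widetilde{\psi}_b$ converges pointwise to the density $\psi_b(t)=(2\sqrt{2\pi t^3})^{-1}\,\N_0^{(t)}(W_*<-b)/\N_0(W_*<-b)$ of $\sigma$ under $\N_0(\cdot\,|\,W_*<-b)$, which is smooth and strictly positive at $t=1$. To transfer smoothness to $g_\varepsilon$, I would pass to Laplace transforms, writing $\widetilde{\E}[e^{-\lambda\sigma'_\varepsilon}]=\widetilde{\E}[e^{-\lambda\sigma}]/\widetilde{\E}[e^{-\lambda\sigma_\varepsilon}]$ with $\widetilde{\E}[e^{-\lambda\sigma_\varepsilon}]\to 1$ uniformly on compacts (a consequence of $\sigma_\varepsilon \to 0$ in probability together with uniform integrability estimates on $\sigma_\varepsilon$ obtained from the Poisson point measure representation of Section~\ref{three arms}). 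Laplace inversion then yields $g_\varepsilon\to \psi_b$ locally uniformly near $1$. Combined with uniform integrability of $g_\varepsilon(1-\sigma_\varepsilon)$, obtained by comparing the tails of $g_\varepsilon$ to those of the explicit $\psi_b$ via the convolution identity, this gives the desired $L^1$ convergence.

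The main obstacle is the uniform regularity and boundedness of $g_\varepsilon$ on a neighborhood of $1$: no explicit formula is available for $g_\varepsilon$, and the argument must route through the convolution identity to transfer smoothness from the explicit $\psi_b$. This is strictly more delicate than the corresponding step in the proof of Theorem~\ref{conditionnement}, where the joint distribution of $(W_*,\sigma)$ was directly accessible through the Brownian snake Laplace functional, allowing for the explicit asymptotic computations of Lemma~\ref{limite}.
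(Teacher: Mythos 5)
Your overall skeleton matches the paper's: restrict to the high-probability event $\{Y_H>L\varepsilon\}$, use the independence of $V_\varepsilon$ and its complement (Proposition \ref{Independence}), express the conditioning on $\{\sigma=1\}$ through densities, and conclude because the ratio of densities evaluated at $1-\sigma_\varepsilon$ and at $1$ tends to $1$ as $\sigma_\varepsilon\rightarrow0$. The gap is in the one step you yourself flag as the main obstacle, and it is not a technicality: your proposed route for obtaining regularity and locally uniform convergence of the density $g_\varepsilon$ of $\overline{\sigma}_\varepsilon$ goes in the wrong direction. Since $\phi_{\sigma}=\phi_{\sigma_\varepsilon}\phi_{\overline{\sigma}_\varepsilon}$ with $|\phi_{\sigma_\varepsilon}|\leq1$, dividing gives $|\phi_{\overline{\sigma}_\varepsilon}|\geq|\phi_{\sigma}|$, so decay or integrability of the transform of $\sigma$ yields no control whatsoever on the transform of $\overline{\sigma}_\varepsilon$; smoothness of the convolution $\widetilde{\psi}_b=g_\varepsilon*f_\varepsilon$ does not transfer to the factor $g_\varepsilon$ (deconvolution is ill-posed), and convergence of Laplace transforms on compacts of real $\lambda$ only gives convergence in distribution, not existence, continuity, or locally uniform convergence of densities. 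So the claims ``Laplace inversion then yields $g_\varepsilon\to\psi_b$ locally uniformly near $1$'' and the uniform integrability of $g_\varepsilon(1-\sigma_\varepsilon)$ ``via the convolution identity'' are unsubstantiated, and the disintegration formula you write already presupposes the continuity you have not established.

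The paper resolves exactly this point by going the other way: it exhibits \emph{inside} $\overline{\sigma}_\varepsilon$ an explicit independent summand, namely the total volume $M_{\delta'}$ of the subtrees grafted on a spine interval of length $\delta'$ just beyond $S_{L\varepsilon}$, whose Laplace transform is $\exp(-2\delta'\sqrt{s/2})$ (the stable-$1/2$ hitting-time law), so that $|\phi_{\overline{\sigma}_\varepsilon}|\leq|\phi_{M_{\delta'}}|$ with $\int|t\,\phi_{M_{\delta'}}(t)|\,dt<\infty$; this gives continuous densities (Lemma \ref{densité}) and a uniform Lipschitz bound yielding locally uniform convergence of the densities (Lemma \ref{uniform}). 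To guarantee that a spine segment of length $\delta'$ is actually available between $S_{L\varepsilon}$ and the branch point, the paper must first introduce and condition on the auxiliary high-probability event $J_\varepsilon^\delta$, and then carry out the $\eta\rightarrow0$ limit of conditioning on $\{1<\sigma<1+\eta\}$ with these continuous densities. This event and the explicit smooth component are entirely absent from your argument, so as written the proof does not go through; to repair it you would need to import precisely this mechanism (or some other a priori bound on the characteristic function of $\overline{\sigma}_\varepsilon$ uniform in $\varepsilon$).
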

\begin{proof}
    The first part of the proof is similar to the one of Theorem \ref{conditionnement}. Consider $F$ a measurable function bounded by $1$. Note that the volume $\sigma$ of the tree $\T$ can be decomposed into 2 parts $\sigma_\varepsilon+\overline{\sigma}_\varepsilon$ where 
    \begin{itemize}
        \item $\sigma_\varepsilon$ is the volume of $V_\varepsilon$
        \item $\overline{\sigma}_\varepsilon$ is the volume of $\T\backslash V_\varepsilon$.
    \end{itemize} 
   
    Fix $\delta>0$. We also define the following events : 
    \begin{itemize}
        \item $G_\varepsilon:=\{Y_H>L\varepsilon\}$
        \item $H_\varepsilon:=\{\sigma_\varepsilon<1\}$
        \item $J_\varepsilon^{\delta}:=\left\{\min{\left(d_\T(g(S_{L\varepsilon}),p_\T(T_b)\wedge p_\T(\widehat{T}_b)),d_\T(\widehat{g}(\widehat{S}_{L\varepsilon}),p_\T(T_b)\wedge p_\T(\widehat{T}_b))\right)}>\delta'\right\}$.
    \end{itemize}
    where $\delta'$ is chosen small enough such that $J_\varepsilon^{\delta}$ has probability at least $1-\delta$ (these events are also defined for $\varepsilon=0$), and $g(S_{L\varepsilon})$ (respectively $\widehat{g}(\widehat{S}_{L\varepsilon})$) is the last point on the geodesic between $p_\T(T_b)$ (respectively $p_\T(\widehat{T}_b)$) with label $-b+L\varepsilon$.
    Note that the probability of $G_\varepsilon$ and $H_\varepsilon$ go to $1$ as $\varepsilon\rightarrow0$. Therefore, 
    \begin{equation}\label{GG}
    \limsup_{\varepsilon\rightarrow0}\left|\,\N_0^{(1)}(F(V_\varepsilon)\,|\,W_*<-b)-\N_0^{(1)}(F(V_\varepsilon)\1_{H_\varepsilon}\,|\,W_*<-b,\,G_\varepsilon,\,J_\varepsilon^\delta)\,\right|\leq2\delta
    \end{equation}
    uniformly in $F$. Then, we have 
    \begin{align}\label{JJ}
        \N_0^{(1)}(F(V_\varepsilon)\1_{H_\varepsilon}\,|\,W_*<-b,\,G_\varepsilon,\,J_\varepsilon^\delta)&=\lim_{\eta\rightarrow0}\N_0(F(V_\varepsilon)\1_{H_\varepsilon}\,|\,W_*<-b,\,Y_H>L\varepsilon,\,J_\varepsilon^\delta,\,1<\sigma<1+\eta)\nonumber \\        &=\lim_{\eta\rightarrow0}C^\delta_\eta\,\N_0(F(V_\varepsilon)\1_{H_\varepsilon}\1_{\{1<\sigma<1+\eta\}}\,|\,W_*<-b,\,Y_H>L\varepsilon,\,J_\varepsilon^\delta)
    \end{align}
    where 
    \[C^\delta_\eta=\frac{\N_0(W_*<-b,\,Y_H>L\varepsilon,\,J_\varepsilon^\delta)}{\N_0(W_*<-b,\,Y_H>L\varepsilon,\,J_\varepsilon^\delta,\,1<\sigma<1+\eta)}=\N_0(1<\sigma<1+\eta\,|\,W_*<-b,\,Y_H>L\varepsilon,\,J_\varepsilon^\delta)^{-1}\]
\end{proof}
By Proposition \ref{Independence} and conditioning on the value of $\sigma_\varepsilon$, we have
\begin{align}\label{Markov slice}   
&\N_0(F(V_\varepsilon)\1_{\{\sigma_\varepsilon<1\}}\1_{\{1<\sigma<1+\eta\}}|W_*<-b,\,Y_H>L\varepsilon,\,J_\varepsilon^{\delta})\\
&=\N_0\left(F(V_\varepsilon)\1_{\{\sigma_\varepsilon<1\}}\N_0(1-\sigma_\varepsilon<\overline{\sigma}'_\varepsilon<1+\eta-\sigma_\varepsilon\,|\,W_*<-b,\,Y_H>L\varepsilon,\,J_\varepsilon^{\delta})\,|\,W_*<-b,\,Y_H>L\varepsilon\right)\nonumber
\end{align} where $\overline{\sigma}'_\varepsilon$ is independent of $\sigma_\varepsilon$. We will need the following lemma. 
\begin{lemme}\label{densité}
    Under the probability measure $\N_0(\cdot\,|\,W_*<-b,\,Y_H>L\varepsilon,\,J_\varepsilon^{\delta})$, the random variable $\overline{\sigma}_\varepsilon$ has a continuous density with respect to the Lebesgue measure. 
\end{lemme}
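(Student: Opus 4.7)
The plan is to exploit the three–arms decomposition (Propositions \ref{decomposition} and \ref{construction 3 arms}) together with the classical fact that the total volume of a Poisson point measure of Brownian snake excursions along a spine is a stable$(1/2)$ subordinator evaluated at the length of the spine. The key observation is that the conditioning events are essentially spine–measurable, so the Poisson structure is preserved, and one of the Poisson components will produce, conditionally on the spines, an explicit, bounded, continuous density.

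First, note that under $\N_0(\cdot\,|\,W_*<-b)$ the event $\{Y_H>L\varepsilon\}$ is measurable with respect to $Y$, while $J_\varepsilon^\delta$ is measurable with respect to $(X,\widehat X)$ alone: indeed, the tree distance $d_\T(g(S_{L\varepsilon}),p_\T(T_b)\wedge p_\T(\widehat T_b))$ is exactly the arc length $\zeta(X)-S_{L\varepsilon}$ along the spine $X$ (and similarly for $\widehat X$). Consequently, under $\N_0(\cdot\,|\,W_*<-b,Y_H>L\varepsilon,J_\varepsilon^\delta)$ and conditionally on the spines $(Y,X,\widehat X)$, the six Poisson point measures of Proposition \ref{decomposition} remain independent with their original intensities.

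Now decompose $\overline{\sigma}_\varepsilon=\sigma_*+\sigma_r$, where
\[\sigma_*:=\sum_{i\in I_2,\,t_i\geq S_{L\varepsilon}}\sigma(\omega_i)\]
is the total volume of the unconditioned subtrees from $\mathcal{N}_2$ attached to $X$ after time $S_{L\varepsilon}$, and $\sigma_r$ is the (conditionally independent) sum of all remaining volume contributions from the other five Poisson point measures. Using the elementary identity
\[\N_x(1-e^{-\lambda\sigma})=\int_0^\infty(1-e^{-\lambda t})\frac{dt}{2\sqrt{2\pi t^3}}=\sqrt{\lambda/2},\]
valid for every $x\in\R$, the Laplace functional formula for Poisson point measures yields
\[\E\!\left[e^{-\lambda\sigma_*}\,\middle|\,(Y,X,\widehat X)\right]=\exp\!\left(-2\sqrt{\lambda/2}\,(\zeta(X)-S_{L\varepsilon})\right)=\exp\!\left(-\ell\sqrt{2\lambda}\right),\]
with $\ell:=\zeta(X)-S_{L\varepsilon}$. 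This is the Laplace transform of a one-sided stable$(1/2)$ (Lévy) law, whose density is explicitly
\[p_\ell(s)=\frac{\ell}{\sqrt{2\pi s^3}}\,e^{-\ell^2/(2s)},\qquad s>0.\]
On $J_\varepsilon^\delta$ one has $\ell\geq\delta'$, so $p_\ell$ is continuous on $(0,\infty)$, jointly continuous in $(\ell,s)$, and uniformly bounded by a constant $C(\delta')$ (its supremum is attained at $s=\ell^2/3$ and is of order $\ell^{-2}\leq(\delta')^{-2}$).

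Since $\sigma_*$ and $\sigma_r$ are conditionally independent given the spines, the conditional law of $\overline{\sigma}_\varepsilon$ admits the density
\[s\longmapsto \int p_\ell(s-u)\,\p(\sigma_r\in du\,|\,(Y,X,\widehat X)),\]
which is continuous in $s$ and bounded by $C(\delta')$ uniformly in the conditioning. Integrating over the law of the spines under $\N_0(\cdot\,|\,W_*<-b,Y_H>L\varepsilon,J_\varepsilon^\delta)$ and applying dominated convergence via the uniform bound $C(\delta')$, one concludes that $\overline{\sigma}_\varepsilon$ has a bounded, continuous density with respect to Lebesgue measure, as required. The only nontrivial point is to check that $J_\varepsilon^\delta$ is spine–measurable so that Proposition \ref{decomposition} applies verbatim under the further conditioning; once this is granted, the rest of the argument reduces to the explicit stable$(1/2)$ computation and a convolution/dominated convergence step.
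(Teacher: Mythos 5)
Your proof is correct and follows essentially the same route as the paper: both use the spine decomposition so that the total volume of the unconditioned subtrees grafted on $X$ beyond $S_{L\varepsilon}$ is, given the spines, a stable$(1/2)$-type variable with explicit density, independent of the remaining volume, with the event $J_\varepsilon^{\delta}$ serving precisely to guarantee at least $\delta'$ of spine beyond $S_{L\varepsilon}$. The only difference is technical: the paper restricts to the deterministic window $(S_{L\varepsilon},S_{L\varepsilon}+\delta')$ so that the extracted summand $M_{\delta'}$ has a fixed law independent of everything and the conclusion is an immediate convolution, whereas you keep the whole remaining window of random length $\ell$ and therefore need the conditional-density formulation together with the uniform bound $\sup p_\ell\leq C(\delta')$ and dominated convergence, which you supply correctly.
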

\begin{proof}
   For $\varepsilon>0$, we define 
   \[M_{\delta'}=\sum_{(t,\omega)\in I_2,S_{L\varepsilon}<t<S_{L\varepsilon}+\delta'}\sigma(\omega_i),\quad \widehat{M}_{\delta'}=\sum_{(t,\omega)\in J_2,\widehat{S}_{L\varepsilon}<t<\widehat{S}_{L\varepsilon}+\delta'}\sigma(\omega_i),\quad\]
   Under $\N_0(\cdot\,|\,W_*<-b,\,Y_H>L\varepsilon,\,J_\varepsilon^{\delta})$, the random variables $\overline{\sigma}_\varepsilon$ can be decomposed as $\overline{\sigma}_\varepsilon=M_{\delta'}+\widehat{M}_{\delta'}+\overline{M}_\varepsilon$, where these three random variables are independent. Indeed, the Poisson point measures $I_2$ and $J_2$ only depend on the processes $(X,\widehat{X})$ through the labels of their root, which is independent of the volume of the trees.  Moreover, the Laplace transform of $M_{\delta'}$ can be computed explicitly :
   \begin{align*}
   \N_0(\exp(-sM_{\delta'})\,|\,W_*<-b,\,Y_H>L\varepsilon,\,J_\varepsilon^{\delta})&=\exp\left(-2\int_{S_\varepsilon}^{S_\varepsilon+\delta'}\N_0(1-\exp(-s\sigma))dt\right)\\
   &=\exp\left(-2\delta'\sqrt{\frac{s}{2}}\right).
   \end{align*}
   This is also the Laplace transform of a random variable with density \[q_{\delta'}(t)=\frac{\delta'\exp(-\delta'^2/(2t))}{\sqrt{2\pi}t^{3/2}},\] which is the density of the first hitting time of $\delta'$ by a Brownian motion. Since $M_{\delta'}$ has a continuous density, so does $\overline{\sigma}_\varepsilon$, which concludes the proof.
\end{proof} 
In what follows, we write :
\begin{itemize}[label=\textbullet]
    \item $p_\varepsilon^\delta$ for the density of $\sigma$ under $\N_0(\cdot\,|\,W_*<-b,\,Y_H>L\varepsilon,\,J_\varepsilon^{\delta})$
    \item $\overline{p}_\varepsilon^\delta$ for the density of $\overline{\sigma}_\varepsilon$ under $\N_0(\cdot\,|\,W_*<-b,\,Y_H>L\varepsilon,\,J_\varepsilon^{\delta})$
    \item $p_0^\delta$ for the density of $\sigma$ under $\N_0(\cdot\,|\,W_*<-b,\,J_0^\delta)$
\end{itemize} (using the same ideas as in Lemma \ref{densité}, we can prove that these quantities are all well-defined and continuous). 
\begin{lemme}\label{uniform}
    We have
    \[\lim_{\varepsilon\rightarrow0}p_\varepsilon^\delta=\lim_{\varepsilon\rightarrow0}\overline{p}_\varepsilon^\delta=p_0^\delta\] 
    where the convergence holds uniformly on compacts. 
\end{lemme}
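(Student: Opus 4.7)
The plan is to combine a uniform Lipschitz bound on the densities, obtained from the convolution structure of Lemma \ref{densité}, with pointwise convergence derived via Laplace transforms; equi-Lipschitz plus pointwise convergence automatically upgrades to uniform convergence on compacts by a standard $\eta/3$ covering argument. The equality $\lim p_\varepsilon^\delta = \lim \overline{p}_\varepsilon^\delta$ is then reduced to the observation that $\sigma_\varepsilon \to 0$ in probability as $\varepsilon \to 0$.

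The first step is to extend the decomposition of Lemma \ref{densité} to every $\varepsilon \geq 0$, writing under the conditional measure $\overline{\sigma}_\varepsilon = M_{\delta'} + \widehat{M}_{\delta'} + \overline{M}_\varepsilon$ with the three summands independent, and $M_{\delta'}, \widehat{M}_{\delta'}$ sharing the explicit density $q_{\delta'}$. A direct computation gives $q_{\delta'}'(t) = q_{\delta'}(t)\bigl(\frac{\delta'^2}{2t^2} - \frac{3}{2t}\bigr)$, and the super-polynomial decay of $q_{\delta'}$ at $0$ together with its polynomial decay at infinity show that both $q_{\delta'}$ and $q_{\delta'}'$ are bounded on $\R_+$. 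Hence $q_{\delta'}$ is Lipschitz, so $\overline{p}_\varepsilon^\delta = q_{\delta'} \ast q_{\delta'} \ast f_{\overline{M}_\varepsilon}$ is Lipschitz with a constant $K$ independent of $\varepsilon$ and uniformly bounded in $L^\infty$ by $\|q_{\delta'}\|_\infty$. By Proposition \ref{Independence}, $\sigma_\varepsilon$ is independent of $\overline{\sigma}_\varepsilon$, so $p_\varepsilon^\delta = f_{\sigma_\varepsilon} \ast \overline{p}_\varepsilon^\delta$ inherits the same Lipschitz and $L^\infty$ bounds.

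Next, since $V_\varepsilon$ shrinks to the singleton $\{p_\T(T_b) \wedge p_\T(\widehat{T}_b)\}$ as $\varepsilon \to 0$, we have $\sigma_\varepsilon \to 0$ in probability; a truncation argument combining the equi-Lipschitz and uniform $L^\infty$ bounds on $\overline{p}_\varepsilon^\delta$ then yields $\sup_t |p_\varepsilon^\delta(t) - \overline{p}_\varepsilon^\delta(t)| \to 0$. It remains to establish pointwise convergence $\overline{p}_\varepsilon^\delta(t) \to p_0^\delta(t)$ for every $t > 0$. Using the Palm-formula manipulations already employed in the proof of Theorem \ref{conditionnement}, I would express the Laplace transform of $\overline{\sigma}_\varepsilon$ under the conditional measure as an expectation involving the Bessel concatenation process $D$ and the functionals $\N_{\cdot}(1 - e^{-s\sigma})$ along the spine; dominated convergence as $\varepsilon \to 0$ yields convergence of this Laplace transform to that of $\sigma$ under $\N_0(\cdot \,|\, W_* < -b,\, J_0^\delta)$. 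Lévy's continuity theorem, combined with the smooth convolution structure of the densities, then promotes this weak convergence to pointwise convergence.

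The main obstacle is this last pointwise step: the conditioning event $\{Y_H > L\varepsilon,\, J_\varepsilon^\delta\}$ depends on $\varepsilon$ both through the spine process (stopped at $S_{L\varepsilon}$) and through the compatibility condition on the branching subtrees, and one must justify the interchange of the limit $\varepsilon \to 0$ with the integrals along the spine. The asymptotic $\p(Y_H > L\varepsilon) \to 1$ coming from \eqref{temps branchement}, together with uniform integrability estimates on the relevant exponential functionals of the Bessel process analogous to those carried out in the proof of Lemma \ref{limite}, should provide the required control.
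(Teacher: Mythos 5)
Your proposal is correct and follows essentially the same route as the paper: both derive equicontinuity of the densities, uniformly in $\varepsilon$, from the independence decomposition of Lemma \ref{densité}, and combine it with the convergence in distribution of $\overline{\sigma}_\varepsilon$ towards $\sigma$ (plus the fact that $\sigma_\varepsilon\rightarrow0$) to identify and upgrade the limit. The only difference is the technical device for the equi-Lipschitz bound: you differentiate the explicit density $q_{\delta'}$ and use convolution with a probability law, whereas the paper bounds the modulus of continuity via Fourier inversion and the domination $|\phi_{\overline{\sigma}_\varepsilon}|\leq|\phi_{M_{\delta'}}|$; both devices work.
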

\begin{proof}
    We just give the proof for the sequence $(\overline{p}_\varepsilon^\delta)_{\varepsilon\geq0}$. First, note that $\overline{\sigma}_\varepsilon\xrightarrow[\varepsilon\rightarrow0]{(d)}\sigma$ (under $\N_0(\cdot\,|\,W_*<-b,\,J_0^\delta)$). To obtain the convergence of the densities, one needs to check the sequence $(\overline{p}_\varepsilon^\delta)_{\varepsilon\geq0}$ is uniformly continuous. For every $x,y\geq0$, we have 
    \[|\overline{p}_\varepsilon^\delta(x)-\overline{p}_\varepsilon^\delta(y)|\leq\int_\R|(\exp(ixt)-\exp(iyt))\phi_{\overline{\sigma}_\varepsilon}(t)|dt\leq|x-y|\int_\R |t\phi_{\overline{\sigma}_\varepsilon}(t)|dt\]
  where $\phi_\xi$ stands for the characteristic function of $\xi$. Therefore, we have to show that
    \[\sup_{\varepsilon>0}\int_\R |t\phi_{\overline{\sigma}_\varepsilon}(t)|dt<\infty.\]
    The decomposition given in the proof of Lemma \ref{densité} shows that, for every $\varepsilon>0$, we have $|\phi_{\overline{\sigma}_\varepsilon}|\leq|\phi_{M_{\delta'}}|$. Moreover, we have 
    \[\phi_{M_{\delta'}}(t)=\begin{cases}
    \delta'\exp((\delta')^{-1}(-1+i)\sqrt{t})\quad\text{ if }t\geq0\\
    \delta'\exp((\delta')^{-1}(-1-i)\sqrt{-t})\quad\text{ if }t\leq0
\end{cases}\]
In particular, one has 
\[\int_\R|t\phi_{M_{\delta'}}(t)|dt<\infty\] which concludes the proof.
\end{proof}
\begin{proof}[End of the proof of Proposition \ref{conditionnement volume}]
By Lemma \ref{densité}, \eqref{Markov slice} is equal to 
\[\N_0\left(F(V_\varepsilon)\1_{\{\sigma_\varepsilon<1\}}\frac{\int_{1-\sigma_\varepsilon}^{1+\eta-\sigma_\varepsilon}\overline{p}^\delta_\varepsilon(x)dx}{\int_{1}^{1+\eta}p_\varepsilon^\delta(x)dx}\,\bigg|\,W_*<-b,Y_H>L\varepsilon\right).\]
Taking the limit as $\eta\rightarrow0$ and using \eqref{JJ}, we have 
\[\N_0^{(1)}(F(V_\varepsilon)\1_{H_\varepsilon}\,|\,W_*<-b,\,G_\varepsilon,\,J_\varepsilon^\delta)=\N_0\left(F(V_\varepsilon)\1_{\{\sigma_\varepsilon<1\}}\frac{\overline{p}^\delta_\varepsilon(1-\sigma_\varepsilon)}{p_\varepsilon^\delta(1)}|W_*<-b,Y_H>L\varepsilon\right)\]
Since $\sigma_\varepsilon\xrightarrow[\varepsilon\rightarrow0]{}0$ almost surely, Lemma \ref{uniform} implies that $\lim_{\varepsilon\rightarrow0}\frac{\overline{p}^\delta_\varepsilon(1-\sigma_\varepsilon)}{p_\varepsilon^\delta(1)}=1$ a.s. Therefore, we have
\begin{equation}\label{Presk}
\left|\N_0^{(1)}(F(V_\varepsilon)\1_{H_\varepsilon}\,|\,W_*<-b,\,G_\varepsilon,\,J_\varepsilon^\delta)-\N_0(F(V_\varepsilon)\1_{\{\sigma_\varepsilon<1\}}\,|\,W_*<-b,\,Y_H>L\varepsilon)\right|\xrightarrow[\varepsilon\rightarrow0]{}0
\end{equation}
uniformly in $F$. Finally, \eqref{GG} and \eqref{Presk} give 
\[\limsup_{\varepsilon\rightarrow0}\left|\N_0^{(1)}(F(V_\varepsilon)\,|\,W_*<-b)-\N_0(F(V_\varepsilon)\,|\,W_*<-b)\right|\leq\delta.\]  
This concludes the proof, $\delta$ being arbitrary.
\end{proof}
The following theorem is a consequence of Proposition \ref{conditionnement volume}. As this proof is very similar to that of Theorem \ref{convergence conditionnée}, we will not give it here.
    \begin{theorem}\label{convergence conditionnée volume}
        For every $\delta>0$ and $b>0$, we can find $\varepsilon>0$ such that we can couple $\s$ under $\N_0^{(1)}(\cdot\,|\,W_*<-b)$ and $\overline{\mathcal{BP}}$ so that $B_\varepsilon(\s,\Gamma_b)$ and $B_\varepsilon(\overline{\mathcal{BP}})$ are isometric with probability at least $1-\delta$. Moreover, we have the convergence 
    \begin{equation}   (\s,\varepsilon^{-1}D,\Gamma_b)\xrightarrow[\varepsilon\rightarrow 0]{(d)}(\overline{\mathcal{BP}},\overline{D}_\infty,\overline{\rho}_\infty)
    \end{equation}
    in distribution for the local Gromov-Hausdorff topology. 
    \end{theorem}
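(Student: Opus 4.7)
The plan is to mimic, almost verbatim, the proof of Theorem \ref{convergence conditionnée}, using Proposition \ref{conditionnement volume} in place of Theorem \ref{conditionnement}. The point is that all the geometric content of the argument, namely the localization lemma (Proposition \ref{localisation}), the distance computation lemmas (Propositions \ref{d}, \ref{e}, \ref{g}) and the isometry statement (Proposition \ref{isométrie}), depends only on the local data $V_{2\varepsilon} = A_{2L\varepsilon}\cup\widehat{A}_{2L\varepsilon}$ of the tree, not on the precise global law being considered; once the local data agree, the balls $B_\varepsilon(\s,\Gamma_b)$ around the geodesic are the same metric spaces.

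More concretely, I would fix $\delta > 0$ and $b > 0$, and choose $L > 0$ large enough so that the event $\mathcal{F}_\varepsilon$ defined before Proposition \ref{d} and its analogue $\mathcal{F}_\varepsilon^\infty$ both have probability at least $1-\delta/4$ for all $\varepsilon$ small enough. The key observation is that Proposition \ref{localisation} holds as stated under $\N_0(\cdot\,|\,W_*<-b)$, hence under $\N_0^{(1)}(\cdot\,|\,W_*<-b)$ as well, because of Proposition \ref{conditionnement volume}: indicator functions of events depending only on $V_{2\varepsilon}$ are admissible test functions in the total variation bound. Next, applying Proposition \ref{conditionnement volume} to $V_{2\varepsilon}$, I can couple $V_{2\varepsilon}$ under $\N_0(\cdot\,|\,W_*<-b)$ and under $\N_0^{(1)}(\cdot\,|\,W_*<-b)$ so that the two are equal with probability at least $1-\delta/4$ for $\varepsilon$ small enough. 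Combining this with the coupling of Proposition \ref{f} between $V_{2\varepsilon}$ (scaled by $\varepsilon^{-1}$) and $V_{2L}^\infty$, I obtain, for $\varepsilon$ small enough, a joint construction of $(\s, D)$ under $\N_0^{(1)}(\cdot\,|\,W_*<-b)$ and of $\BP$ on the same probability space, together with a bijection $\Phi \colon V_{2\varepsilon} \to V_{2L}^\infty$ satisfying $D^\circ(u,v) = \varepsilon\, \overline{D}^\circ_\infty(\Phi(u),\Phi(v))$ on a single event of probability at least $1-\delta$, on which all the ``good events'' $\mathcal{F}_\varepsilon$, $\mathcal{F}_\varepsilon^\infty$ and equality of local data hold.

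On this event, Proposition \ref{isométrie} produces an isometry $\mathcal{I}\colon B_\varepsilon(\s,\Gamma_b) \to B_\varepsilon(\BP,\overline{\rho}_\infty)$ mapping $\Gamma_b$ to $\overline{\rho}_\infty$ (that is, after rescaling distances in $\s$ by $\varepsilon^{-1}$, the balls of radius $1$ are isometric). Rescaling back gives the announced coupling, and the convergence in distribution for the local Gromov--Hausdorff topology then follows exactly as in Proposition \ref{coupling} and the derivation of Theorem \ref{convergence} from it.

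The only delicate point, and the place where one must be slightly careful, is the transfer of the localization lemma and of the distance identities of Propositions \ref{d}--\ref{e} from $\N_0(\cdot\,|\,W_*<-b)$ to $\N_0^{(1)}(\cdot\,|\,W_*<-b)$. Proposition \ref{conditionnement volume} takes care of events that depend only on $V_{2\varepsilon}$, and the events $\mathcal{F}_\varepsilon$ and the assumptions of Propositions \ref{d}--\ref{e} are of this form, so no additional work is required; but it is worth checking that one never invokes a global property of the free Brownian sphere that would not pass through the total variation bound. Once this is verified, the argument is purely a repackaging of the coupling scheme already set up for Theorem \ref{convergence conditionnée}.
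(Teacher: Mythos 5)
Your proposal is correct and follows exactly the route the paper intends: the paper omits this proof precisely because it is the proof of Theorem \ref{convergence conditionnée} repeated with Proposition \ref{conditionnement volume} in place of Theorem \ref{conditionnement}, i.e. a total-variation coupling of the local data $V_{2\varepsilon}$ under the two conditionings, combined with Proposition \ref{f} and Proposition \ref{isométrie} (the transfer being legitimate since the relevant good events, such as $\mathcal{H}_\varepsilon$ in the proof of Proposition \ref{localisation}, are measurable with respect to the local data). Your closing caveat about not invoking global properties is exactly the right point to check, and it is satisfied here, so nothing is missing.
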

    
\section{Properties of the space $\BP$}\label{section 6}

In this section, we establish various properties of the space $\BP$. The proofs mostly rely on coupling arguments and estimates for the Brownian sphere.

\subsection{Topological properties}

It was proved in \cite{brownianplane} that the Brownian plane has almost surely the topology of the plane. Even though we expect $\BP$ to have the same property, we can not apply the method used for the Brownian plane. Instead, we use the classification of non-compact surfaces, as in \cite[Corrolary 3.8]{classification}.
\begin{proposition}\label{Simply connected}
    Almost surely, every ball $B_r(\BP,\overline{\rho}_\infty)$ is included in a subset of $\BP$ which is homeomorphic to the disk.
\end{proposition}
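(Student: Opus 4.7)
The plan is to transfer the disk property from the Brownian sphere, where the topology is well understood, to $\BP$ via the coupling of Proposition~\ref{coupling}.

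Fix $r>0$ and $\delta>0$ and set $r'=r+1$. By Proposition~\ref{coupling}, for $\lambda$ large enough we can realize $\lambda\cdot\s$ (under $\N_0(\cdot\,|\,W_*<-b)$) and $\BP$ on a common probability space so that on an event $\mathcal{E}$ of probability at least $1-\delta$ there is an isometry $\phi:B_{r'}(\lambda\cdot\s,\Gamma_b)\to B_{r'}(\BP)$ with $\phi(\Gamma_b)=\overline{\rho}_\infty$. It therefore suffices to produce, on $\mathcal{E}$, an open subset $V\subset\lambda\cdot\s$ with $B_r(\lambda\cdot\s,\Gamma_b)\subset V\subset\{d_{\lambda\cdot\s}(\cdot,\Gamma_b)<r'\}$ and $V$ homeomorphic to the open disk: then $\phi(V)$ is open in $\BP$ (being contained in the open ball $\{d_{\BP}<r'\}$, which is open in $\BP$), homeomorphic to the open disk, and contains $B_r(\BP)$.

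To produce $V$ I would use the almost-sure homeomorphism of $\s$ with $S^2$ from~\cite{TopologicalStructure}. For $\lambda$ large the ball $B_{r'}(\lambda\cdot\s,\Gamma_b)$ is a proper subset of $\lambda\cdot\s$, so one can pick a point $q$ with $d_{\lambda\cdot\s}(q,\Gamma_b)>r'$; the punctured sphere $\lambda\cdot\s\setminus\{q\}$ is then homeomorphic to $\mathbb{R}^2$. A natural candidate for $V$ is the interior of the topological hull of $B_r(\lambda\cdot\s,\Gamma_b)$ inside this plane, i.e., the union of the closed ball with all bounded components of its complement in $\lambda\cdot\s\setminus\{q\}$; this hull is a compact simply connected planar continuum and hence homeomorphic to a closed disk, whose interior gives the required open disk.

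The main obstacle is to verify that this hull actually lies inside the open ball of radius $r'$ centered at $\Gamma_b$: a priori the bounded components of $(\lambda\cdot\s\setminus\{q\})\setminus B_r$ could extend past the sphere of radius $r'$. I would control them by a uniform small-scale topological estimate around a point of a geodesic in the Brownian sphere, using the local convergence of Theorem~\ref{convergence} (and the scale invariance of $\BP$) to reduce the estimate to an analogous, simpler statement inside $\BP$ itself, which can in turn be attacked via the explicit construction of $\BP$ from labeled trees. Once this control is in hand, the event ``$B_r(\BP)$ is contained in an open subset of $\BP$ homeomorphic to the disk'' has probability at least $1-\delta$ for every $\delta>0$, hence probability $1$ for each fixed $r$; a countable intersection over $r\in\mathbb{Q}_{>0}$ combined with monotonicity of balls in $r$ then yields the almost sure statement for all $r>0$.
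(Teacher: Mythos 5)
Your overall strategy (transfer the topology from the Brownian sphere through the coupling of Proposition \ref{coupling}, then do planar topology in the punctured sphere) is genuinely different from the paper, which instead traps $B_r(\BP)$ inside $p_{\BP}(A_L^\infty\cup\widehat{A}_L^\infty)$ via the localization lemma (Proposition \ref{localisation}), identifies each of $p_{\BP}(A_L^\infty)$ and $p_{\BP}(\widehat{A}_L^\infty)$ with a Brownian slice (hence a disk) by checking that the natural map is injective and continuous, and concludes by gluing the two disks along an arc of $\Gamma_\infty$. However, as written your argument has two genuine gaps. First, the purely topological claim that the filled hull of $B_r(\lambda\cdot\s,\Gamma_b)$ is ``a compact simply connected planar continuum and hence homeomorphic to a closed disk'' is false: a segment, or a disk with an arc attached, or a continuum with non--locally-connected boundary are all compact simply connected planar continua that are not disks. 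Filling in the bounded complementary components of a metric ball does not by itself produce a Jordan boundary; to get a disk you would either need a nontrivial input about hull boundaries in the Brownian sphere (and the known results concern hulls around typical points, not around the atypical point $\Gamma_b$), or you should replace the hull by the interior of the hull of a slightly larger ball and invoke that a connected, simply connected, proper open subset of the plane is homeomorphic to an open disk --- and then connectivity of that interior and the inclusion of the closed ball $B_r$ in it still have to be argued.

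Second, and more importantly, the step you yourself identify as the main obstacle --- that the bounded complementary components of the ball do not reach past radius $r'$ (equivalently, that the hull stays inside the coupling window), with probability tending to $1$ --- is not proved; you only sketch that it should reduce, via Theorem \ref{convergence} and scale invariance, to an analogous statement in $\BP$ ``attacked via the explicit construction from labeled trees.'' That deferred statement is precisely the nontrivial content here: the paper's proof does this work through Proposition \ref{localisation} and the analysis of the sets $A_L^\infty$, $\widehat{A}_L^\infty$ as slices, and without carrying out some version of that control your argument does not close. So the proposal is a plausible alternative outline, but in its present form it contains an incorrect topological assertion and leaves the key quantitative/localization estimate unestablished.
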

\begin{proof}
    By scaling, it is enough to prove the result for $r=1$.
    Fix $\delta >0$. By Proposition \ref{localisation}, there exists $L>0$ such that 
    \[\p(B_1(\BP ,\overline{\rho}_\infty)\subset p_{\BP }(A^\infty_{L}\cup\widehat{A}^\infty_{L}))>1-\delta.\]
    Therefore, we just have to prove that for every $L>0$, $p_{\BP }(A^\infty_{L}\cup\widehat{A}^\infty_{L})$ is almost surely homeomorphic to the disk.\\
    Note that $A_L^\infty$ is encoded by a Bessel process of dimension 7 $(X^\infty_t)_{0\leq t\leq S_L^\infty}$ starting at 0 and stopped at its hitting time of $L$, together with two Poisson point measures $(\mathcal{N}_1^\infty|_{[0,S_L^\infty]},{\mathcal{N}}_2^\infty |_{[0,{S}^\infty_L]})$ with intensities described in Section \ref{presentation}. According to Proposition \ref{tree} and after shifting the labels by $-L$, the labelled tree $A^\infty_L$ (rooted at 0) is distributed as the tree $\T$ under $\N_0(\cdot\,|\,W_*<-L)$ (rooted at $T_L$).\\
    Then, we can define a new pseudo-distance on $A^\infty_L$ (we abuse notation by calling it $\Tilde{D}^\circ$), which is the restriction of the pseudo distance $D^\circ_\infty$ to $A^\infty_L\times A^\infty_L$
    \begin{equation*}
        \Tilde{D}^\circ(u,v)=Z^\infty_u+Z^\infty_b-2\inf_{w\in[u\wedge v,u\vee v]}Z^\infty_w\quad\quad\text{for every }u,v\in\T_\infty.
    \end{equation*}
    Note that this function is the same as \eqref{pseudo dist slice}. Finally, we construct another pseudo-distance $\Tilde{D}$ on $A^\infty_L$ from $\Tilde{D}^\circ$ as we did in Sections \ref{construction} (also see \ref{three arms} and \ref{presentation}). Note that the distribution of $A_L^\infty$ and the definition of $\Tilde{D}^\circ$ imply that $\Tilde{\s}=\left(A_L^\infty/\{\Tilde{D}=0\},\Tilde{D}\right)$ is distributed as a Brownian slice (under $\N_0(\cdot\,|\,W_*<-L)$). In particular, it has almost surely the topology of a disk. \\
    Note that we have a natural embedding $\phi$ of $\Tilde{\s}$ into $\BP$ given by 
    \[\phi(p_{\Tilde{\s}}(u))=p_{\BP}(u)\quad\text{for every $u\in A^\infty_L$}.\]
    Note that this application is continuous, because $\overline{D}_\infty\leq\Tilde{D}$. Moreover, it is also injective. To see this, note that Proposition \ref{Identification} also holds for the tree $\T_\infty$ (which is a consequence of Proposition \ref{f}). Then, using the fact that $\Tilde{D}^\circ$ is just the restriction of $\D^\circ$, we have
    \[\Tilde{D}(u,v)=0\longleftrightarrow \overline{D}_\infty(u,v)=0,\] 
    which gives the injectivity. Thus, $A_L^\infty$ being compact, $\phi$ realizes a homeomorphism onto its image. In particular, $p_{\BP}(A^\infty_L)$ is homeomorphic to the disk. By symmetry, this also holds for $p_{\BP}(\widehat{A}^\infty_L)$. \\
    To conclude, note that $p_{\BP}(A^\infty_L)\cap p_{\BP}(\widehat{A}^\infty_L)$ is just a connected portion of the geodesic $\Gamma_\infty$. Furthermore, almost surely, 
    \[\inf_{x\in p_{\BP}(A_L^\infty)}Z^\infty_x\neq\inf_{x\in p_{\BP}(\widehat{A}_L^\infty)}Z^\infty_x\]
   which implies that $p_{\BP}(A^\infty_L)\cap p_{\BP}(\widehat{A}^\infty_L)$ has a non-trivial boundary. By Van Kampen's theorem, $p_{\BP}(A^\infty_L\cup\widehat{A}^\infty_L)$ is also homeomorphic to the disk, which concludes the proof.  
\end{proof}
\begin{proposition}
            Almost surely, the space $\BP$ is homeomorphic to the plane. 
        \end{proposition}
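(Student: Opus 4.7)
The plan is to invoke the classical theorem (a consequence of the Kerékjártó--Radó classification of surfaces, as in \cite{classification}) that any connected, simply connected, non-compact topological $2$-manifold without boundary is homeomorphic to $\R^2$. It therefore suffices to verify that $\BP$ is (a) connected, (b) non-compact, (c) a topological $2$-manifold without boundary, and (d) simply connected.

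Items (a) and (b) are immediate: connectedness follows from the construction of $\BP$ as a quotient of the connected space $\T_\infty\cup\widehat\T_\infty$, while the bigeodesic $\Gamma_\infty:\R\to\BP$ provides an isometric copy of $\R$, so $\BP$ is non-compact. For (d), any continuous loop $\gamma:S^1\to\BP$ has compact image, hence lies in some ball $B_r(\BP,\overline\rho_\infty)$. By the previous proposition this ball is included in a subset $S\subset\BP$ homeomorphic to the closed disk, hence simply connected, so $\gamma$ is null-homotopic inside $S$ and a fortiori inside $\BP$.

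The main work lies in (c). I would use the nested family $S_L:=p_{\BP}(A_L^\infty\cup\widehat A_L^\infty)$ of closed-disk-homeomorphic subsets, nestedness $S_L\subset S_{L'}$ for $L\le L'$ being inherited from the inclusion $A_L^\infty\subset A_{L'}^\infty$. By Proposition \ref{localisation}, $\bigcup_{L>0}S_L=\BP$ almost surely. The key point is to show that every $x\in\BP$ lies in the topological interior (inside $\BP$) of some $S_{L'}$: once this is established, fixing a homeomorphism $\phi:S_{L'}\to\overline{D^2}$ sends $x$ to an interior point of the open disk, and pulling back a Euclidean disk around $\phi(x)$ yields an open neighborhood of $x$ in $\BP$ homeomorphic to $\R^2$, with no boundary points appearing.

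The hard part will be this strict nestedness: one must show that for every $L$ there exists $L'>L$ such that the topological boundary of $S_L$ in $\BP$ is contained in the $\BP$-interior of $S_{L'}$. My plan is to use the transience of the Bessel process of dimension $7$ driving the spines of $\T_\infty$ and $\widehat\T_\infty$ to ensure that the ``outer boundary'' of $S_L$ is actually contained in $B_{r(L)}(\BP,\overline\rho_\infty)$ for some $r(L)$, and then to apply Proposition \ref{localisation} at a larger scale to produce an $L'$ with $S_L$ strictly inside $S_{L'}$. A variant of the argument used to prove the previous proposition, combined with the Identification \ref{Identification2} of which subtree points are pinched in $\BP$, should show that the boundary circle $\phi^{-1}(\partial D^2)$ inside $S_L$ is not identified with anything in $S_L$ itself and is genuinely pushed outward when one passes to $S_{L'}$. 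Once this has been carried out, $\BP$ is a $2$-manifold and the classification theorem concludes.
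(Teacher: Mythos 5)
Your overall route is the same family of argument as the paper's: exhaust $\BP$ by the disk-homeomorphic sets $S_L=p_{\BP}(A_L^\infty\cup\widehat{A}_L^\infty)$ furnished by Proposition \ref{Simply connected} and Proposition \ref{localisation}, and conclude by the classification of non-compact surfaces. The only structural difference is that the paper verifies that $\BP\setminus S_L$ is connected and invokes the corollary of \cite{classification} quoted there, whereas you verify simple connectedness directly and quote the Ker\'ekj\'art\'o statement; both are legitimate, and your items (a), (b), (d) are fine as written.

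The genuine gap is in (c), exactly at the step you treat as automatic. Knowing that $x$ lies in the topological interior (in $\BP$) of a set $S_{L'}$ homeomorphic to the closed disk does \emph{not} give $x$ a neighborhood homeomorphic to $\R^2$: the homeomorphism $\phi:S_{L'}\to\overline{D^2}$ has no a priori reason to map the $\BP$-interior of $S_{L'}$ into the open disk. The closed half-plane is a counterexample to the implication you use: the origin lies in the topological interior (relative to the half-plane) of the closed half-disk, which is homeomorphic to $\overline{D^2}$, yet the origin has no Euclidean neighborhood. So even after proving your nestedness claim (the boundary of $S_L$ in $\BP$ is contained in the interior of $S_{L'}$), you would still need to show that every point of $\phi^{-1}(\partial D^2)$ is an accumulation point of $\BP\setminus S_{L'}$, equivalently that $\phi$ maps the $\BP$-interior of $S_{L'}$ into the open disk. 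Your phrase ``genuinely pushed outward'' gestures at this, but no argument is given, and this is precisely the non-trivial point: one has to identify the image of the boundary circle of $S_{L'}$ with the slice-boundary geodesics at label $L'$ (an infinite-volume analogue of Proposition \ref{boundary} and Lemma \ref{Identification2}) and check that these curves are approached by points of $\T_\infty\setminus A^\infty_{L'}$ and $\widehat{\T}_\infty\setminus\widehat{A}^\infty_{L'}$, e.g. coming from the spine beyond $S^\infty_{L'}$ and the subtrees grafted there. The paper avoids redoing this topology by citing a ready-made criterion whose hypotheses are exactly the conclusions of Proposition \ref{Simply connected} together with connectedness of the complements; if you want to argue by hand as you propose, the missing boundary identification is the piece you must supply.
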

\begin{proof}
    First, by Proposition \ref{Simply connected}, $\BP$ is a non-compact orientable surface. Moreover, note that $\BP\backslash p_{\BP}(A^\infty_L\cup\widehat{A}^\infty_L)$ is connected, because $p_{\BP}(\T_\infty\backslash A^\infty_L)$ and $p_{\BP}(\widehat{\T}_\infty\backslash \widehat{A}^\infty_L)$ are connected and have a non-empty intersection. Therefore, according to the classification of non-compact surfaces, almost surely, $\BP$ has the topology of the plane. 
\end{proof}
\subsection{The Brownian plane seen from a point at infinity along its geodesic ray}

Here, we prove Theorem \ref{convergence2}, which establish a connection between the space $\overline{\mathcal{BP}}$ and the Brownian plane $\mathcal{BP}$, introduced in \cite{brownianplane}. The Brownian plane $(\mathcal{BP},D_\infty,\rho_\infty)$ is a non-compact rooted metric space, that appears as the scaling limit of the $\mathrm{UIPQ}$, and as the local limit of the Brownian sphere around a typical point. The connection between $\BP$ and $\mathcal{BP}$ is reminiscent of similar results about the $\mathrm{UIPQ}$ proved in \cite{dieuleveut}. We denote by $\gamma=(\gamma_t)_{t\geq 0}$ the simple geodesic path from the root in the Brownian plane. By Proposition 15 of \cite{brownianplane}, this is almost surely the unique infinite geodesic path from the root.

\begin{proof}[Proof of Theorem \ref{convergence2}]
First, we claim that for every $\delta>0$ and $r>0$, there exists $\varepsilon_0$ such that for every $\varepsilon<\varepsilon_0$, we can construct $\s$ (under $\N_0(\cdot\,|\,W_*<-b)$) and $\mathcal{BP}$ on the same probability space in such a way that the equality 
\[B_r(\varepsilon^{-1}\cdot\s,\rho)=B_r(\mathcal{BP},\rho_\infty)\]
holds with probability at least $1-\delta$. This result was proved in (\cite{Geodesicstars}, Lemma 18) for the Brownian sphere under $\N_0(\cdot\,|\,W_*=-1)$, but the author was working with hulls equipped with the intrinsic distance (instead of balls equipped with the restriction of the distance). However, the proof can easily be adapted to obtain the stated result, by taking larger hulls so that the two distances coincide on smaller balls (we omit the details). Therefore, if we fix $\delta>0$, there exists $\varepsilon>0$ so that we can couple $(\s,\mathcal{BP})$ so that $B_{2\varepsilon}(\mathcal{BP},\rho_\infty)$ and $B_{2\varepsilon}(\s,\rho)$ are isometric with probability at least $1-\delta/2$. Note that on this event, for every 
$0<\eta<\varepsilon$,  $B_{\eta}(\mathcal{BP},\gamma_{\varepsilon})$ and $B_{\eta}(\s,\Gamma_{\varepsilon})$ are also isometric. Therefore, this implies that 
\begin{equation}
    d_{TV}(\mathcal{L}(B_{\eta}(\mathcal{BP},\gamma_{\varepsilon})),\mathcal{L}(B_{\eta}(\s,\Gamma_{\varepsilon})))<\delta/2.
\end{equation}
where $\mathcal{L}(X)$ stands for the law of $X$. Now, by Theorem \ref{coupling}, there exists $\eta_0>0$ such that for every $0<\eta<\eta_0$, 
\begin{equation}
    d_{TV}(\mathcal{L}(B_{\eta}(\BP,\overline{\rho}_\infty)),\mathcal{L}(B_{\eta}(\s,\Gamma_{\varepsilon})))<\delta/2.
\end{equation}
Thus, for $0<\eta<\min(\varepsilon,\eta_0)$,
\begin{equation}
    d_{TV}(\mathcal{L}(B_{\eta}(\mathcal{BP},\gamma_{\varepsilon})),\mathcal{L}(B_{\eta}(\BP,\overline{\rho}_\infty)))<\delta.
\end{equation}
Hence, there exists a coupling of $(\BP,\mathcal{BP})$ such that 
\[\p(B_{\eta}(\mathcal{BP},\gamma_{\varepsilon})=B_{\eta}(\BP,\overline{\rho}_\infty))>1-\delta\]
where the equality is in the sense of isometry between pointed compact metric spaces. For every $r>0$, the scale invariance of these spaces implies that 
\[\p\left(B_{r}(\frac{\eta}{r}\cdot\mathcal{BP},\gamma_{\varepsilon})=B_{r}(\frac{\eta}{r}\cdot\BP,\overline{\rho}_\infty)\right)=\p\left((B_{r}(\mathcal{BP},\gamma_{\frac{r\varepsilon}{\eta}})=B_{r}(\BP,\overline{\rho}_\infty)\right)>1-\delta.\]
The coupling result follows, with $b_0=\frac{r\varepsilon}{\min(\varepsilon,\eta_0)}$. Finally, as in Proposition \ref{convergence GHPU}, we can see that the coupling behaves well with respect to the volume measures and the distinguished geodesics, which implies the convergence in law for the local Gromov-Hausdorff-Prokhorov-Uniform topology, which concludes the proof. 
\end{proof} 

\subsection{Invariance properties}

The purpose of this section is to prove some invariance properties of the space $\BP$, and to use these results to obtain further information about this random space.\\

   Our first result relies on Theorem \ref{convergence conditionnée}, which can be used to perform the local limit on a specific point of the geodesic $\Gamma$. In particular, we can choose $b$ so that $\Gamma_b$ is the middle of the geodesic. This can be used to show that the random space $\BP$ is invariant by reversal of its infinite bigeodesic, which is not clear from its construction. This result will be used later to deduce some properties of geodesics.
    \begin{theorem}\label{Retournement}
        Let $\widehat{\Gamma}_\infty(t)=\Gamma_\infty(-t)$. Then $(\BP ,\Gamma_\infty)$ has the same law as $(\BP ,\widehat{\Gamma}_\infty)$. 
    \end{theorem}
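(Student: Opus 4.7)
The plan is to apply Theorem~\ref{convergence conditionnée} at the midpoint of the geodesic $\Gamma$ in the Brownian sphere and to exploit a re-rooting symmetry that exchanges $\rho$ and $x_*$. I would first fix $c>0$ and work under $\N_0(\cdot\,|\,W_*=-c)$, taking $b=c/2$ so that $\Gamma_b=\Gamma_{c/2}$ is the midpoint of the simple geodesic $\Gamma:[0,c]\to\s$ between $\rho$ and $x_*$. Under Theorem~\ref{convergence conditionnée}, the rescaled sphere $(\s,\varepsilon^{-1}D,\Gamma_{c/2})$ converges to $(\BP,\overline{D}_\infty,\overline{\rho}_\infty)$, and the bigeodesic $\Gamma_\infty$ in the limit decomposes into two rays corresponding to the two halves of $\Gamma$ viewed from $\Gamma_{c/2}$: one ray comes from the side of $\Gamma$ ending at $\rho$, the other from the side ending at $x_*$. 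Reversing $\Gamma_\infty$ amounts to swapping these two halves.

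The key input I would invoke is the re-rooting invariance of the doubly-pointed Brownian sphere, namely
\begin{equation*}
(\s,D,\rho,x_*)\;\stackrel{(d)}{=}\;(\s,D,x_*,\rho)\qquad\text{under }\N_0(\cdot\,|\,W_*=-c).
\end{equation*}
This is a root/dual-root duality for the Brownian map that is classical under $\N_0^{(1)}$; the version under the conditioned free measure should follow by disintegration against the volume, together with the scaling relation~\eqref{scaling}. Granted this symmetry, $\Gamma$ and its reversal $t\mapsto\Gamma(c-t)$ have the same joint law with the fixed midpoint $\Gamma_{c/2}$ (which is preserved since $\Gamma$ is the unique geodesic from $\rho$ to $x_*$).

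Next, I would upgrade Theorem~\ref{convergence conditionnée} to a joint convergence of the sphere together with the rescaled geodesic segment, in the spirit of Proposition~\ref{convergence GHPU}. The coupling produced by Propositions~\ref{coupling} and~\ref{isométrie} already yields an isometry between $B_\varepsilon(\s,\Gamma_{c/2})$ and $B_1(\BP,\overline{\rho}_\infty)$; one checks that, through the bijection $\Phi$ of Proposition~\ref{f}, the portion of $\Gamma$ inside $B_\varepsilon(\s,\Gamma_{c/2})$ is sent onto the portion of $\Gamma_\infty$ inside $B_1(\BP,\overline{\rho}_\infty)$. Letting $\varepsilon\to 0$, the finite-$\varepsilon$ identity in law provided by the re-rooting symmetry passes to the limit, yielding $(\BP,\overline{\rho}_\infty,\Gamma_\infty)\stackrel{(d)}{=}(\BP,\overline{\rho}_\infty,\widehat{\Gamma}_\infty)$, which is the desired claim.

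The hard part will be justifying the re-rooting invariance under the conditioning $\{W_*=-c\}$, with all the required measurability and a correct identification of $\Gamma$ as the reparameterized geodesic on both sides of the symmetry. While the $\rho\leftrightarrow x_*$ duality under $\N_0^{(1)}$ is well known and implicit in much of the literature on the Brownian map, stating it precisely so that the simple geodesic is tracked by the reparameterization $t\mapsto c-t$, and then transferring it to $\N_0(\cdot\,|\,W_*=-c)$, is the delicate point. Upgrading Theorem~\ref{convergence conditionnée} to also track the distinguished geodesic is expected to be routine but must be carried out with some care.
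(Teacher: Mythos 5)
Your proposal is correct and follows essentially the same route as the paper: condition on $\{W_*=-c\}$, use the $\rho\leftrightarrow x_*$ re-rooting symmetry to identify the laws of $(\s,\Gamma)$ and of $\s$ with the reversed geodesic (fixing the midpoint $\Gamma_{c/2}$), then pass to the local limit at $\Gamma_{c/2}$ via Theorem \ref{convergence conditionnée} upgraded to track the geodesic as in Proposition \ref{convergence GHPU}. The only cosmetic difference is that the paper states the limit identification on a fixed ball and finishes with the scaling invariance of $\BP$, whereas you pass directly to the full local GHPU limit.
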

    \begin{proof}
        Consider the Brownian sphere under the probability measure $\N_0(\cdot\,|\,W_*=-1)$. By the invariance of the Brownian sphere under rerooting, we know that $(\s,\Gamma)$ has the same law as $(\s,\widehat{\Gamma})$. In particular, $(B_\varepsilon(\s,\Gamma_{1/2}),\Gamma|_{[1/2-\varepsilon,1/2+\varepsilon]})$ has the same law as $(B_\varepsilon(\s,\Gamma_{1/2}),\widehat{\Gamma}|_{[1/2-\varepsilon,1/2+\varepsilon]})$ (we used the fact that $\Gamma_{1/2}=\widehat{\Gamma}_{1/2}$). In particular, $(B_\varepsilon(\s,\Gamma_{1/2}),\varepsilon^{-1}D,\Gamma|_{[1/2-\varepsilon,1/2+\varepsilon]})$ and $(B_\varepsilon(\s,\Gamma_{1/2}),\varepsilon^{-1}D,\widehat{\Gamma}|_{[1/2-\varepsilon,1/2+\varepsilon]})$ converge toward the same random variable. By Proposition \ref{convergence GHPU}, this means that $(B(\BP ,\overline{\rho}_\infty),\overline{D}_\infty,\Gamma|_{[-1,1]})$ and $(B(\BP ,\overline{\rho}_\infty),\overline{D}_\infty,\widehat{\Gamma}|_{[-1,1]})$ have the same law. The result follows from the scaling invariance of $\BP $.
    \end{proof}
    Next, we present the invariance under rerooting on its geodesic of the space $\BP$, which is a consequence of a similar property for the tree $\T_\infty$.
    \begin{proposition}\label{rerooting}
   For every $s\in \R$,  the random labelled tree  $(\T_\infty,\tau_s)$  has the same distribution as $(\T_\infty,\tau_0)$.
\end{proposition}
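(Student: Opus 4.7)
My plan is to realize both rooted labelled trees $(\T_\infty,\tau_0)$ and $(\T_\infty,\tau_s)$ as scaling limits of the same random labelled tree $\T$ under $\N_0(\cdot\mid W_*=-1)$, viewed from two different but nearby points on the geodesic $\Gamma$, and then to exploit that the underlying tree is the same from either vantage point. I first treat $s>0$; the case $s<0$ should follow from a symmetric argument exchanging the roles of $A_{L\varepsilon}$ and $\widehat{A}_{L\varepsilon}$.

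Fix $b\in(0,1)$ and set $b':=b-\varepsilon s$, which lies in $(0,1)$ for small $\varepsilon$. Combining Proposition \ref{f} with the total variation bound of Theorem \ref{conditionnement}, and using the uniformity in the base point recorded in Remark \ref{uniform1} (together with the uniformity of Theorem \ref{conditionnement} in $b$), for any $L>|s|$ and $\delta>0$ one can, for $\varepsilon$ small enough, couple $\T$ with a copy $\T_\infty$ so that, with probability at least $1-\delta$, the neighborhood $A_{L\varepsilon}$ of $p_\T(T_b)$ in $\T$ is isomorphic to $A_L^\infty\subset\T_\infty$ rooted at $\tau_0$, after rescaling distances by $\varepsilon^{-1}$ and shifting labels by $b$. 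The label identification $Z^\infty_{\Phi(u)}=\varepsilon^{-1}(Z_u+b)$ from the proof of Proposition \ref{f} forces $p_\T(T_{b'})$ to be sent to the unique point of the spine of $A_L^\infty$ with label $s$, namely $\tau_s$. A completely parallel construction, re-centered at $p_\T(T_{b'})$, produces a second coupling with an independent copy $\T_\infty'$ under which $A_{L\varepsilon}$ around $p_\T(T_{b'})$ is isomorphic to $A_L^\infty\subset\T_\infty'$ rooted at $\tau_0'$, with $p_\T(T_b)$ sent to $\tau_{-s}'$.

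The key point is that, for $L$ large compared to $|s|$ and $\varepsilon$ small, the two neighborhoods $A_{L\varepsilon}$ near $p_\T(T_b)$ and near $p_\T(T_{b'})$ in $\T$ share a common sub-neighborhood containing both points; as a rescaled labelled tree this shared region is intrinsic to $\T$ and does not depend on which point is declared the centering origin. The two couplings therefore identify this common region with corresponding subsets of $\T_\infty$ and $\T_\infty'$, differing only by the choice of root. Passing to the limit $\varepsilon\to 0$ yields, in distribution, $(\T_\infty,\tau_0,\tau_s)\stackrel{(d)}{=}(\T_\infty',\tau_{-s}',\tau_0')$ as doubly pointed labelled trees (with the first coordinate the root). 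Forgetting the second distinguished point and re-rooting the left-hand side at $\tau_s$ then gives $(\T_\infty,\tau_s)\stackrel{(d)}{=}(\T_\infty',\tau_0')\stackrel{(d)}{=}(\T_\infty,\tau_0)$ on arbitrarily large balls, which is enough since $\T_\infty$ is characterized by its local structure.

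The main technical obstacle will be making the ``common sub-neighborhood'' step rigorous, in particular verifying that the two isomorphisms coming from Proposition \ref{f} can be constructed consistently on the shared region, up to the expected change of root and shift of labels by $s$. The ingredients for this are already present in the proof of Proposition \ref{f}: the isomorphism $\Phi$ is built from a coupling of the underlying Bessel$(7)$ spines and the attached Poisson point measures, and the strong Markov property of Bessel$(7)$ at its first hitting time of level $\varepsilon s$ should allow the two couplings to be performed simultaneously on the common region of $\T$.
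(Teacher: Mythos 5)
Your proposal follows essentially the same route as the paper's proof: view the same conditioned tree from two points of the geodesic at (rescaled) distance $|s|$, use the coupling of Proposition \ref{f} together with Theorem \ref{conditionnement}, and invoke the uniformity in the base point recorded in Remarks \ref{uniform1} and \ref{uniform2}. The one substantive comment is that the step you single out as the main technical obstacle --- constructing the two couplings consistently on a common sub-neighborhood so as to obtain the bi-pointed identity $(\T_\infty,\tau_0,\tau_s)\overset{(d)}{=}(\T_\infty',\tau_{-s}',\tau_0')$ --- is not needed, and the paper's argument avoids it entirely: it suffices to note that the single sequence $(\T,\varepsilon^{-1}d_\T,p_\T(T_{b'}))$ converges in distribution both to $(\T_\infty,\tau_{\pm s})$ (via the coupling centered at $p_\T(T_b)$, after restricting to balls of radius $r'<r-|s|$ so that the shifted root stays in the coupled region) and to $(\T_\infty,\tau_0)$ (via the coupling centered at $p_\T(T_{b'})$, legitimate by the uniformity in $b$); uniqueness of the limit in distribution then gives the claim, with no joint coupling of the two limit copies and no strong Markov argument for the Bessel spine. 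Two smaller points: $\tau_s$ is not ``the unique point of the spine of $A_L^\infty$ with label $s$'' --- for $s<0$ it lies in a subtree grafted on the unconditioned side, and in general spine points with a given label are not unique; what one actually uses is that $\Phi$ preserves labels and the exploration order, hence sends the first hitting point $p_\T(T_{b'})$ to $\tau_{\pm s}$, together with the fact that $p_\T(T_{b'})$ falls in the coupled region $A_{L\varepsilon}\cup\widehat{A}_{L\varepsilon}$ with high probability (this is the content of Lemma \ref{inc}). Finally, the sign of your shift $b'=b-\varepsilon s$ versus the paper's $b+\varepsilon s$ is immaterial, since $s$ ranges over all of $\R$.
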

\begin{proof}
 We fix $s\in\R$, and work with the measure $\N_0(\cdot\,|\,W_*=-2)$. By Proposition \ref{conditionnement} and Theorem \ref{f}, for every $r>|s|$, we have 
 \begin{equation*}     B_r\bigg(\T,\varepsilon^{-1}d_\T,p_\T(T_1)\bigg)\xrightarrow[\varepsilon\rightarrow 0]{(d)}B_r\bigg(\T_\infty,d_{\T_\infty},\tau_0\bigg).
 \end{equation*}
 This implies that for every $0<r'<r-|s|$, 
 \begin{equation*}     B_{r'}\bigg(\T,\varepsilon^{-1}d_\T,p_\T(T_{1+s\varepsilon})\bigg)\xrightarrow[\varepsilon\rightarrow 0]{(d)}B_{r'}\bigg(\T_\infty,d_{\T_\infty},\tau_s\bigg)
 \end{equation*}
 which means that $(\T,\varepsilon^{-1}d_\T,p_\T(T_{1+s\varepsilon}))$ converges in distribution towards $(\T_\infty,d_{\T_\infty},\tau_s)$ for the local Gromov-Hausdorff topology. On the other hand, according to the remarks \ref{uniform1} and \ref{uniform2}, we also have the convergence in distribution :
 \begin{equation*}    \bigg(\T,\varepsilon^{-1}d_\T,p_\T(T_{1+s\varepsilon})\bigg)\xrightarrow[\varepsilon\rightarrow 0]{(d)}\bigg(\T_\infty,d_{\T_\infty},\tau_0 \bigg).
\end{equation*}
The result is obtained by comparing the two limits. 
\end{proof}
\begin{corollary}\label{translation}
    For every $s\in\R$, the random space $(\overline{\mathcal{BP}},\overline{D}_\infty,\Gamma_\infty(s+\cdot))$ has the same distribution as $(\overline{\mathcal{BP}},\overline{D}_\infty,\Gamma_\infty)$. 
\end{corollary}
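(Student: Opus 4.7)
The strategy is to lift the rerooting invariance of Proposition \ref{rerooting} from the level of the single labelled tree $\T_\infty$ to the level of the bigeodesic Brownian plane $\BP$, using the fact that the construction of $(\BP,\overline{D}_\infty,\Gamma_\infty)$ in Subsection \ref{presentation} is a measurable, intrinsic functional of the two independent labelled trees and their spine families.

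First I would observe that by construction, the pairs $(\T_\infty,(\tau_r)_{r\in\R})$ and $(\widehat{\T}_\infty,(\widehat{\tau}_r)_{r\in\R})$ are independent, so applying Proposition \ref{rerooting} to each tree separately yields the joint distributional identity
\[
\bigl(\T_\infty,\widehat{\T}_\infty,\tau_s,\widehat{\tau}_s\bigr)\overset{(d)}{=}\bigl(\T_\infty,\widehat{\T}_\infty,\tau_0,\widehat{\tau}_0\bigr),
\]
interpreted as an equality of pointed labelled trees. Since the whole family $(\tau_r)_{r\in\R}$ is a deterministic functional of $\T_\infty$ and its distinguished point (through the hitting times $T_r^\infty$ of labels, which are a.s.\ finite by the lemma of Subsection \ref{presentation}), this identity automatically extends to the shifted spine families $(\tau_{s+r})_{r\in\R}$ and $(\widehat{\tau}_{s+r})_{r\in\R}$, which play the role of $(\tau_r)$ and $(\widehat{\tau}_r)$ with respect to the rerooted trees.

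Next, I would note that the construction of $\BP$ depends only on the labelled trees and their spine families: the pseudo-distance $D_\infty^\circ$ on each tree is defined from labels and intervals without reference to the chosen root, and the gluing identification $\overline{D}_\infty^\circ(\tau_r,\widehat{\tau}_r)=0$ for $r\in\R$ involves only the two spines. Consequently, carrying out the construction using the rerooted labelled trees $(\T_\infty,\tau_s)$, $(\widehat{\T}_\infty,\widehat{\tau}_s)$ together with their shifted spine families produces the same quotient metric space $(\BP,\overline{D}_\infty)$, this time rooted at $p_{\BP}(\tau_s)=\Gamma_\infty(s)$ and endowed with the bigeodesic $r\mapsto\Gamma_\infty(s+r)$. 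Combining this with the previous step yields the desired distributional equality.

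The main point to be careful about is the compatibility of rerooting with the spine family in the first step: one must check that the family produced from the rerooted tree $(\T_\infty,\tau_s)$ by the definition of Subsection \ref{presentation} coincides, as a subset of $\T_\infty$ with its labels, with the shifted family $(\tau_{s+r})_{r\in\R}$ coming from the original tree. This reduces to the observation that hitting times of labels are intrinsic to the labelled tree structure and depend on the root only through the parametrization, so rerooting at $\tau_s$ shifts the parameter by $s$ without altering the underlying geometric family; the finiteness of the new hitting times is guaranteed by Proposition \ref{rerooting} applied to the rerooted tree.
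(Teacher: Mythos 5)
Your argument is correct and follows essentially the same route as the paper: apply Proposition \ref{rerooting} (together with the independence of $\T_\infty$ and $\widehat{\T}_\infty$) to reroot each tree at $\tau_s$, $\widehat{\tau}_s$, and observe that the gluing construction of Subsection \ref{presentation}, applied to the rerooted pair, produces exactly $(\overline{\mathcal{BP}},\overline{D}_\infty,\Gamma_\infty(s+\cdot))$. Your extra care about the rerooted spine family being the shifted family $(\tau_{s+r})_{r\in\R}$ only makes explicit what the paper's one-line proof leaves implicit.
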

\begin{proof}
    This is a consequence of Proposition \ref{rerooting}, and the fact that the random space $(\overline{\mathcal{BP}},\overline{D}_\infty,\Gamma_\infty(s+\cdot))$ can be obtained from $(\T_\infty^{(s)},\widehat{\T}_\infty^{(s)})$ by the same construction that we used to obtain  $(\overline{\mathcal{BP}},\overline{D}_\infty,\Gamma_\infty)$ from $(\T_\infty,\widehat{\T}_\infty)$, where $\T_\infty^{(s)}$ is the tree $\T_\infty$ rooted at $\tau_s$. 
\end{proof}
Before stating our last invariance result, we need to introduce some notations. Set 
\[\overline{\mathcal{HBP}}_1=p_{\BP}(\T_\infty)\quad\text{ and }\overline{\mathcal{HBP}}_2=p_{\BP}(\widehat{\T}_\infty).\]
We omit the details, but we also could have constructed $\overline{\mathcal{HBP}}_1$ (respectively $\overline{\mathcal{HBP}}_2$) from $\T_\infty$ (respectively $\widehat{\T}_\infty$), and then glue these two spaces together along their infinite bigeodesics $\Gamma_\infty^1$ and $\Gamma_\infty^2$. The resulting space has the same distribution as $\BP$ (this is a consequence of the fact that we glued the spaces along a geodesic). 
\begin{corollary}\label{earthquake}
    For every $s\in \R$, let $\BP^{(s)}$ be the random space obtained by gluing $\overline{\mathcal{HBP}}_1$ and $\overline{\mathcal{HBP}}_2$ along the curves $\Gamma_\infty^1(s+\cdot)$ and $\Gamma_\infty^2$. Then $(\BP^{(s)},\overline{D}_\infty^{(s)},\overline{\rho}_\infty^{(s)})$ has the same distribution as $(\BP,\overline{D}_\infty,\overline{\rho}_\infty)$. 
\end{corollary}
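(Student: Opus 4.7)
The plan is to reduce Corollary \ref{earthquake} to a suitable strengthening of Proposition \ref{rerooting}. First, I would observe that $\BP$ (resp.\ $\BP^{(s)}$) is obtained by gluing the two independent half-spaces $\overline{\mathcal{HBP}}_1$ and $\overline{\mathcal{HBP}}_2$ along their distinguished bigeodesics via the identification $\Gamma^1_\infty(r)\sim\Gamma^2_\infty(r)$ (resp.\ $\Gamma^1_\infty(s+r)\sim\Gamma^2_\infty(r)$) for every $r\in\R$. Since the gluing depends measurably only on the joint laws of each half-space together with its bigeodesic, and since the two half-spaces are independent, it suffices to prove the identity in distribution
\[\bigl(\overline{\mathcal{HBP}}_1,\Gamma^1_\infty(s+\cdot)\bigr)\stackrel{(d)}{=}\bigl(\overline{\mathcal{HBP}}_1,\Gamma^1_\infty(\cdot)\bigr),\]
which itself will follow from the joint version
\[\bigl(\T_\infty,(\tau_{s+r})_{r\in\R}\bigr)\stackrel{(d)}{=}\bigl(\T_\infty,(\tau_r)_{r\in\R}\bigr)\]
of Proposition \ref{rerooting} (since $\overline{\mathcal{HBP}}_1$ and $\Gamma^1_\infty$ are measurable functionals of $\T_\infty$ and its distinguished spine).

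To prove this joint identity, I would follow exactly the scheme used in the proof of Proposition \ref{rerooting}, but apply it to the entire one-parameter family rather than a single point. Working under $\N_0(\cdot\,|\,W_*=-2)$, I would combine Theorem \ref{convergence conditionnée} (in its GHPU version) with the uniformity of the coupling granted by Remarks \ref{uniform1} and \ref{uniform2}. Rescaling around $\Gamma_1$ yields joint convergence of the family $(\Gamma_{1+(s+r)\varepsilon})_r$ toward $(\tau_{s+r})_r$ in the limiting space $\BP$. On the other hand, rescaling instead around $\Gamma_{1+s\varepsilon}$ yields, by the uniformity of the convergence in the base point, convergence of the shifted family $(\Gamma_{1+s\varepsilon+r\varepsilon})_r$ toward $(\tau_r)_r$ in a copy of $\BP$ with the same law. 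Since the two families in the prelimit are \emph{the same} objects (only their indexing has been translated by $s\varepsilon$), the two limits must coincide in distribution, establishing the desired identity.

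The main obstacle I anticipate is upgrading the single-point rerooting invariance of Proposition \ref{rerooting} to joint equality in distribution over the whole parameter $r\in\R$. This should not require new ideas, but does rely crucially on the coupling formulation of Proposition \ref{coupling} rather than mere marginal convergence: on the high-probability event where $B_r(\varepsilon^{-1}\cdot\s,\Gamma_b)$ is isometric to $B_r(\BP)$, the entire portion of $\Gamma$ contained in the ball is sent isometrically to the corresponding portion of $\Gamma_\infty$, so joint convergence of arbitrary finite sub-families of points of the spine is automatic. Once the joint identity is secured, plugging it back into the independent gluing with $\overline{\mathcal{HBP}}_2$ yields the desired equality in distribution between $\BP^{(s)}$ and $\BP$.
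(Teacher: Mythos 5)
Your proposal is correct and follows essentially the same route as the paper: reroot the first tree/half-space at $\tau_s$, invoke the rerooting invariance of Proposition \ref{rerooting}, and use the independence of $\T_\infty$ and $\widehat{\T}_\infty$ together with the fact that the gluing is the same measurable construction applied to the rerooted pair. The only difference is that the ``joint'' upgrade you spend most of your effort on is automatic and needs no new coupling or scaling-limit argument: the family $(\tau_{s+r})_{r\in\R}$ is obtained from the marked labelled tree $(\T_\infty,\tau_s)$ by exactly the same measurable functional (first point in the exploration with label equal to that of the marked point plus $r$) that produces $(\tau_r)_{r\in\R}$ from $(\T_\infty,\tau_0)$, which is precisely what the paper's one-line proof exploits, so your extra argument via Theorem \ref{convergence conditionnée} and the uniformity remarks is harmless but redundant.
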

\begin{proof}
    Let $\overline{\mathcal{HBP}}^{(s)}$ stands for the space $\overline{\mathcal{HBP}}$ rooted at $\Gamma_\infty(s)$. The result follows from Proposition \ref{rerooting}, and the fact that the pair $(\overline{\mathcal{HBP}}^{(s)}_1,\overline{\mathcal{HBP}}_2)$ can be obtained from $(\T_\infty^{(s)},\widehat{\T}_\infty)$ in the same way that $(\overline{\mathcal{HBP}}_1,\overline{\mathcal{HBP}}_2)$ is obtained from $(\T_\infty,\widehat{\T}_\infty)$. 
\end{proof}
In words, Corollary \ref{earthquake} states that the space $\BP$ is invariant under shifting of one of its half-space alongside the geodesic $\Gamma_\infty$. Note that this operation has an interpretation in terms of hyperbolic geometry, where it corresponds to an elementary left-earthquake (see \cite{earthquake} for more details). 
\subsection{Study of geodesics}
The purpose of this section is to identify all the infinite geodesics in $\BP $. Our results are similar to the discrete results proved in \cite{UIPQgeodesic} and to the results obtained for the Brownian plane. \\
Recall that we can easily identify two geodesic rays from $\overline{\rho}_\infty$, which are $\Gamma^+_\infty$ and $\Gamma^-_\infty$. We will show that these are in fact the only geodesic rays from $\overline{\rho}_\infty$. We start by proving that in the Brownian sphere, there is a confluence property of geodesics to $\Gamma_b$ with the simple geodesic $\Gamma$. This result easily follows from estimates in \cite{uniqueness} and \cite{convergence}, and was probably clear to both authors. 
\begin{lemme}\label{intersection}
Almost surely, under the probability measure $\N_0(\cdot\,|\,W_*<-b)$, for any point $z\in\s$, there exists a geodesic between $z$ and $\Gamma_b$ that coalesces with the geodesic $\Gamma$ before hitting $\Gamma_b$. 
\end{lemme}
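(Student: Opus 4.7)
The plan is based on the following reduction. Since any path from $z$ to $\Gamma_b \in \Gamma$ must first hit $\Gamma$ at some point $\Gamma_r$, after which the remainder has length at least $|r-b|$ (the bound being saturated by following $\Gamma$ itself), while conversely any value $D(z,\Gamma_r) + |r-b|$ is attained by concatenating a geodesic $z \to \Gamma_r$ with the segment of $\Gamma$ from $\Gamma_r$ to $\Gamma_b$, one has
\[
D(z,\Gamma_b) \;=\; \inf_{r \in [0,-W_*]} \bigl[D(z,\Gamma_r) + |r-b|\bigr].
\]
The infimum is trivially attained at $r = b$; the statement is equivalent to it being attained at some $r \neq b$, for then the corresponding concatenation is a geodesic from $z$ to $\Gamma_b$ that follows $\Gamma$ on a nontrivial final portion.

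The main input I would use is Le Gall's confluence theorem \cite{geodesic1}: for every $z = \mathbf{p}(s) \in \s$ there exists a simple geodesic $\eta$ from $z$ to $x_*$ coinciding with $\Gamma$ on a nontrivial final portion $[\Gamma_{s_z}, x_*]$, where the coalescence label $-s_z$ equals the minimum of $Z$ along the forward snake arc joining $s$ to $s_*$. In the favorable case $s_z < b$, the point $\Gamma_b$ lies on the $\Gamma$-portion of $\eta$, and truncating $\eta$ at $\Gamma_b$ yields a geodesic from $z$ to $\Gamma_b$ coinciding with $\Gamma|_{[s_z,b]}$ at its end, which concludes.

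The delicate case is $s_z \geq b$, in which $\eta$ does not visit $\Gamma_b$. To handle it I would invoke the time-reversal invariance of the snake under $\N_0$, together with the exchange between $\rho$ and $x_*$, to obtain an analogous simple geodesic $\eta'$ from $z$ to $\rho$ that enters $\Gamma$ at some $\Gamma_{s'_z}$ and then follows $\Gamma$ down to $\rho$. If $s'_z \geq b$, truncating $\eta'$ at $\Gamma_b$ settles the case symmetrically. The remaining degenerate scenario $\{s_z \geq b,\ s'_z < b\}$ would be ruled out using the snake formula
\[
D^\circ(\mathbf{p}(s),\mathbf{p}(T_b)) \;=\; Z_s - b - 2\max\bigl(\min\nolimits_{[s,T_b]} Z,\ \min\nolimits_{[T_b,s]} Z\bigr),
\]
together with the characterisations of $s_z, s'_z$ as snake minima over complementary cyclic arcs; the two constraints turn out to be incompatible. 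I expect this last scenario to be the main obstacle: it ultimately demands that $D(z,\Gamma_b) = D^\circ(z,\Gamma_b)$ in the relevant regime, and hence the kind of control on the absence of shortcuts in $\s$ that is supplied by the cactus-type estimates of \cite{uniqueness, convergence} that the author refers to.
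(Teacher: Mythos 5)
Your favorable cases reproduce (in a slightly different packaging, via the $\rho\leftrightarrow x_*$ exchange) the easy half of the paper's argument, which handles exactly the points $z=\mathbf{p}(s)$ with $s\in[0,T_b]\cup[\widehat{T}_b,1]$ by truncating simple geodesics. The genuine gap is in your treatment of the remaining case $\{s_z\geq b,\ s'_z<b\}$: this case is \emph{not} empty, and no deterministic incompatibility between the two snake-minimum constraints can rule it out. Indeed, for every $z=\mathbf{p}(s)$ with $s\in(T_b,\widehat{T}_b)$ the simple geodesic toward $x_*$ merges with $\Gamma$ at the level $\min_{[0,s]}Z\leq -b$ (note, incidentally, that your characterisation of $s_z$ as the minimum over the arc joining $s$ to $s_*$ cannot be right, since that minimum is $W_*$; the correct quantity is $\min_{[0,s]}Z$ for $s\leq s_*$), and generically this minimum is strictly below $-b$, so the truncation toward $x_*$ never reaches $\Gamma_b$ for any of these points. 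Symmetrically, for many such points (e.g.\ points at small distance from $\Gamma_b$ lying just off the geodesic, or points of the region with labels above $-b$) the geodesic toward $\rho$ merges with $\Gamma$ strictly above level $-b$. For all these $z$ the conclusion of the lemma is still true, but the witnessing geodesic is not a truncation of either of your two simple geodesics, so your case analysis does not produce it.

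Moreover, the fallback you invoke — that $D(z,\Gamma_b)=D^\circ(z,\Gamma_b)$ ``in the relevant regime'' — is precisely what fails near an interior point of a geodesic: $D^\circ$ to $\Gamma_b$ is not a geodesic distance, and the cactus-type bounds you allude to do not give such a pointwise identity (the paper itself stresses, at the start of Section~\ref{Section4}, that the cactus bound cannot be applied in this setting). This is why the paper's proof of the lemma is not soft on the region coded by $[T_b,\widehat{T}_b]$: it argues by contradiction, assuming the bad set of points has probability $k>0$, and kills it with the quantitative estimate of \cite[Lemma 5.3]{uniqueness}, which bounds by $C\varepsilon^{\beta}$ (on a good event $\mathcal{H}_{\delta,\kappa}$) the probability that some point $z$ in that region satisfies $D(z,\Gamma_b)<D(z,\Gamma_{b\pm\varepsilon})+\varepsilon$ simultaneously for both signs, i.e.\ admits no geodesic to $\Gamma_b$ coalescing with $\Gamma$ at scale $\varepsilon$. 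Some probabilistic input of this kind is unavoidable, and your proposal does not supply it; as written, the concluding step would fail.
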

\begin{proof}
     We prove the statement under the probability measure $\N_0^{(1)}(\cdot\,|\,W_*<-b)$, which will extend to $\N_0(\cdot\,|\,W_*<-b)$ by scaling.\\
    For every $s\in[0,1]$, we can define the simple geodesic $\Phi_s$ between $\mathbf{p}(s)$ and $x_*$ in the following way. If $s\in[0,s_*]$, for $t\in[0,Z_s-Z_*]$, we set
    \begin{equation*}
        \phi_s(t)=\inf\{s'\geq s\,:\,Z_{s'}=Z_s-t\}
    \end{equation*}
    and
    \begin{equation*}
        \Phi_s(t)=\mathbf{p}(\phi_s(t)).
    \end{equation*}
     Similarly, for $s\in[s_*,1]$, we set 
    \begin{equation*}
        \phi_s(t)=\sup\{s'\leq s\,:\,Z_{s'}=Z_s-t\}
    \end{equation*}
    and
    \begin{equation*}
        \Phi_s(t)=\mathbf{p}(\phi_s(t)).
    \end{equation*}
    Using \eqref{Bound}, we can check that $\Phi_s$ is a geodesic between $\mathbf{p}(s)$ and $x_*$.\\
    For every $n\in\N^*$, set 
    \begin{equation*}
      \alpha_n=\inf_{s\in[0,(T_b-1/n)_+]\cup[(\widehat{T}_b+1/n)\wedge 1,1]}Z_s.  
    \end{equation*}
    We have $\alpha_n>-b$ almost surely, and by definition of a simple geodesic, for every $s\in[0,T_b-1/n]\cup[\widehat{T}_b+1/n,1]$, the simple geodesic $\Phi_s$ coincides with $\Gamma$ before $\Gamma_{(b-\alpha_n)/2}$, meaning that there exists $t$ such that $\Phi_s(t+\cdot)=\Gamma((b-\alpha_n)/2+\cdot)$. Therefore, by cutting the simple geodesic $\Phi_s$, we obtain a geodesic between $\mathbf{p}(s)$ and $\Gamma_b$ that coalesces with $\Gamma$ before hitting its endpoint. This proves the result for every $z\in\s$ of the form $z=\mathbf{p}(s)$ for some $s\in[0,T_b]\cup[\widehat{T}_b,1]$.  \\
    We then need to introduce some notations. For $\delta\in[0,b[$, we set 
    \begin{equation*}
        \eta_\delta(b)=\inf\big\{s>T_b:\mathbf{e}_s=\min_{t\in[T_b,s]}\mathbf{e}_t\text{ and }Z_s=-b+\delta\big\}.
    \end{equation*}
    Informally, $\mathbf{p}(\eta_\delta(b))$ is the first point in the ancestral line of $p_\T(T_b)$ with label $-b+\delta$ that we encounter in the contour exploration. Similarly, we set 
    \begin{equation*}
        \eta'_\delta(b)=\sup\big\{s<T_b:\mathbf{e}_s=\min_{t\in[s,T_b]}\mathbf{e}_t\text{ and }Z_s=-b+\delta\big\}.
    \end{equation*}
    Note that $\mathbf{p}(\eta_\delta(b))=\mathbf{p}(\eta'_\delta(b))$. \\
    Now, suppose that the statement of Lemma \ref{intersection} does not hold for every $z\in\s$ of the form $\mathbf{p}(t)$ with $t\in[T_b,\widehat{T}_b]$ with probability $k>0$. Note that the event $\{D(z,\Gamma(r))<D(z,\Gamma(r+\varepsilon))+\varepsilon\text{ and }D(z,\Gamma(r))<D(z,\Gamma(r-\varepsilon))+\varepsilon\}$ occurs if and only if there is no geodesic from $z$ to $\Gamma_b$ that coalesces with $\Gamma$ before $\Gamma_{b-\varepsilon}$ (or $\Gamma_{b+\varepsilon}$). Hence, by monotone convergence, there exists $\delta,\varepsilon>0$ such that :
    \begin{align}\label{long}
    \p(\{T_{b+\varepsilon}<\infty\}\cap\{\exists z\in\mathbf{p}([\eta_\delta(b),\eta'_\delta(b)]):&D(z,\Gamma(r))<D(z,\Gamma(r+\varepsilon))+\varepsilon\\
    &\text{ and }D(z,\Gamma(r))<D(z,\Gamma(r-\varepsilon))+\varepsilon\})>k/2.\nonumber
    \end{align}
    By \cite[Lemma 5.3]{uniqueness}, for every $0<\delta<b$ and $\kappa>0$, there exists constants $0<\beta<1$ and $C>0$ and an event $\mathcal{H}_{\delta,\kappa}$ such that, for every $0<\varepsilon<1$,
    \begin{align}\label{long1}
     \p(\mathcal{H}_{\delta,\kappa}\cap\{T_{b+\varepsilon}<\infty\}\cap&\{\exists z\in\mathbf{p}([\eta_\delta(b),\eta'_\delta(b)]):\\
     & D(z,\Gamma(r))<D(z,\Gamma(r+\varepsilon))+\varepsilon\text{ and }D(z,\Gamma(r))<D(z,\Gamma(r-\varepsilon))+\varepsilon\})\leq C\varepsilon^\beta.\nonumber
    \end{align}
    Moreover $\lim_{\delta,\kappa\rightarrow0}\p(\mathcal{H}_{\delta,\kappa}^c\cap\{T_b<\infty\})=0$.\\ 
    Then, we can choose $\kappa>0$ (and reduce $\delta$ and $\varepsilon$ if necessary) such that 
    \begin{equation*}
        \p(\mathcal{H}^c_{\delta,\kappa}\cap\{T_{b+\varepsilon}<\infty\})<k/4\quad\text{and}\quad C\varepsilon^\beta<k/4
    \end{equation*}
    (we can reduce $\delta$ if necessary). Thus, using \eqref{long1}, the term in \eqref{long} is bounded above by 
   \[\p(\mathcal{H}^c_{\delta,\kappa}\cap\{T_{b+\varepsilon}<\infty\})+C\varepsilon^\beta<k/2\]
    which gives us a contradiction with \eqref{long}, and concludes the proof. 
\end{proof}
\begin{proposition}\label{intersection2}
Almost surely, conditionally on $\{T_b<\infty\}$, all geodesics to $\Gamma_b$ coalesce with the simple geodesic $\Gamma$ before hitting their endpoint. 
\end{proposition}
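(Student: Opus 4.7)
The plan is to extend any geodesic $\eta$ from $z$ to $\Gamma_b$ into a geodesic from $z$ to $x_*$, and then invoke Le Gall's structural results on geodesics ending at $x_*$ (from \cite{geodesic1}) together with Lemma \ref{intersection}. Throughout I write $\eta^*$ for the particular geodesic from $z$ to $\Gamma_b$ provided by Lemma \ref{intersection}, which coalesces with $\Gamma$ at some $\Gamma_{s_0}$ with $s_0<b$.

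The first step is to show that the triangle inequality is tight on the triple $(z,\Gamma_b,x_*)$: namely,
\[
D(z,x_*) = D(z,\Gamma_b)+D(\Gamma_b,x_*).
\]
Using the identity $D(y,x_*)=Z_y-W_*$ recalled after \eqref{Bound}, and applying it to $\Gamma_{s_0}$ (which lies on the geodesic $\Gamma$ to $x_*$), one checks that $D(z,\Gamma_{s_0})=Z_z+s_0$. Then the concatenation $\eta^*|_{[0,D(z,\Gamma_{s_0})]}\cdot\Gamma|_{[s_0,-W_*]}$ has total length $Z_z+s_0+(-W_*-s_0)=Z_z-W_*=D(z,x_*)$, and since this length also equals $D(z,\Gamma_b)+(-W_*-b)$, the claimed identity follows.

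With this identity in hand, for any geodesic $\eta$ from $z$ to $\Gamma_b$ the concatenation $\tilde\eta:=\eta\cdot\Gamma|_{[b,-W_*]}$ has length $D(z,x_*)$, hence is itself a geodesic from $z$ to $x_*$. By Le Gall's description of geodesics to the distinguished point (Theorem 7.4 of \cite{geodesic1}), $\tilde\eta$ is then a simple geodesic, and in particular it coincides with $\Gamma$ from some joining point $\Gamma_{s^*}$ onwards. Because $\tilde\eta$ already coincides with $\Gamma|_{[b,-W_*]}$ on the interval $[D(z,\Gamma_b),D(z,x_*)]$ by construction, one immediately gets $s^*\leq b$. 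If $s^*<b$, then $\eta$ passes through the point $\Gamma_{s^*}\in\Gamma$ at time $D(z,\Gamma_{s^*})=Z_z+s^*<Z_z+b=D(z,\Gamma_b)$ and follows $\Gamma$ from there up to $\Gamma_b$, which is exactly the desired coalescence.

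The main obstacle is therefore to rule out the degenerate case $s^*=b$, in which $\tilde\eta$ would meet $\Gamma$ exactly at $\Gamma_b$ without any prior coalescence. I would handle this by working at the level of the tree $\T$: the snake-trajectory description of simple geodesics (as used in the proof of Lemma \ref{intersection} via the functions $\phi_s$, $\eta_\delta(b)$, $\eta'_\delta(b)$) shows that a simple geodesic ending at $\Gamma_b$ must reach one of the two preimages $T_b$ or $\widehat T_b$ of $\Gamma_b$ in $\T$. But both of these preimages lie on the ancestral spine already encoding $\Gamma$, so the simple geodesic must in fact agree with $\Gamma$ on a nontrivial interval of labels ending at $-b$; equivalently, $s^*<b$ strictly. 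Combined with the previous steps this gives the result.
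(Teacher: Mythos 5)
Your first step is where the argument breaks down. The identity $D(z,x_*)=D(z,\Gamma_b)+D(\Gamma_b,x_*)$ says that $\Gamma_b$ lies on a geodesic from $z$ to $x_*$, and this is false for a large class of points $z$: take $z=\Gamma_{b'}$ with $b<b'\leq -W_*$ (in particular $z=x_*$), for which $D(z,x_*)=-W_*-b'$ while $D(z,\Gamma_b)+D(\Gamma_b,x_*)=(b'-b)+(-W_*-b)$, and these agree only when $b'=b$; the same failure occurs for every point lying in the region beyond $\Gamma_b$ on the $x_*$ side, whose geodesics to $x_*$ do not pass through $\Gamma_b$. Your derivation of the identity presupposes what is at stake: you take the geodesic $\eta^*$ of Lemma \ref{intersection} to coalesce with $\Gamma$ at some $\Gamma_{s_0}$ with $s_0<b$, but the lemma only asserts coalescence with $\Gamma$ before the endpoint, and for $z=\mathbf{p}(t)$ with $t\in[T_b,\widehat{T}_b]$ the coalescence point is typically $\Gamma_{s_0}$ with $s_0>b$ (the geodesic reaches $\Gamma_b$ from the $x_*$ side). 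Moreover the equality $D(z,\Gamma_{s_0})=Z_z+s_0$ does not follow from $D(y,x_*)=Z_y-W_*$; it is equivalent to $D(z,\Gamma_b)=Z_z+b$, which is exactly the kind of statement you are trying to prove and which is false in general (for such $z$ one only has $D(z,\Gamma_b)\geq|Z_z+b|$, and $Z_z+b$ may even be negative). Consequently the concatenation $\eta\cdot\Gamma|_{[b,-W_*]}$ is in general not a geodesic, and Le Gall's classification of geodesics ending at $x_*$ cannot be invoked; the closing discussion ruling out $s^*=b$ is also only heuristic, but the argument has already failed before that point.

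For comparison, the paper's proof avoids any length computation: arguing by contradiction, if some geodesic $\eta$ from $z$ to $\Gamma_b$ never meets $\Gamma$, one applies Lemma \ref{intersection} at each intermediate point $\eta(t)$, $t\in\,]0,T[$, and replaces the terminal segment of $\eta$ by the coalescing geodesic from $\eta(t)$; this produces infinitely many distinct geodesics between $z$ and $\Gamma_b$, contradicting the almost sure finiteness of the number of geodesics between any two points (Theorem 1.6 of \cite{geodesic2}). If you wish to salvage your approach, you would have to restrict it to points whose simple geodesic to $x_*$ actually passes through $\Gamma_b$ and treat the remaining points separately, at which point the substitution argument above is both shorter and complete.
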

\begin{proof}
    We argue by contradiction. Suppose the statement does not hold with probability $c>0$, and on this event, consider a geodesic $\eta:[0,T]\rightarrow\s$ from some element $z\in\s$ to $\Gamma_b$ that does not coalesce with the simple geodesic from $\rho$. By Lemma \ref{intersection}, for every $t\in]0,T[$, there exists a geodesic from $\eta(t)$ to $\Gamma_b$ that coalesces with the simple geodesic. By substitution, this gives an infinite number of geodesics between $z$ and $\Gamma_b$, which almost surely does not happen by Theorem $1.6$ of \cite{geodesic2}. The result follows. 
\end{proof}
In order to prove our result, we need a uniform control of geodesics from $\Gamma_b$ in the Brownian sphere. This kind of results is already known for geodesics from the root $\rho$ in \cite{geodesic1}. 
\begin{figure}
    \centering
    \includegraphics[scale=0.5]{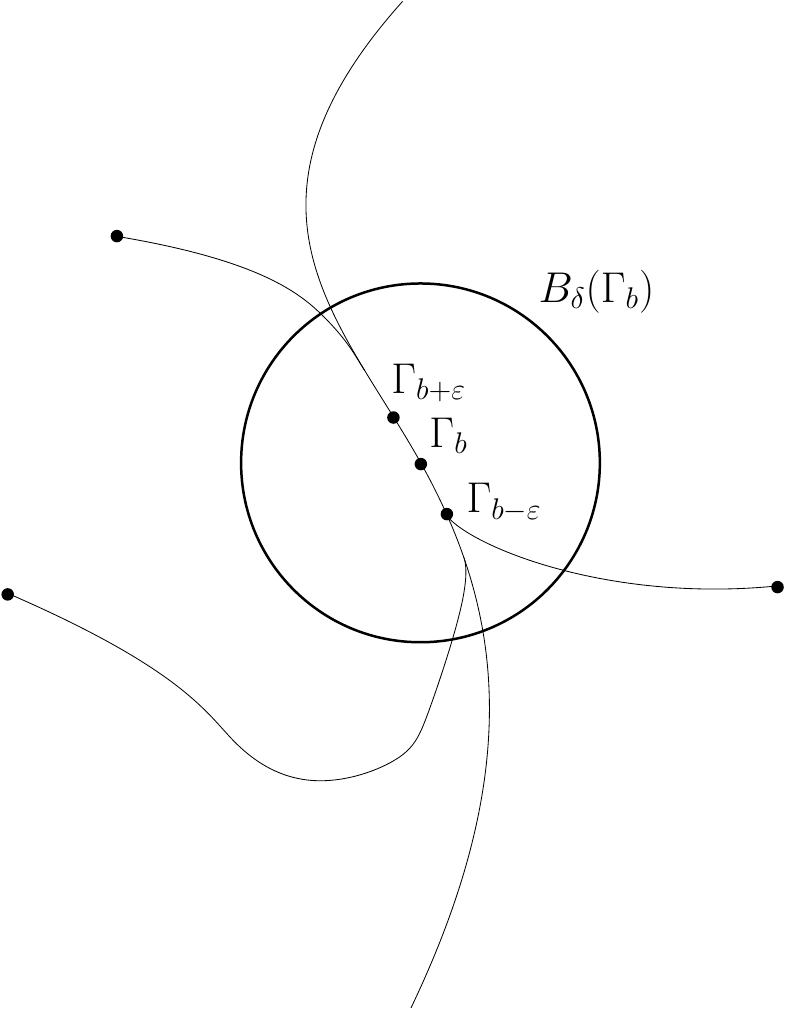}
    \caption{Illustration of the confluence of geodesics described in Proposition \ref{geodesic} ; every curve on this picture represents a geodesic to $\Gamma_b$.}
\end{figure}
\begin{proposition}\label{geodesic}
  Almost surely, for every $\delta>0$, there exists $\varepsilon>0$ such that,  all geodesics from $\Gamma_b$ to a point outside $B_\delta(\Gamma_b)$ coincide with the simple geodesic from $\rho$ on $[0,\varepsilon]$. 
\end{proposition}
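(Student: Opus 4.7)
The plan is to argue by contradiction, combining a compactness extraction with Proposition \ref{intersection2}. Suppose the conclusion fails: there exist $\delta>0$ and sequences of geodesics $\eta_n:[0,T_n]\to\s$ from $\Gamma_b$ to points $z_n\in\s$, with $T_n=D(\Gamma_b,z_n)\geq\delta$, such that $\eta_n$ does not coincide with $\Gamma$ on $[0,1/n]$. Equivalently, there is $s_n\in(0,1/n]$ with $\eta_n(s_n)\notin\Gamma$.

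First, each $\eta_n|_{[0,\delta]}$ is a $1$-Lipschitz map into the compact space $(\s,D)$. By Arzelà-Ascoli, I would extract a subsequence along which $z_n\to z_\infty$ (with $D(\Gamma_b,z_\infty)\geq\delta$) and $\eta_n|_{[0,\delta]}\to\eta_\infty$ uniformly, where $\eta_\infty:[0,\delta]\to\s$ is a geodesic starting at $\Gamma_b$. A diagonal argument further lets me extend $\eta_\infty$ to a full geodesic from $\Gamma_b$ to $z_\infty$ of length $D(\Gamma_b,z_\infty)$.

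Next, reversing the orientation of $\eta_\infty$ and applying Proposition \ref{intersection2} (``all geodesics to $\Gamma_b$ coalesce with $\Gamma$ before hitting their endpoint''), I obtain $\alpha\in(0,\delta]$ such that $\eta_\infty|_{[0,\alpha]}$ is a sub-arc of $\Gamma$ emanating from $\Gamma_b$. Up to symmetry, I may assume $\eta_\infty(t)=\Gamma_{b-t}$ for $t\in[0,\alpha]$.

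The main obstacle, and the technical core of the proof, is to convert this qualitative information about $\eta_\infty$ into uniform rigidity for $\eta_n$: I must conclude that for large $n$, $\eta_n|_{[0,\alpha/2]}$ actually coincides with $\Gamma|_{[b-\alpha/2,b]}$ (reversed), contradicting the existence of $s_n\in(0,1/n]$ with $\eta_n(s_n)\notin\Gamma$. The uniform convergence $\eta_n\to\eta_\infty$ only yields that $\eta_n$ stays close to $\Gamma$ initially, not that it lies on $\Gamma$. To bridge this gap, I plan to invoke the local coupling between the Brownian sphere near $\Gamma_b$ and $\BP$ near $\overline{\rho}_\infty$ provided by Propositions \ref{localisation}, \ref{f} and \ref{isométrie}: for any fixed $r>0$, with probability close to one, $B_r(\s,\Gamma_b)$ is isometric to $B_r(\BP,\overline{\rho}_\infty)$, and under this isometry $\Gamma|_{[b-r,b+r]}$ corresponds to $\Gamma_\infty|_{[-r,r]}$. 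Transferring the $\eta_n$ to $\BP$, the uniqueness of the infinite bigeodesic in $\BP$ (Theorem \ref{bigeodesic}) forces geodesic rays from $\overline{\rho}_\infty$ that are close to $\Gamma_\infty$ to coincide with it on a uniform initial segment. Returning to $\s$ yields the contradiction. An alternative route, purely inside $\s$, is to apply Proposition \ref{intersection2} a second time to $\eta_n$ viewed as a geodesic from $\eta_n(\alpha)$ (which converges to $\Gamma_{b-\alpha}$) to $\Gamma_b$, and to combine this with a net argument on the compact sphere $\{m\in\s:D(\Gamma_b,m)=\alpha\}$ to get a uniform lower bound on the coalescence time.
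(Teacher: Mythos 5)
Your setup (contradiction, Arzel\`a--Ascoli extraction of a limit geodesic $\eta_\infty$ from $\Gamma_b$, and Proposition \ref{intersection2} to get an initial segment $[0,\alpha]$ on which $\eta_\infty$ coincides with $\Gamma$) matches the paper's proof. But you have correctly identified the crux --- upgrading uniform convergence $\eta_n\to\eta_\infty$ to the statement that $\eta_n$ actually meets $\Gamma$ near $\Gamma_b$ --- and neither of your proposed bridges closes it. The paper closes it with an external rigidity result, Theorem 1.1 of \cite{geodesic2}: when geodesics converge (uniformly, hence in Hausdorff distance), the interior portion $\xi([\varepsilon,T-\varepsilon])$ of the limit geodesic is contained, as a set, in the approximating geodesics $\xi_n$ for $n$ large. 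This immediately forces $\Gamma_{b-\alpha/2}$ or $\Gamma_{b+\alpha/2}$ to lie on $\xi_n$, contradicting the assumption that $\xi_n$ leaves $\Gamma$ before time $1/n$. Nothing in your argument supplies this property, and it is genuinely nontrivial (it fails in general length spaces), so the proof as written does not go through.

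Your first bridge is moreover circular within this paper: Theorem \ref{bigeodesic} is proved via Theorems \ref{coalesce} and \ref{rays}, and Theorem \ref{rays} itself invokes Proposition \ref{geodesic}, the statement you are proving. Even ignoring the circularity, uniqueness of the infinite bigeodesic in $\BP$ says nothing about \emph{finite} geodesics from $\overline{\rho}_\infty$ to points at distance $\geq\delta$ (these are neither rays nor bigeodesics), and the coupling of Proposition \ref{coupling} only holds with probability $1-\delta$, whereas the claim is almost sure. Your second, ``purely inside $\s$'' alternative is also insufficient as stated: Proposition \ref{intersection2} gives, for each starting point, \emph{some} positive coalescence length, but with no lower semicontinuity in the starting point; a net on the sphere $\{m:D(\Gamma_b,m)=\alpha\}$ cannot produce a uniform lower bound without precisely the geodesic-stability statement of \cite{geodesic2} (indeed the paper's Corollary \ref{geodesic2}, which is the uniform version you are after, is itself proved using that theorem). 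To repair the proof, replace both bridges by a direct appeal to Theorem 1.1 of \cite{geodesic2} applied to the convergent subsequence $\eta_n\to\eta_\infty$.
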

\begin{proof}
    We argue by contradiction. If the statement does not hold, then we can find a sequence $(z_n)_{n\in\N}\in\s\cap B_\delta(\Gamma_b)^c$ and a sequence of paths $(\xi_n)_{n\in \N}$ such that for every $n\in\N$, $\xi_n$ is a geodesic between $\Gamma_b$ and $z_n$ such that $\Gamma_{b+1/n}\notin\xi_n([0,D(\Gamma_b,z_n)])$ and $\Gamma_{b-1/n}\notin\xi_n([0,D(\Gamma_b,z_n)])$ (which means that the geodesic $\xi$ leaves $\Gamma$ before time $1/n$). By compactness, we can suppose that $(z_n)_{n\in\N}$ converges toward $z\in\s$ such that $\D(z,\Gamma_b)\geq\delta$. Let 
    \begin{equation*}
      \alpha=\inf\{t\geq 0:\text{there exists a geodesic $\gamma$ from $\Gamma_b$ to $z$ such that $\gamma(t)\neq \Gamma(b+t)$ and $\gamma(t)\neq \Gamma(b-t)$}\}.  
    \end{equation*}
    By Proposition \ref{intersection2} and because there is only a finite number of geodesics between $\Gamma_b$ and $z$, $\alpha>0$ a.s. Now, note that the duration of the geodesic $(\xi_n)_{n\in \N}$ is bounded and that there are $1$-Lipschitz functions. Therefore, we can extract a subsequence that converges uniformly toward a random curve $\xi$. By assumption on the sequence $(z_n)_{n\in\N}$, $\xi$ must be a geodesic between $\Gamma_b$ and $z$. Now, because the subsequence of geodesic converges uniformly, it also converges for the Hausdorff distance. Therefore, by Theorem $1.1$ of \cite{geodesic2}, we have $\xi([\varepsilon,D(\Gamma_b,z)-\varepsilon])\subset\xi_n$ for $n$ large enough. But this implies that, for $n$ large enough, $\Gamma_{b-\alpha/2}$ or $\Gamma_{b+\alpha/2}$ belongs to $\xi_n([0,D(\Gamma_b,z_n)])$, which give us a contradiction if $n\geq\frac{1}{\alpha}$. This concludes the proof. 
\end{proof}
\begin{theorem}\label{rays}
   Almost surely, the only geodesic rays from $\overline{\rho}_\infty$ are $\Gamma^+_\infty$ and $\Gamma^-_\infty$. 
\end{theorem}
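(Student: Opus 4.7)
The plan is to promote the confluence of geodesics in the Brownian sphere (Proposition~\ref{geodesic}) to a confluence statement at the root $\overline{\rho}_\infty$ of $\overline{\mathcal{BP}}$ via the coupling of Theorem~\ref{convergence}, then to use the translation invariance along $\Gamma_\infty$ from Corollary~\ref{translation} to propagate this confluence along the bigeodesic, and finally to bootstrap into a global coincidence statement.

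\textbf{Step 1 (Local confluence at $\overline{\rho}_\infty$).} Fix $R>0$ and a small $\delta_0>0$. By Proposition~\ref{coupling}, for $\lambda$ large enough we can couple $\lambda\cdot\mathcal{S}$ under $\N_0(\cdot\,|\,W_*<-b)$ with $\overline{\mathcal{BP}}$ so that $B_R(\lambda\cdot\mathcal{S},\Gamma_b)$ and $B_R(\overline{\mathcal{BP}})$ coincide as pointed metric spaces with probability at least $1-\delta_0$. On this event, the restriction to $[0,R]$ of any geodesic ray $\gamma$ from $\overline{\rho}_\infty$ corresponds to a length--$R$ path from $\Gamma_b$ in $\lambda\cdot\mathcal{S}$ which realises the distance between its endpoints, hence a geodesic segment of $\lambda\cdot\mathcal{S}$. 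Applying Proposition~\ref{geodesic} with some $\delta\in(0,R)$, this geodesic coincides with the simple geodesic $\Gamma$ on some initial interval of random length $\varepsilon>0$; since the two directions of $\Gamma$ starting from $\Gamma_b$ correspond, under the coupling, to $\Gamma_\infty^+$ and $\Gamma_\infty^-$, we deduce that $\gamma|_{[0,\varepsilon]}$ coincides with $\Gamma_\infty^+|_{[0,\varepsilon]}$ or $\Gamma_\infty^-|_{[0,\varepsilon]}$. Letting $\delta_0\to 0$ and $R\to\infty$ along a countable sequence gives, almost surely, that every geodesic ray from $\overline{\rho}_\infty$ coincides with $\Gamma_\infty^+$ or $\Gamma_\infty^-$ on some initial interval of positive length.

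\textbf{Step 2 (Extension along $\Gamma_\infty$).} By Corollary~\ref{translation}, for each fixed $s\in\mathbb{R}$ the rerooted space $(\overline{\mathcal{BP}},\overline{D}_\infty,\Gamma_\infty(s+\cdot))$ has the same law as $(\overline{\mathcal{BP}},\overline{D}_\infty,\Gamma_\infty)$, so the confluence property of Step 1 holds almost surely at each $\Gamma_\infty(s)$. A countable union over $s\in\mathbb{Q}$ shows that, almost surely, the confluence property holds at $\Gamma_\infty(q)$ for every rational $q$.

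\textbf{Step 3 (Bootstrap).} Let $\gamma$ be an arbitrary geodesic ray from $\overline{\rho}_\infty$. By Step 1 we may assume $\gamma|_{[0,\varepsilon_0]}=\Gamma_\infty^+|_{[0,\varepsilon_0]}$ for some $\varepsilon_0>0$ and put $T=\sup\{t\geq 0:\gamma(s)=\Gamma_\infty^+(s)\text{ for all }s\in[0,t]\}>0$. Assume for contradiction that $T<\infty$; by continuity $\gamma(T)=\Gamma_\infty(T)$. Pick a rational $q\in(0,T)$ and define $\tilde\gamma:\R_+\to\overline{\mathcal{BP}}$ by $\tilde\gamma(t)=\Gamma_\infty(q+t)$ on $[0,T-q]$ and $\tilde\gamma(t)=\gamma(q+t)$ on $[T-q,\infty)$. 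Using $\overline{D}_\infty(\overline{\rho}_\infty,\gamma(s))=s$, $\overline{D}_\infty(\overline{\rho}_\infty,\Gamma_\infty(r))=|r|$ and the triangle inequality, one checks that $\overline{D}_\infty(\Gamma_\infty(q),\tilde\gamma(t))=t$ for every $t\geq 0$, so $\tilde\gamma$ is a geodesic ray from $\Gamma_\infty(q)$ which, by construction, starts by following $\Gamma_\infty(q+\cdot)$. Applying confluence at $\Gamma_\infty(q)$ (Step 2) and discarding the alternative $\Gamma_\infty(q-\cdot)$ by uniqueness of the initial direction, we obtain some random $\varepsilon(q)>0$ such that $\tilde\gamma(t)=\Gamma_\infty(q+t)$ for all $t\in[0,\varepsilon(q)]$. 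If $\varepsilon(q)>T-q$, then for $s\in(T,q+\varepsilon(q)]$ we get $\gamma(s)=\Gamma_\infty^+(s)$, contradicting the maximality of $T$.

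\textbf{Main obstacle.} The delicate point is ensuring that a rational $q<T$ can be chosen with $\varepsilon(q)>T-q$, i.e.\ that the confluence radius does not collapse as $q\uparrow T$ along $\mathbb{Q}$. To obtain this I would combine Proposition~\ref{localisation} with the scale invariance of $\overline{\mathcal{BP}}$ to express $\varepsilon(q)$ in terms of the local data $(A_{L\varepsilon}^\infty,\widehat A_{L\varepsilon}^\infty)$ around $\Gamma_\infty(q)$. By the construction of Section~\ref{presentation} these local data depend continuously on $q$ through the underlying Bessel process and Poisson point measures, which forbids $\varepsilon(q)$ from tending to $0$ along rational sequences approaching $T$. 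This uniform lower bound, together with the a.s.\ positivity of $\varepsilon(q)$ at every rational $q$, closes the contradiction and completes the proof.
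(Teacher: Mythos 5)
Your Step 1 is exactly the paper's first step: couple $B_R(\lambda\cdot\s,\Gamma_b)$ with $B_R(\BP)$ via Proposition \ref{coupling} and invoke the uniform confluence of Proposition \ref{geodesic} to get that, almost surely, there is a random $\varepsilon>0$ such that \emph{every} geodesic ray from $\overline{\rho}_\infty$ coincides with $\Gamma^+_\infty$ or $\Gamma^-_\infty$ on $[0,\varepsilon]$. Where you diverge is in how to upgrade this local statement to a global one, and this is where there is a genuine gap. Your bootstrap needs, for a geodesic ray $\gamma$ whose deviation time $T$ from $\Gamma^+_\infty$ is finite, a rational $q<T$ with confluence radius $\varepsilon(q)>T-q$. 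The rerooting invariance of Corollary \ref{translation} only gives, for each \emph{fixed} rational $q$, that $\varepsilon(q)>0$ almost surely; it gives no locally uniform lower bound near the \emph{random} time $T$, and your proposed fix --- that the local data $(A^\infty_{L\varepsilon},\widehat{A}^\infty_{L\varepsilon})$ ``depend continuously on $q$'' and therefore forbid $\varepsilon(q)\to0$ --- is not a valid argument. Continuity of the encoding Bessel process and Poisson measures does not rule out exceptional (non-rational, random) points of $\Gamma_\infty$ with degenerate local behaviour; indeed the whole paper is about points that are atypical despite the continuity of the encodings, and the paper itself never claims the confluence property simultaneously for all $s\in\R$, only for rational $s$. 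So as written, the contradiction in Step 3 does not close.

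The paper's own conclusion is much shorter and sidesteps this issue entirely: let $\mathcal{R}$ be the set of geodesic rays from $\overline{\rho}_\infty$ and let $\tau$ be the first time at which some $\omega\in\mathcal{R}$ deviates from both $\Gamma^+_\infty$ and $\Gamma^-_\infty$. Your Step 1 shows $\tau>0$ almost surely, and the exact scale invariance of $\BP$ gives $\lambda\cdot\tau\overset{(d)}{=}\tau$ for every $\lambda>0$, which forces $\tau=+\infty$ almost surely. Note also that the rational-rerooting strategy you propose is essentially how the paper later proves Theorems \ref{coalesce} and \ref{bigeodesic}, but there it works precisely because the \emph{global} statement of Theorem \ref{rays} is already available at each $\Gamma_\infty(q)$, $q\in\Q$; trying to run it with only the local confluence at rational points, as you do, is what creates the uncontrolled limit at $T$. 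Replacing your Steps 2--3 by the scaling argument on $\tau$ repairs the proof.
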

\begin{proof}
    We proceed as in Proposition $15$ of \cite{brownianplane}. By Theorem \ref{coupling}, for every $\delta>0$, we can find $\lambda$ large enough so that with probability at least $1-\delta$, the balls $B_1(\lambda\cdot\s,\Gamma_b)$ and $B_1(\BP ,\rho)$ are isometric. By Proposition \ref{geodesic}, we can find $\varepsilon>0$, which is random, such that all geodesic paths from $\Gamma_b$ to a point outside $B_1(\lambda\cdot\s,\Gamma_b)$ coincide with $\Gamma$ on $[0,\varepsilon]$. Let $\mathcal{R}$ denote the set of all geodesic rays from $\overline{\rho}_\infty$ and set 
    \begin{equation*}
        \tau=\inf\{t\geq 0:\exists\omega\in\mathcal{R},\,\omega(t)\neq\Gamma^+_\infty(t)\text{ or }\omega(t)\neq\Gamma^-_\infty(t)\}.
    \end{equation*}
    From the previous considerations, $\tau>0$ a.s. But, by the scaling invariance of $\BP $, $\lambda\cdot\tau$ has the same distribution as $\tau$ for $\lambda>0$. Therefore, $\tau=+\infty$ a.s, which concludes the proof. 
\end{proof}
We can identify the behavior of every geodesic ray in $\BP $, by adapting the previous proof and using the invariance under rerooting alongside $\Gamma_\infty$ of Corollary \ref{translation}. We start by generalizing Corollary \ref{geodesic}. 
\begin{figure}
    \centering
    \includegraphics[scale=0.5]{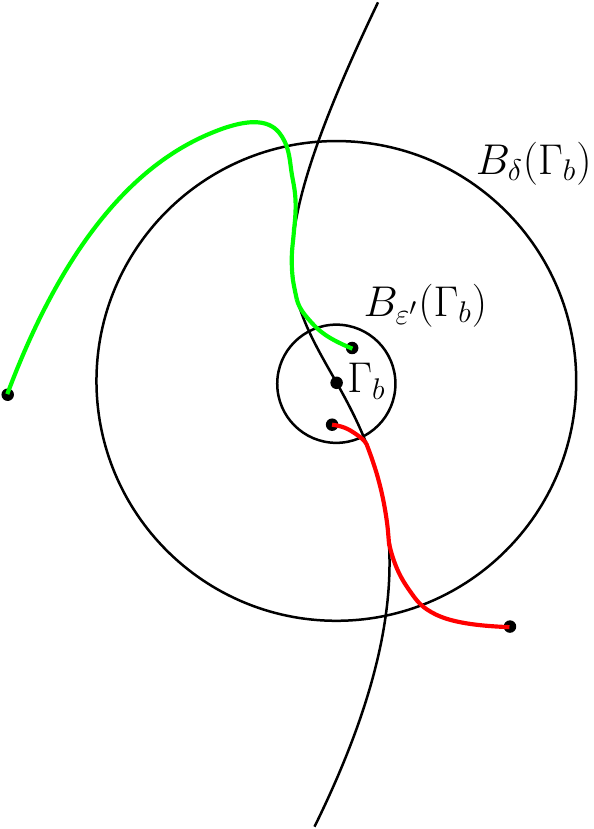}
    \caption{Illustration of Corollary \ref{geodesic2}.}
\end{figure}
\begin{corollary}\label{geodesic2}
     For every $\delta>0$, there exist two random variables $\varepsilon>0$ and $\varepsilon'>0$ such that, almost surely, all geodesics from a point of $B_{\varepsilon'}(\Gamma_b)$ to a point outside $B_\delta(\Gamma_b)$ coincide with $\Gamma$ on an interval of length at least $\varepsilon$. 
\end{corollary}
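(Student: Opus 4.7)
The plan is to argue by contradiction and reduce to Proposition \ref{geodesic}. Working on the full-measure event where that proposition holds, I first apply it with $\delta/2$ in place of $\delta$ to obtain a random $\varepsilon_1>0$ such that every geodesic from $\Gamma_b$ to a point outside $B_{\delta/2}(\Gamma_b)$ coincides with $\Gamma$ on a time interval of length at least $\varepsilon_1$ starting from $0$. I claim the corollary then holds with $\varepsilon=\varepsilon_1/3$ and a random $\varepsilon'>0$ that will be supplied by the contradiction argument.

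Suppose the claim fails. Then there exist sequences $z_n\to\Gamma_b$, $y_n\in\s$ with $D(y_n,\Gamma_b)\geq\delta$, and geodesic paths $\eta_n\colon[0,D(z_n,y_n)]\to\s$ from $z_n$ to $y_n$ such that no subinterval of length $\varepsilon_1/3$ of $\eta_n$ coincides with $\Gamma$. The paths $\eta_n$ are $1$-Lipschitz of uniformly bounded length on a compact metric space, so by Arzela--Ascoli (after linearly reparametrising them on $[0,1]$) I can extract a subsequence along which $y_n\to y^\infty$ with $D(y^\infty,\Gamma_b)\geq\delta$ and $\eta_n$ converges uniformly. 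Since $D(z_n,y_n)\to D(\Gamma_b,y^\infty)$ by continuity of the distance, the limiting curve (reparametrised by arc length) is a genuine geodesic $\eta^\infty$ from $\Gamma_b$ to $y^\infty$.

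Because $D(y^\infty,\Gamma_b)\geq\delta>\delta/2$, Proposition \ref{geodesic} applies to $\eta^\infty$, so $\eta^\infty$ coincides with $\Gamma$ on a time interval of length at least $\varepsilon_1$ starting from $0$. Now invoking Theorem $1.1$ of \cite{geodesic2} exactly as in the proof of Proposition \ref{geodesic}, the uniform convergence of the geodesics forces the image of $\eta_n$ to contain the compact middle portion $\eta^\infty([\varepsilon_1/3,2\varepsilon_1/3])$ for all $n$ large enough; but this set is a subarc of $\Gamma$ of length $\varepsilon_1/3$. Since both $\eta_n$ and $\Gamma$ are unit-speed geodesics, $\eta_n$ must traverse this subarc on a time interval of length $\varepsilon_1/3$, contradicting the defining property of the sequence.

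The main obstacle is the standard one of making the extraction of the limit geodesic fully rigorous: the starting points $z_n$ move, the lengths $D(z_n,y_n)$ vary, and the stability statement from \cite{geodesic2} must be invoked in a setting where neither endpoint is fixed. However this is exactly the situation already treated in the proof of Proposition \ref{geodesic}, so the argument transfers verbatim and no genuinely new input is required.
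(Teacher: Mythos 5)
Your proof is correct and follows essentially the same route as the paper: argue by contradiction, extract a limiting geodesic from $\Gamma_b$ via Arzel\`a--Ascoli, note that it coincides with $\Gamma$ on an initial interval, and invoke Theorem 1.1 of \cite{geodesic2} to force the approximating geodesics to contain a subarc of $\Gamma$, yielding the contradiction. The only cosmetic difference is that you pin down $\varepsilon$ in advance via Proposition \ref{geodesic} (with $\delta/2$), whereas the paper obtains the initial coincidence of the limit geodesic from Proposition \ref{intersection2} and lets both $\varepsilon$ and $\varepsilon'$ come out of the contradiction; both versions work.
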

\begin{proof}
    We argue by contradiction. If the statement does not hold, we can find sequences $(x_k)_{k\in\N}$ and $(y_k)_{k\in\N}$, such that for every $k\in\N$, $D(\Gamma_b,x_k)<\frac{1}{k},D(\Gamma_b,y_k)>\delta$, and there exists a geodesic $\gamma_k$ between $x_k$ and $y_k$ that does not coincide with $\Gamma$ on any interval of length $\frac{1}{k}$. By the Arzela-Ascoli theorem, we can suppose that the family $(\gamma_k)_{k\in\N}$ converges uniformly toward a geodesic $\gamma$ between $\Gamma_b$ and some point $y\in\s$, such that $D(\Gamma_b,y)\geq\delta$, and that $d(\gamma,\gamma_k)<\frac{1}{k}$. \\
    By Proposition \ref{intersection2}, we know that $\gamma$ coincides with $\Gamma$ on some interval $[0,\alpha]$. By Theorem $1.1$ of \cite{geodesic2}, we know that $\gamma([\frac{1}{k},T-\frac{1}{k}])\subseteq\gamma_k$ for $k$ large enough. But this leads to a contradiction for $\alpha-\frac{1}{k}>\frac{1}{k}$, which concludes the proof.
     \end{proof}
 \begin{theorem}\label{coalesce}
    Almost surely, every geodesic ray $\omega$ of $\BP $ coalesces with $\Gamma_\infty$.
\end{theorem}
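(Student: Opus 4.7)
The plan is to adapt the argument of Theorem~\ref{rays}, replacing Proposition~\ref{geodesic} by its stronger form Corollary~\ref{geodesic2}, and exploiting the translation invariance along $\Gamma_\infty$ provided by Corollary~\ref{translation}. First, via the coupling of Theorem~\ref{convergence}, I will transfer Corollary~\ref{geodesic2} from the Brownian sphere to $\BP$: almost surely, for every $\delta>0$ there exist random $\varepsilon,\varepsilon'>0$ such that every geodesic in $\BP$ joining a point of $B_{\varepsilon'}(\overline{\rho}_\infty)$ to a point outside $B_\delta(\overline{\rho}_\infty)$ coincides with $\Gamma_\infty$ on an interval of length at least $\varepsilon$. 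Using the scale invariance of $\BP$ and Corollary~\ref{translation}, the same statement transports to any fixed point $\Gamma_\infty(s)$, and by a countable intersection it holds simultaneously at every $\Gamma_\infty(s)$ with $s\in\Q$.

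Given this local confluence, I will first show that if a geodesic ray $\omega$ meets $\Gamma_\infty$ at some time $t_0$, then $\omega|_{[t_0,\infty)}$ is necessarily one of the two half-rays of $\Gamma_\infty$ based at $\omega(t_0)$. Writing $\omega(t_0)=\Gamma_\infty(s_0)$, this reduces after rerooting (Corollary~\ref{translation}) to Theorem~\ref{rays}; the fact that $s_0$ is random rather than fixed is handled by approximating $s_0$ by rationals and invoking the rational-uniform local confluence statement together with compactness of closed balls in $\BP$.

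The main obstacle is the second part: proving that every geodesic ray $\omega$ from an arbitrary starting point $x\in\BP$ must meet $\Gamma_\infty$ in finite time. My plan is, for each large $T$, to pick a geodesic $\eta_T$ in $\BP$ from $x$ to $\Gamma_\infty(T)$. By the local confluence applied at $\Gamma_\infty(T)$, the endpoint portion of $\eta_T$ runs along $\Gamma_\infty$ for some positive length, so in particular $\eta_T$ meets $\Gamma_\infty$. An Arzel\`a--Ascoli extraction on compact balls yields a subsequential limit $\omega^\star$ which is a geodesic ray from $x$ and which, by construction, meets $\Gamma_\infty$. Deducing from this that the original $\omega$ also meets $\Gamma_\infty$ is the delicate step: if it did not, then $\omega$ and $\omega^\star$ would be two distinct geodesic rays issued from $x$, and combining the local confluence of the first paragraph (applied at the point where $\omega^\star$ joins $\Gamma_\infty$) with the finiteness-of-geodesics property transferred from the Brownian sphere via Theorem~\ref{convergence} should produce a contradiction by exhibiting too many geodesics between $x$ and a common far-away target on $\Gamma_\infty$.
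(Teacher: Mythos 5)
Your steps 1 and 2 are in the spirit of the paper (transfer of Corollary \ref{geodesic2} through the coupling of Theorem \ref{convergence}, then Theorem \ref{rays} and Corollary \ref{translation} applied at rational points of $\Gamma_\infty$), but the heart of the theorem --- showing that an \emph{arbitrary} geodesic ray $\omega$ actually meets $\Gamma_\infty$ --- is not established by your third paragraph. Two concrete problems. First, the Arzel\`a--Ascoli limit $\omega^\star$ of the geodesics $\eta_T$ from $x$ to $\Gamma_\infty(T)$ need not meet $\Gamma_\infty$ at all: the merging of $\eta_T$ with $\Gamma_\infty$ provided by the local confluence takes place in a fixed-size neighbourhood of the far endpoint $\Gamma_\infty(T)$, and these merging points escape to infinity as $T\to\infty$; nothing forces $\eta_T$ to touch $\Gamma_\infty$ at bounded distance from $x$, so the touching locus can vanish in the limit. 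Second, even granting that some ray $\omega^\star$ from $x$ meets $\Gamma_\infty$, exhibiting two distinct geodesic rays from $x$ is not a contradiction: the paper itself proves (last proposition of Section \ref{section 6}) that almost surely \emph{every} point of $\BP$ admits at least two geodesic rays. Nor does the pair $(\omega,\omega^\star)$ by itself yield infinitely many geodesics between two fixed points; to contradict the finiteness of geodesics one needs a substitution argument in the style of the proof of Proposition \ref{intersection2}, i.e.\ a confluence statement anchored at points of $\omega$ itself, which is precisely what is missing. In short, the scenario of a ray staying at positive distance from $\Gamma_\infty$ forever is not ruled out by your argument.

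The paper proceeds differently at this point: for each $r$ it considers the whole family $\mathcal{R}_r$ of geodesic rays started in $B_r(\BP)$ and shows, directly from Corollary \ref{geodesic2} combined with the coupling of Theorem \ref{convergence}, that the uniform quantity $\tau_r=\sup\{t:\ \text{every }\omega\in\mathcal{R}_r\text{ coincides with }\Gamma_\infty\text{ on an interval of length at least }t\}$ is a.s.\ strictly positive; only then does it apply your step 2 (a ray coinciding with $\Gamma_\infty$ on a nontrivial interval passes through some $\Gamma_\infty(q)$, $q\in\Q$, and Theorem \ref{rays} plus Corollary \ref{translation} identify its tail). Note that this also removes the need for your rational approximation of the random parameter $s_0$, which as written is itself shaky: the confluence radii $\varepsilon'_q$ at rational anchor points are random, and the union of the intervals $(q-\varepsilon'_q,q+\varepsilon'_q)$ need not cover a random $s_0$ that is not independent of the space, so ``approximating $s_0$ by rationals'' plus compactness does not suffice. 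The missing ingredient in your proposal is thus a uniform-confluence statement valid for all rays started in a fixed ball, not just for rays started near $\overline{\rho}_\infty$ or at rational points of $\Gamma_\infty$.
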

    \begin{proof}
        Fix $r>0$, and set :
        \begin{equation*}
            \mathcal{R}_r:=\{\omega:\omega\text{ is a geodesic ray starting in }B_r(\BP )\}
        \end{equation*} 
        and 
        \begin{equation*}
            \tau_r:=\sup\{t:\text{every geodesic }\omega\in\mathcal{R}_r\text{ coincides with $\Gamma_\infty$ on some interval of length at least $t$}\}.
        \end{equation*}
        By Corollary \ref{geodesic2} and the coupling result of Theorem \ref{convergence}, we know that $\tau_r>0$ a.s. But by Theorem \ref{rays} and Corollary \ref{translation}, almost surely, for every $q\in\Q$, the only infinite geodesics from $\Gamma_\infty(q)$ are $(\Gamma_\infty(q+t))_{t\geq 0}$ and $(\Gamma_\infty(q-t))_{t\geq 0}$. Therefore, because every $\omega\in\mathcal{R}_r$ passes through one of those points, it must coincide with $\Gamma$ after some time. This being true almost surely for every $r\geq 0$, the result follows.
    \end{proof}
We are now able to prove Theorem \ref{bigeodesic}.
    \begin{proof}[Proof of Theorem \ref{bigeodesic}]   
    Consider a bigeodesic $\gamma$ in $\BP$, and set $\gamma_+:=\gamma|_{\R_+}$. By Theorem \ref{coalesce}, there exists $t\geq 0$ such that $\gamma_+$ coalesces with $\Gamma_\infty$ before time $t$. Without loss of generality, suppose that $\gamma_+|_{[t,\infty[}=\Gamma_\infty|_{[r,\infty[}$ for some $r\in\R$. Then, for every $q\in\Q$ such that $q\geq r$, the path 
    \begin{equation*}
        \gamma_q(s)=
        \begin{cases}
            \Gamma_\infty(q-s)&\quad\text{if }0\leq s\leq q-r\\
            \gamma(t-(s-q+r))&\quad\text{if }s\geq q-r
        \end{cases}
    \end{equation*}
    is a geodesic ray from $\Gamma_q$. By Theorem \ref{rays} and Corollary \ref{translation}, we know that for every $q\in\Q$, $\Gamma_\infty(q)$ has the same role as $\overline{\rho}_\infty$, which implies that the only geodesic rays from $\Gamma_\infty(q)$ are $\Gamma_\infty(q+\cdot)$ and $\Gamma_\infty(q-\cdot)$. Therefore, almost surely, for every $q\in\Q$ such that $q\geq r$, 
    \[\gamma_q(s)=\Gamma_\infty(q-s)\quad\text{for every }s\geq 0.\]
    In particular, 
    \[\gamma(s)=\Gamma_\infty(s+r-t)\quad\text{for every }s\in\R,\]
    which concludes.
    \end{proof}
    \begin{proposition}
        Almost surely, there are at least two geodesic rays starting from every point $x\in\BP $.
    \end{proposition}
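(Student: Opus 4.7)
My plan is to split on whether $x$ lies on the bigeodesic $\Gamma_\infty$. If $x=\Gamma_\infty(t)$ for some $t\in\R$, the two half-rays $s\mapsto\Gamma_\infty(t+s)$ and $s\mapsto\Gamma_\infty(t-s)$ are automatically two distinct geodesic rays from $x$, purely from the bigeodesic property of $\Gamma_\infty$. The substantive case is therefore $x\notin\Gamma_\infty$, where I would construct two geodesic rays from $x$ that coalesce with $\Gamma_\infty$ in opposite directions along it, and are thus distinct.

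For this, I would set $d_0=\overline{D}_\infty(x,\overline{\rho}_\infty)$ and, for each $n\geq 1$, invoke Hopf-Rinow in the boundedly compact length space $\BP$ to choose a unit-speed geodesic $\gamma_n^+:[0,L_n^+]\to\BP$ from $x$ to $\Gamma_\infty(n)$, with $n-d_0\leq L_n^+\leq n+d_0$ by the triangle inequality. Each $\gamma_n^+$ is $1$-Lipschitz and its image lies in a compact ball around $\overline{\rho}_\infty$, so a standard diagonal Arzelà-Ascoli extraction yields a subsequence converging uniformly on compact subsets of $[0,\infty)$ to a geodesic ray $\omega^+:[0,\infty)\to\BP$ with $\omega^+(0)=x$. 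I would construct $\omega^-$ analogously from geodesic segments from $x$ to $\Gamma_\infty(-n)$. By Theorem \ref{coalesce}, each $\omega^\pm$ coalesces with $\Gamma_\infty$: there exist $T^\pm\geq 0$, signs $\epsilon^\pm\in\{-1,+1\}$, and constants $c^\pm\in\R$ such that $\omega^\pm(s)=\Gamma_\infty(\epsilon^\pm s+c^\pm)$ for every $s\geq T^\pm$.

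The hard part will be to check that $\omega^+$ and $\omega^-$ coalesce in opposite directions ($\epsilon^+=+1$ and $\epsilon^-=-1$), which is what forces $\omega^+\neq\omega^-$. I would argue by contradiction using the labels, which descend to a $1$-Lipschitz function on $\BP$ by the bound \eqref{Bound}. If $\epsilon^+=-1$, then for any fixed $s>T^+$ one has $Z_{\omega^+(s)}=c^+-s$, and uniform convergence on $[0,s]$ along the extracting subsequence $(\gamma_{n_k}^+)$ would give $Z_{\gamma_{n_k}^+(s)}\to c^+-s$. Combining $\overline{D}_\infty(\gamma_{n_k}^+(s),\Gamma_\infty(n_k))=L_{n_k}^+-s$ with \eqref{Bound} and the fact that $\Gamma_\infty(n_k)$ has label $n_k$ would yield, for $k$ large,
\[ L_{n_k}^+-s \;\geq\; n_k-Z_{\gamma_{n_k}^+(s)}. \]
Combining with the upper bound $L_{n_k}^+\leq n_k+d_0$ and letting $k\to\infty$ at fixed $s$ would force $2s\leq d_0+c^+$, which fails once $s$ is taken large enough -- contradiction. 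Hence $\epsilon^+=+1$, and the symmetric argument gives $\epsilon^-=-1$, so $\omega^+\neq\omega^-$. Apart from this label-based estimate, all the remaining steps are standard metric-geometric extractions.
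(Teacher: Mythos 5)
Your argument is correct, but it follows a genuinely different route from the paper. The paper's proof is short and structural: from any point $u$ of $\T_\infty\cup\widehat{\T}_\infty$ one has the explicit \emph{simple geodesic} $\gamma_u$ coming from the labelled-tree construction, which automatically merges with the reversed bigeodesic $\widehat{\Gamma}_\infty$; the reversal invariance of Theorem \ref{Retournement} then guarantees, almost surely and simultaneously for all points, a second geodesic ray merging with $\Gamma_\infty$ in the opposite direction. You instead produce the two rays by a purely metric-geometric limiting procedure: geodesic segments from $x$ to $\Gamma_\infty(\pm n)$ (Hopf--Rinow, which tacitly uses that $\BP$ is a boundedly compact length space, as the paper does throughout), an Arzel\`a--Ascoli diagonal extraction, then Theorem \ref{coalesce} to say each limit ray eventually equals $\Gamma_\infty(\epsilon^\pm s+c^\pm)$, and finally the label bound \eqref{Bound} (labels are $1$-Lipschitz on $\BP$ and $\Gamma_\infty(n)$ has label $n$) to rule out $\epsilon^+=-1$ via the inequality $2s\le d_0+c^+$, which indeed fails for large $s$; the symmetric argument pins $\epsilon^-=-1$, so the two rays are distinct, and everything holds on the single almost-sure event where Theorem \ref{coalesce} applies to all rays. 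What each approach buys: the paper's proof is shorter and exhibits the second ray explicitly, but relies on the simple-geodesic construction and the reversal invariance; yours avoids both, at the cost of invoking the stronger coalescence theorem (available at this point in the paper, so there is no circularity) plus a compactness extraction, and it has the mild virtue of constructing rays that coalesce with $\Gamma_\infty$ in the two prescribed directions, which is slightly more information than the statement requires.
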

    \begin{proof}
        For every point $u\in\T_\infty$ (or $u\in\widehat{\T}_\infty$), we can define the simple geodesic $\gamma_u$, which coincides with the curve $\widehat{\Gamma}_\infty$ at some point, meaning that $\gamma_u(t+\cdot)=\Gamma_\infty(s-\cdot)$ for some $t\geq0,s\in\R$. \\
        However, by Theorem \ref{Retournement}, it means that there also exists a geodesic $\gamma'_u$ which coincides with $\Gamma_\infty$ after some point, meaning that $\gamma'_u(t'+\cdot)=\Gamma_\infty(s'+\cdot)$ for some $t'\geq0,s'\in\R$, which concludes the proof.
    \end{proof}
\printbibliography
\end{document}